\def\thtext#1{
\catcode`@=11 \gdef\@thmcountersep{. #1}
\catcode`@=12}
\renewcommand{\baselinestretch}{1.2}
\newtheorem{theorem}{Theorem}[section]
\newtheorem{lemma}{Lemma}[section]
\newtheorem{crl}{Corollary}[section]
\newcounter{imn1}[section]
\renewcommand{\thtext}
{\thesection.\arabic{imn1}}
\newcounter{imn2}[section]
\renewcommand{\thtext}
{\thesection.\arabic{imn2}}
\newcounter{imn3}[section]
\renewcommand{\thtext}
{\thesection.\arabic{imn3}}
 \newenvironment{definition}{\trivlist \item[\hskip\labelsep{\bf Definition}]
 \refstepcounter{imn1}{\bf\thesection.\arabic{imn1}.}}%
 {\endtrivlist}
 \newenvironment{rk}{\trivlist \item[\hskip\labelsep{\bf Remark}]
 \refstepcounter{imn2}{\bf\thesection.\arabic{imn2}.}}%
 {\endtrivlist}
 \newenvironment{exa}{\trivlist \item[\hskip\labelsep{\bf Example}]
 \refstepcounter{imn3}{\bf\thesection.\arabic{imn3}.}}%
 {\endtrivlist}
\def\Z{{\mathbb Z}}
\newcommand{\ob}{\mathrm{ob}}
\newcommand{\V}{{\mathcal V}}
\newcommand{\firstn}{\raisebox{-0.35\height}{\includegraphics[width=0.7cm]{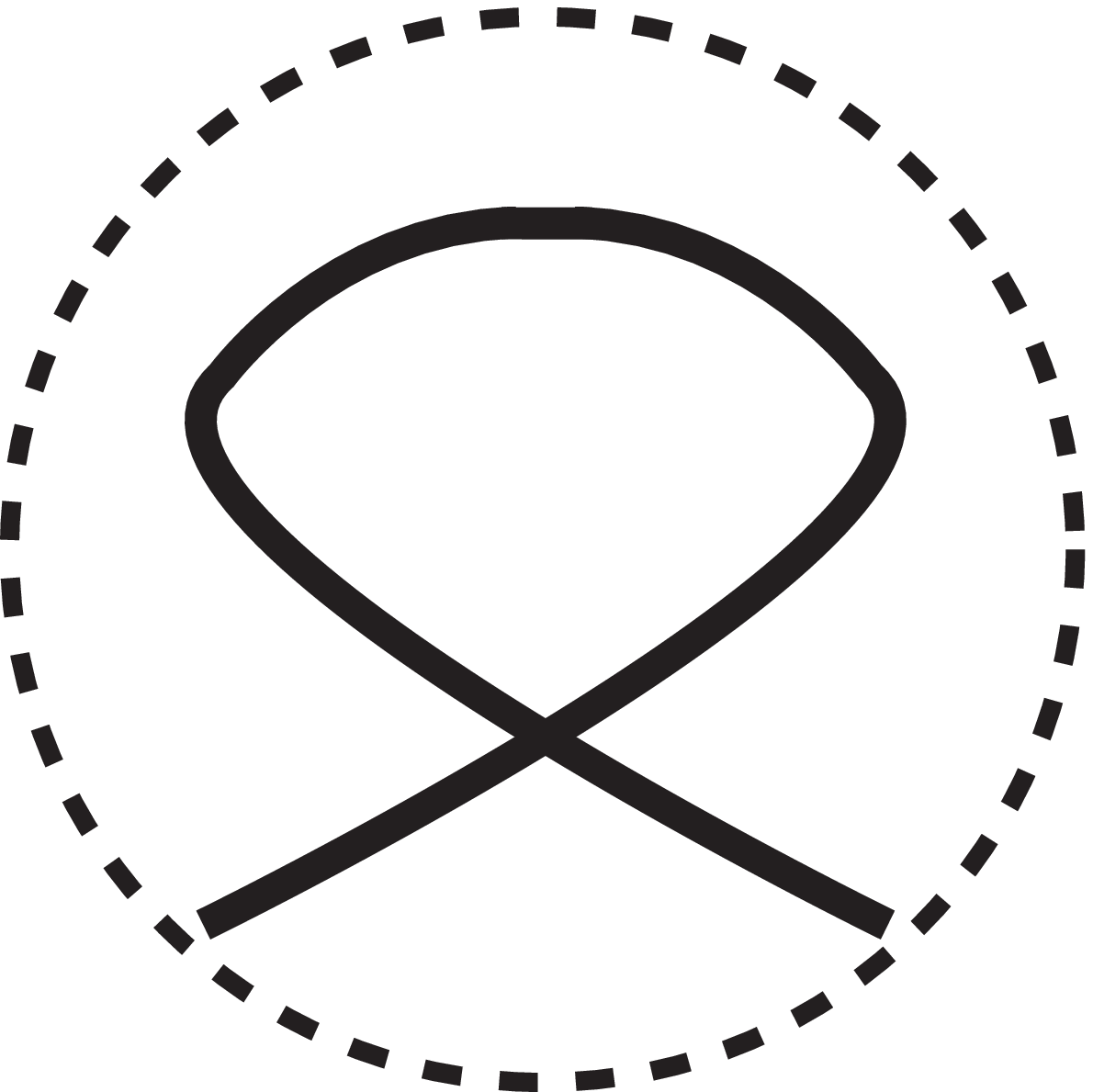}}}
\newcommand{\firstfio}{\raisebox{-0.35\height}{\includegraphics[width=0.7cm]{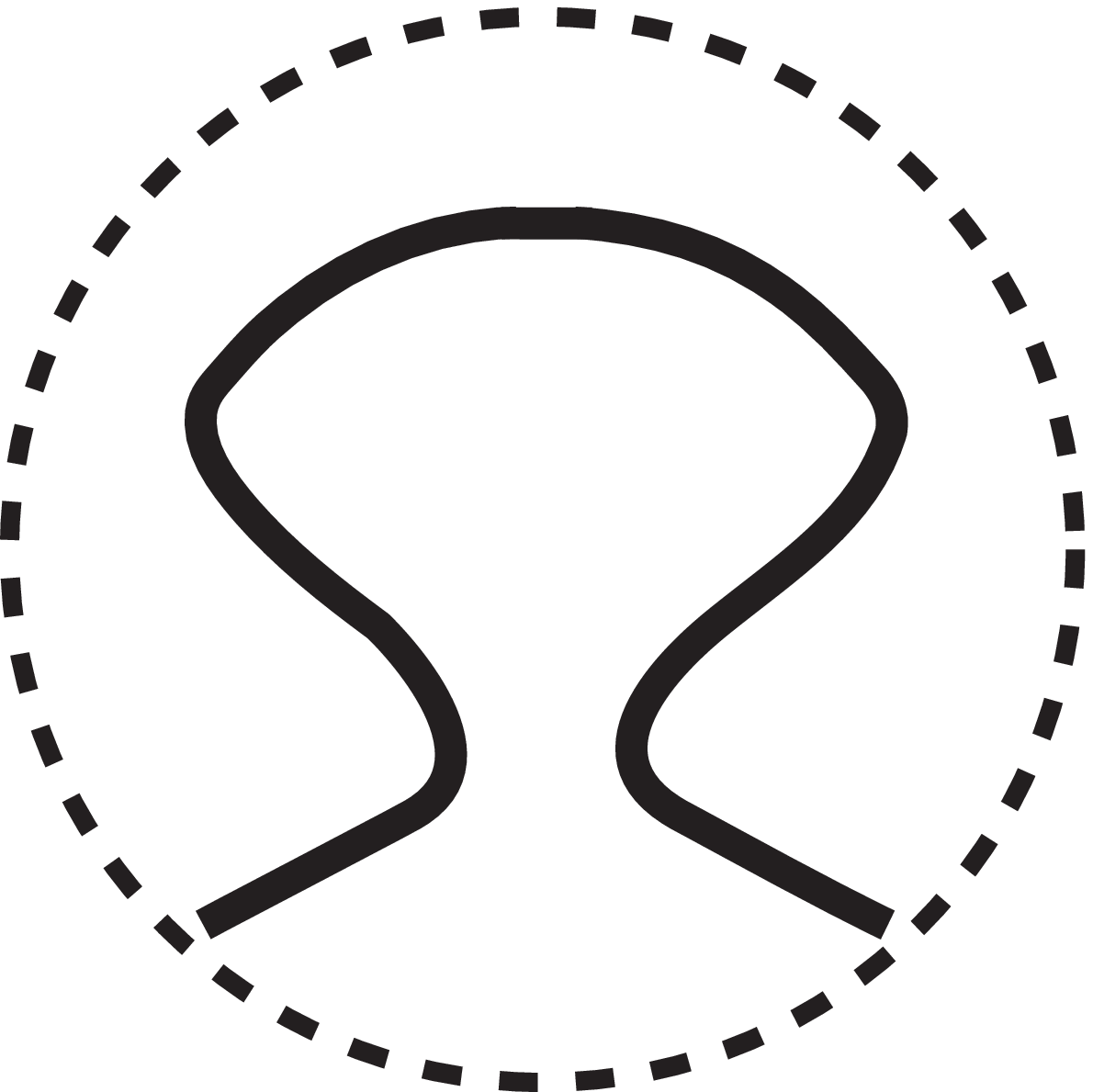}}}
\newcommand{\firstfit}{\raisebox{-0.35\height}{\includegraphics[width=0.7cm]{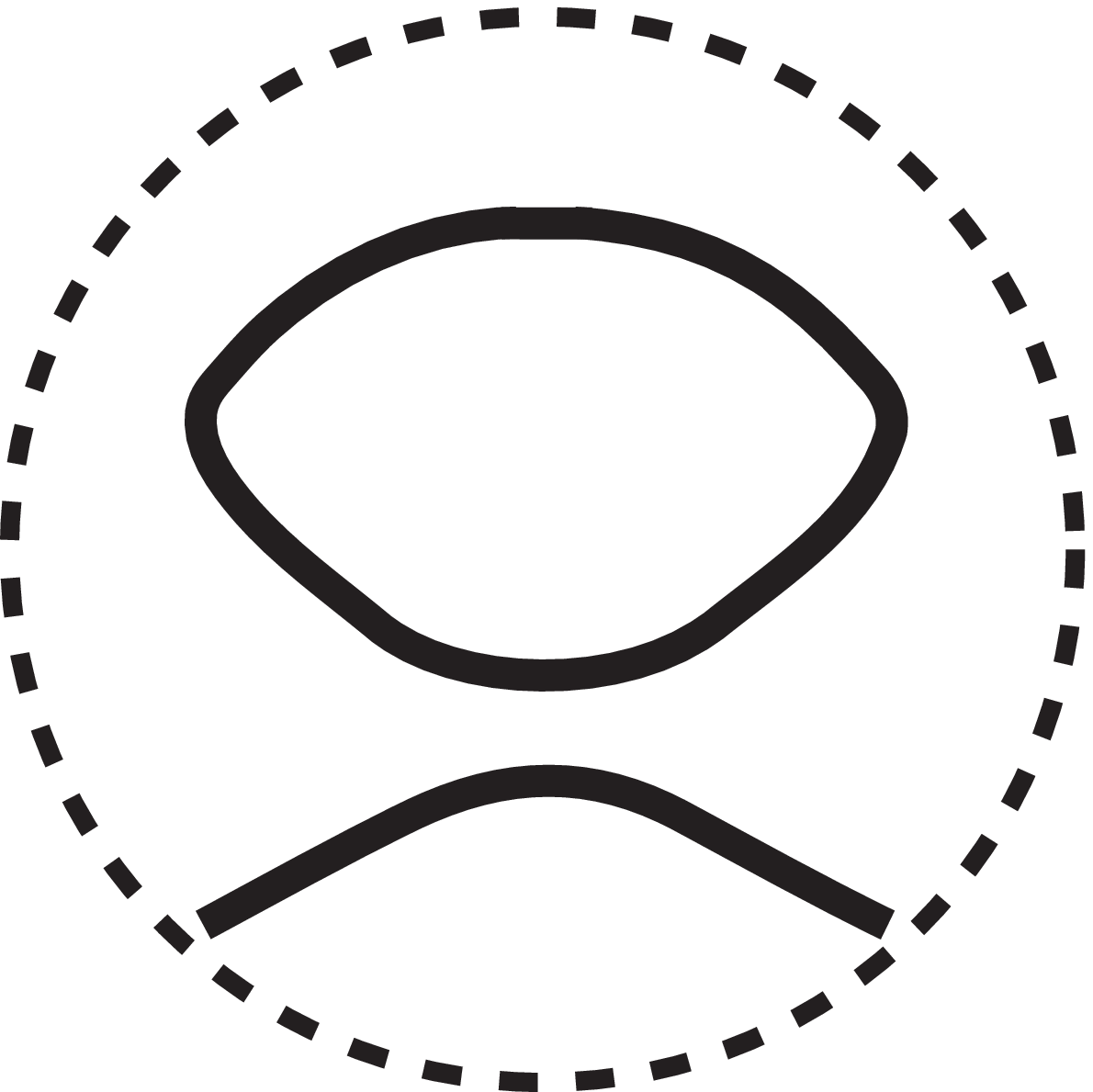}}}
\newcommand{\sectwon}{\raisebox{-0.35\height}{\includegraphics[width=0.7cm]{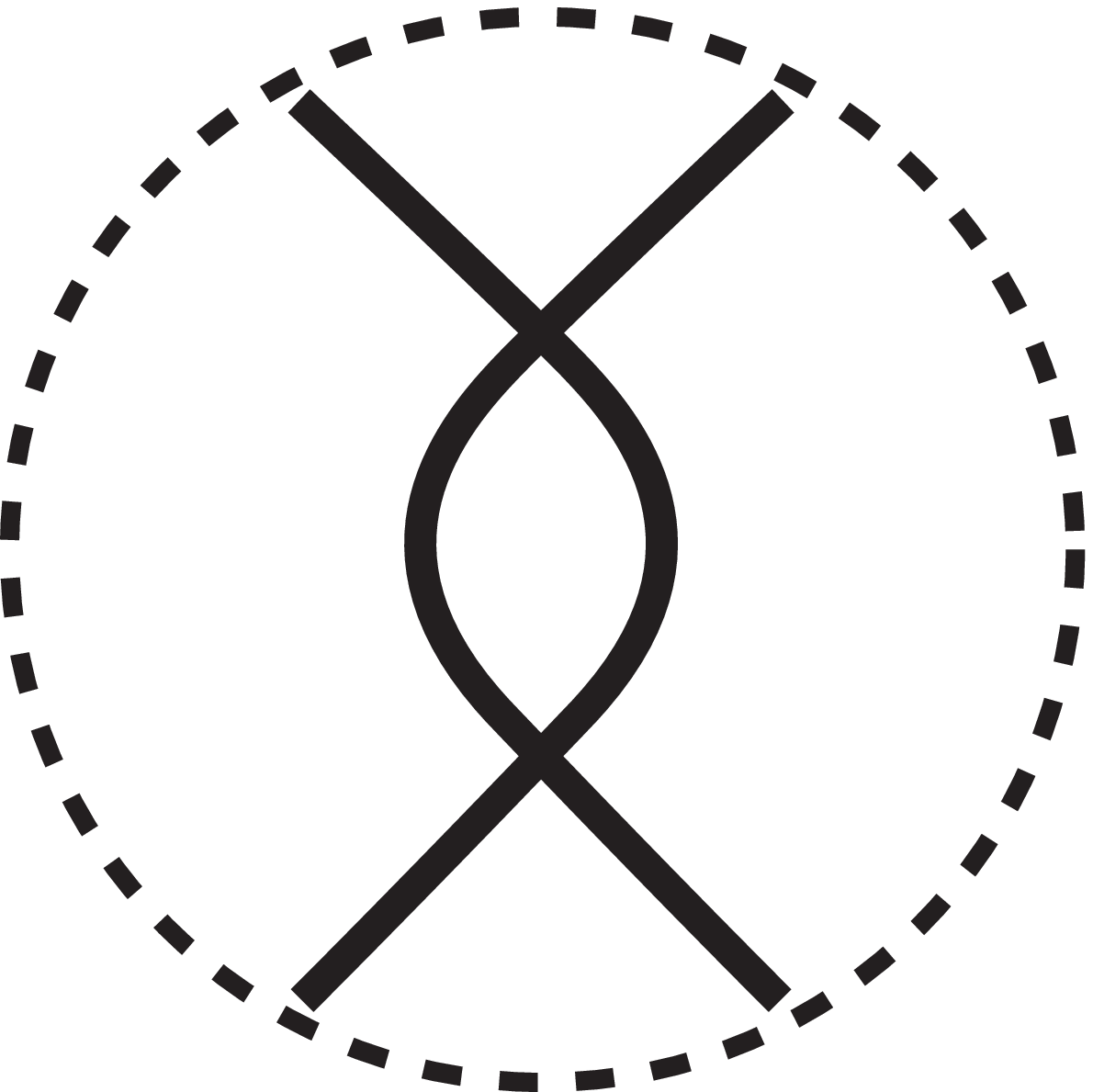}}}
\newcommand{\secfino}{\raisebox{-0.35\height}{\includegraphics[width=0.7cm]{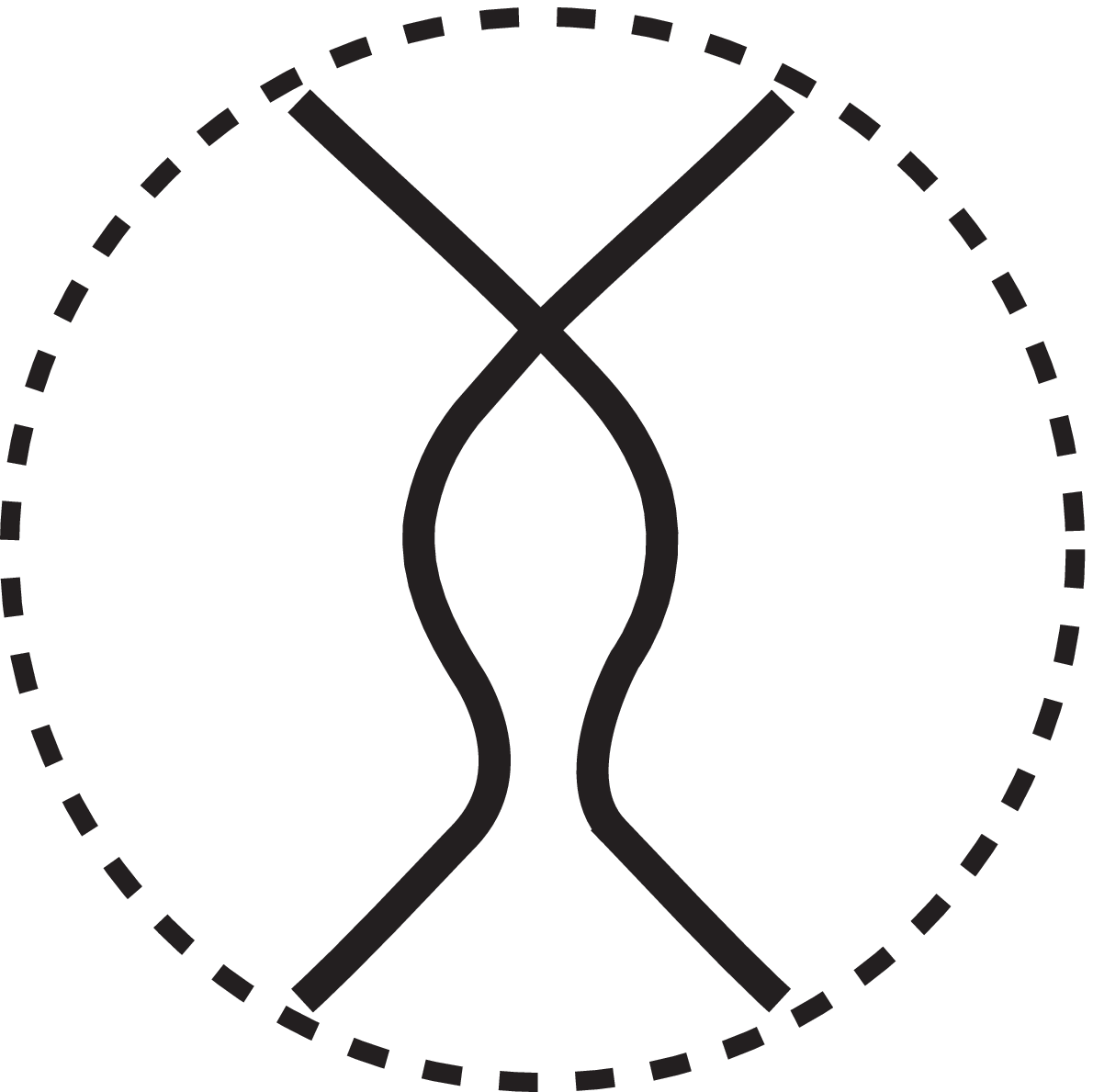}}}
\newcommand{\secsecno}{\raisebox{-0.35\height}{\includegraphics[width=0.7cm]{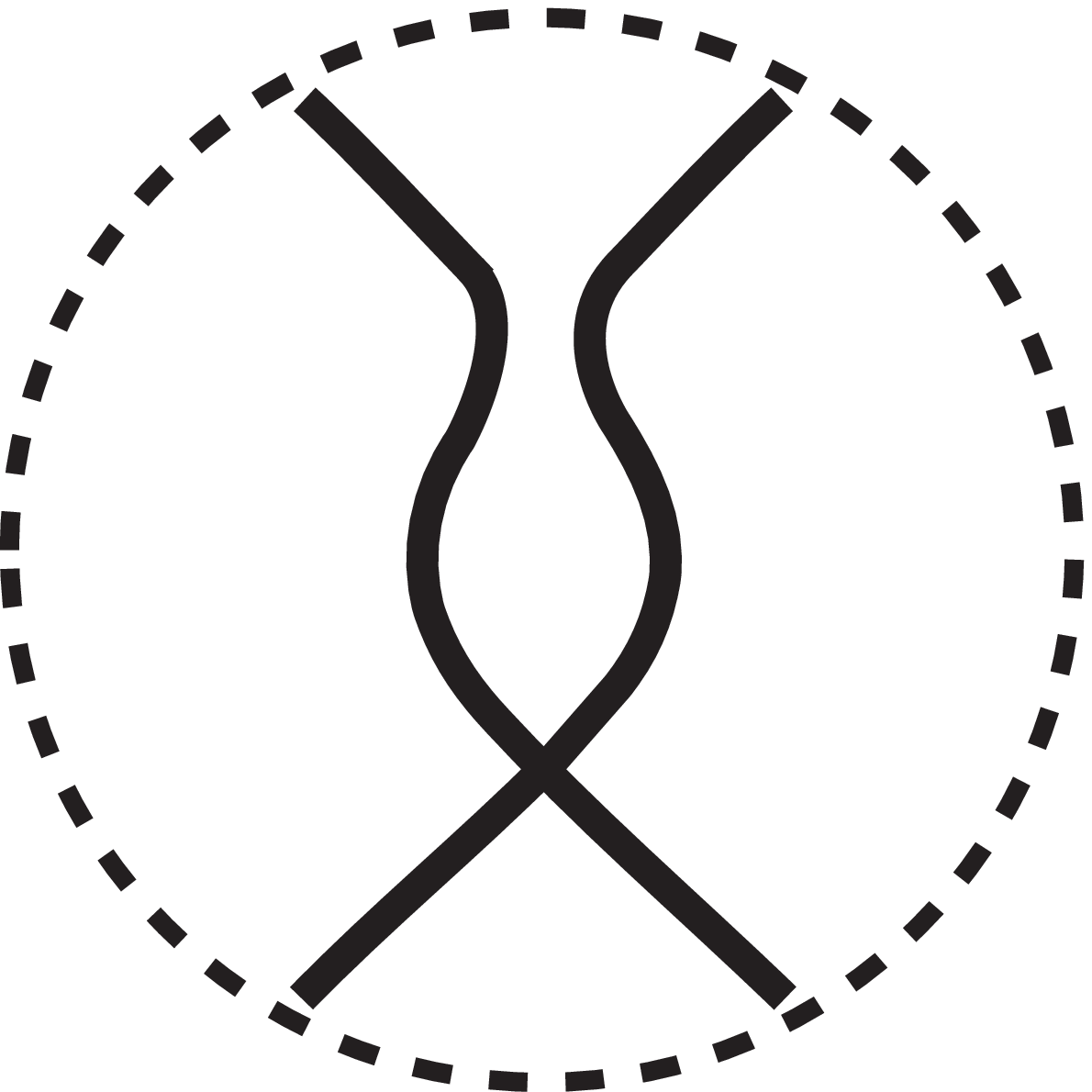}}}
\newcommand{\secfint}{\raisebox{-0.35\height}{\includegraphics[width=0.7cm]{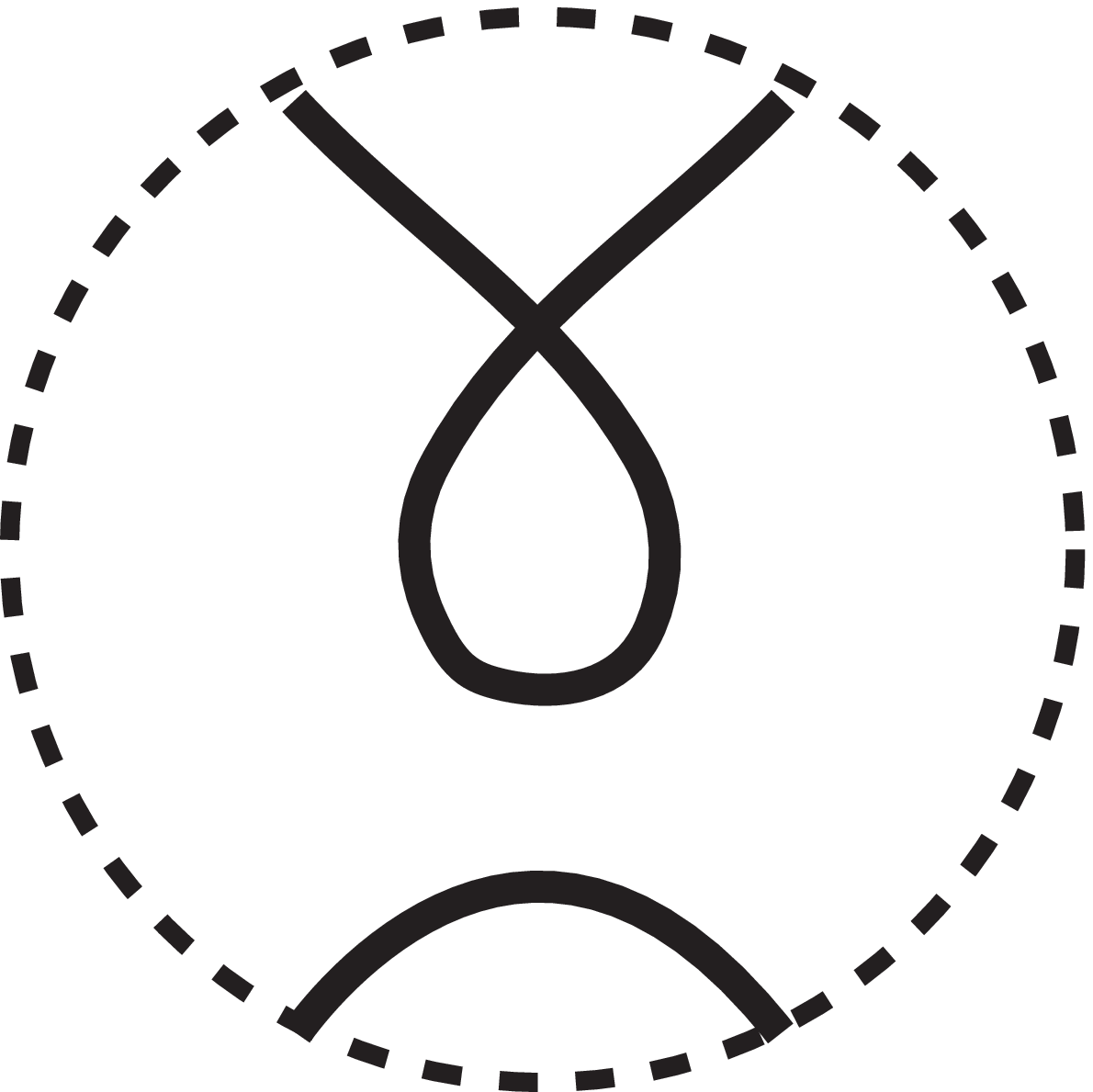}}}
\newcommand{\secsecnt}{\raisebox{-0.35\height}{\includegraphics[width=0.7cm]{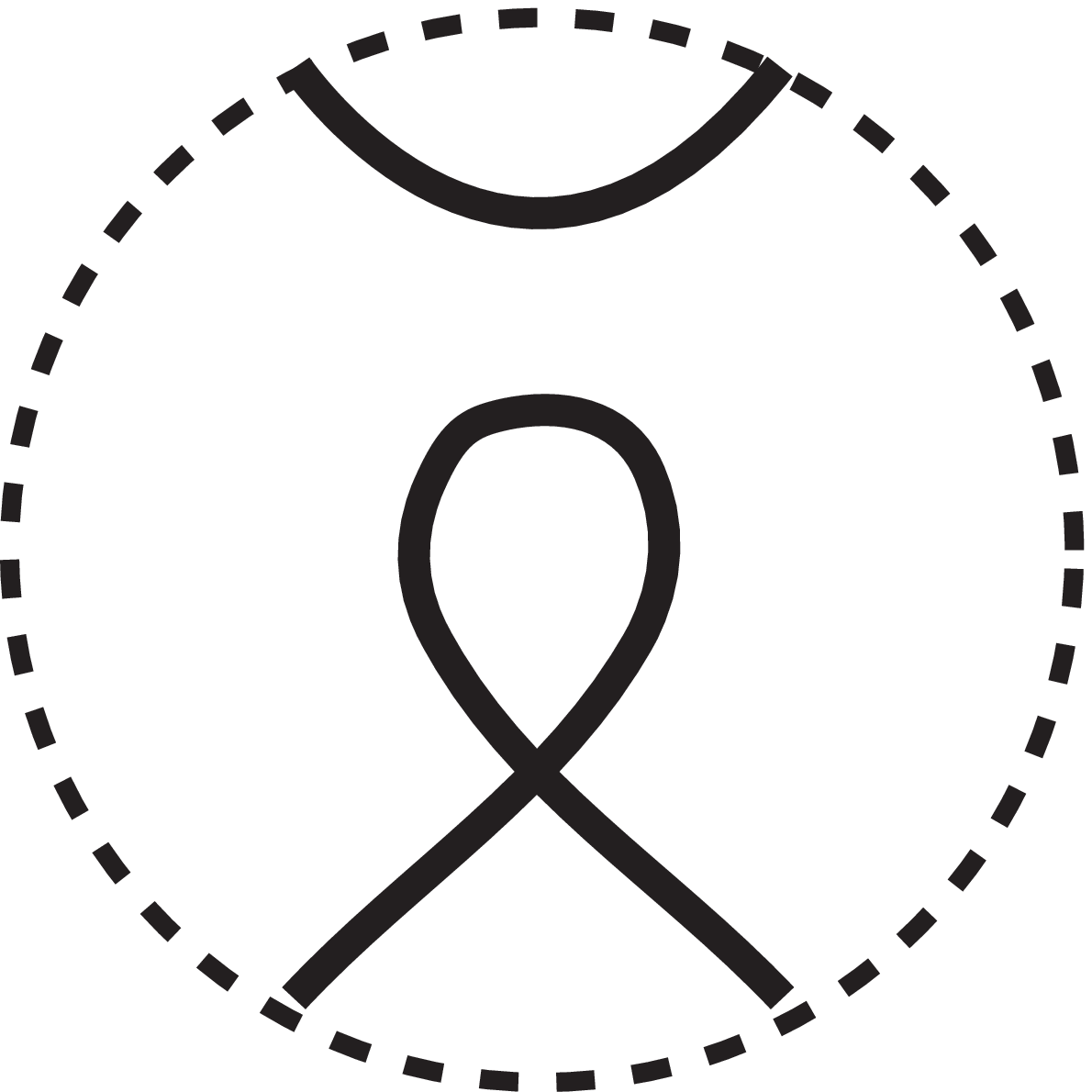}}}
\newcommand{\secfioseco}{\raisebox{-0.35\height}{\includegraphics[width=0.7cm]{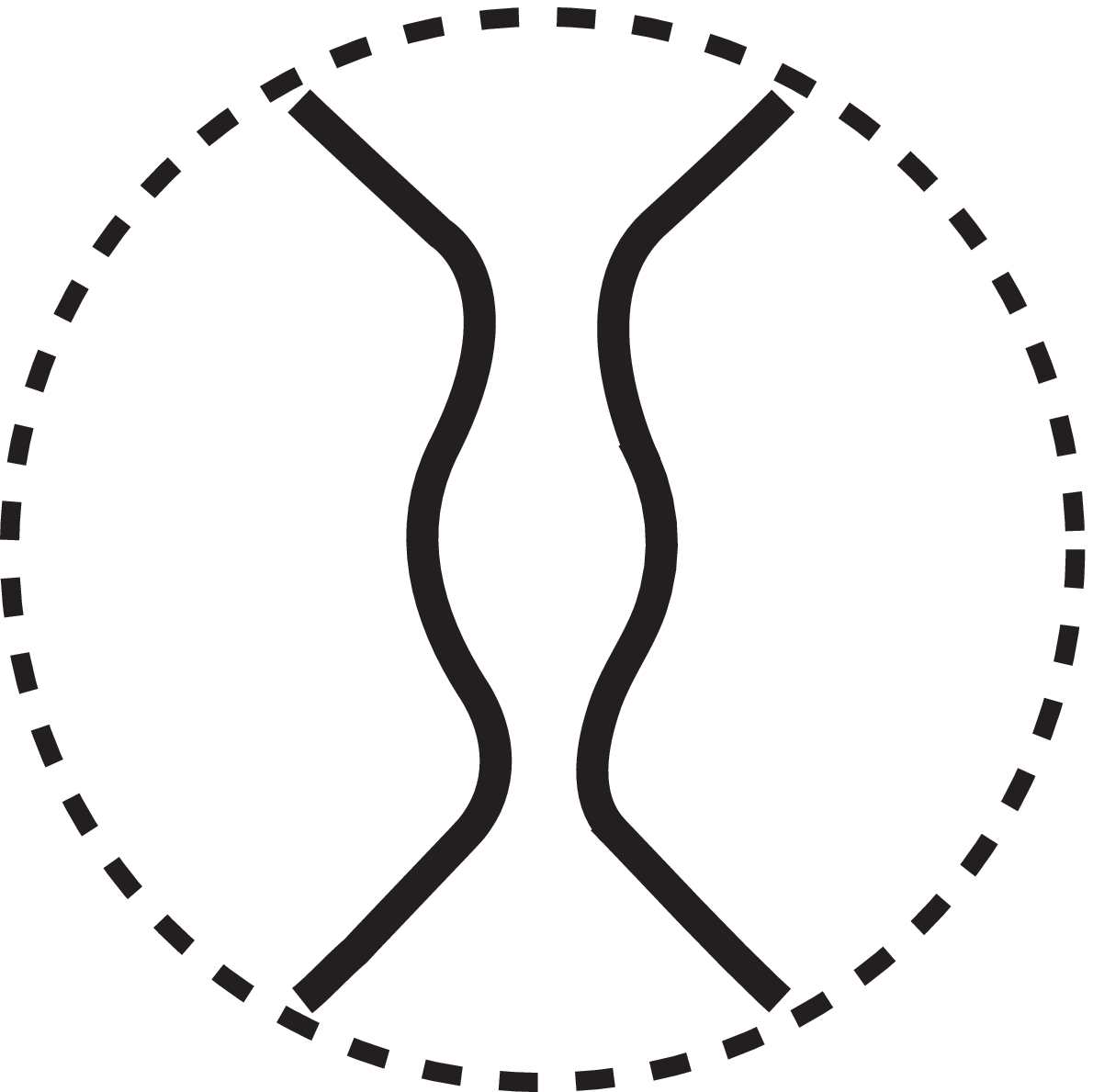}}}
\newcommand{\secfiosect}{\raisebox{-0.35\height}{\includegraphics[width=0.7cm]{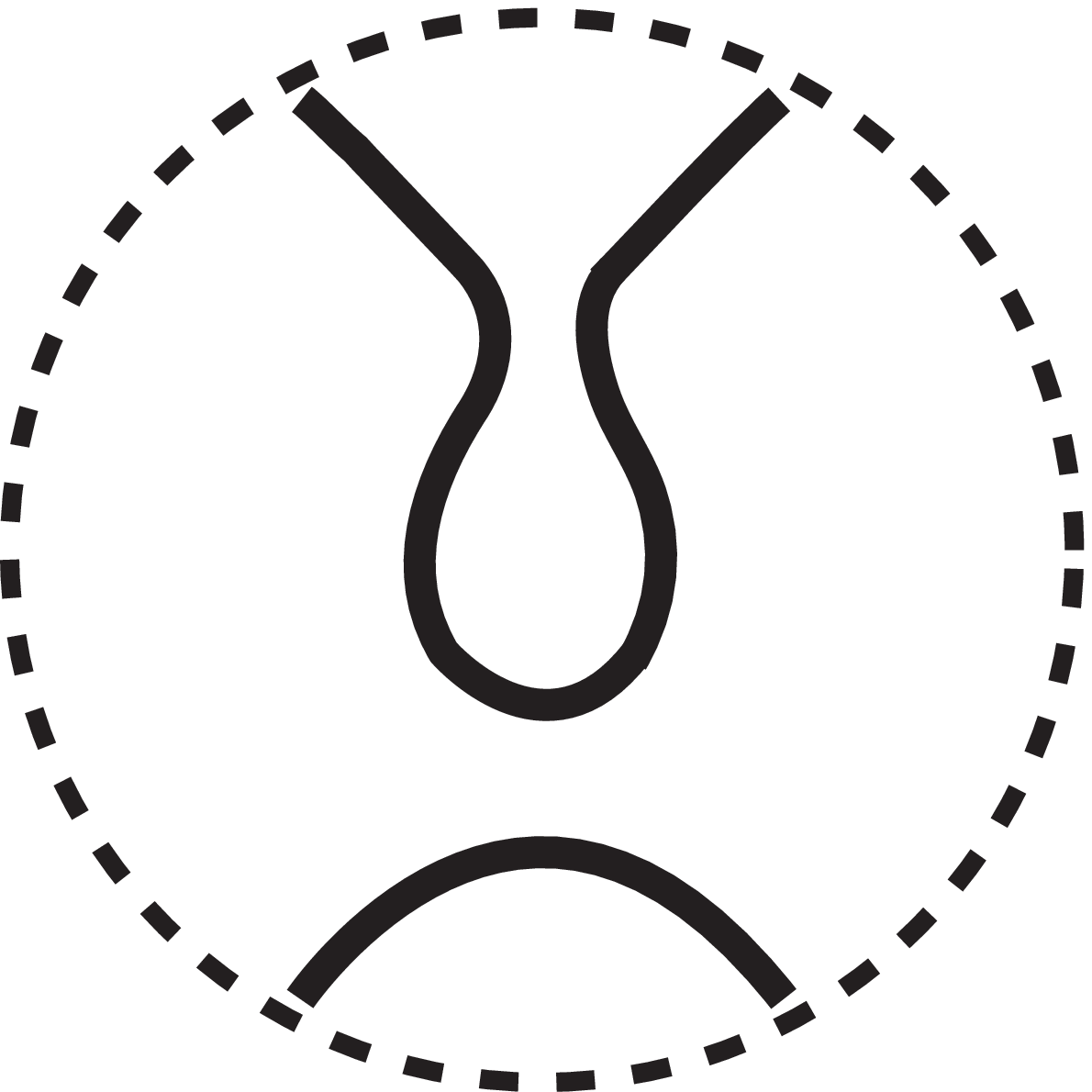}}}
\newcommand{\secfitseco}{\raisebox{-0.35\height}{\includegraphics[width=0.7cm]{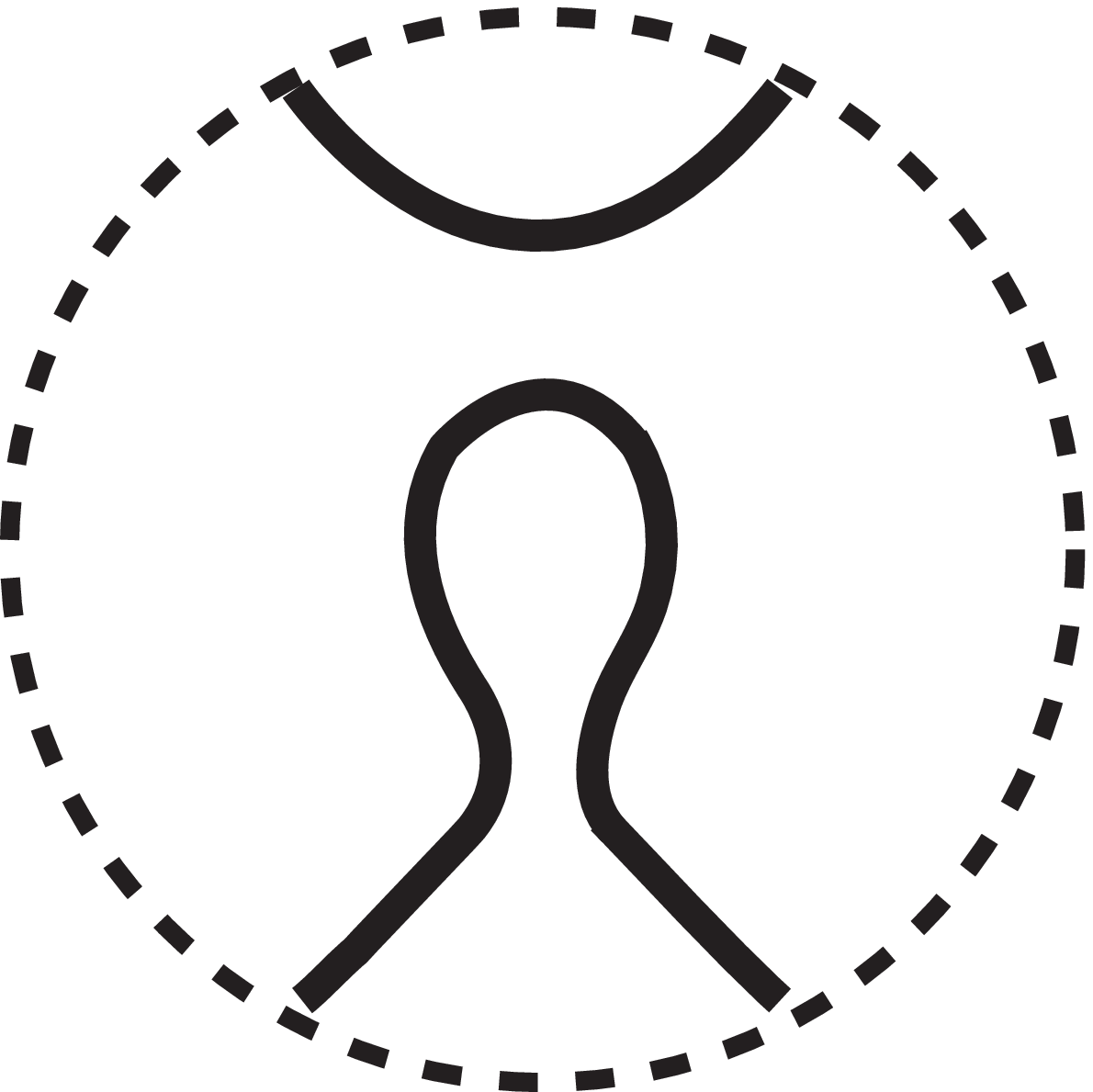}}}
\newcommand{\secfitsect}{\raisebox{-0.35\height}{\includegraphics[width=0.7cm]{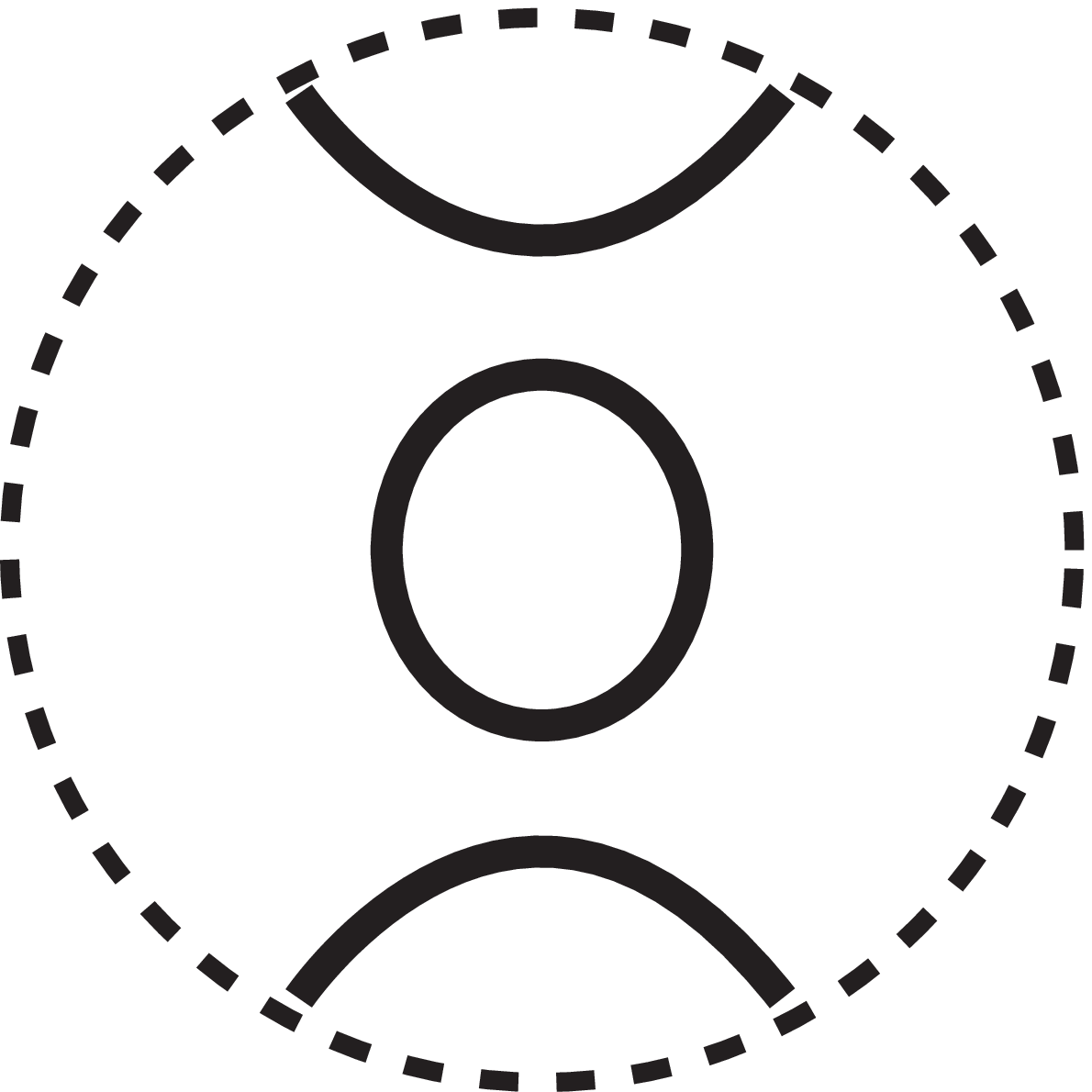}}}
\newcommand{\thirdn}{\raisebox{-0.35\height}{\includegraphics[width=0.7cm]{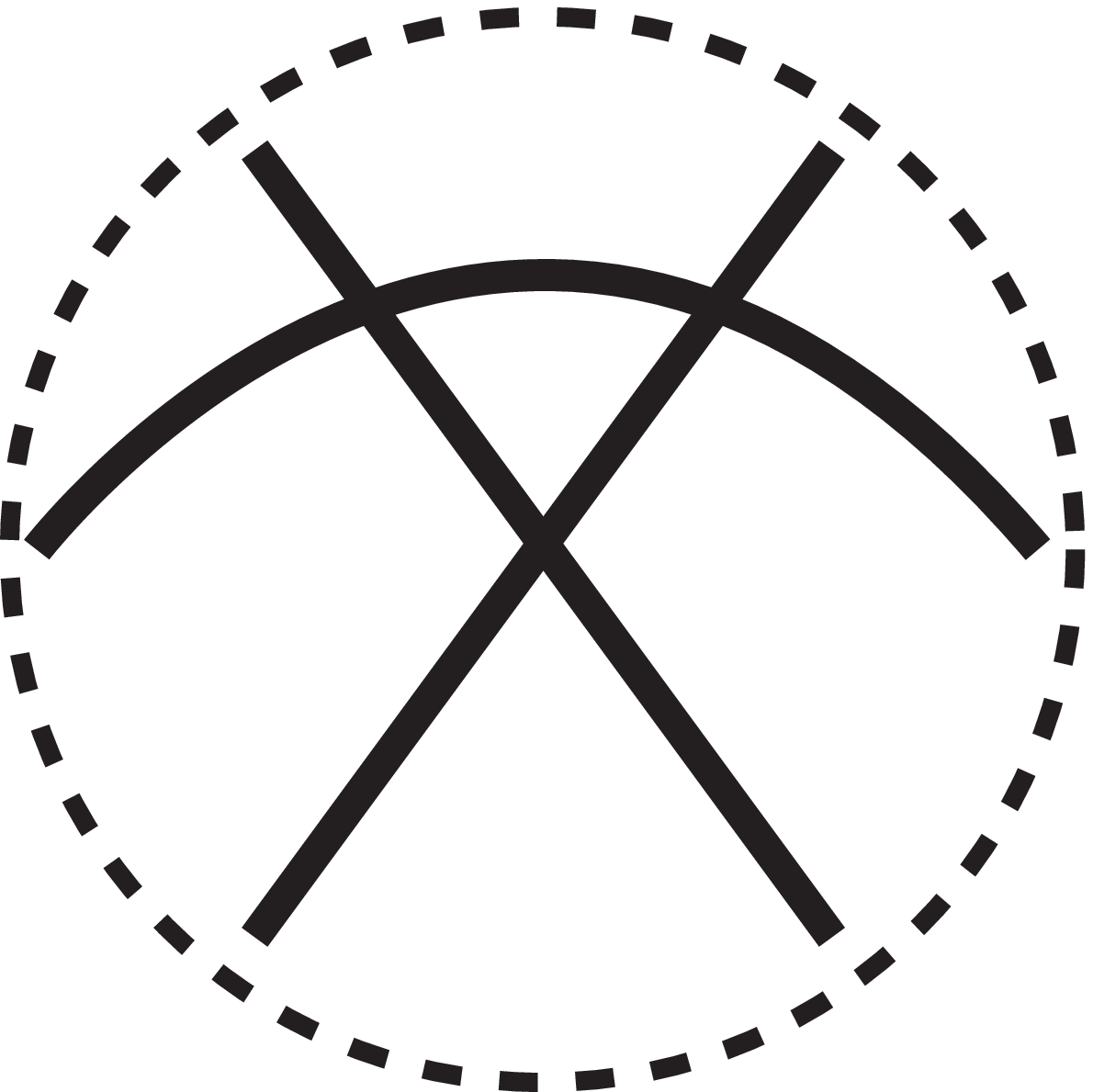}}}
\newcommand{\thirdfinsecntho}{\raisebox{-0.35\height}{\includegraphics[width=0.7cm]{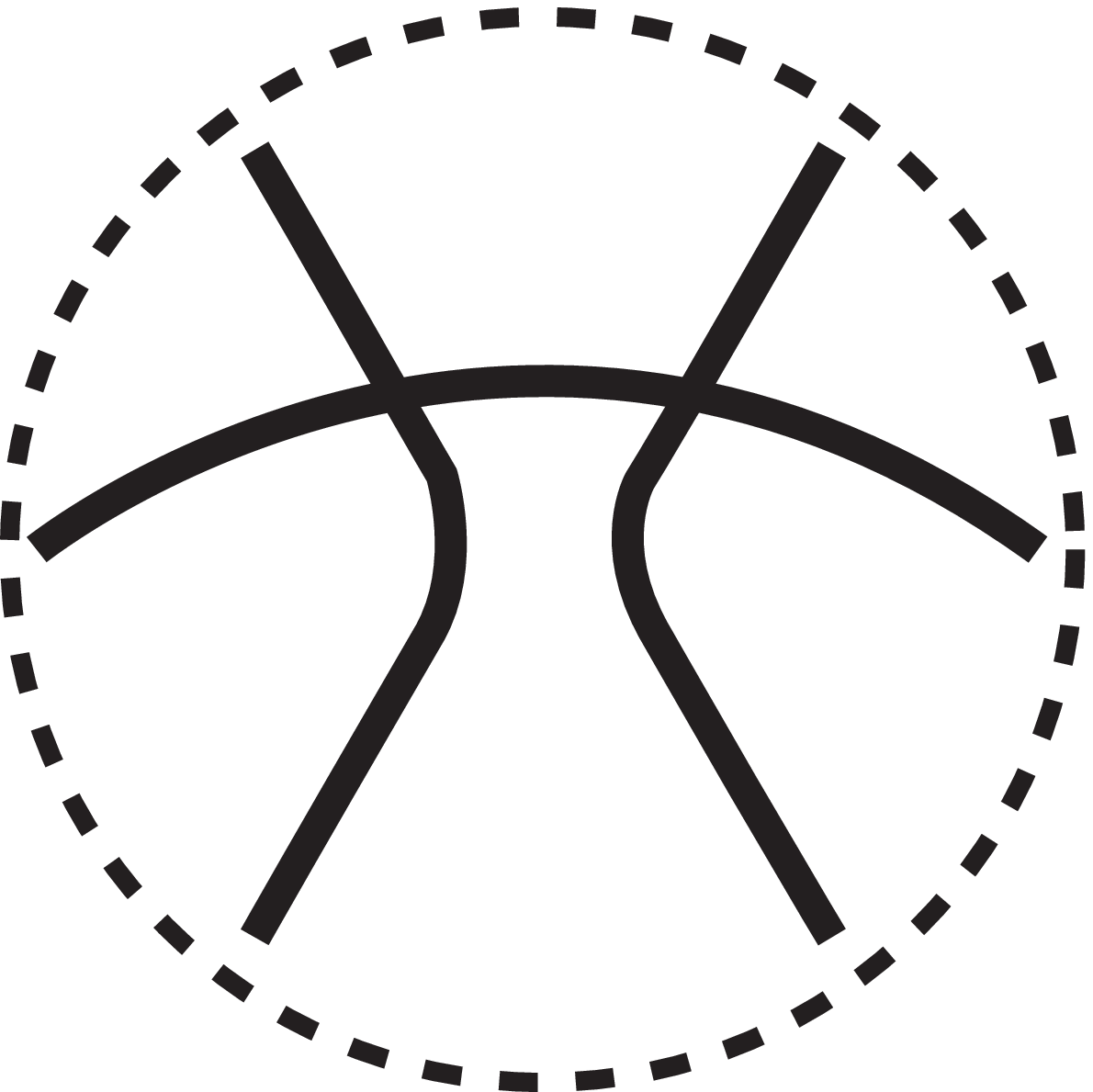}}}
\newcommand{\thirdfinsecntht}{\raisebox{-0.35\height}{\includegraphics[width=0.7cm]{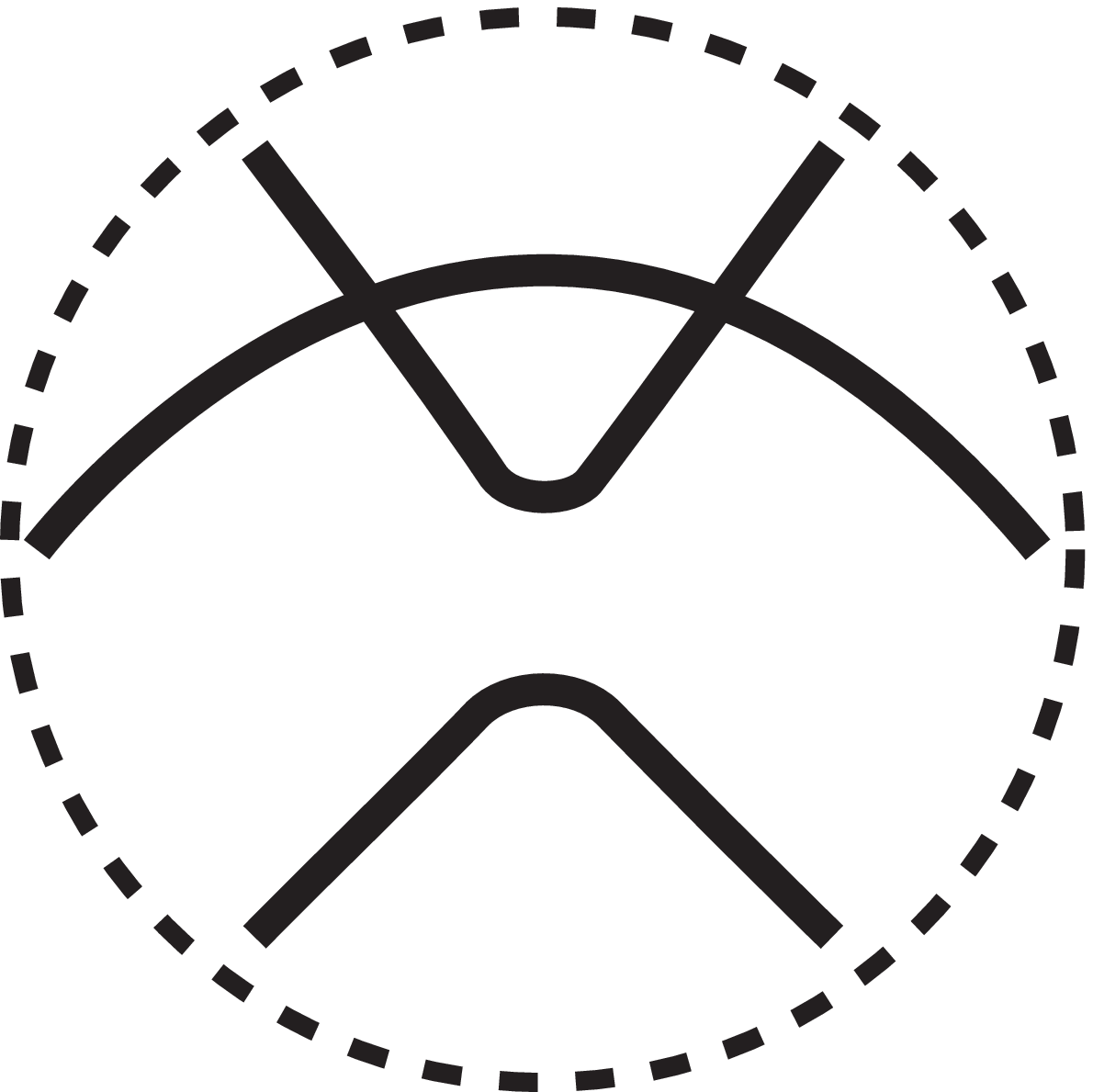}}}
\newcommand{\thirdfinsecothn}{\raisebox{-0.35\height}{\includegraphics[width=0.7cm]{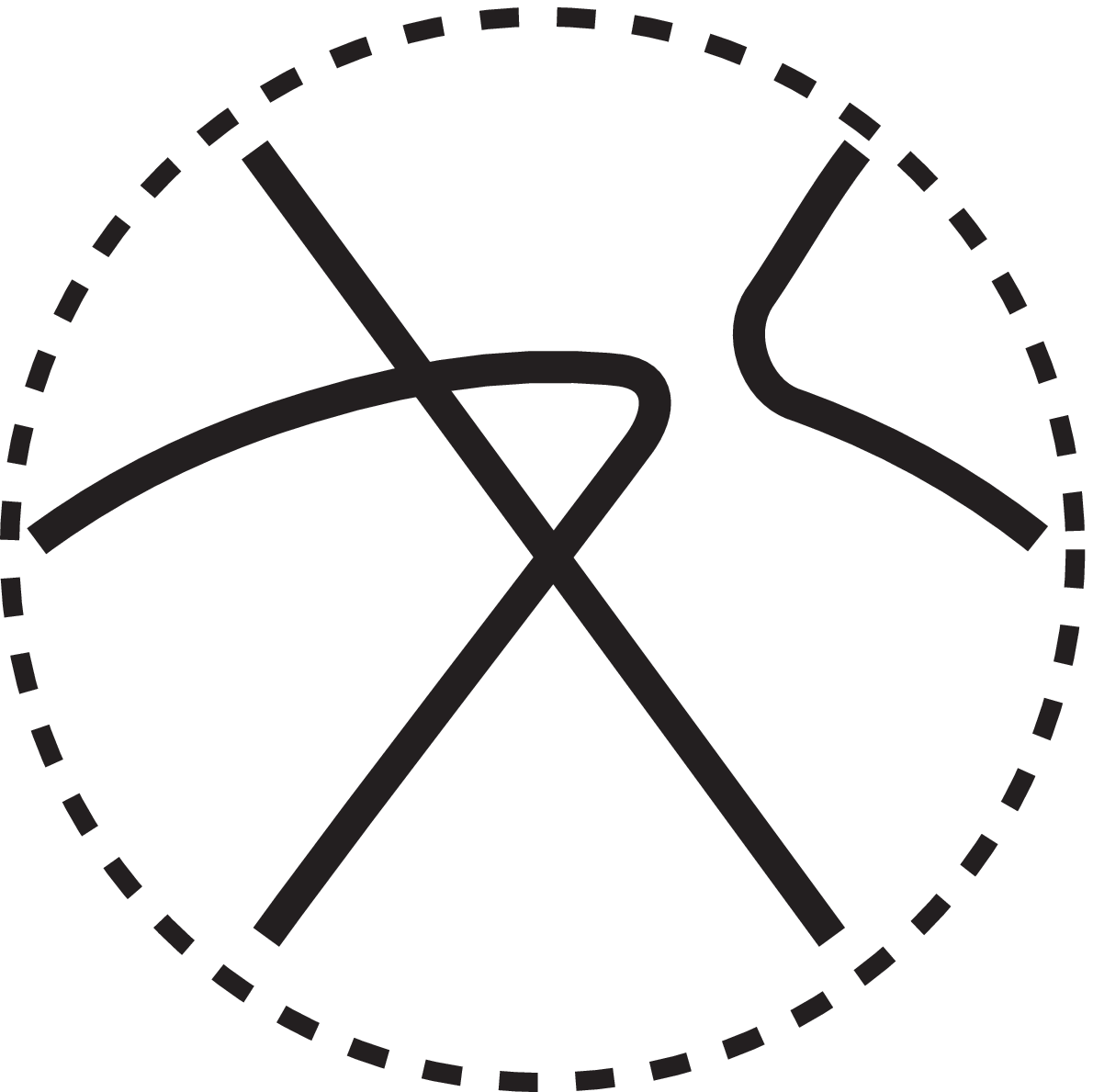}}}
\newcommand{\thirdfinsectthn}{\raisebox{-0.35\height}{\includegraphics[width=0.7cm]{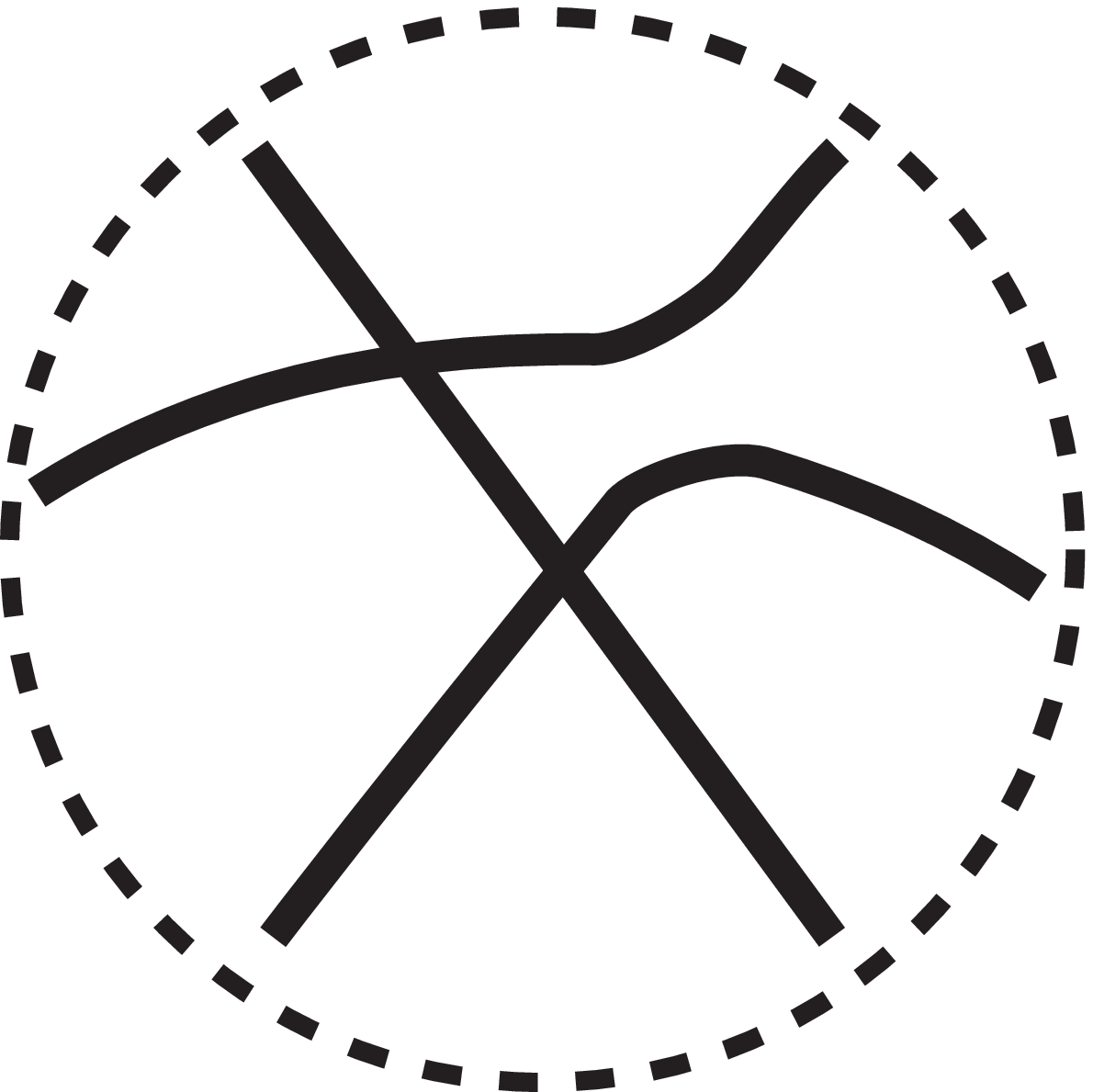}}}
\newcommand{\thirdfiosecnthn}{\raisebox{-0.35\height}{\includegraphics[width=0.7cm]{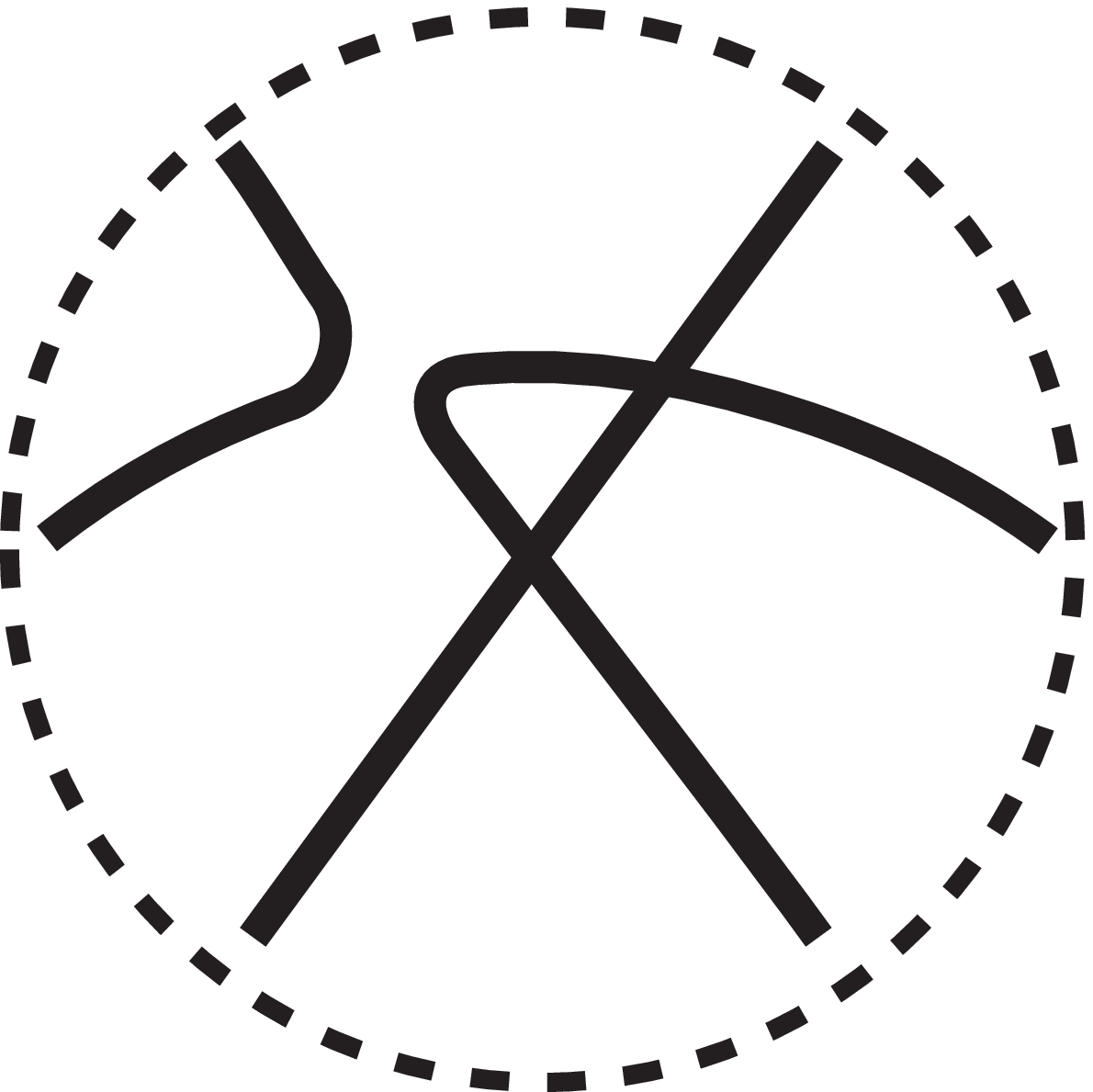}}}
\newcommand{\thirdfiosecothn}{\raisebox{-0.35\height}{\includegraphics[width=0.7cm]{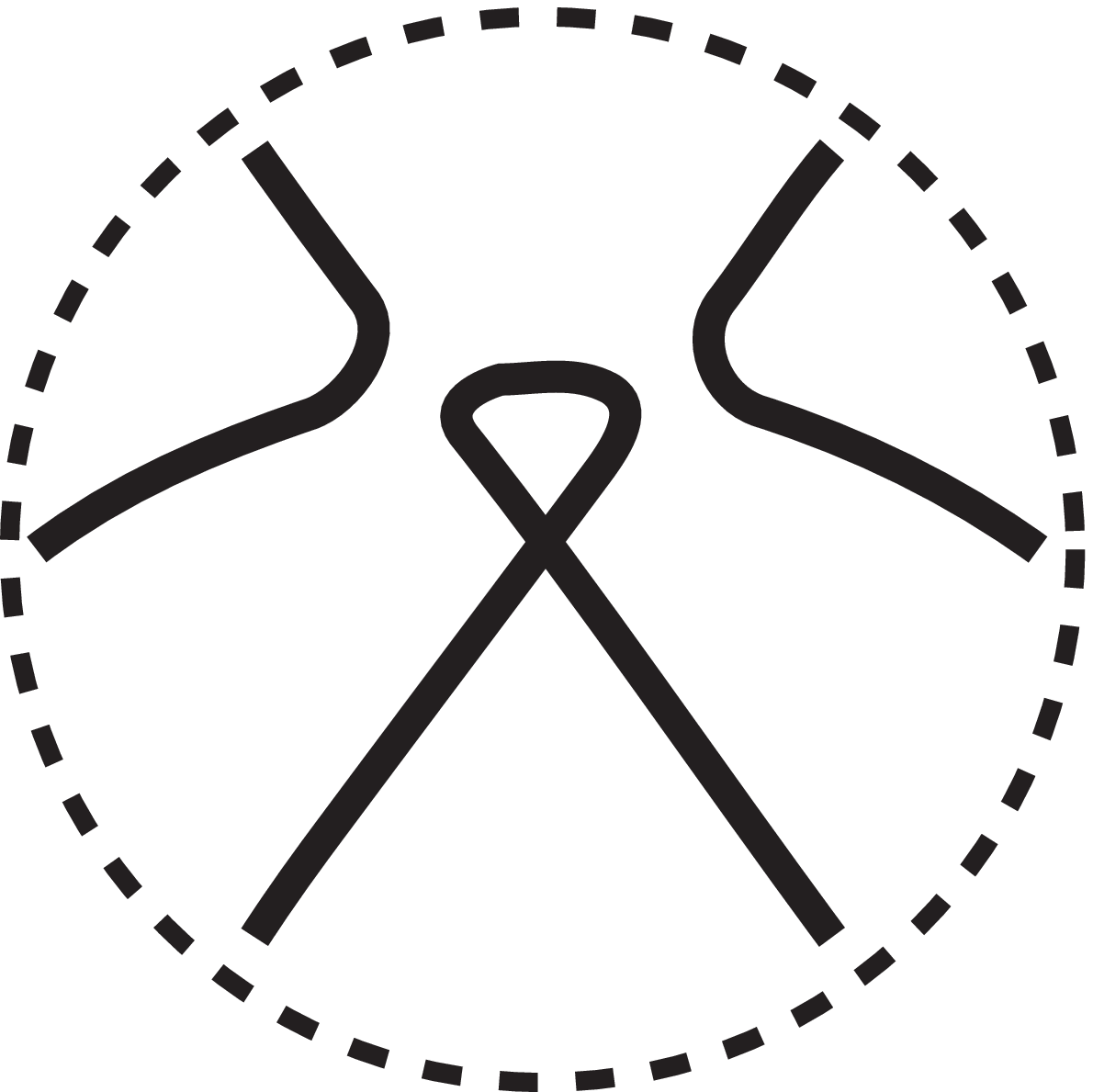}}}
\newcommand{\thirdfiosectthn}{\raisebox{-0.35\height}{\includegraphics[width=0.7cm]{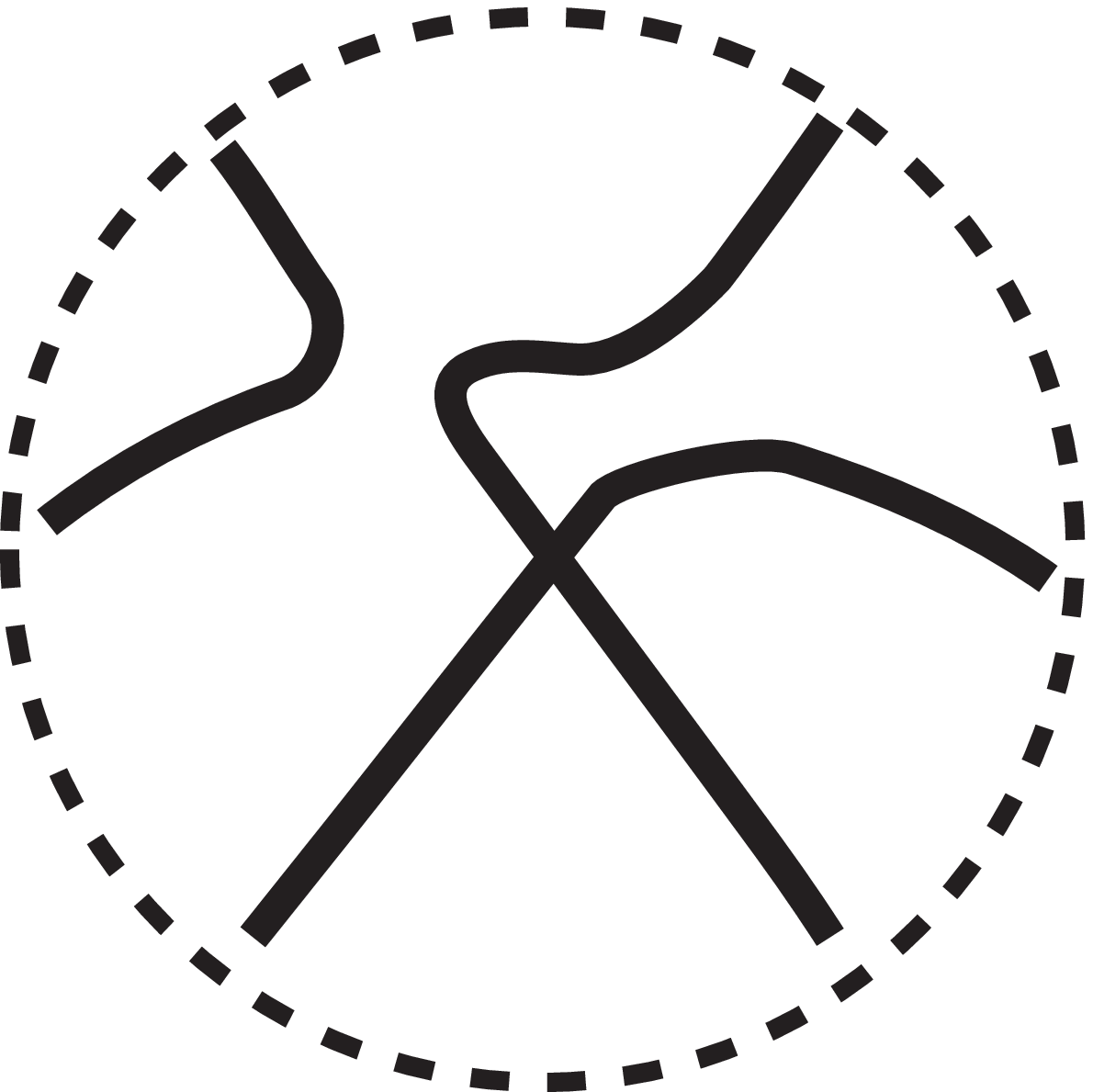}}}
\newcommand{\thirdfiosecttho}{\raisebox{-0.35\height}{\includegraphics[width=0.7cm]{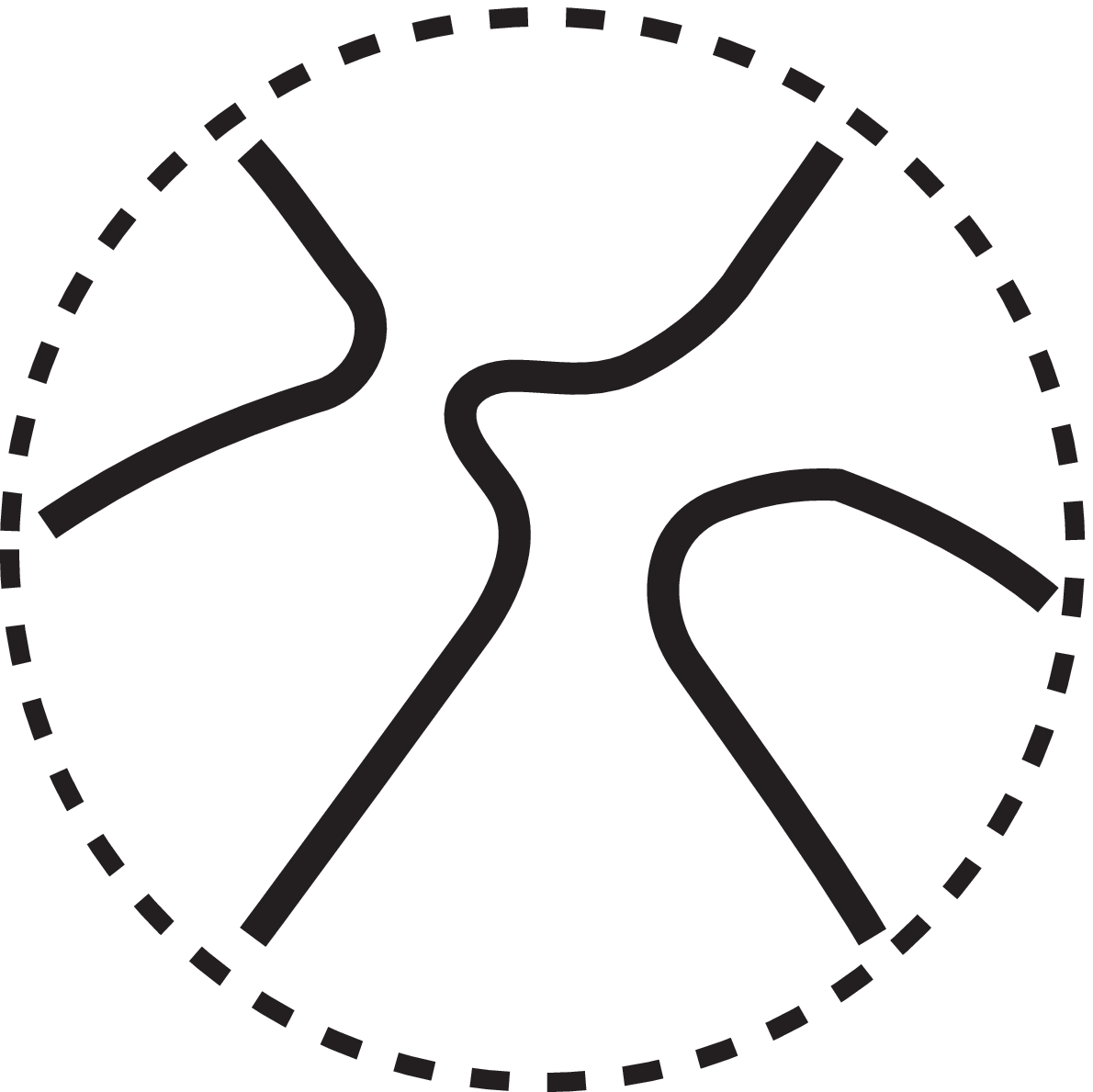}}}
\newcommand{\thirdfiosecttht}{\raisebox{-0.35\height}{\includegraphics[width=0.7cm]{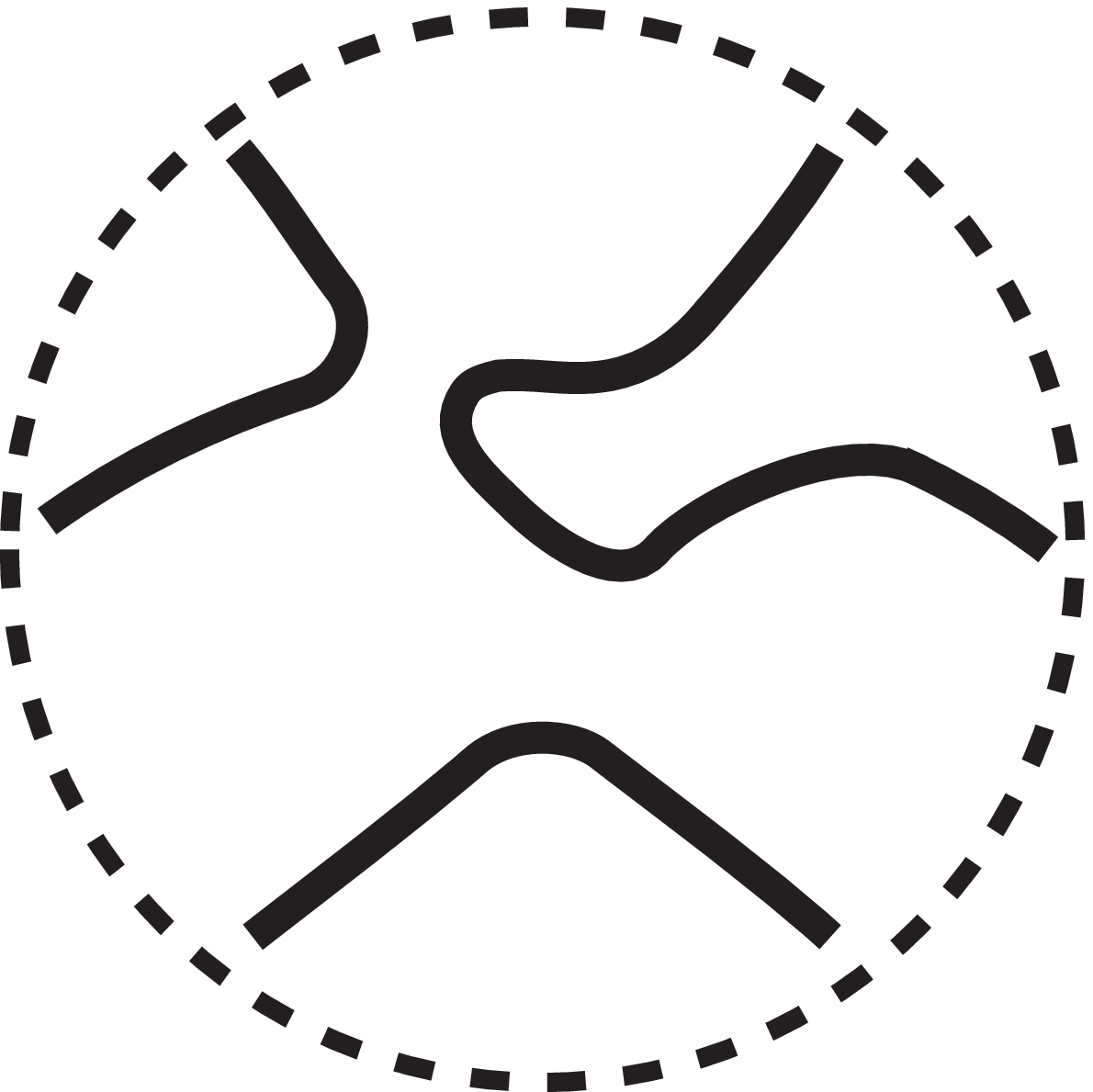}}}
\newcommand{\thirdfitsecnthn}{\raisebox{-0.35\height}{\includegraphics[width=0.7cm]{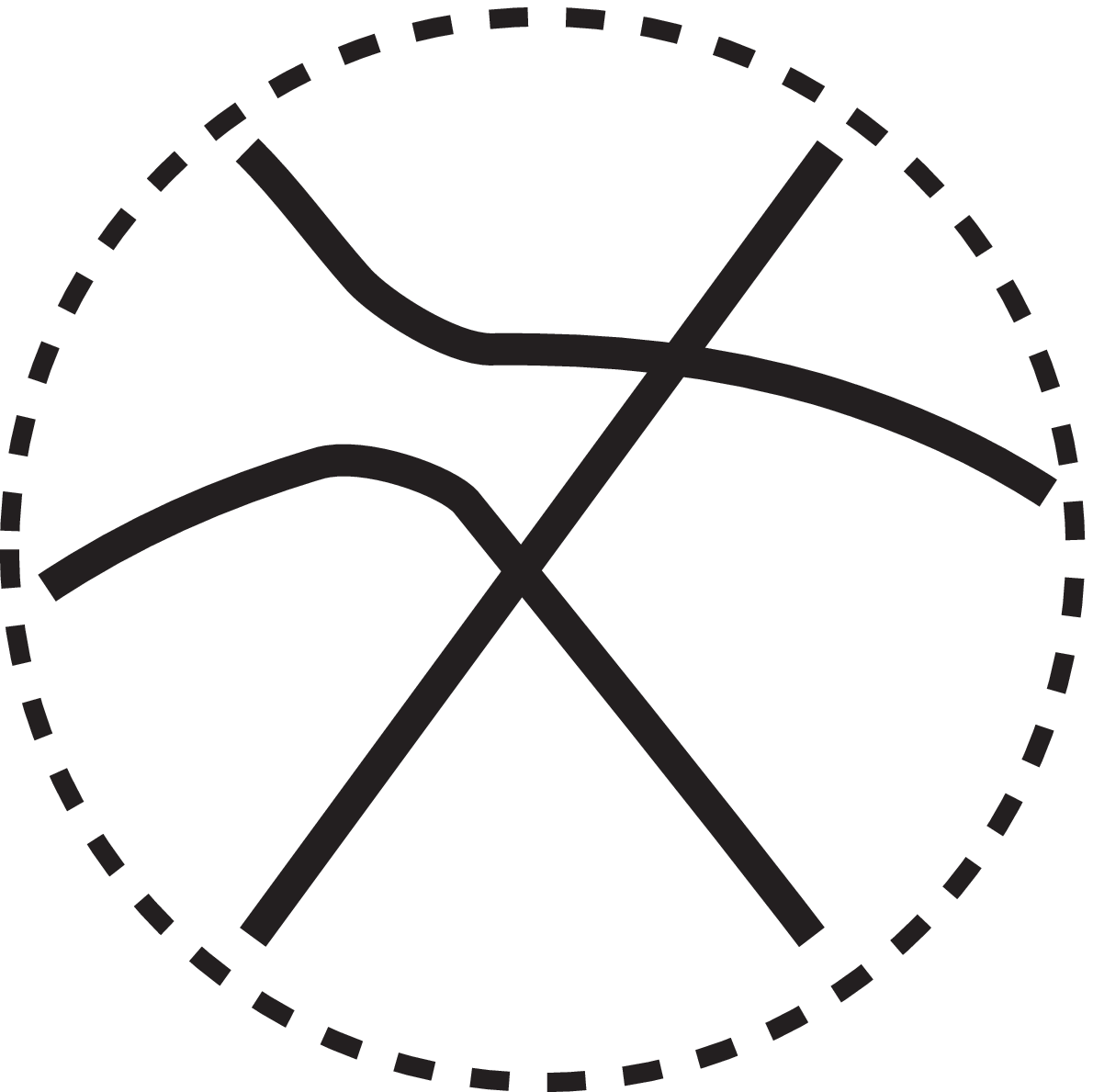}}}
\newcommand{\thirdfitsecothn}{\raisebox{-0.35\height}{\includegraphics[width=0.7cm]{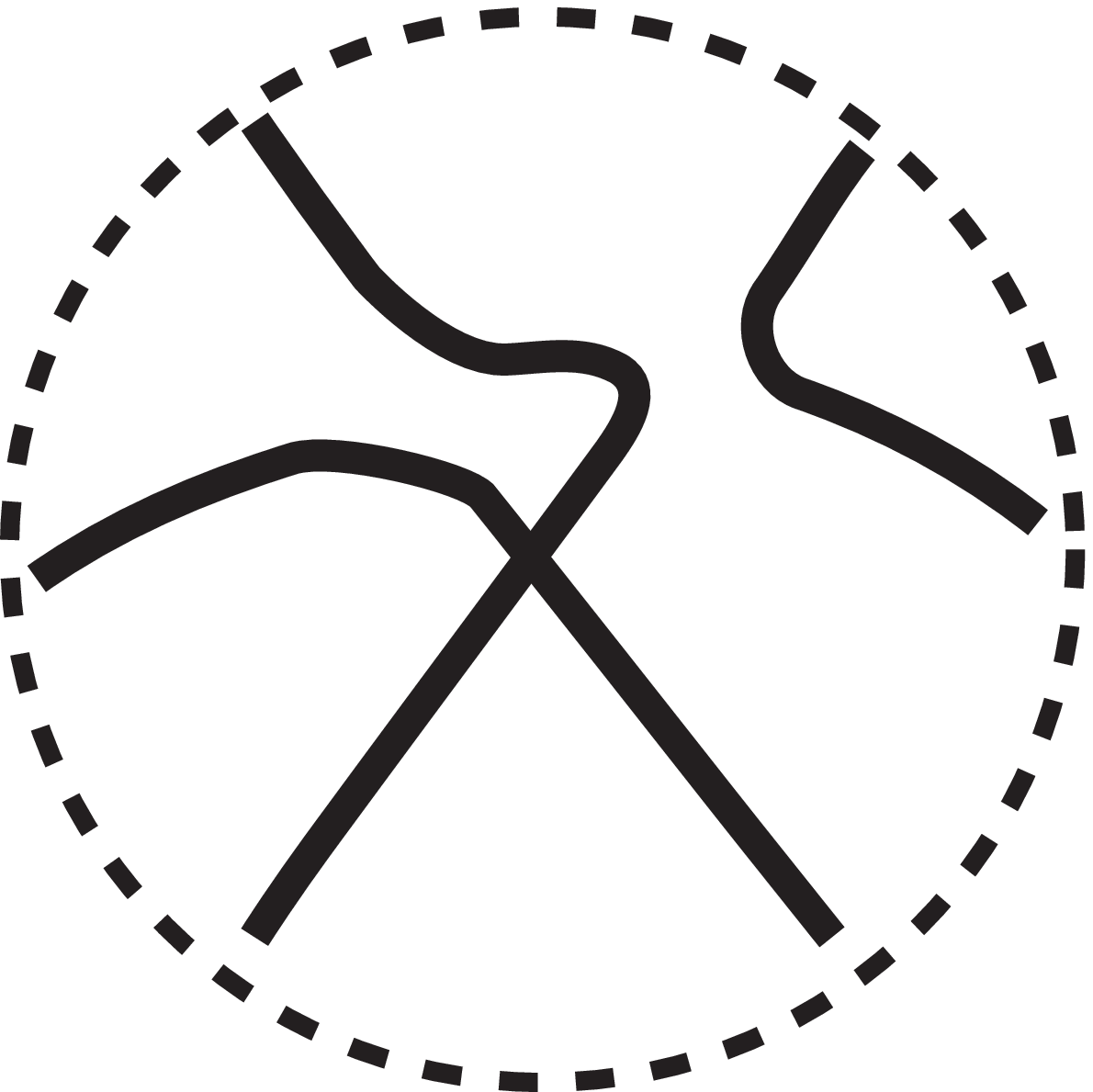}}}
\newcommand{\thirdfitsecotht}{\raisebox{-0.35\height}{\includegraphics[width=0.7cm]{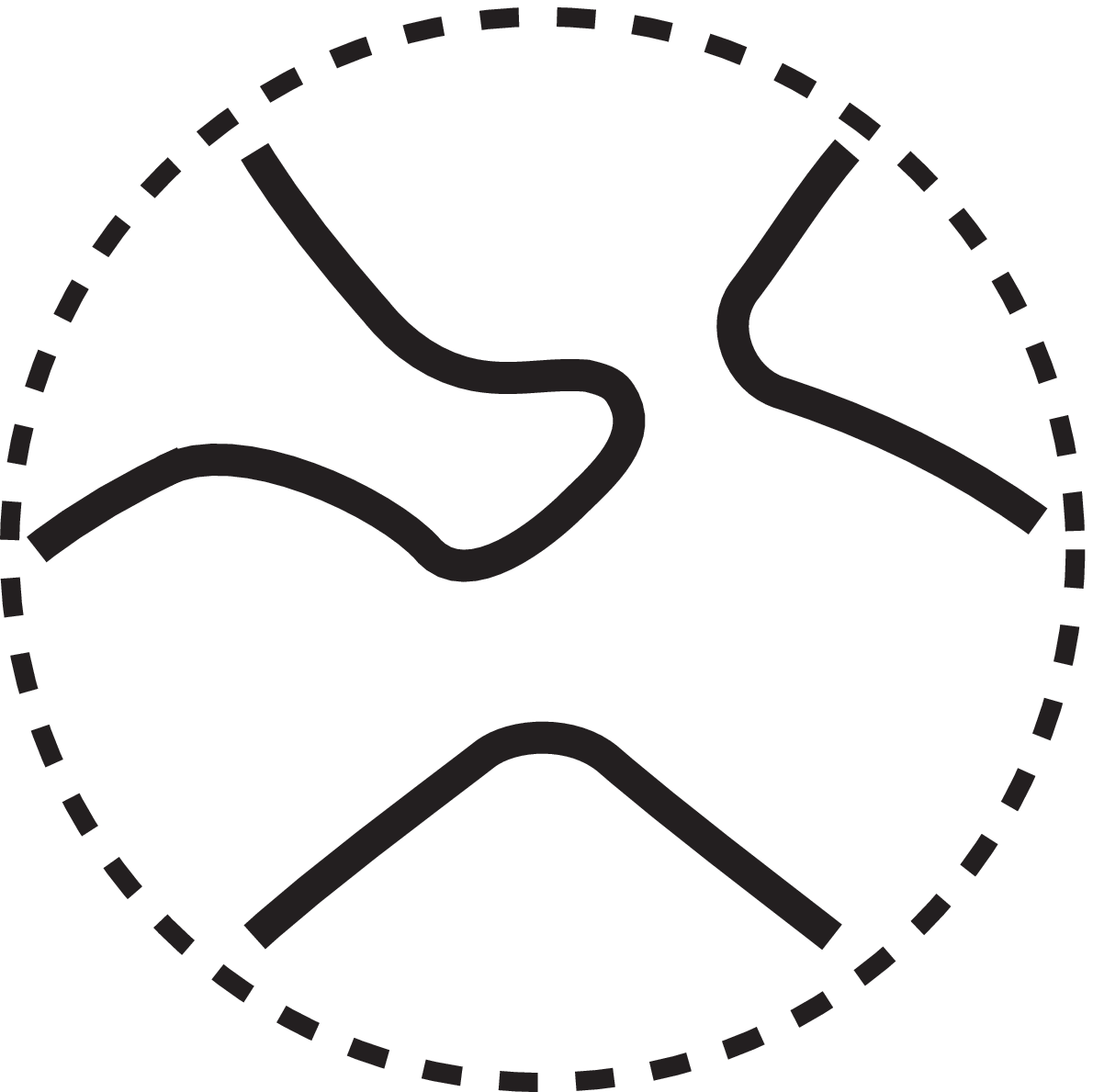}}}
\newcommand{\thirdfitsectthn}{\raisebox{-0.35\height}{\includegraphics[width=0.7cm]{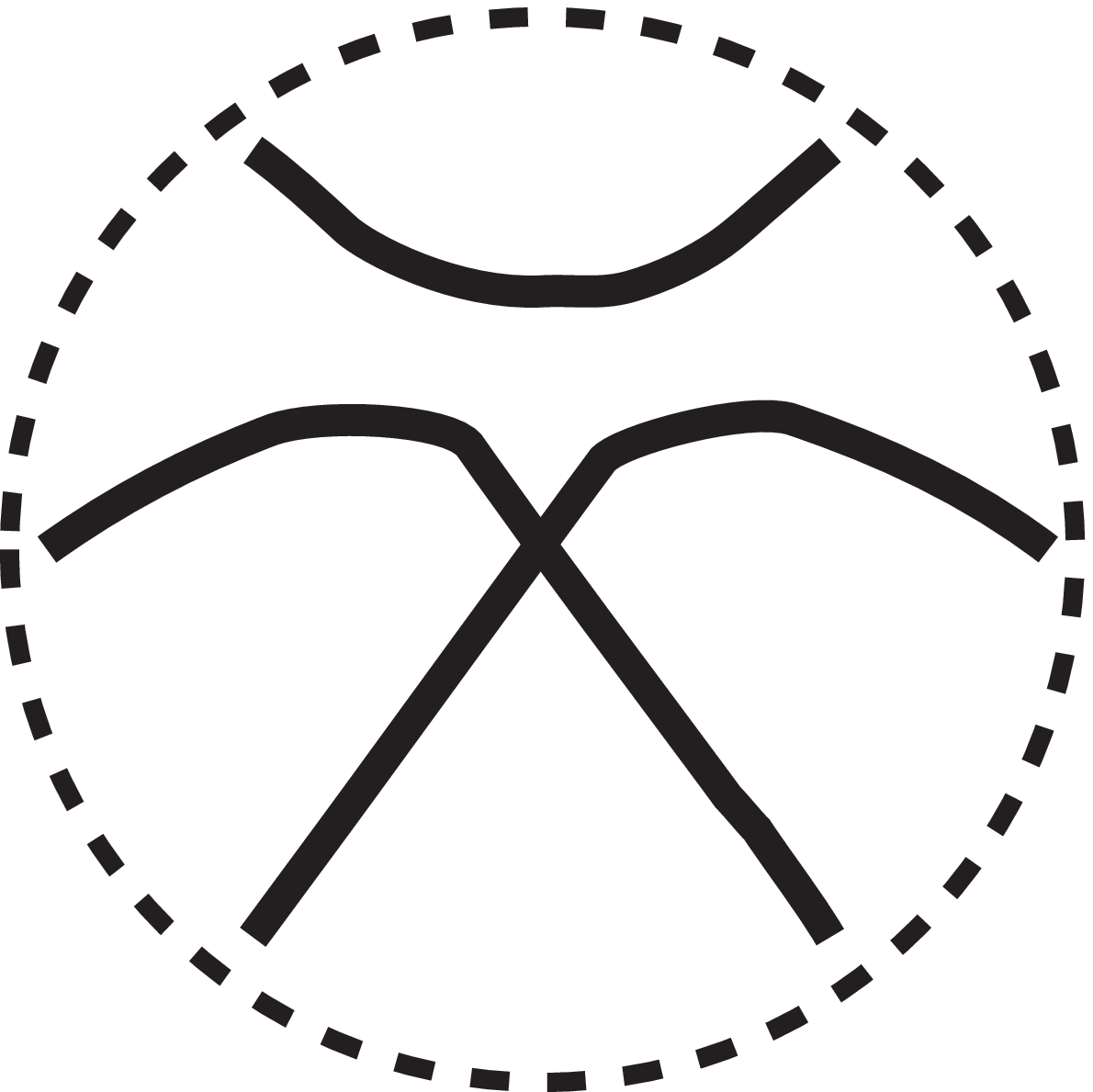}}}
\newcommand{\thirdfiosecntho}{\raisebox{-0.35\height}{\includegraphics[width=0.7cm]{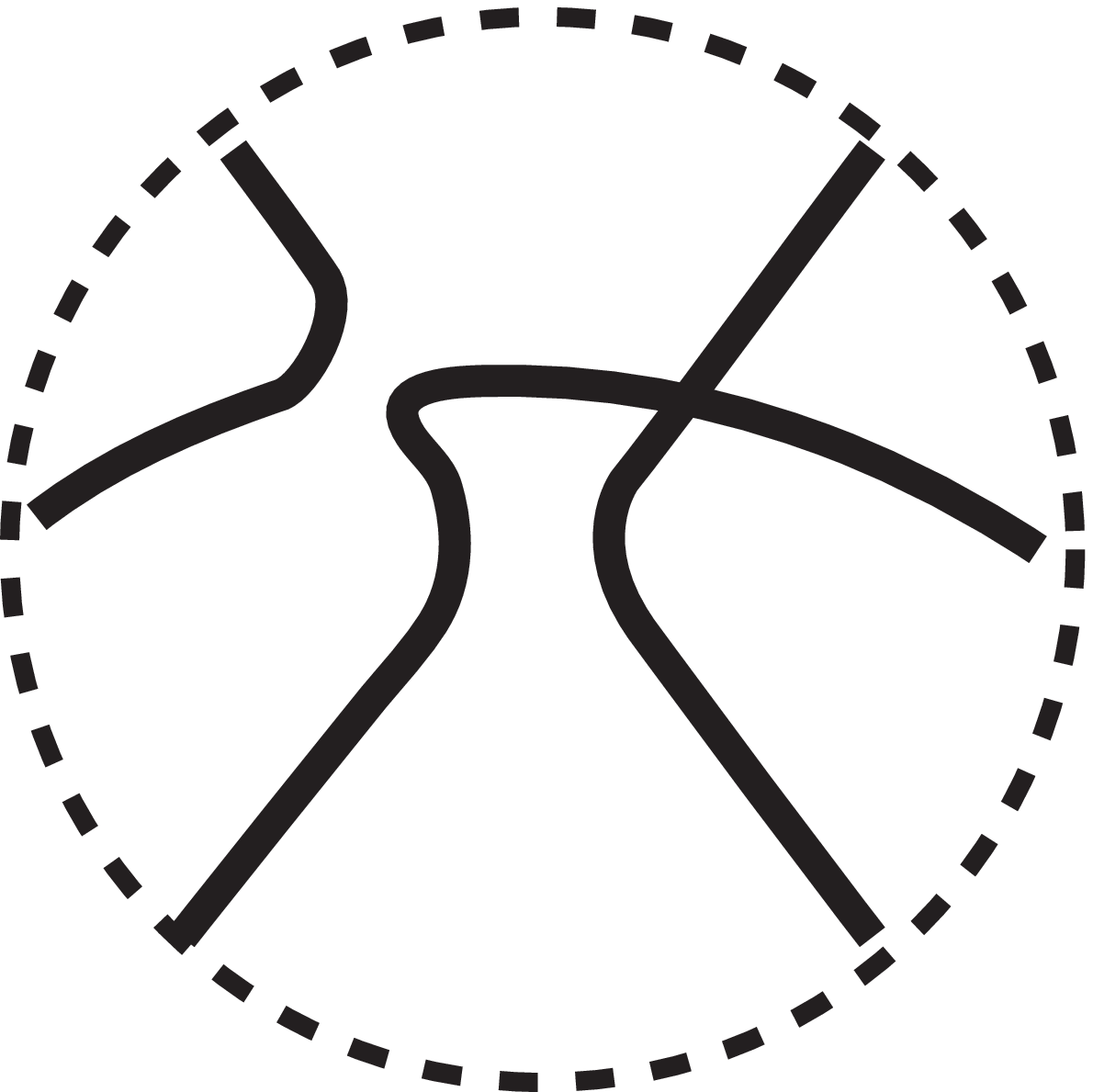}}}
\newcommand{\thirdfitsecntho}{\raisebox{-0.35\height}{\includegraphics[width=0.7cm]{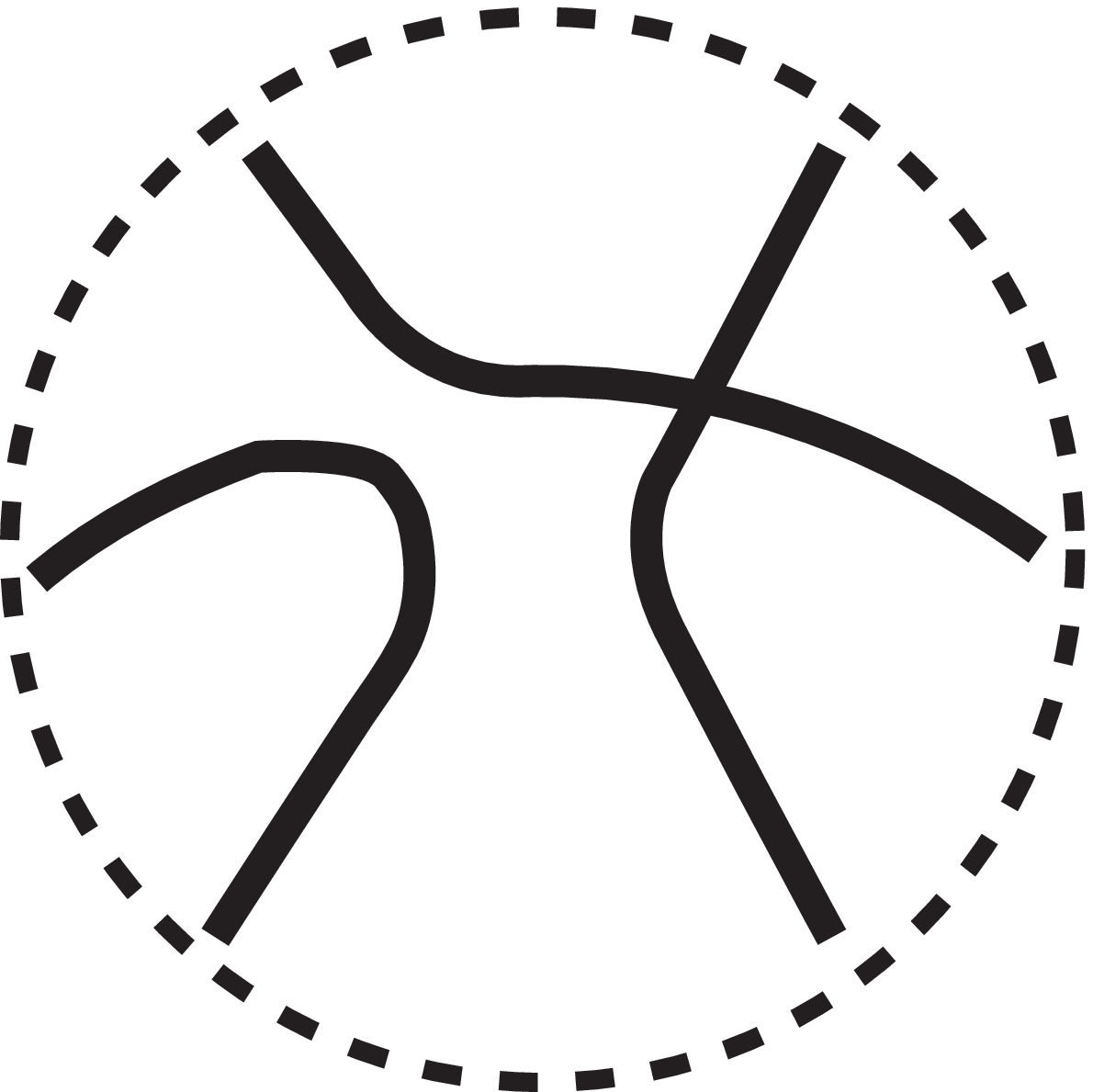}}}
\newcommand{\thirdfinsecotho}{\raisebox{-0.35\height}{\includegraphics[width=0.7cm]{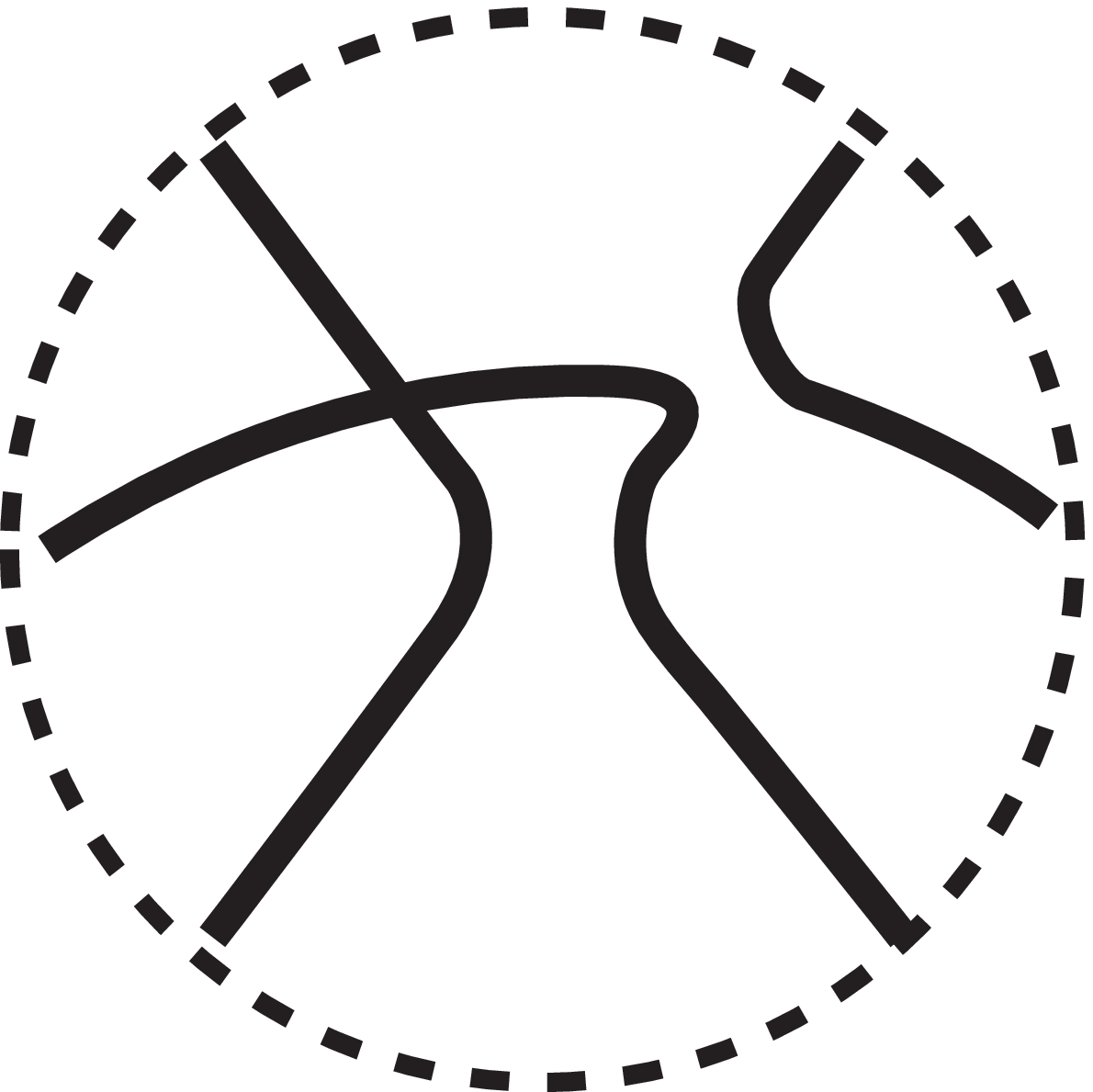}}}
\newcommand{\thirdfinsecttho}{\raisebox{-0.35\height}{\includegraphics[width=0.7cm]{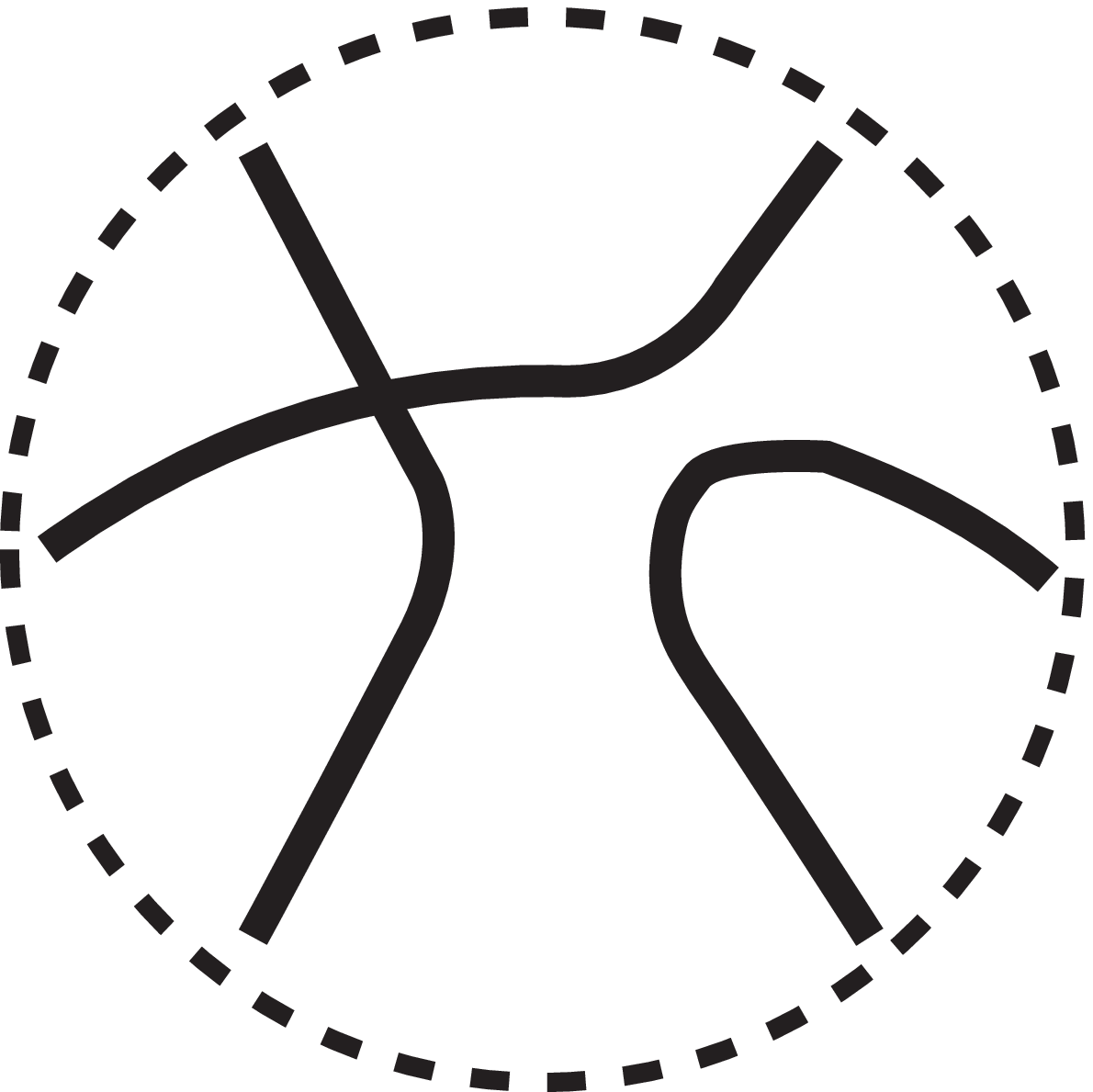}}}
\newcommand{\thirdfitsecntht}{\raisebox{-0.35\height}{\includegraphics[width=0.7cm]{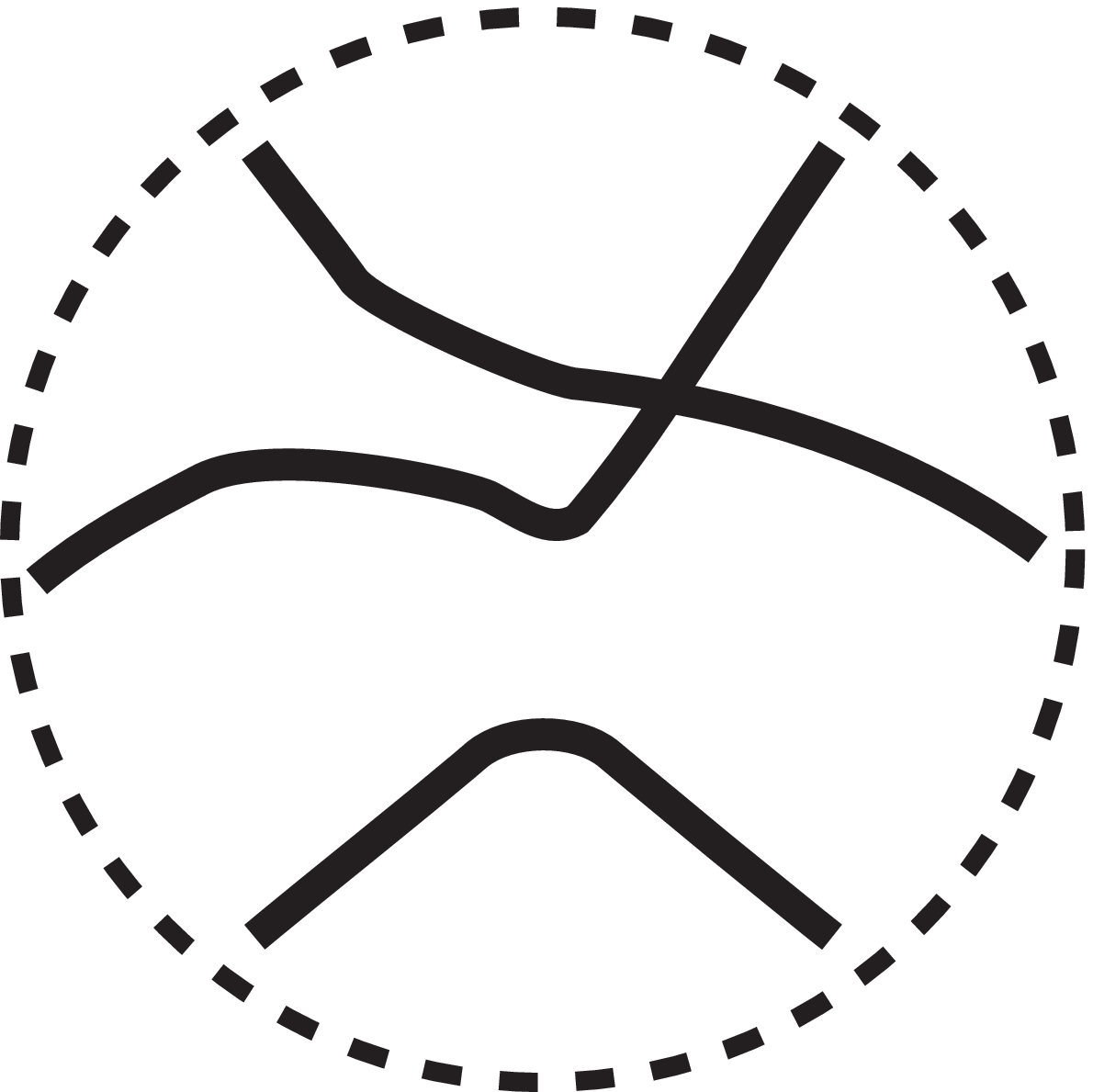}}}
\newcommand{\thirdfinsecttht}{\raisebox{-0.35\height}{\includegraphics[width=0.7cm]{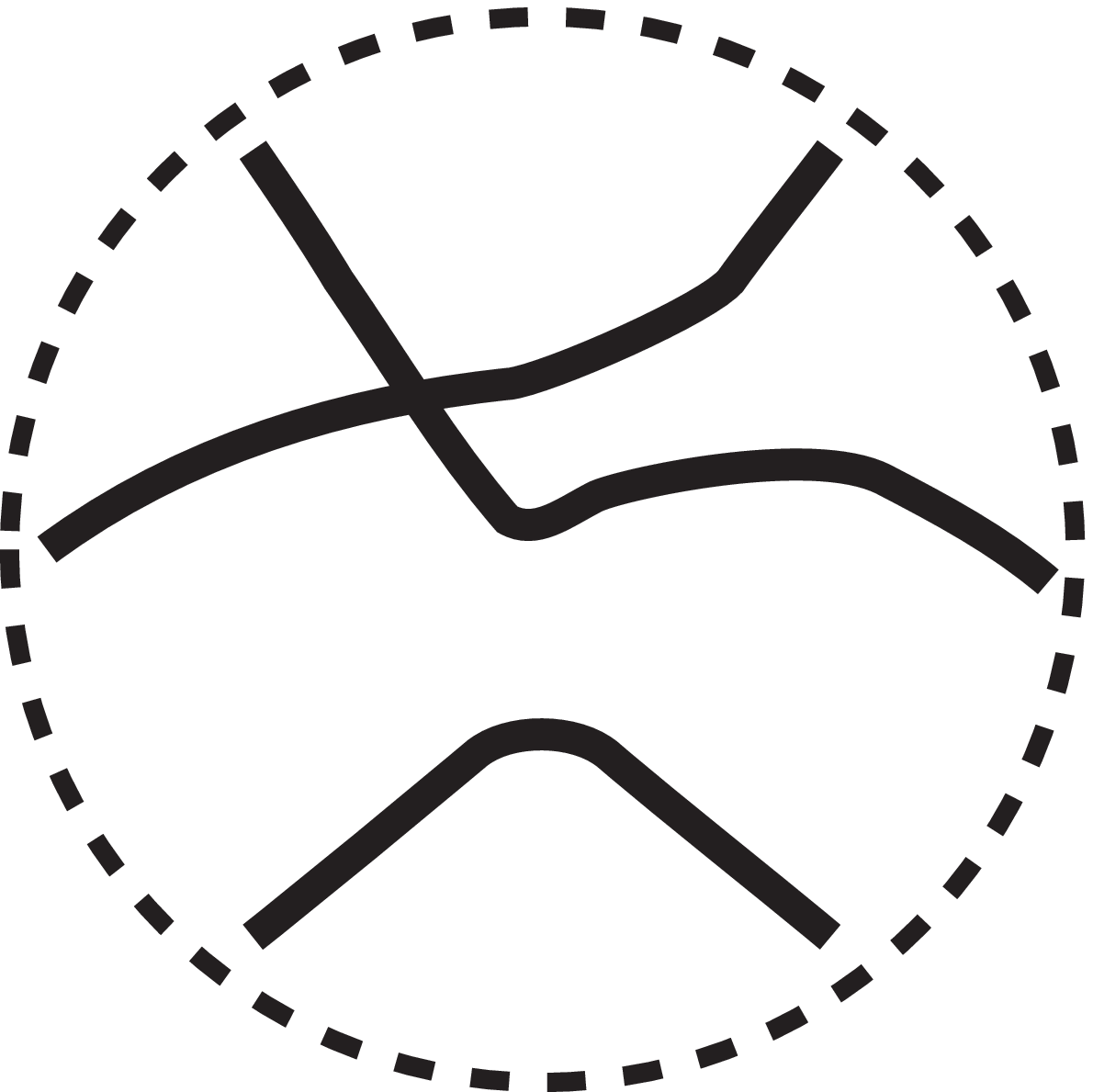}}}
\newcommand{\thirdfiosecntht}{\raisebox{-0.35\height}{\includegraphics[width=0.7cm]{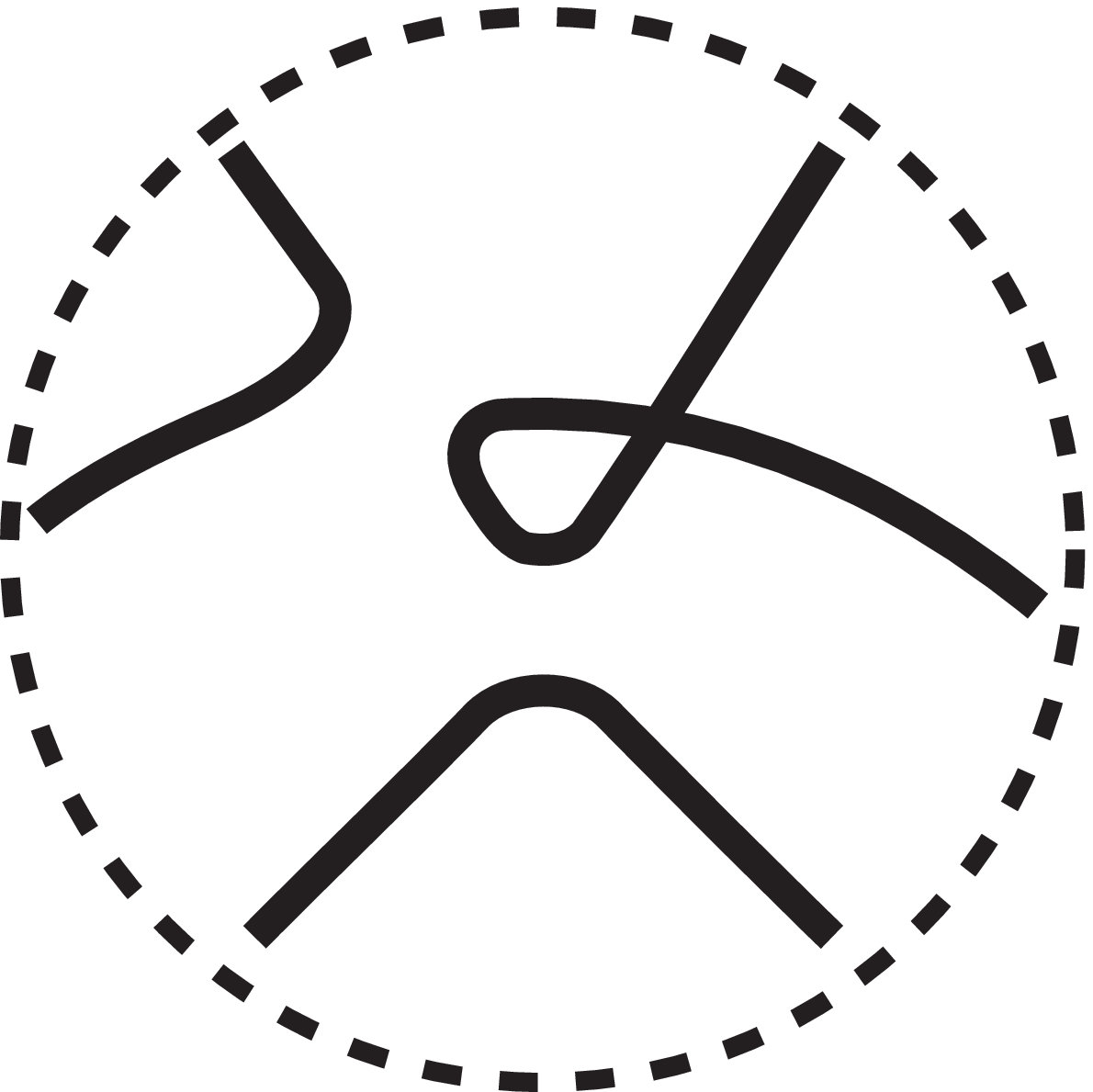}}}
\newcommand{\thirdfinsecotht}{\raisebox{-0.35\height}{\includegraphics[width=0.7cm]{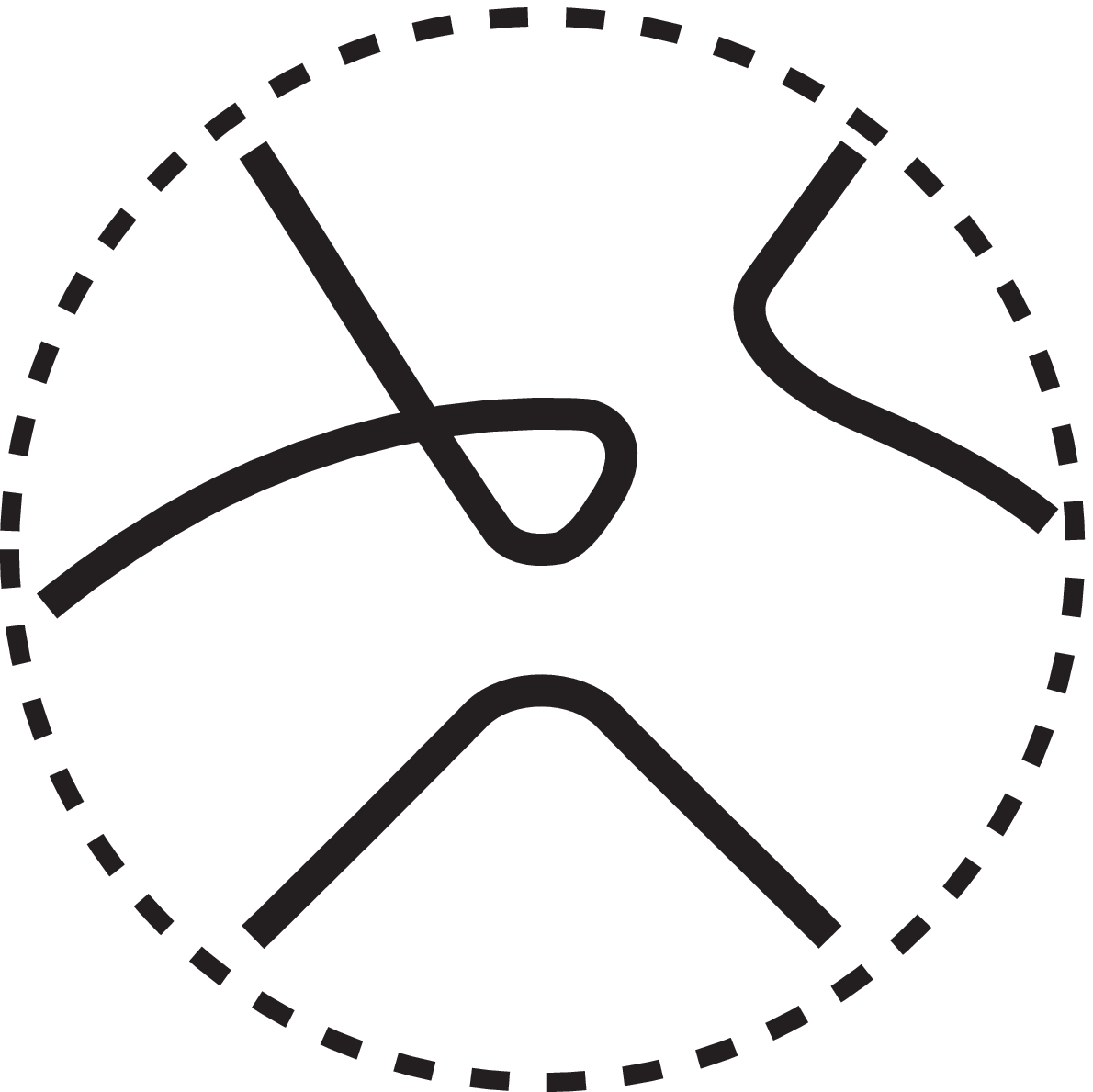}}}
\newcommand{\thirdfiosecotho}{\raisebox{-0.35\height}{\includegraphics[width=0.7cm]{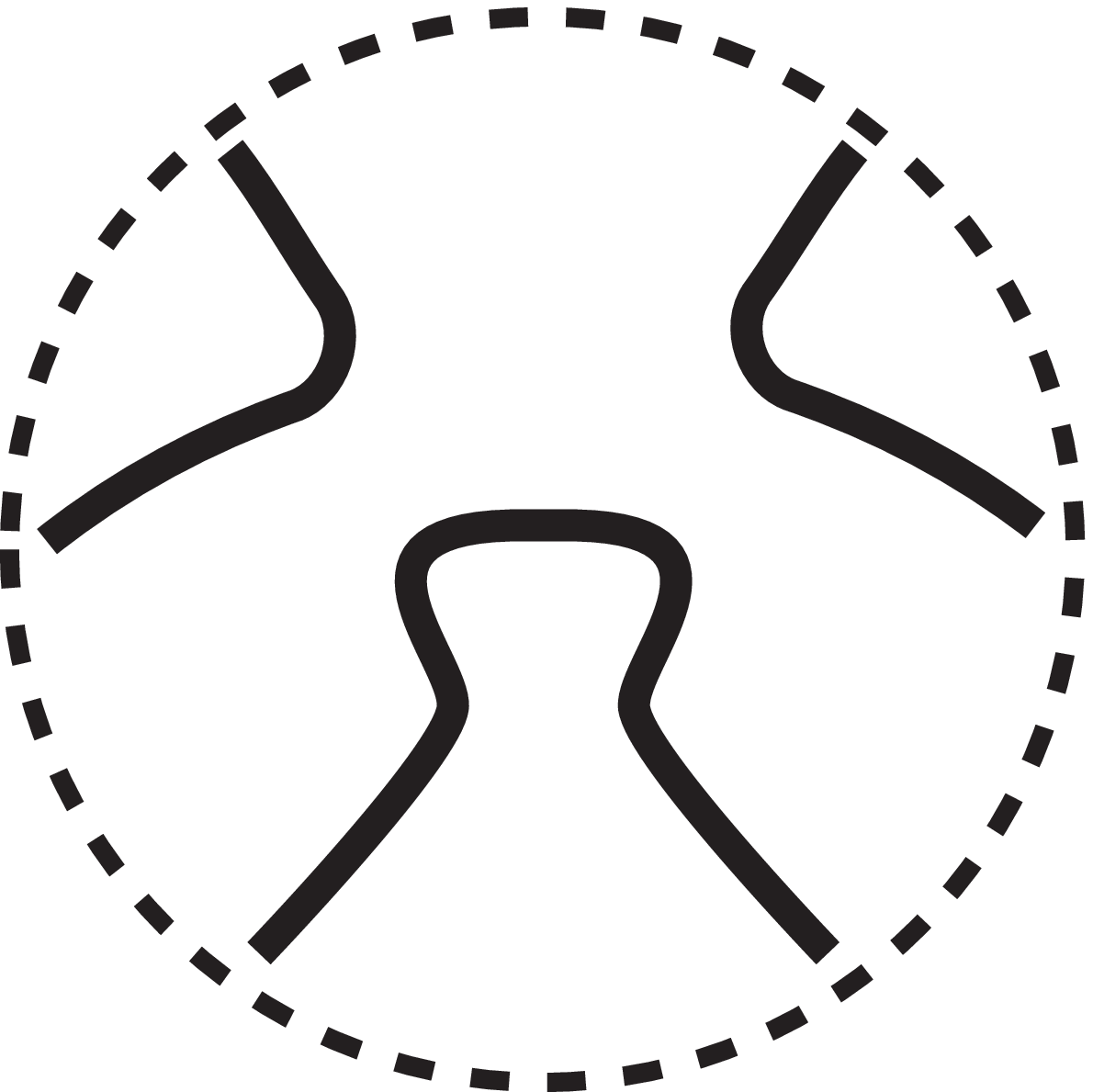}}}
\newcommand{\thirdfitsecttht}{\raisebox{-0.35\height}{\includegraphics[width=0.7cm]{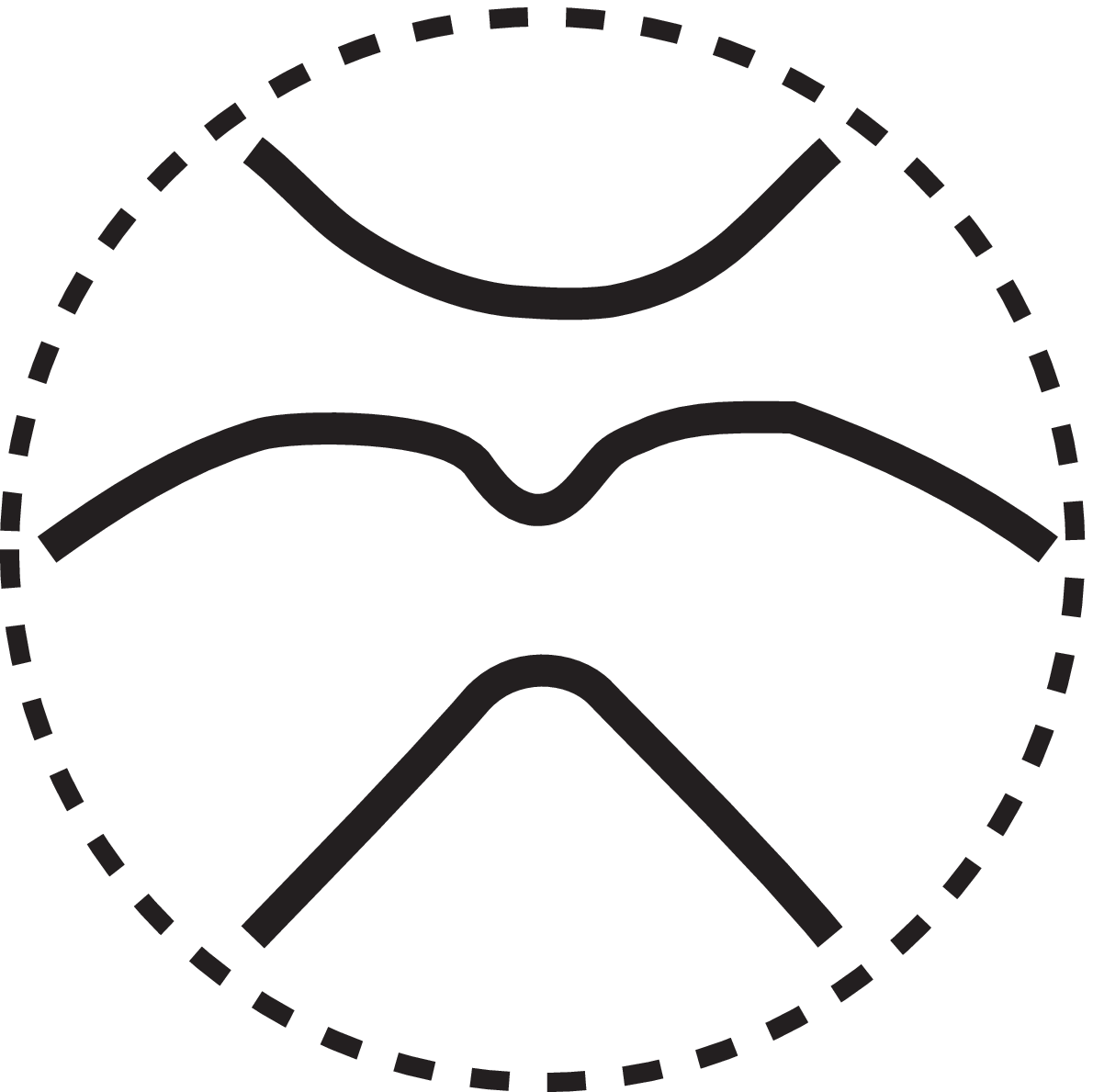}}}
\newcommand{\thirdfiosecotht}{\raisebox{-0.35\height}{\includegraphics[width=0.7cm]{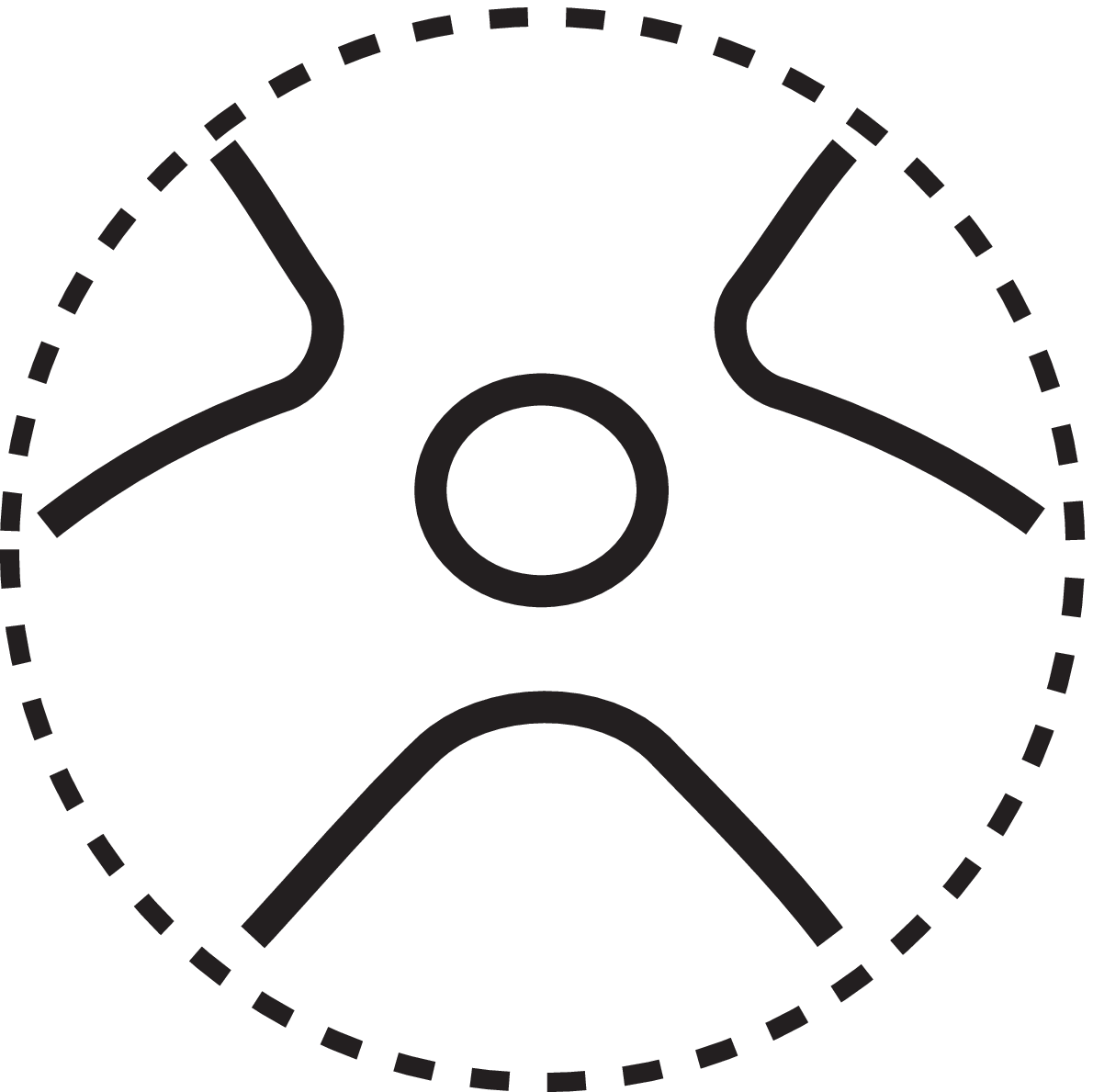}}}
\newcommand{\thirdfitsecttho}{\raisebox{-0.35\height}{\includegraphics[width=0.7cm]{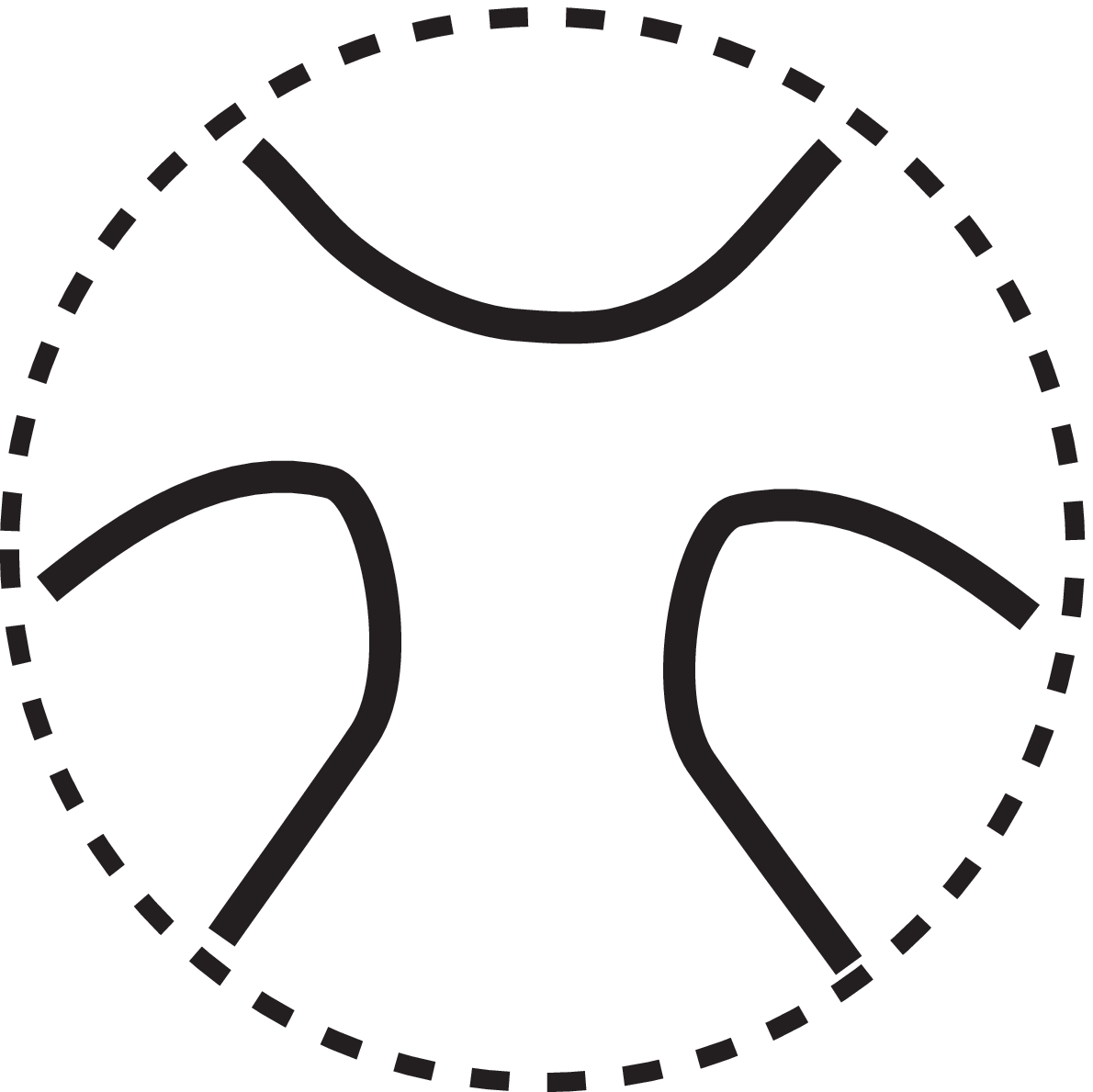}}}
\newcommand{\thirdfitsecotho}{\raisebox{-0.35\height}{\includegraphics[width=0.7cm]{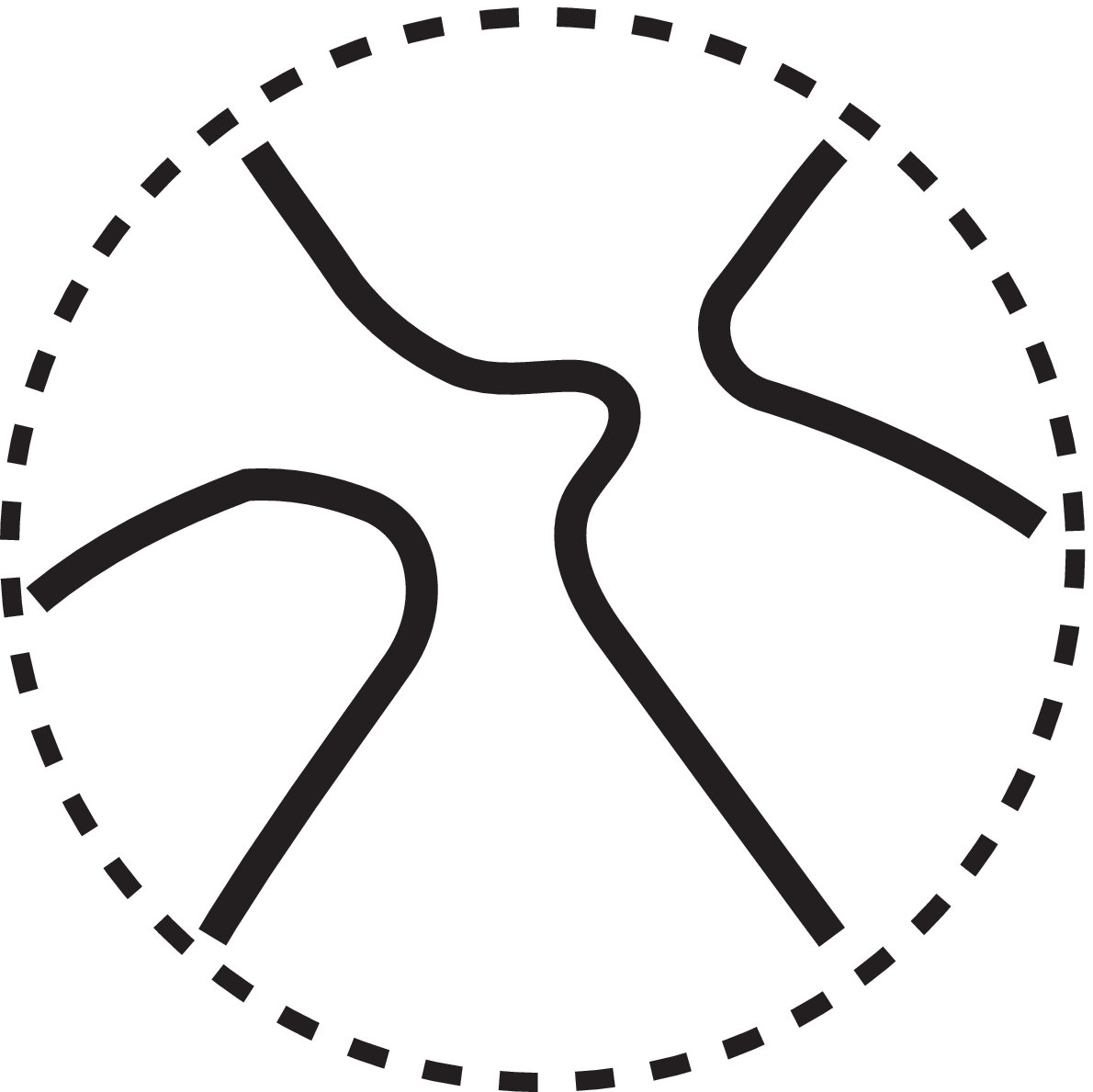}}}
\newcommand{\thirdnpr}{\raisebox{-0.35\height}{\includegraphics[width=0.7cm]{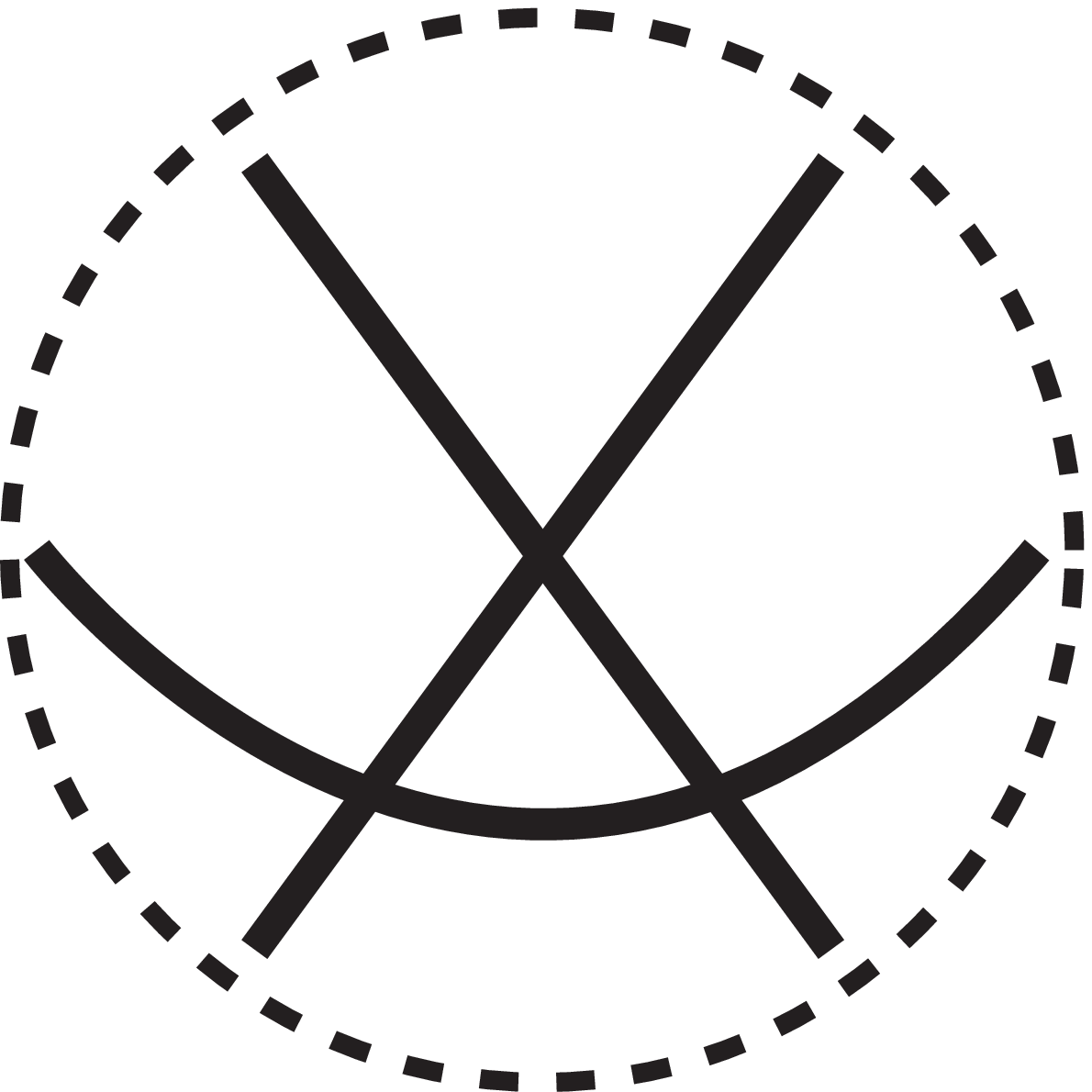}}}
\newcommand{\thirdfinsecnthopr}{\raisebox{-0.35\height}{\includegraphics[width=0.7cm]{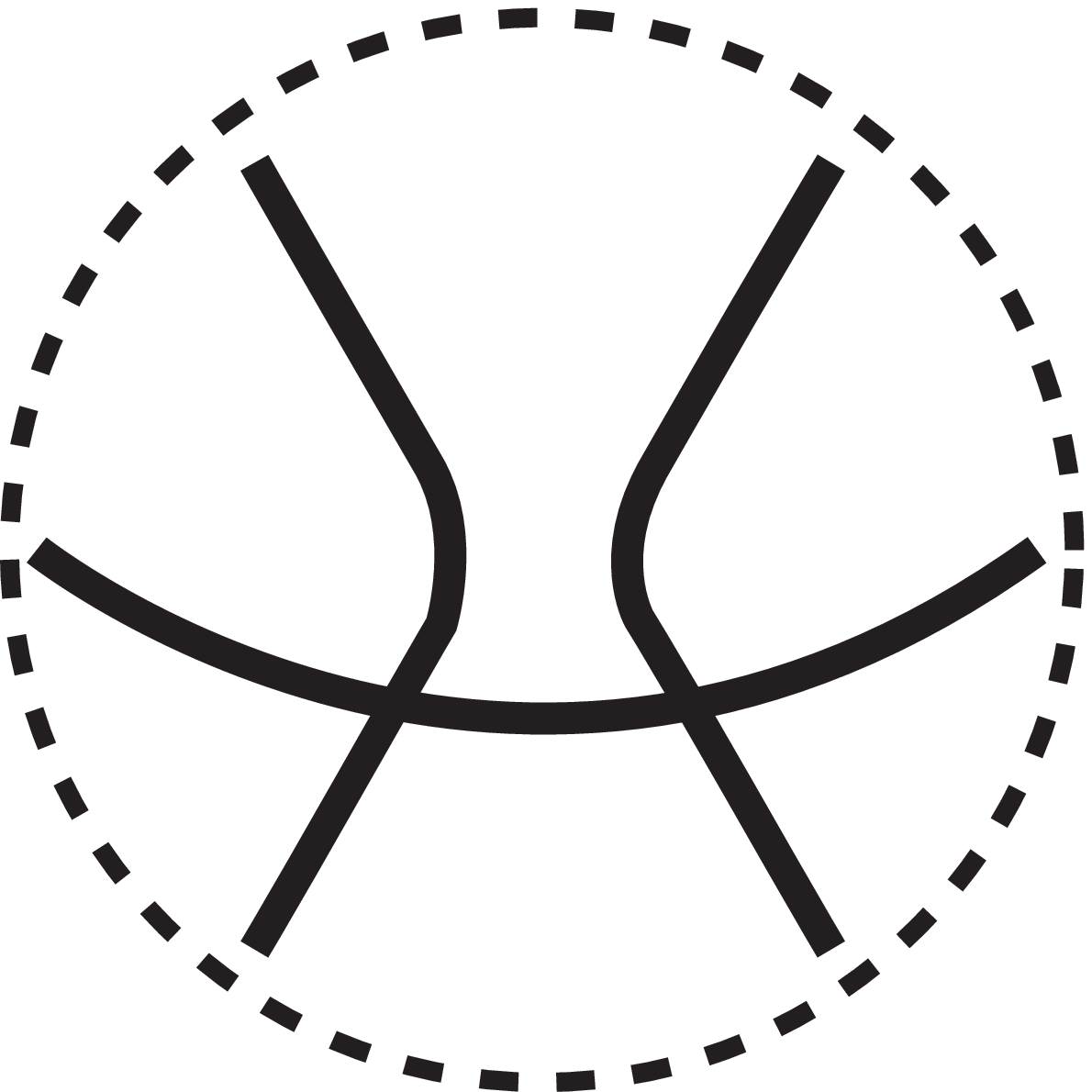}}}
\newcommand{\thirdfinsecnthtpr}{\raisebox{-0.35\height}{\includegraphics[width=0.7cm]{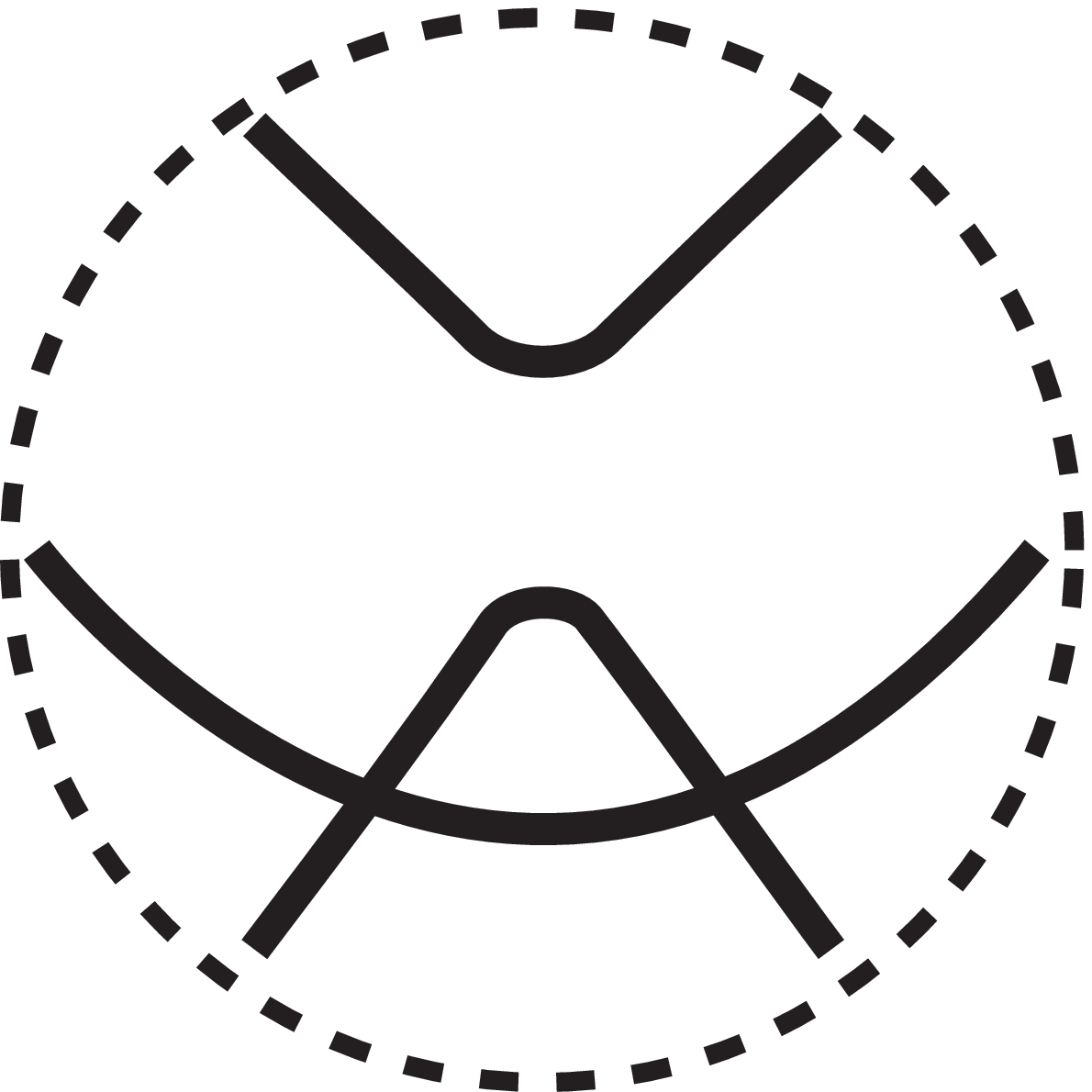}}}
\newcommand{\thirdfinsecothnpr}{\raisebox{-0.35\height}{\includegraphics[width=0.7cm]{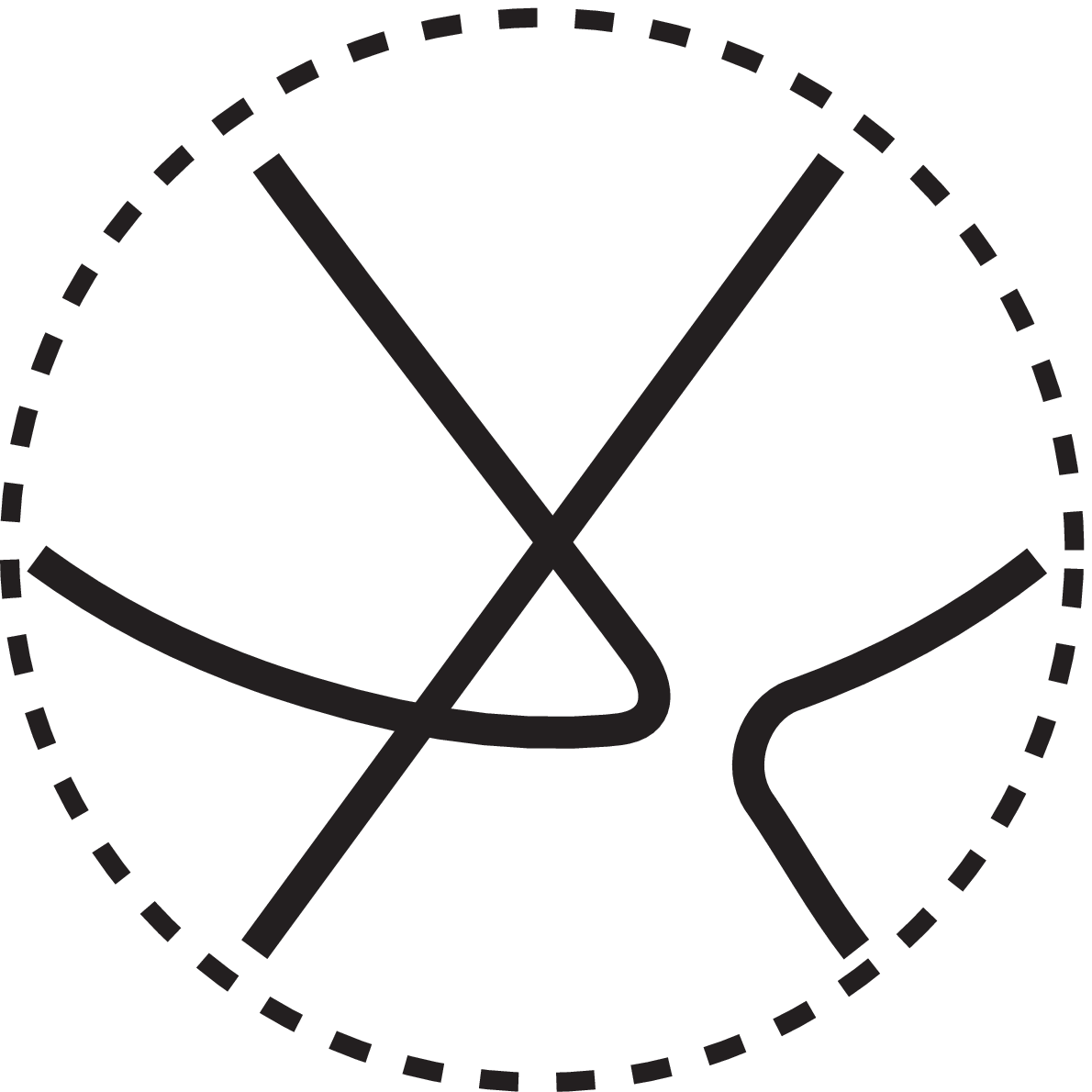}}}
\newcommand{\thirdfinsectthnpr}{\raisebox{-0.35\height}{\includegraphics[width=0.7cm]{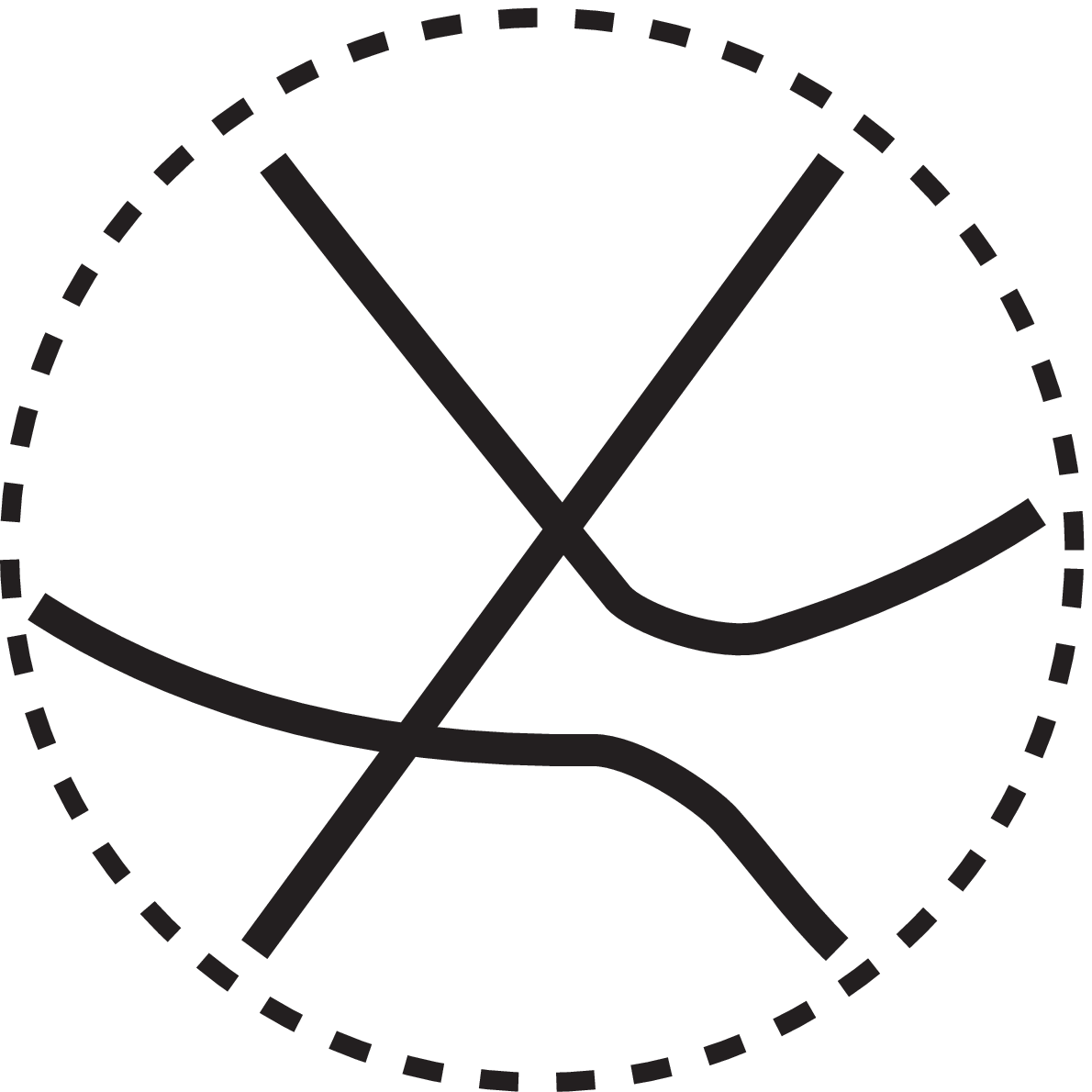}}}
\newcommand{\thirdfiosecnthnpr}{\raisebox{-0.35\height}{\includegraphics[width=0.7cm]{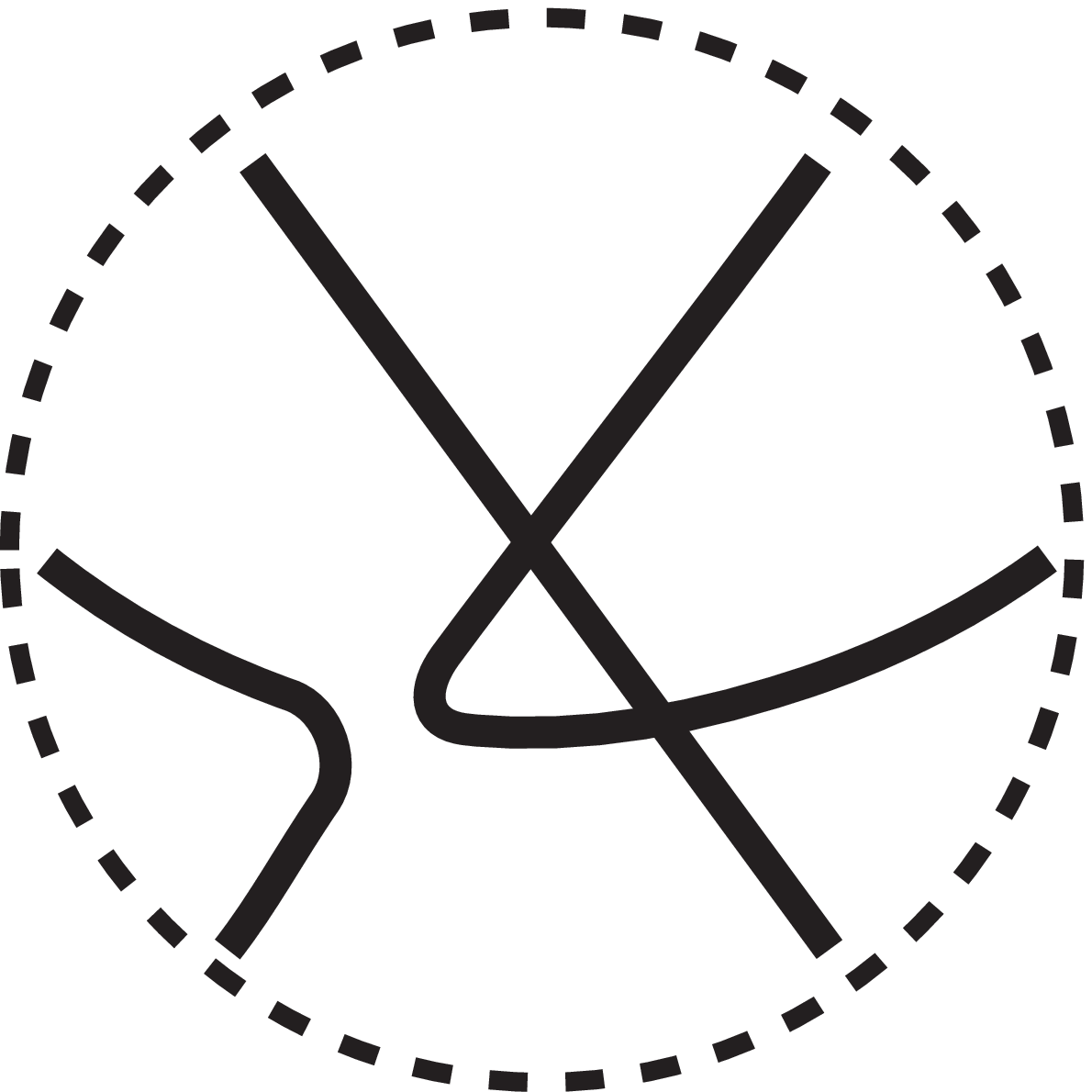}}}
\newcommand{\thirdfiosecothnpr}{\raisebox{-0.35\height}{\includegraphics[width=0.7cm]{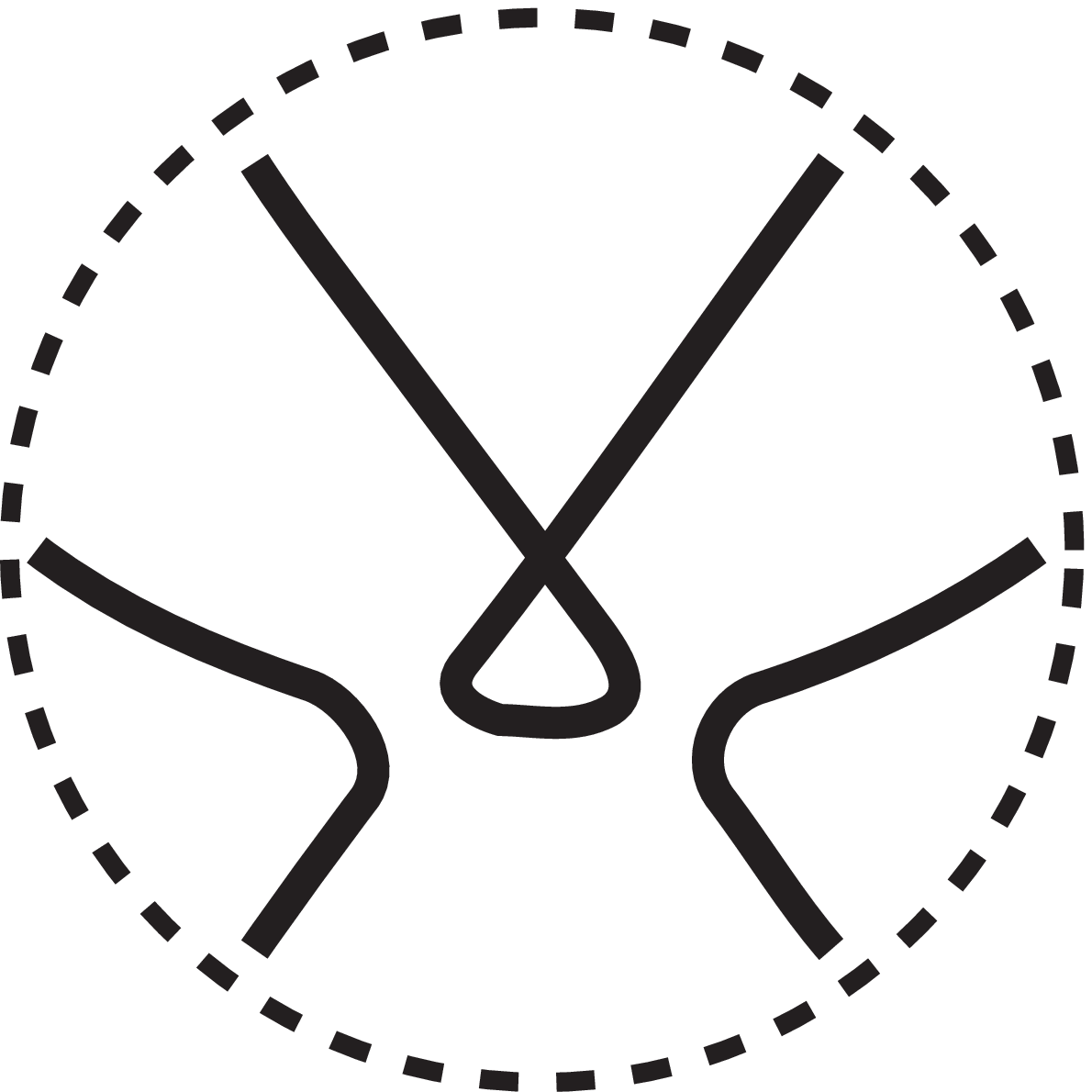}}}
\newcommand{\thirdfiosectthnpr}{\raisebox{-0.35\height}{\includegraphics[width=0.7cm]{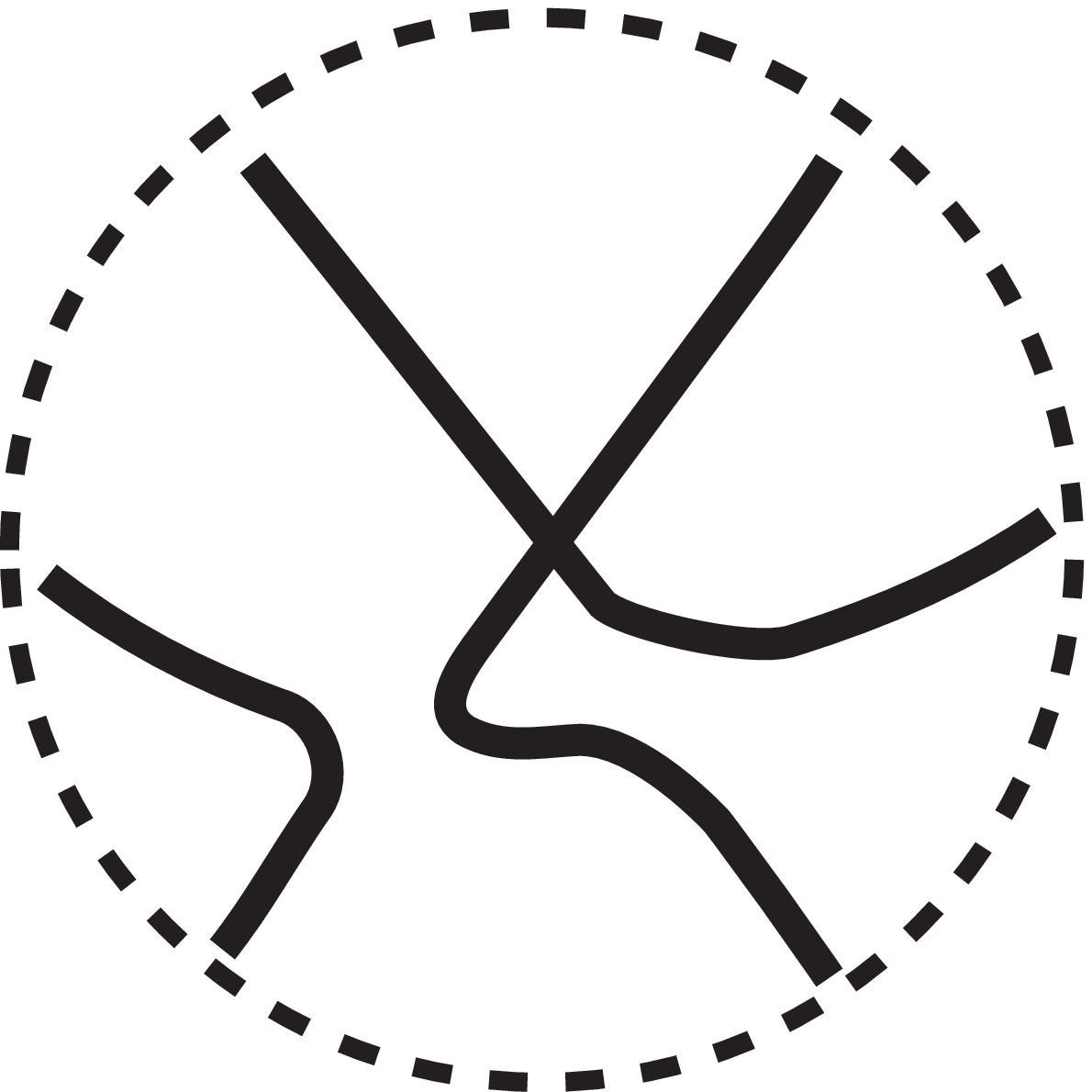}}}
\newcommand{\thirdfiosectthopr}{\raisebox{-0.35\height}{\includegraphics[width=0.7cm]{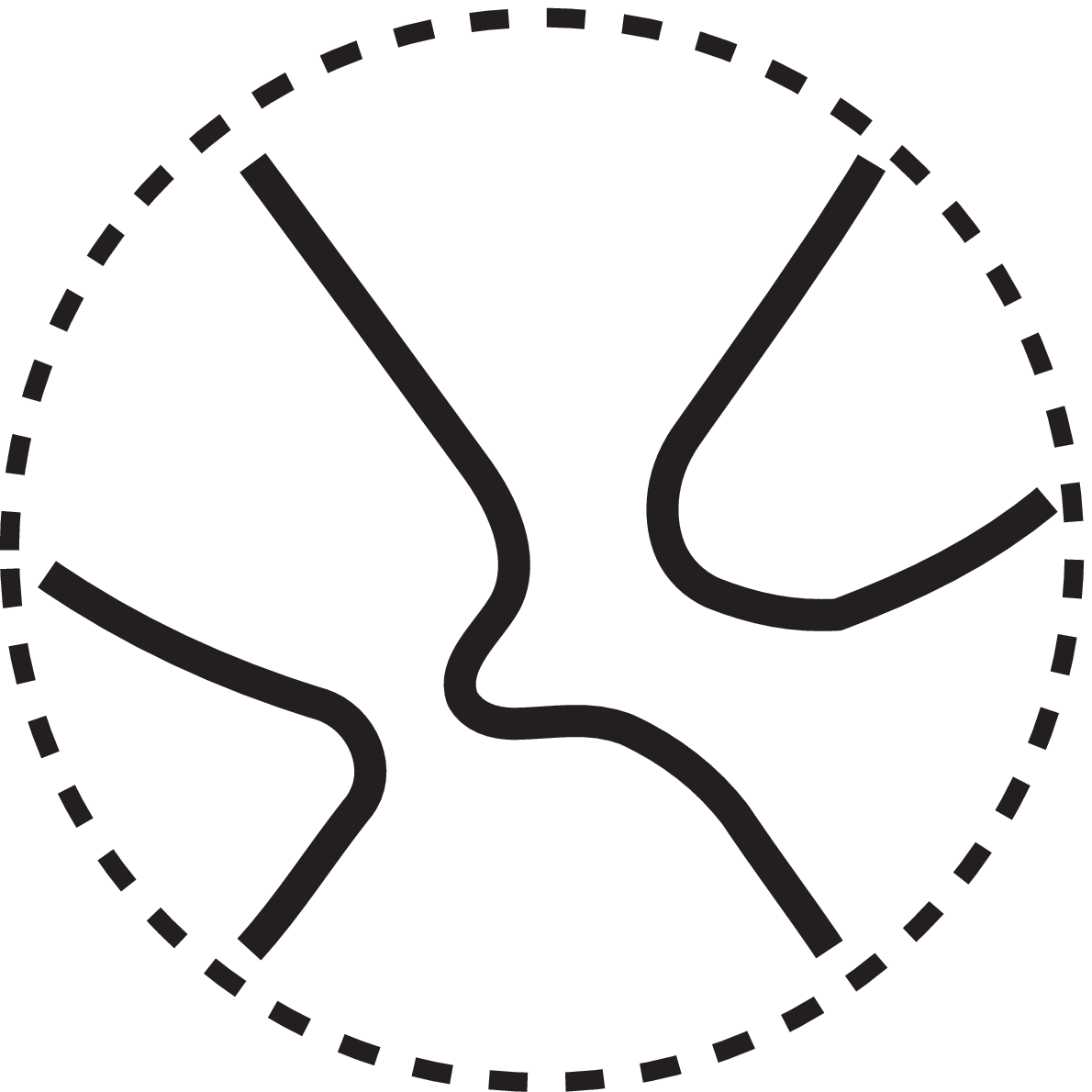}}}
\newcommand{\thirdfiosectthtpr}{\raisebox{-0.35\height}{\includegraphics[width=0.7cm]{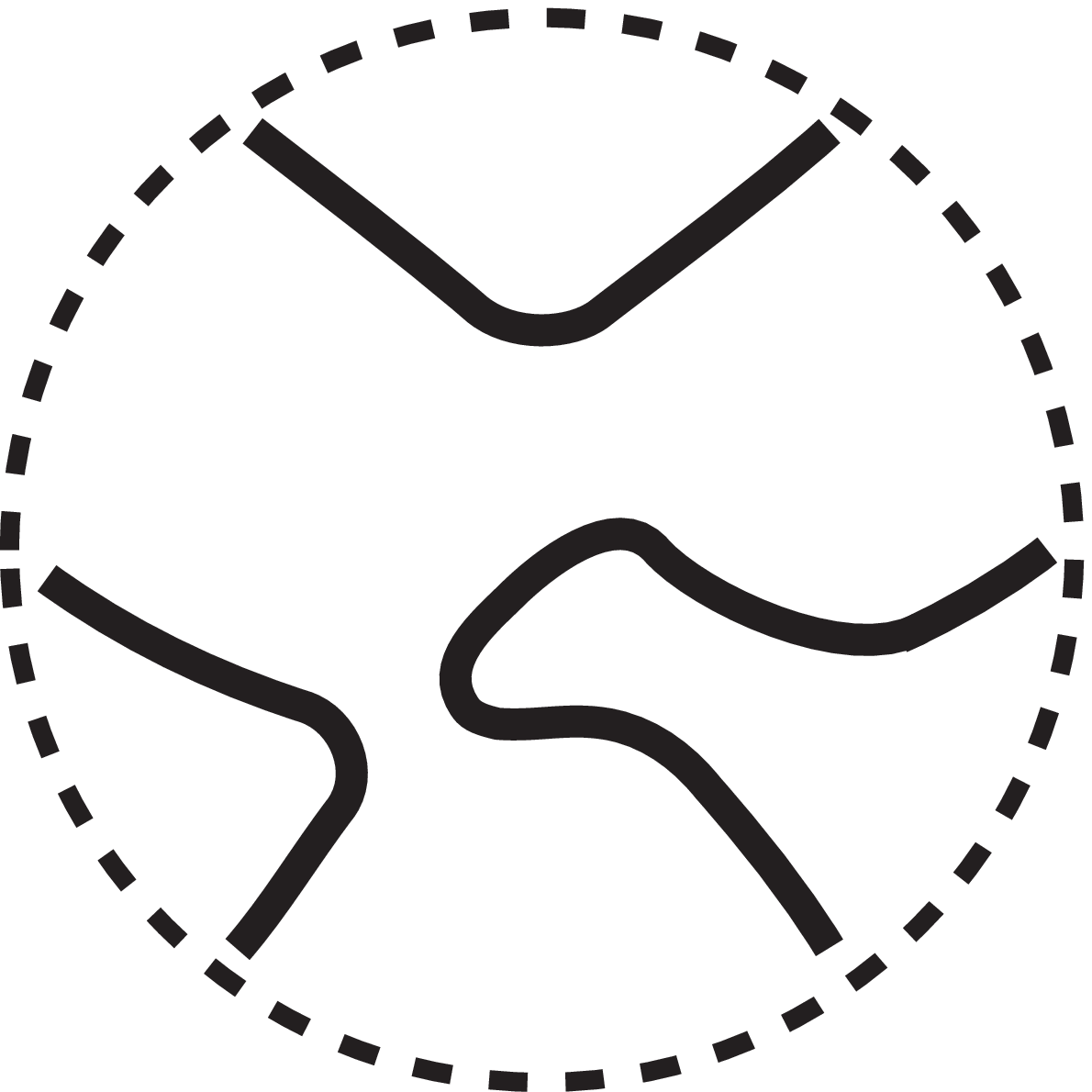}}}
\newcommand{\thirdfitsecnthnpr}{\raisebox{-0.35\height}{\includegraphics[width=0.7cm]{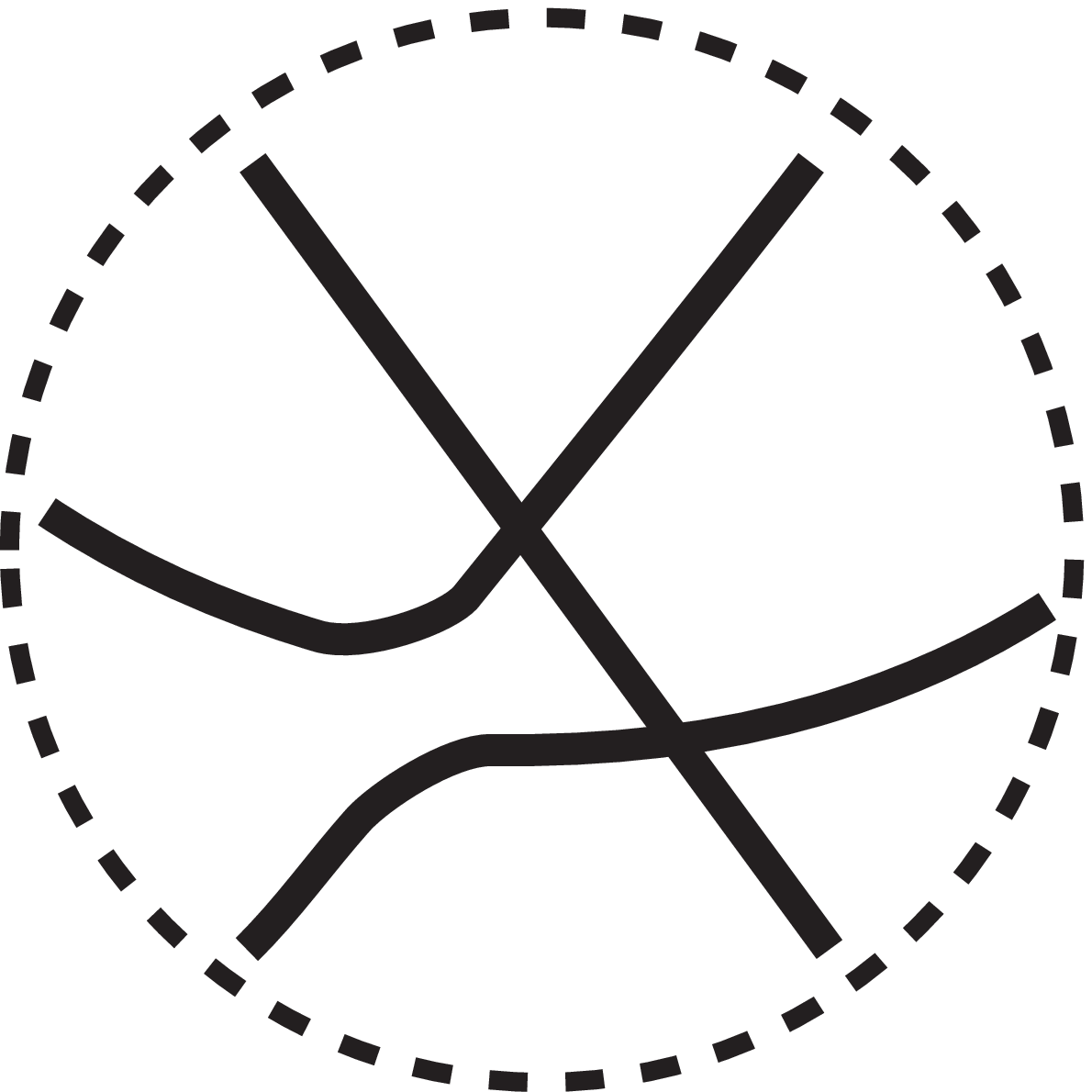}}}
\newcommand{\thirdfitsecothnpr}{\raisebox{-0.35\height}{\includegraphics[width=0.7cm]{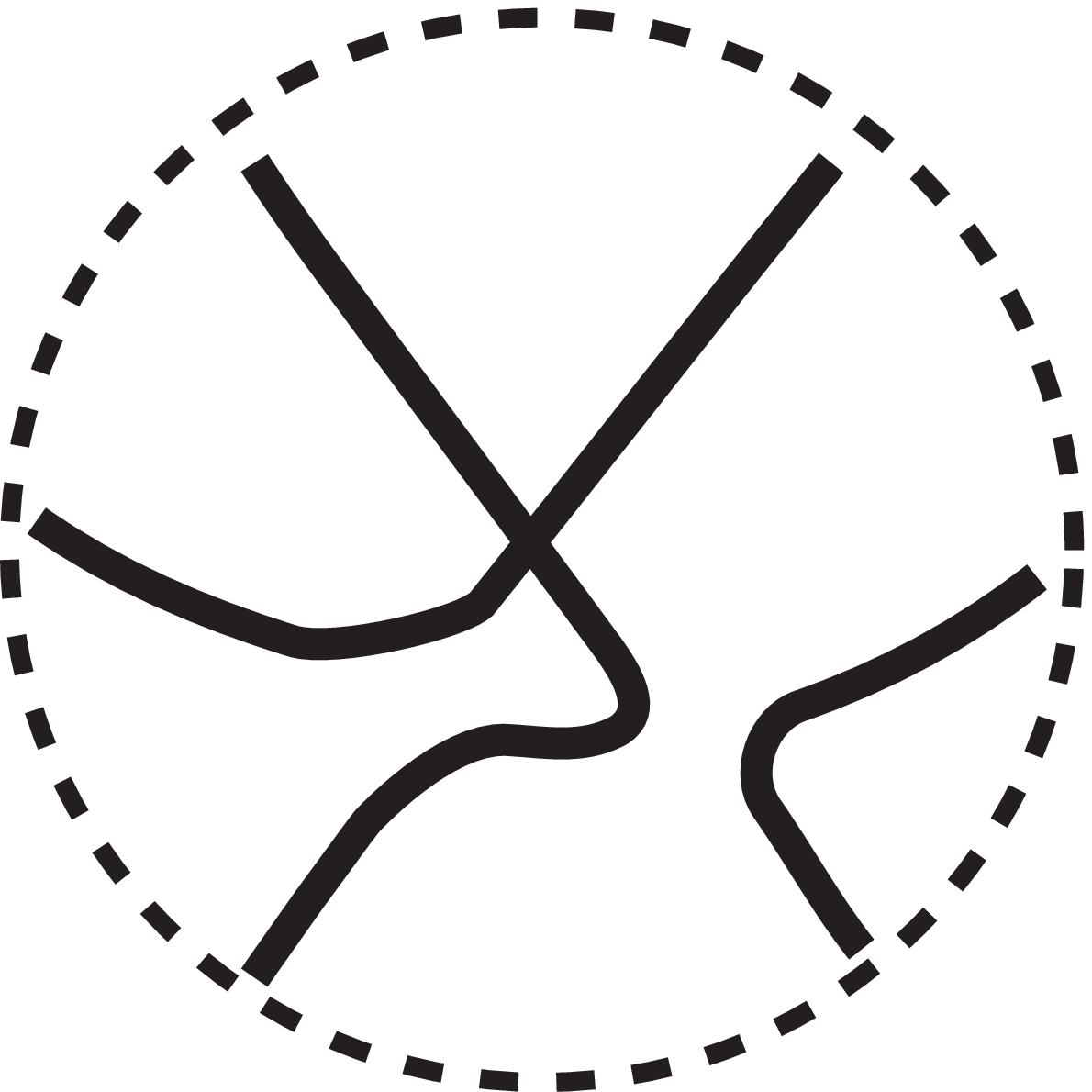}}}
\newcommand{\thirdfitsecothtpr}{\raisebox{-0.35\height}{\includegraphics[width=0.7cm]{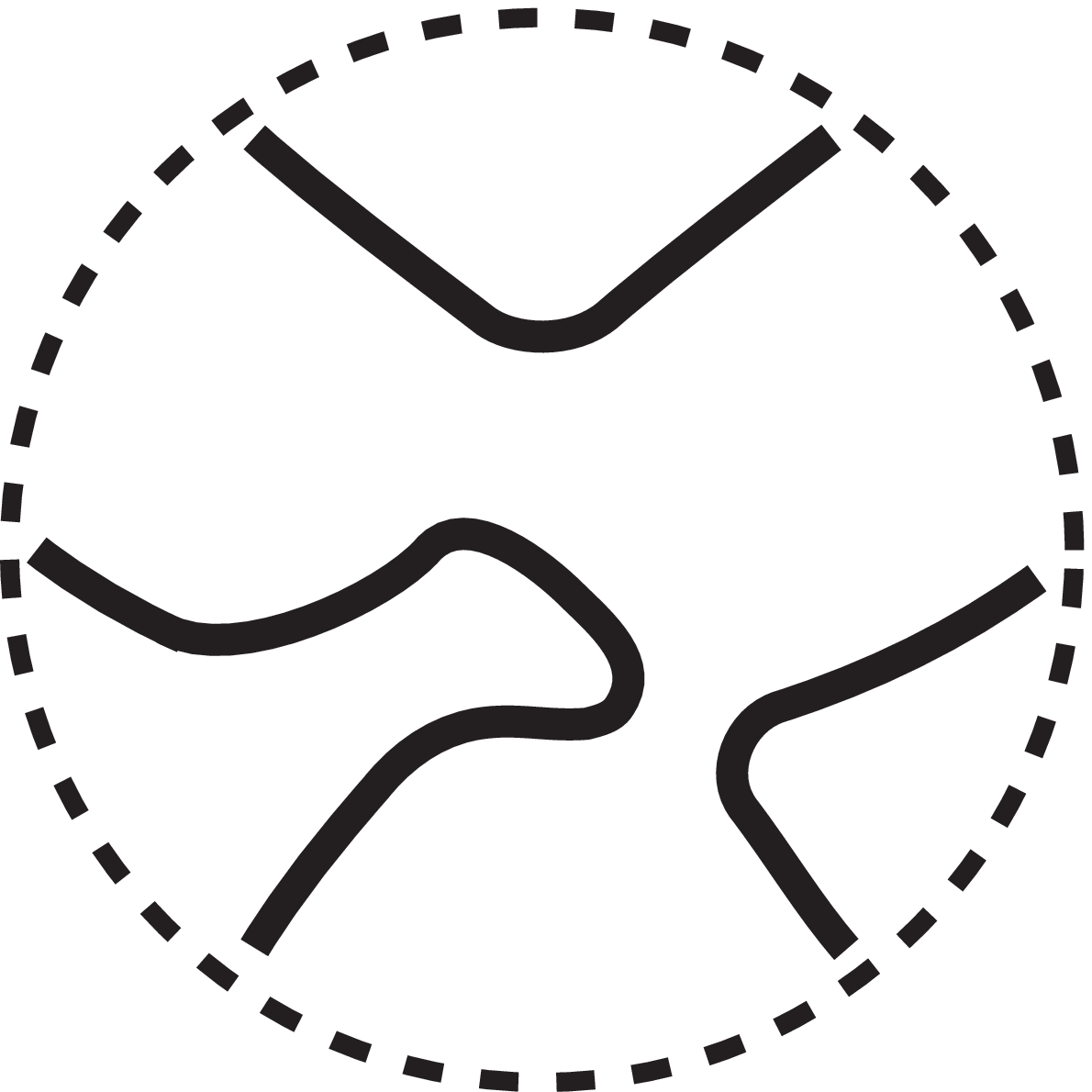}}}
\newcommand{\thirdfitsectthnpr}{\raisebox{-0.35\height}{\includegraphics[width=0.7cm]{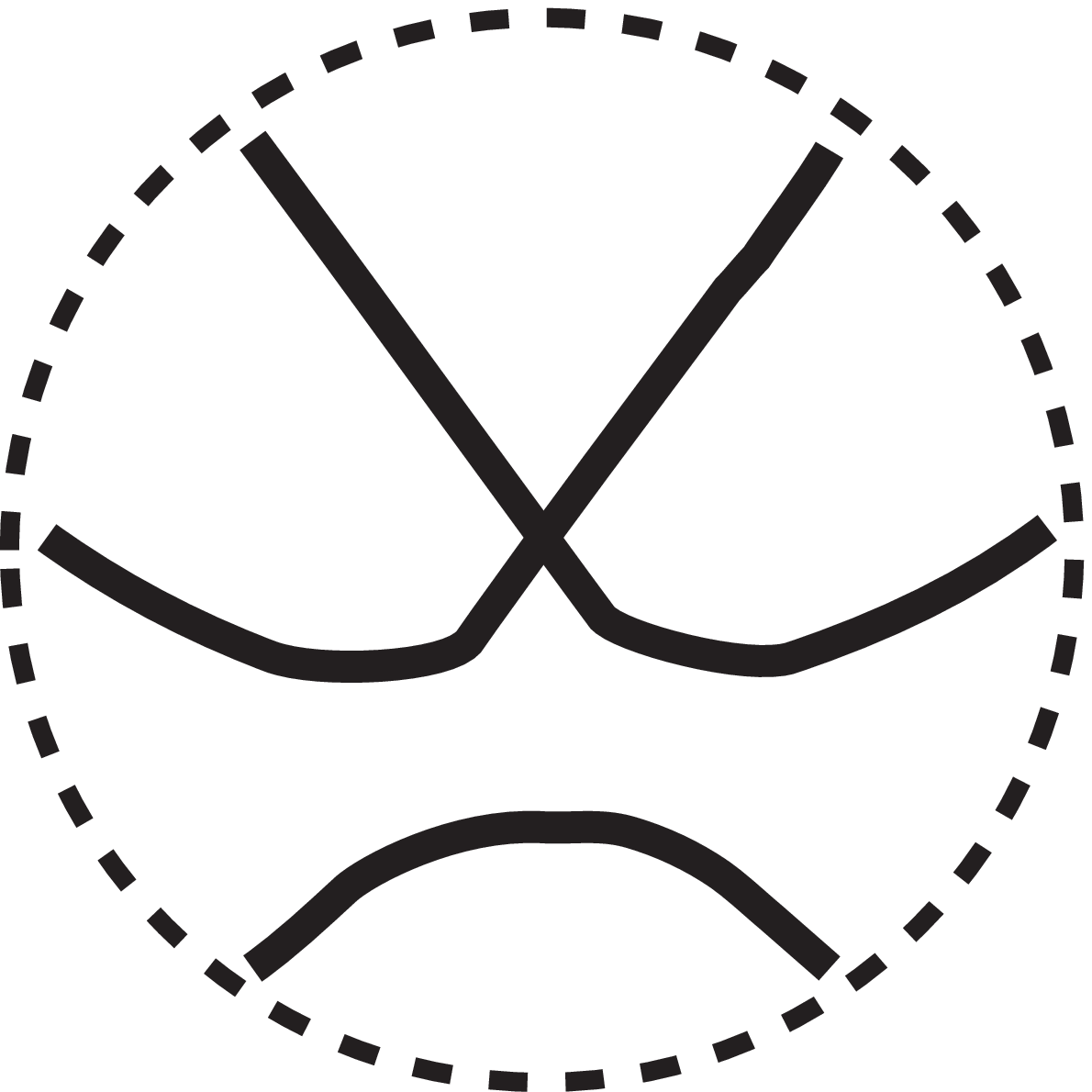}}}
\newcommand{\thirdfiosecnthopr}{\raisebox{-0.35\height}{\includegraphics[width=0.7cm]{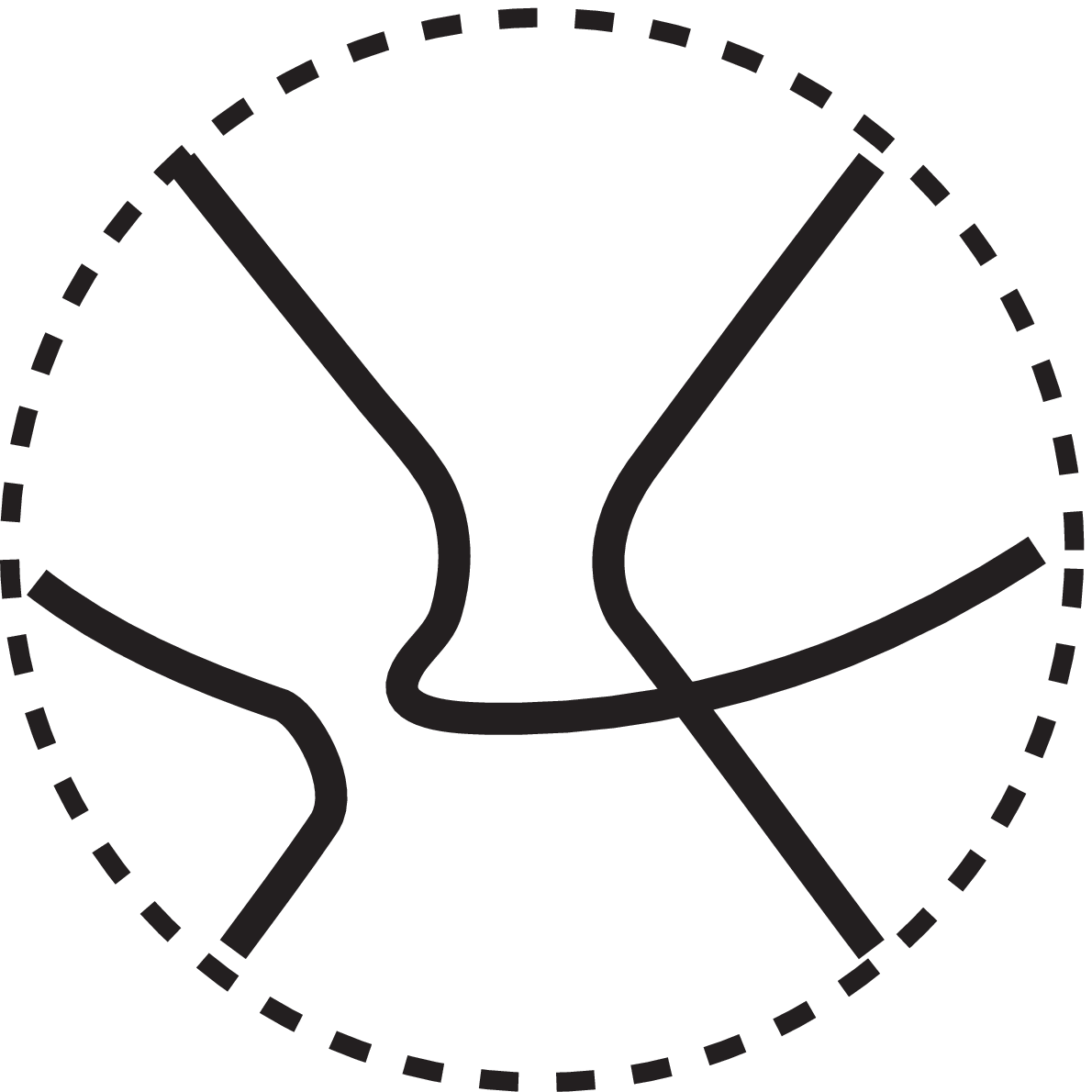}}}
\newcommand{\thirdfitsecnthopr}{\raisebox{-0.35\height}{\includegraphics[width=0.7cm]{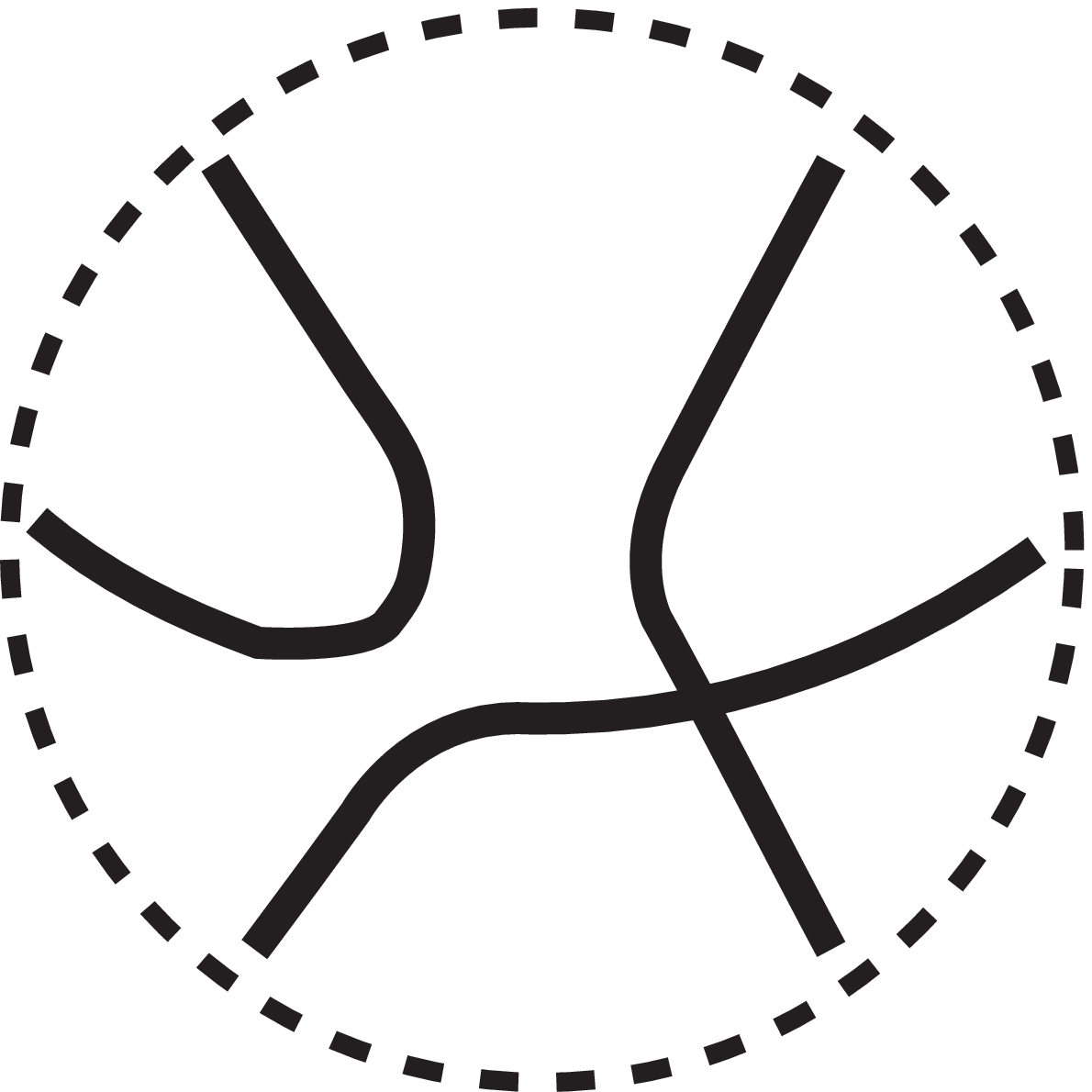}}}
\newcommand{\thirdfinsecothopr}{\raisebox{-0.35\height}{\includegraphics[width=0.7cm]{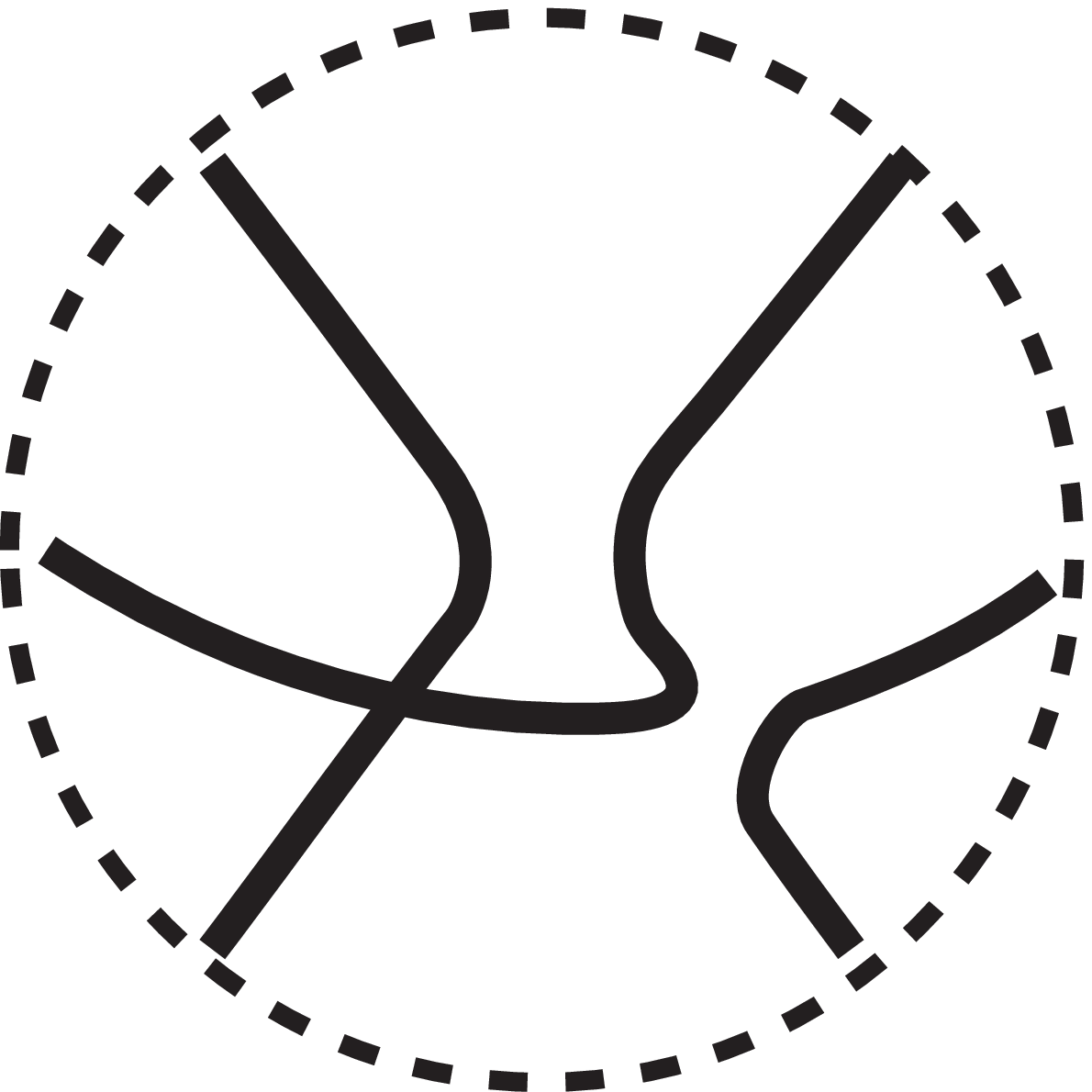}}}
\newcommand{\thirdfinsectthopr}{\raisebox{-0.35\height}{\includegraphics[width=0.7cm]{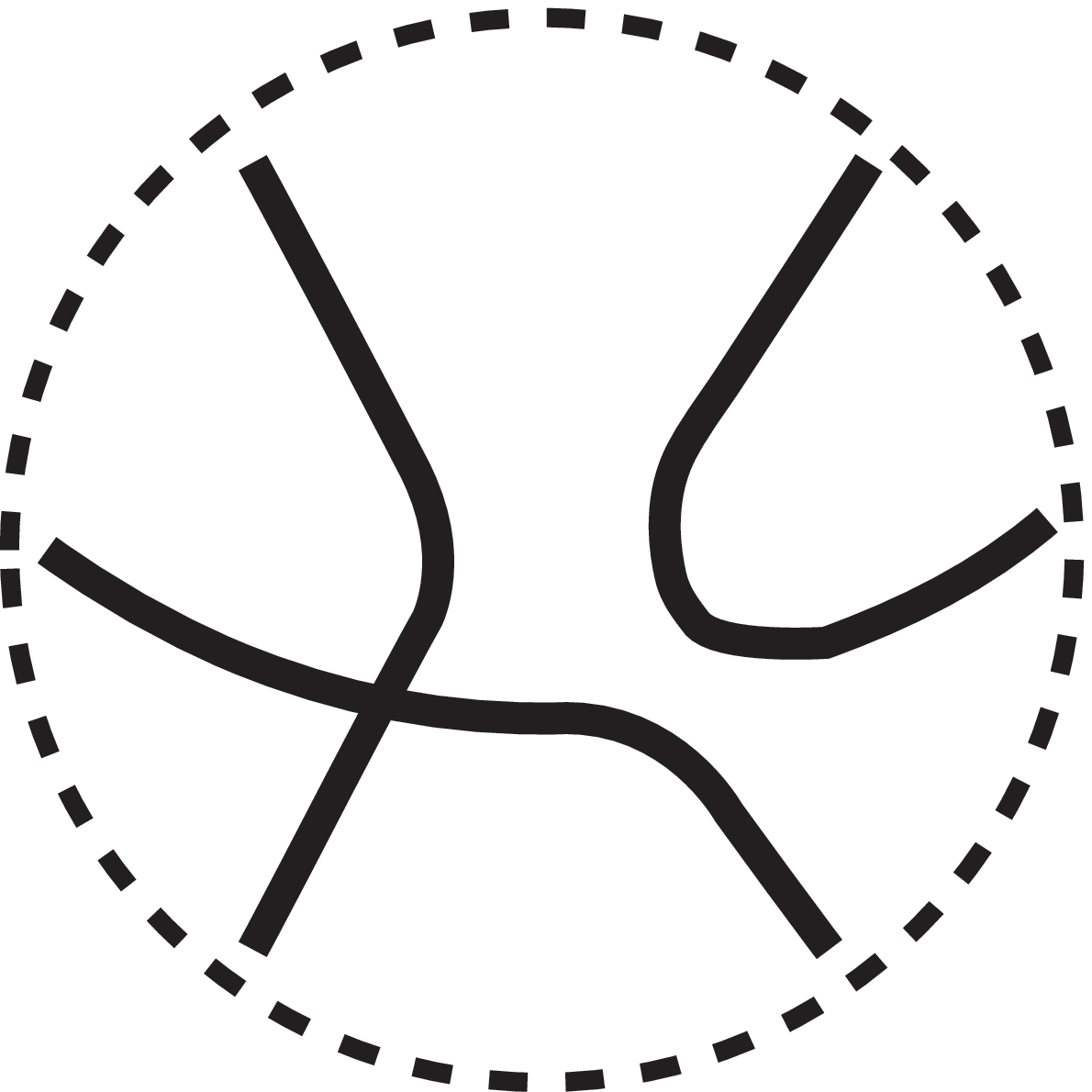}}}
\newcommand{\thirdfitsecnthtpr}{\raisebox{-0.35\height}{\includegraphics[width=0.7cm]{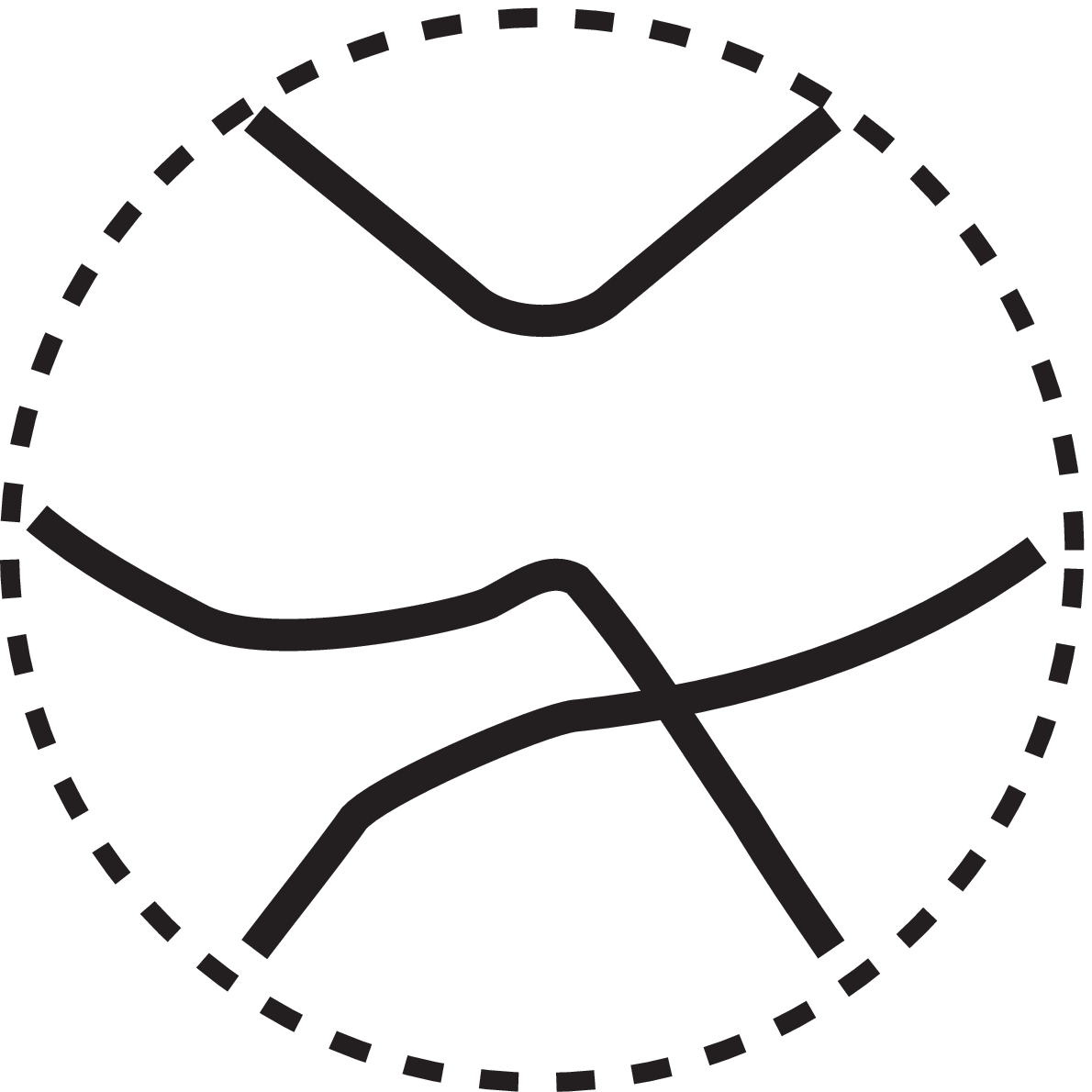}}}
\newcommand{\thirdfinsectthtpr}{\raisebox{-0.35\height}{\includegraphics[width=0.7cm]{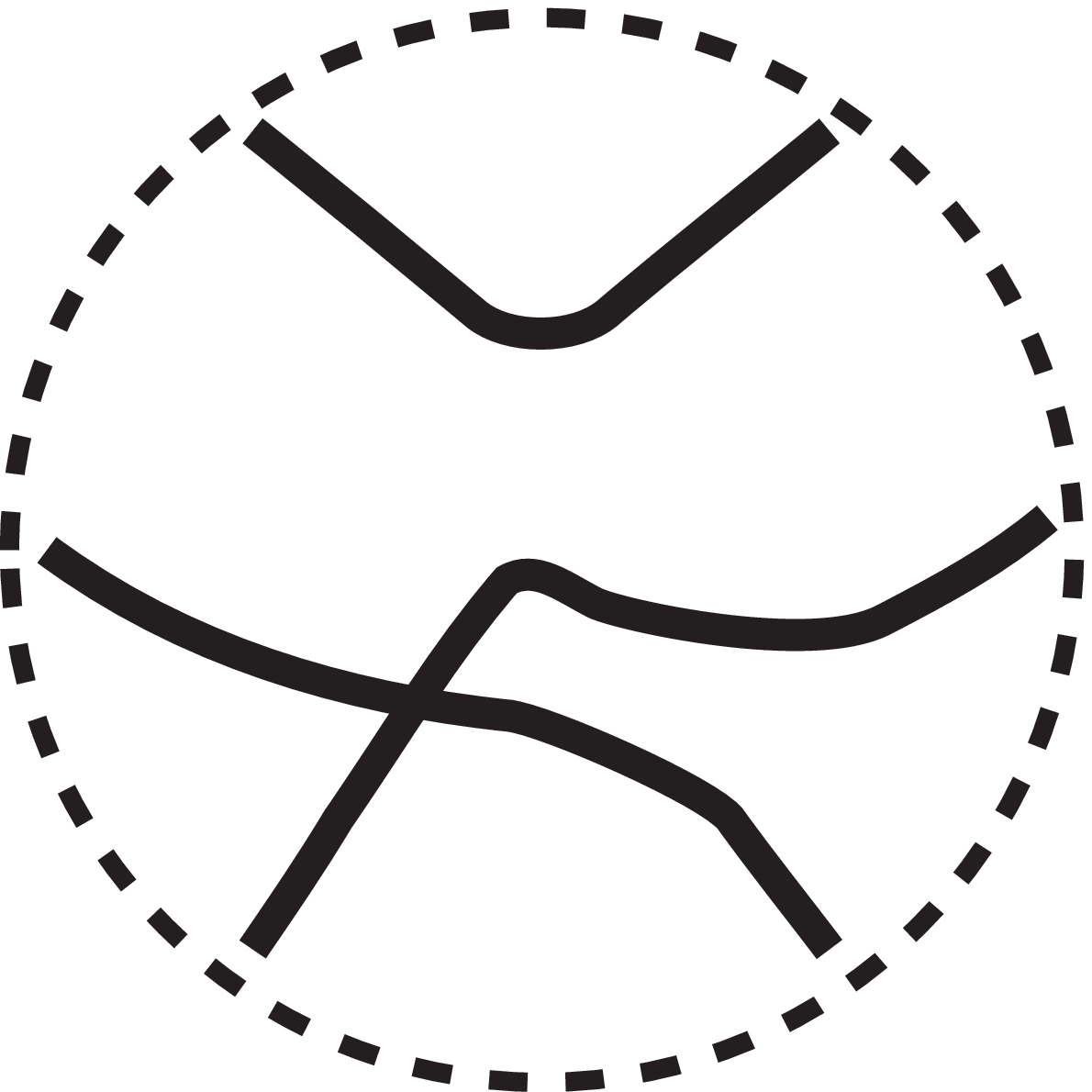}}}
\newcommand{\thirdfiosecnthtpr}{\raisebox{-0.35\height}{\includegraphics[width=0.7cm]{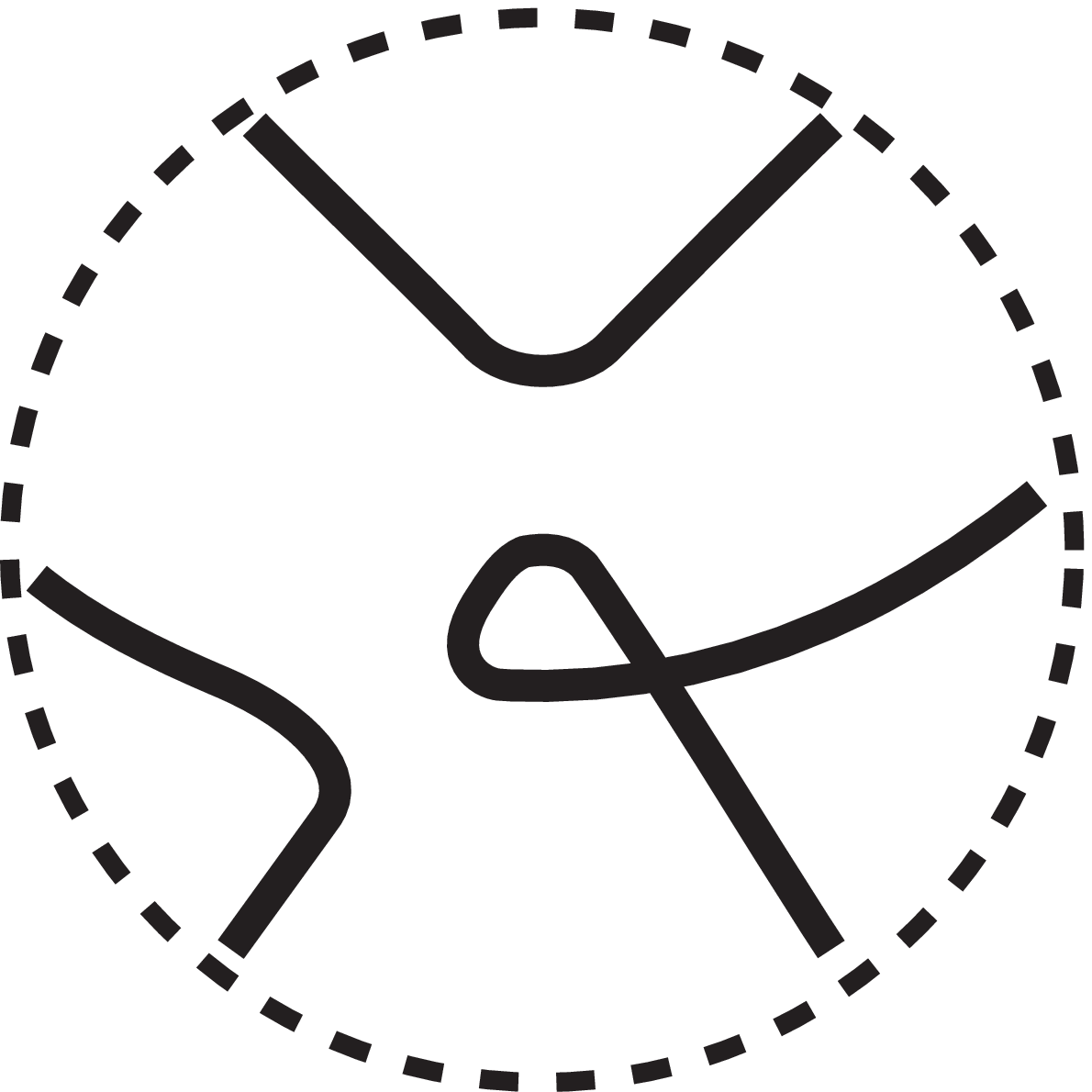}}}
\newcommand{\thirdfinsecothtpr}{\raisebox{-0.35\height}{\includegraphics[width=0.7cm]{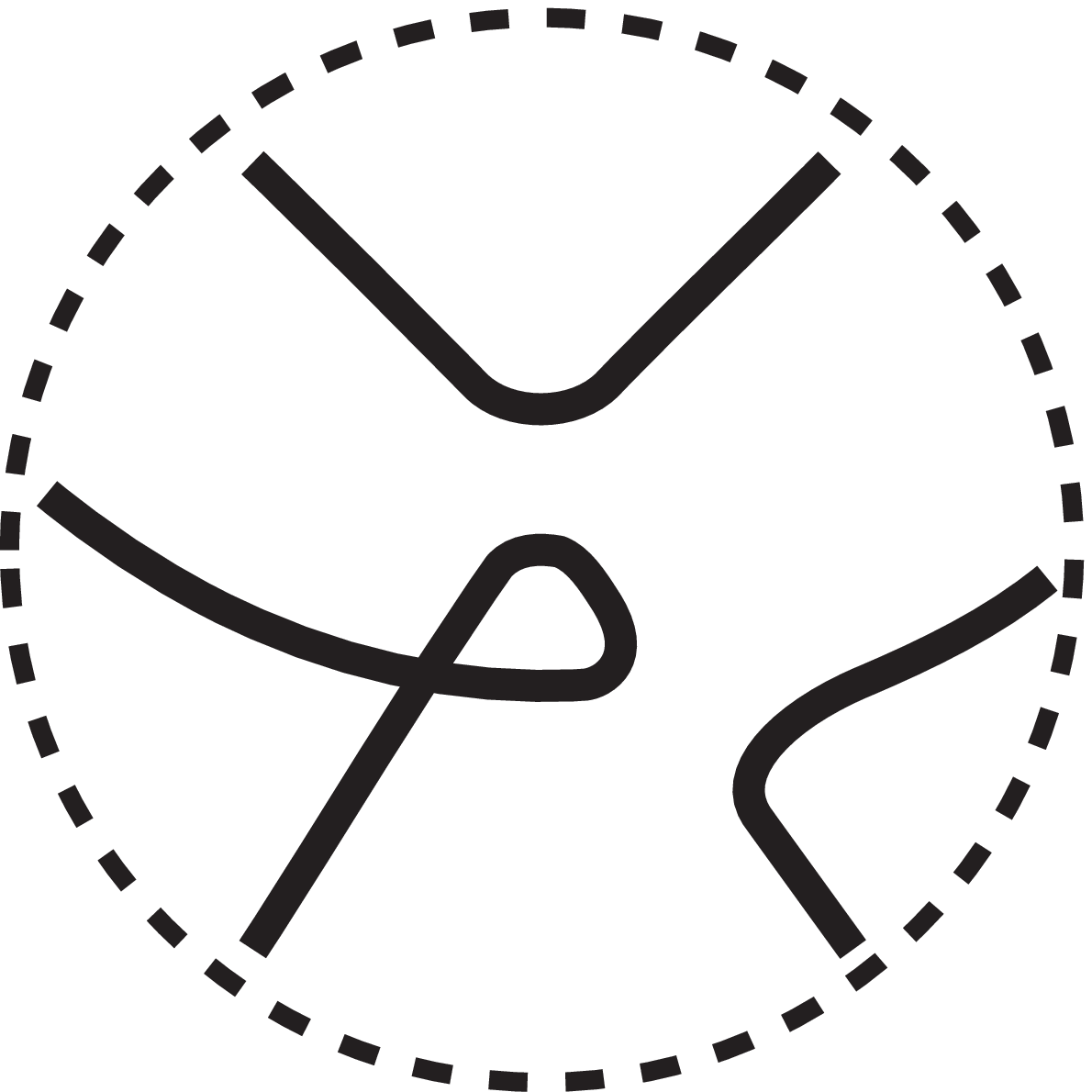}}}
\newcommand{\thirdfiosecothopr}{\raisebox{-0.35\height}{\includegraphics[width=0.7cm]{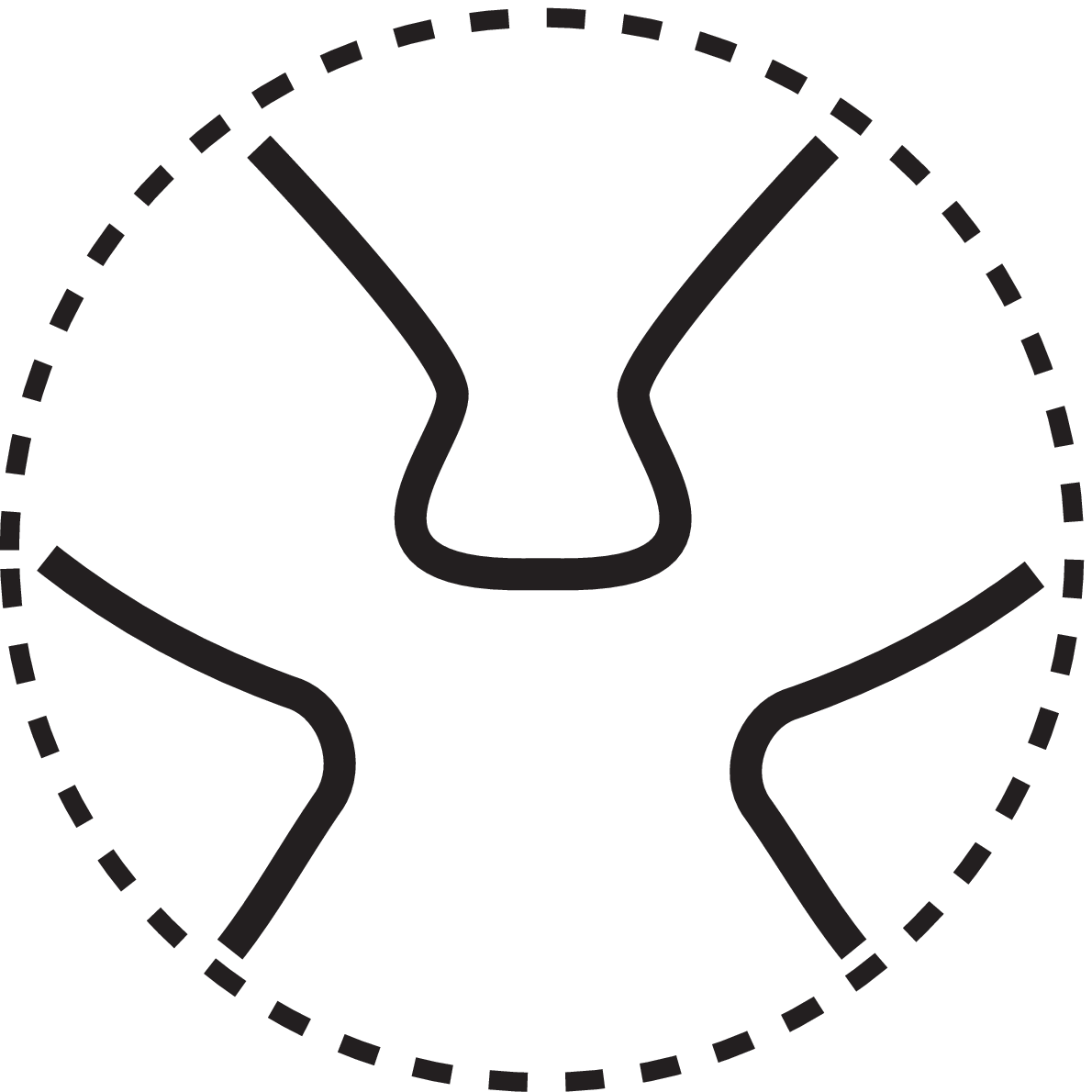}}}
\newcommand{\thirdfitsectthtpr}{\raisebox{-0.35\height}{\includegraphics[width=0.7cm]{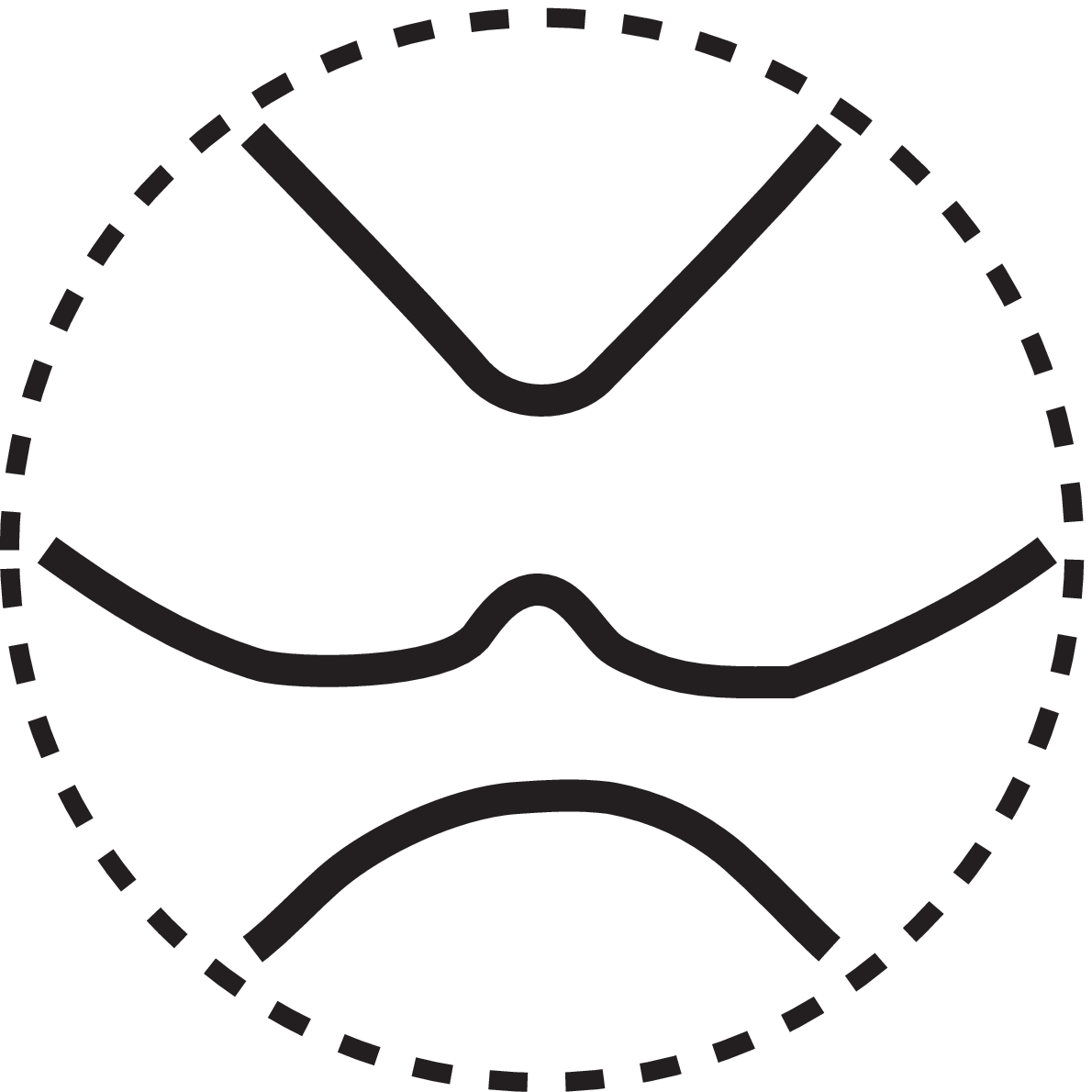}}}
\newcommand{\thirdfiosecothtpr}{\raisebox{-0.35\height}{\includegraphics[width=0.7cm]{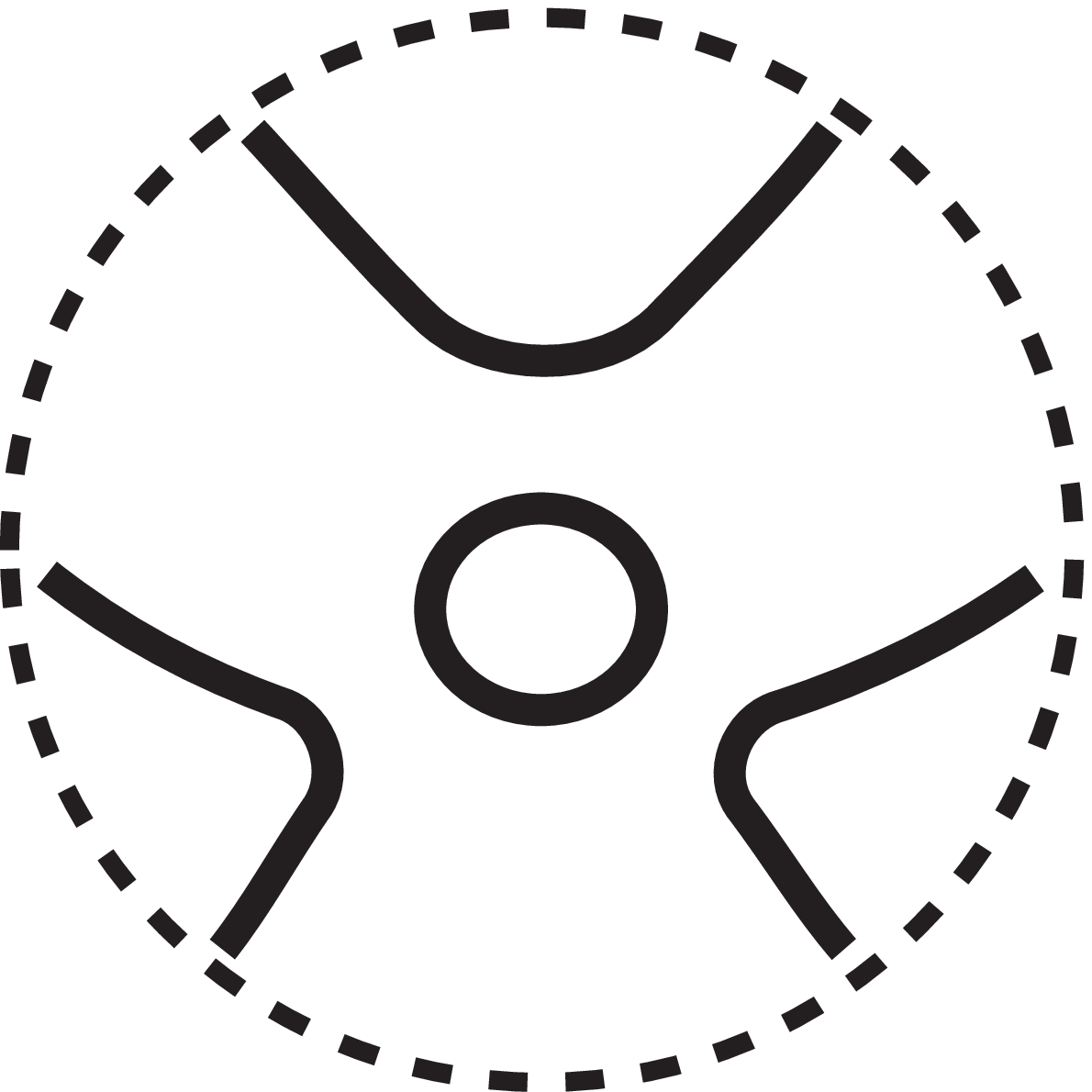}}}
\newcommand{\thirdfitsectthopr}{\raisebox{-0.35\height}{\includegraphics[width=0.7cm]{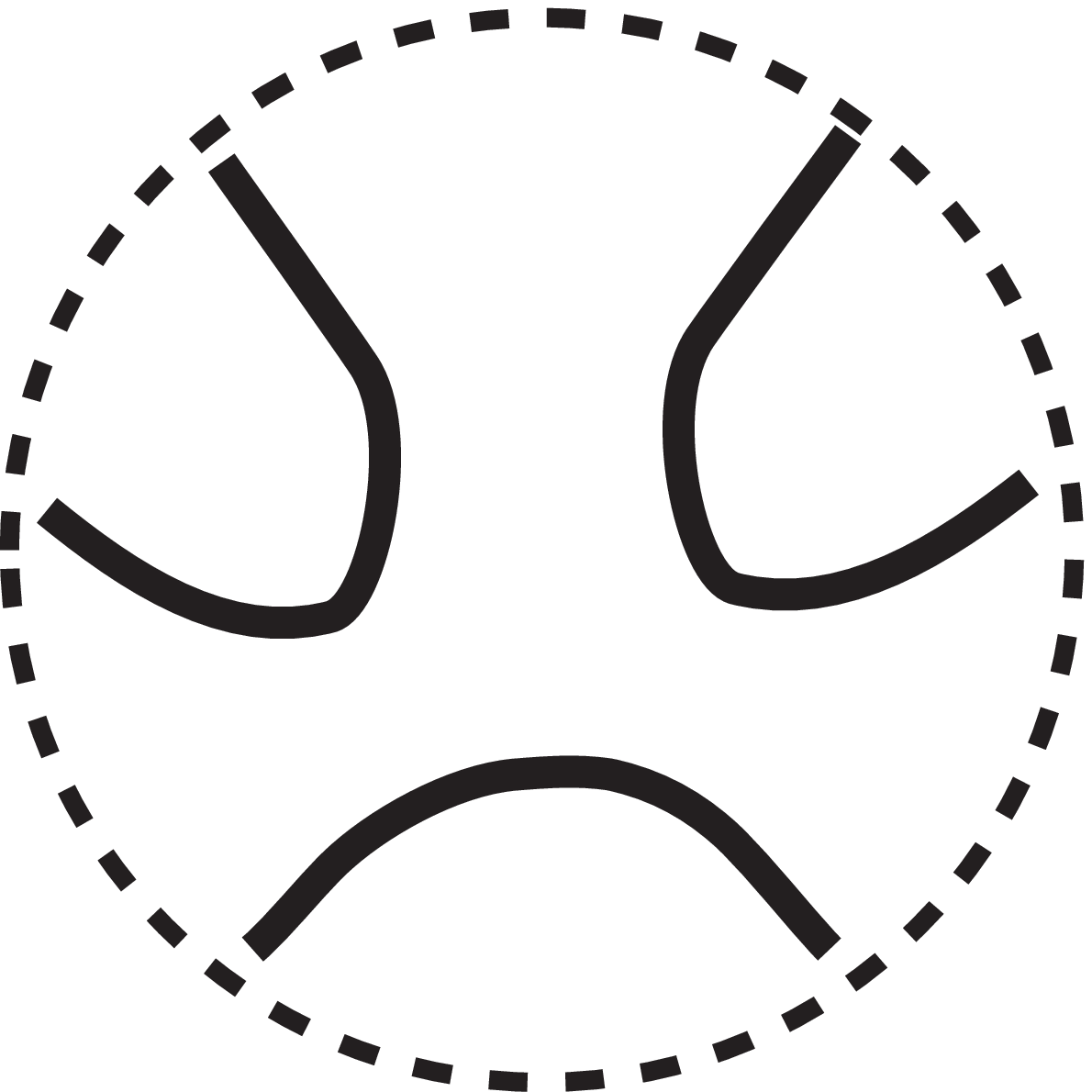}}}
\newcommand{\thirdfitsecothopr}{\raisebox{-0.35\height}{\includegraphics[width=0.7cm]{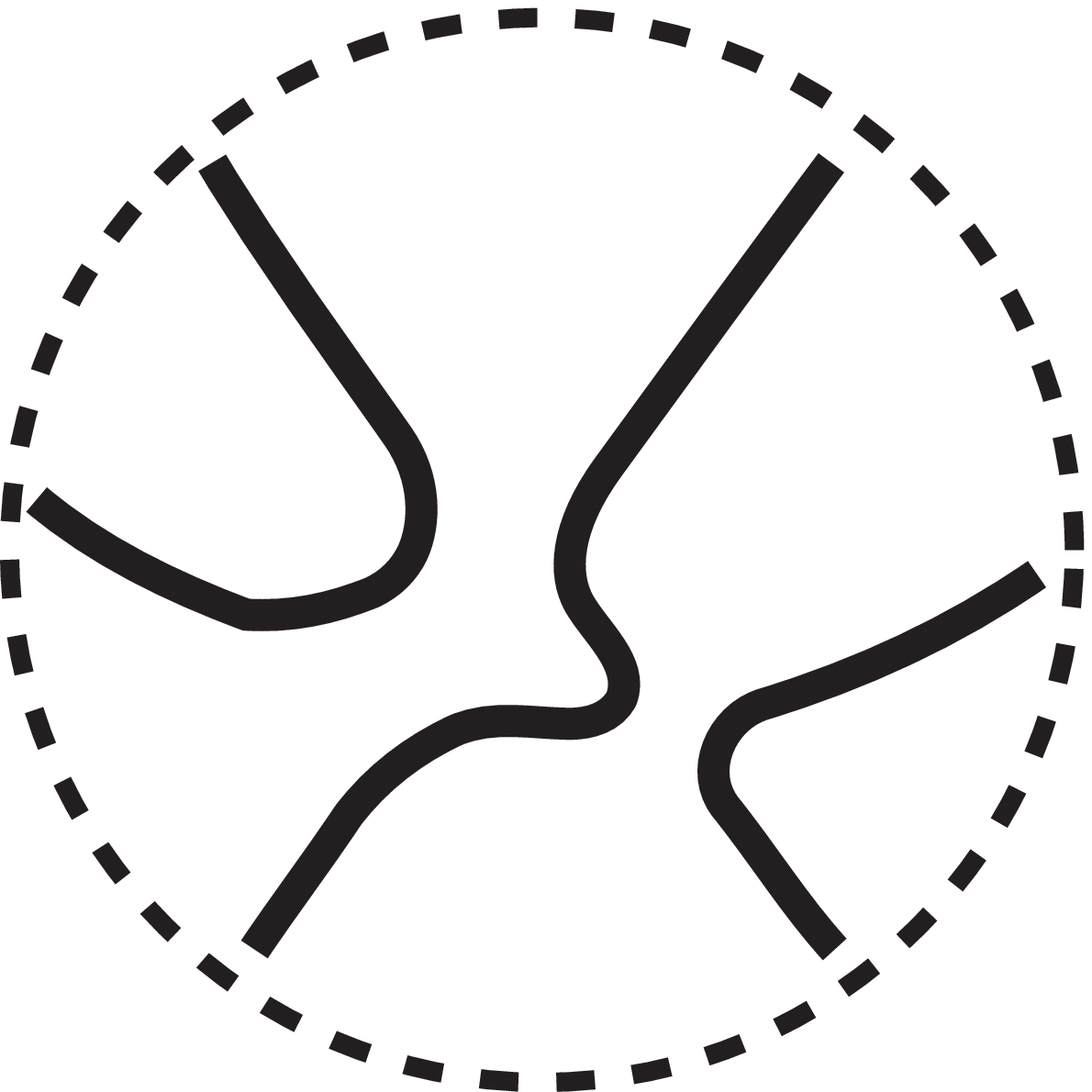}}}
\date{}
\title{Virtual Knot Invariants Arising From Parities}
\author{Denis Petrovich Ilyutko\\
{\em\footnotesize Department of Mechanics and Mathematics,}\\
{\em\footnotesize Moscow State University, Russia}\\
{\em\footnotesize dilyutko@yahoo.com}\\ \\
Vassily Olegovich Manturov\\
{\em\footnotesize Faculty of Science,}\\
{\em\footnotesize People's Friendship University of Russia, Russia}\\
{\em\footnotesize vomanturov@yandex.ru}\\ \\
Igor Mikhailovich Nikonov\\
{\em\footnotesize Department of Mechanics and Mathematics,}\\
{\em\footnotesize Moscow State University, Russia}\\
{\em\footnotesize nikonov@mech.math.msu.su}}
\begin{document}

\renewcommand{\baselinestretch}{0.75}
 \maketitle
\renewcommand{\baselinestretch}{1}

\abstract{In~\cite {FrKn,Sbornik} it was shown that in some knot
theories the crucial role is played by {\em parity}, i.e.\ a
function on crossings valued in $\{0,1\}$ and behaving nicely with
respect to Reidemeister moves. Any parity allows one to construct
functorial mappings from knots to knots, to refine many invariants
and to prove minimality theorems for knots. In the present paper, we
generalise the notion of parity and construct parities with
coefficients from an abelian group rather than $\mathbb{Z}_2$ and
investigate them for different knot theories. For some knot theories
we show that there is the universal parity, i.e.\ such a parity that
any other parity factors through it. We realise that in the case of
flat knots all parities originate from homology groups of underlying
surfaces and, at the same time, allow one to ``localise'' the global
homological information about the ambient space at crossings.

We prove that there is only one non-trivial parity for free knots,
the Gaussian parity. At the end of the paper we analyse the
behaviour of some invariants constructed for some modifications of
parities.}

\newpage
\tableofcontents

\section{Introduction}

In~\cite{Sbornik}, the second named author introduced the notion of
{\em parity} into the study of different knot theories, especially
virtual knots: one distinguishes between two types of crossings,
{\em even} ones and {\em odd} in a way compatible with the
Reidemeister moves so that the parity allows one to refine many
invariants, and construct new invariants. In some sense, odd
crossings are responsible for non-triviality of link diagrams, and
one can prove many minimality and non-triviality theorems starting
with some parity. For every concrete parity, one gets explicit
counterparts of most of theorems proved in~\cite{Sbornik}.

One goal of the present paper is to generalise the notion of parity
and construct the parity with coefficients from an abelian group.
Another goal is to classify parities for different knot theories.

The paper is organised as follows. In the next section we recall the
definitions of different ``knot theories'' and the main
constructions which will be used within the paper.

In Section~\ref {sec:def_par} we introduce the notion of parity with
coefficients in an abelian group. In this section we give the main
examples of parities for different knot theories. We also give a
receipt how to construct parities from homology classes and indicate
how to construct characteristic homology classes from a knot itself;
these classes lead to concrete parities.

Section~\ref {sec:uni_par} is devoted to the universal parity. We
deduce some basic properties of parity from the parity axioms and
show that for some knot theories any parity can be obtained from one
parity, the universal parity.

We conclude the paper with some applications of parity. Firstly, we
construct a functorial map from knots to knots which allows us to
extend some invariants. Secondly, we extend the parity
bracket~\cite{FrKn} to the parity bracket for any parity valued in
$\{0,1\}$.

 \section*{Acknowledgments}
The authors are grateful to V.\,V.~Chernov, A.\,T.~Fomenko,
L.~Kauffman, O.\,V.~Manturov for their interest to this work. The
first author was partially supported by grants of RF President NSh
-- 3224.2010.1, RFBR 10-01-00748-a, RNP 2.1.1.3704, the Federal
Agency for Education NK-421P/108, Ministry of Education and Science
of the Russian Federation 02.740.11.5213 and 14.740.11.0794, the
second author was partially supported by grants of RF President NSh
-- 3224.2010.1, Ministry of Education and Science of the Russian
Federation 14.740.11.0794, the third author was partially supported
by grants of RF President NSh -- 3224.2010.1, RFBR 10-01-00748-a,
RNP 2.1.1.3704, Ministry of Education and Science of the Russian
Federation 02.740.11.5213 and 14.740.11.0794.

\section{Basic definitions}

\subsection {Framed 4-graphs and chord diagrams}

By a {\em graph} we always mean a finite graph; loops and multiple
edges are allowed.

Let $G$ be a graph with the set of vertices $V(G)$ and the set of
edges $E(G)$. We think of an edge as an equivalence class of the two
half-edges forming the edge. From now on, by a {\em 4-graph} we mean
the following generalisation of a four-valent graph: a
$1$-dimensional complex, with each connected component being
homeomorphic either to the circle (with no matter how many
$0$-cells) or to a four-valent graph; by a vertex we shall mean only
vertices of those components which are homeomorphic to four-valent
graphs, and by edges we mean either edges of
four-valent-graph-components or circular components; the latter will
be called {\em cyclic edges}.

We say that a 4-graph is {\em framed} if for every vertex of it, the
four emanating half-edges are split into two pairs. We call
half-edges from the same pair {\em opposite}. We shall also apply
the term opposite to edges containing opposite half-edges. By an
{\em isomorphism} of framed 4-graphs we assume a framing-preserving
homeomorphism. All framed 4-graphs are considered up to isomorphism.
Denote by $G_0$ the framed 4-graph homeomorphic to the circle. By a
{\em unicursal component} of a framed 4-graph we mean either its
connected component homeomorphic to the circle or an equivalence
class of its edges, where the equivalence is generated by the
relation of being opposite.

 \begin {definition}
By a {\em chord diagram} we mean a cubic graph consisting of one
selected Hamiltonian cycle (a cycle passing through all vertices of
the graph) and a set of {\em chords}. We call this cycle the {\em
core circle} of the chord diagram. A chord diagram is {\em oriented}
whenever its core circle is oriented. Edges belonging to the core
circle are called {\em arcs} of the chord diagram. One distinguishes
between {\em oriented} and {\em non-oriented} chord diagrams
depending on whether an orientation of the core circle is given or
not. A chord diagram is depicted on the plane as the Euclidean
circle with a collection of chords connecting end points of chords.
 \end {definition}

For a chord diagram $D$, the corresponding framed 4-graph $G(D)$
with a unique unicursal component is constructed as follows. If the
set of chords of $D$ is empty then the corresponding graph will be
$G_0$. Otherwise, the edges of the graph are in one-to-one
correspondence with the arcs of the chord diagram, and the vertices
are in one-to-one correspondence with chords of $D$. The arcs
incident to the same chord end correspond to the (half-)edges which
are formally opposite at the vertex corresponding to the chord.

The inverse procedure (of constructing a chord diagram from a framed
4-graph with one unicursal component) is evident. In this situation
every connected framed 4-graph can be considered as a topological
space obtained from the circle by identifying some pairs of points.
Thinking of the circle as the core circle of a chord diagram, where
the pairs of identified points will correspond to chords, one
obtains a chord diagram. The chord diagram obtained from a framed
4-graph with one unicursal component in this way is called a {\em
Gauss diagram}.

 \begin {definition}\label {def:link}
We say that two chords $a$ and $b$ of a chord diagram $D$ are {\em
linked} if the ends of the chord $b$ belong to two different
connected components of the complement to the ends of $a$ in the
core circle of $D$. Otherwise we say that chords are {\em unlinked}.

We say that two vertices of a framed 4-graph $G$ are {\em linked} if
the corresponding chords of its Gauss diagram are linked.
 \end {definition}

Define an operation on framed 4-graphs.

 \begin {definition}
By a {\em smoothing} of a framed 4-graph $G$ at a vertex $v$ we mean
any of the two framed 4-graphs obtained from $G$ by removing $v$ and
repasting the edges, see Fig.~\ref {smooth}. The rest of the graph
(together with all framings at vertices except $v$) remains
unchanged.

Note that we may consider further smoothings at {\em several}
vertices. Later on, by a {\em smoothing} we mean a sequence of
smoothings at several vertices.
 \end {definition}

 \begin{figure}
\centering\includegraphics[width=200pt]{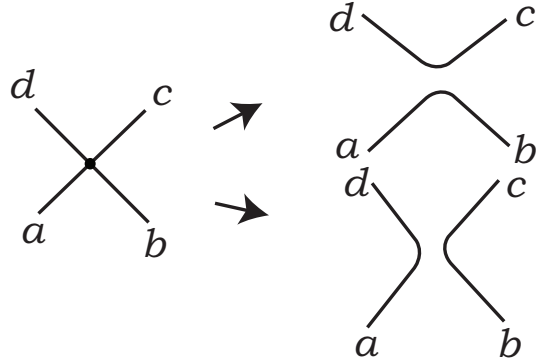} \caption{Two
smoothings of a vertex of a framed graph} \label{smooth}
 \end{figure}

 \subsection {Virtual knots, flat knots and free knots}

In this subsection we consider some knot theories. Let us give main
definitions.

A {\em virtual diagram} is a framed $4$-graph immersed in ${\mathbb
R}^2$ with a finite number of intersections of edges. Moreover, each
intersection is a transverse double point which we call a {\em
virtual crossing} and mark by a small circle, and each vertex of the
graph is endowed with the classical crossing structure (with a
choice for underpass and overpass specified). The vertices of the
graph with that additional structure are called {\em classical
crossings} or just {\em crossings}.

A {\em virtual link} is an equivalence class of virtual diagrams
modulo generalised Reidemeister moves. The latter consist of the
usual Reidemeister moves referring to classical crossings and the
{\em detour move} that replaces one arc containing only virtual
(self-)intersections by another arc of such sort in any other place
of the plane, see Fig.~\ref{detour}.

{\it When drawing framed graphs on the plane, we always assume that
the framing is induced from the plane. In figures depicting moves we
always take into consideration that each side of the move shows a
small area of the diagram homeomorphic to a disc.}

 \begin{figure}
\centering\includegraphics[width=300pt]{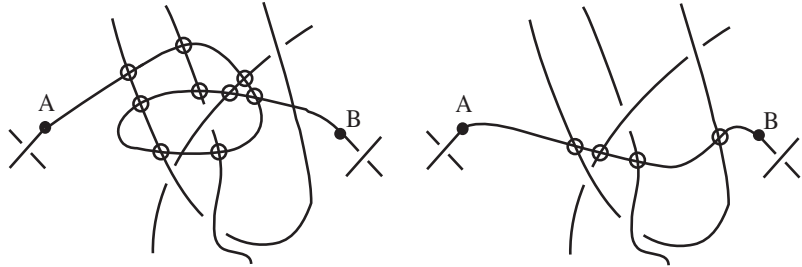} \caption{The
detour move} \label{detour}
 \end{figure}

 \begin {rk}
If we consider embeddings of framed 4-graphs with the classical
crossing structure at each vertex and the usual Reidemeister moves
on them, then we get classical diagrams and classical links.
 \end {rk}

Let us consider an immersion of a framed 4-graph in ${\mathbb R}^2$
and flatten the classical crossings in the Reidemeister moves and
the detour move to double points, i.e.\ we just disregard
over/undercrossing information. We can then define an equivalence
relation on diagrams without overcrossing and undercrossing
structure specified using these flattened Reidemeister moves and
detour move. As a result we get a new object --- a {\em flat knot}.
It is easy to see that flat knots are equivalence classes of virtual
knots modulo transformation swapping over/undercrossing structure.

J.\,S.~Carter, S.~Kamada and M.~Saito showed that we can consider
virtual knots as equivalence classes of embedded framed 4-graphs on
compact oriented surfaces~\cite {CKS}, where two knots are
equivalent if there exists a finite sequence of stabilisations and
Reidemeister moves transforming one knot to the other. The same is
true for flat knots.

Let $K$ be a virtual diagram, and let $S$ be a closed oriented
2-surface. We call the pair $P=(S,K)$ {\em a canonical link surface
diagram} (CLSD) if there exists an embedding of the underlying
framed 4-graph of $K$ into $S$ such that the complement to the image
of this embedding is a disjoint union of 2-cells. Denote by
$\widetilde{S}$ a neighbourhood of the embedding of $K$ in $S$. For
a CLSD, $P=(S,K)$, if there exists an orientation preserving
embedding $f\colon \widetilde{S}\to M$ into a closed oriented
surface $M$, we call $f(K)$ {\em a diagram realisation} of $K$ in
$M$. Two CLSD's $P=(S,K)$ and $P'=(S',K')$ are related by {\em an
abstract Reidemeister move} if there is a closed oriented surface
$M$ and diagram realisations of $K$ and $K'$ in $M$ which are
related by a Reidemeister move in $M$. Two CLSD's are {\em
equivalent} if they are related by a finite sequence of abstract
Reidemeister moves. Following N.~Kamada and S.~Kamada~\cite {KK} one
can construct a bijection
 $$
\psi\colon\{\hbox{virtual link diagrams}\}\to\{\hbox{CLSD's}\}.
 $$
The idea of this map is illustrated in Fig.~\ref {lsd}. Having a
virtual link diagram $K$, we take all classical crossings of it and
associate with a neighbourhood of a crossing two crossing bands ---
a `piece of $2$-surface', and with a virtual crossing we associate a
pair of skew bands (when drawing on the plane it does not matter
which band is over and which one is under). If we connect these
crossings and bands by (non-overtwisted) bands going along edges, we
get a $2$-surface with boundary. Gluing its boundary components by
discs, we get an orientable closed 2-surface. We call $\psi(L)$ a
{\em CLSD associated with a virtual diagram $L$}.

 \begin{figure}
\centering\includegraphics[width=300pt]{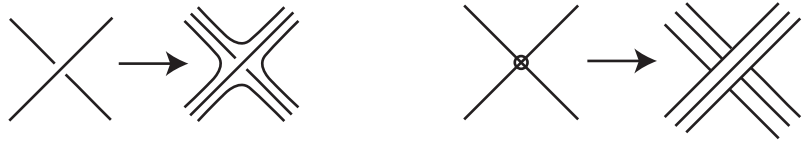} \caption{The local
structure} \label{lsd}
 \end{figure}

We have defined virtual knots and flat knots by using their diagrams
which are obtained by immersions of framed 4-graphs in the plane.
Let us now consider abstract framed 4-graphs and define the
equivalence relation between two graphs using moves analogous to the
Reidemeister moves. Recall that in figures depicting moves on
diagrams we draw only the changing parts; the stable part will be
omitted. In the case of one unicursal component a move can be
represented on a Gauss diagram; it changes the diagram on some set
of arcs; we shall not draw those chords away from the Reidemeister
move being performed; the arcs having no ends of chords taking part
in the move, will be depicted by dotted lines.

 \begin {definition}
{\em The first Reidemeister move} is an addition/removal of a loop,
see Fig.~\ref {1mR}.

{\em The second Reidemeister move} is an addition/removal of a bigon
formed by a pair of edges which are adjacent (not opposite) in each
of the two vertices, see Fig.~\ref {2mR}.

{\em The third Reidemeister move} is shown in Fig.~\ref {3mR}.
 \end {definition}

 \begin{figure}
\centering\includegraphics[width=200pt]{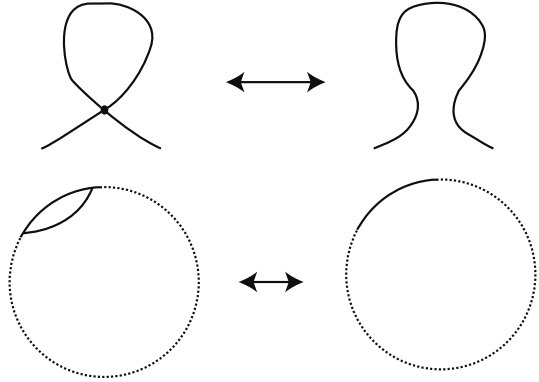} \caption{The first
Reidemeister move and its chord diagram version} \label{1mR}
 \end{figure}

 \begin{figure}
\centering\includegraphics[width=300pt]{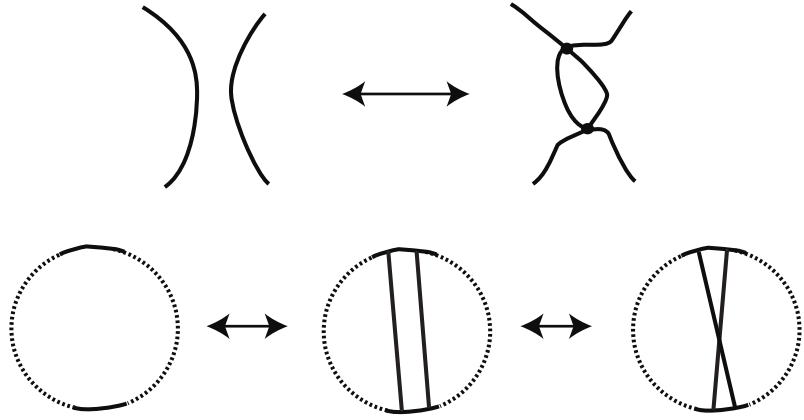} \caption{The
second Reidemeister move and its chord diagram version} \label{2mR}
 \end{figure}

 \begin{figure}
\centering\includegraphics[width=300pt]{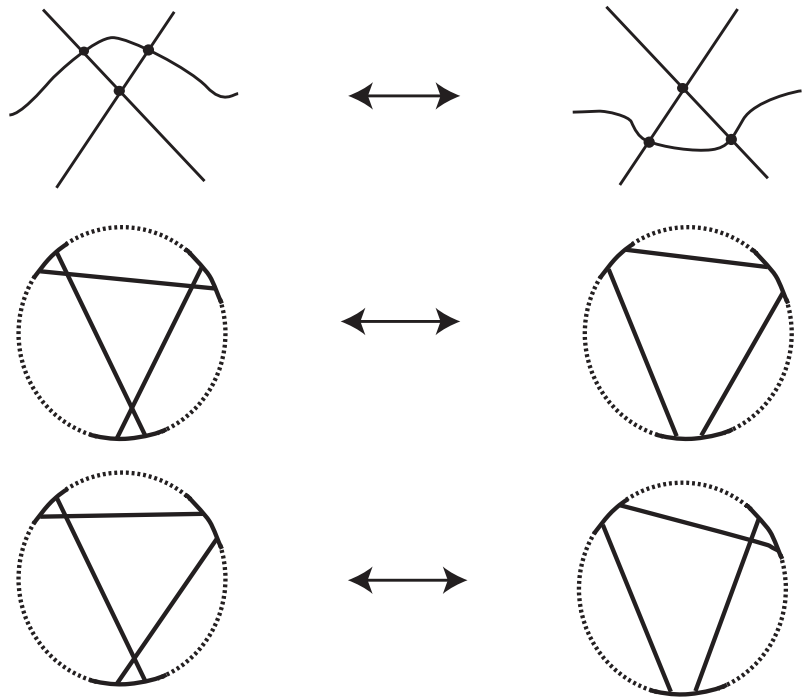} \caption{The third
Reidemeister move and its chord diagram version} \label{3mR}
 \end{figure}

 \begin {rk}
In the cases of the second Reidemeister move and third Reidemeister
move we have one picture for a framed 4-graph and several pictures
for chord diagrams. The number of the pictures for chord diagrams
depends on ways of joining the ends of edges for framed 4-graphs.
 \end {rk}

 \begin {definition}
A {\em free link} is an equivalence class of framed 4-graphs modulo
Reidemeister moves.
 \end {definition}

It is evident that the number of components of a framed 4-graph does
not change after applying a Reidemeister move, so, it makes sense to
talk about the {\em number of components} of a free link.

By a {\em free knot} we mean a free link with one unicursal
component. Free knots can be treated as equivalence classes of Gauss
diagrams by a finite sequence of Reidemeister moves.

The {\em free unknot} (resp., the  {\em free $n$-component unlink})
is the free knot (link) represented by $G_0$ (resp., by $n$ disjoint
copies of $G_0$).

The exact statement connecting virtual knots and free knots sounds
as follows:

 \begin {lemma}
A free knot is an equivalence class of virtual knots modulo two
transformations\/{\em:} classical crossing switches and
virtualisations.

A virtualisation is a local transformation shown in Fig.~\ref
{virt}.
 \end {lemma}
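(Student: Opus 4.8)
The plan is to realise both sides of the claimed equality as equivalence classes of chord diagrams and to keep track of which decorations are allowed. On one side, by definition a free knot is an equivalence class of framed $4$-graphs with one unicursal component modulo Reidemeister moves, equivalently, passing to Gauss diagrams, an \emph{undecorated} chord diagram taken modulo the free Reidemeister moves of Figs.~\ref{1mR}--\ref{3mR}. On the other side, to a virtual knot diagram $K$ we associate its underlying framed $4$-graph $\Phi(K)$: forget every virtual crossing, and at each classical crossing remember only the $4$-valent vertex together with the framing given by the two strands (so that over/under information is dropped). In terms of Gauss diagrams, $\Phi(K)$ is the chord diagram obtained from the usual signed and arrowed Gauss diagram of $K$ by forgetting the signs and arrows of the chords. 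I would also note a parallel organisation through flat knots, since flat knots are virtual knots modulo crossing switches and their shadows carry exactly one extra datum --- a sign on each chord --- which the virtualisation move is designed to flip; but the direct route is cleaner.

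First I would verify that $\Phi$ descends to a well-defined map $\bar\Phi$ from virtual knots modulo crossing switches and virtualisations to free knots. This is a move-by-move check: a classical Reidemeister move on $K$ induces the corresponding free Reidemeister move on $\Phi(K)$; a detour move leaves $\Phi(K)$ unchanged, since it touches no classical crossing; a crossing switch changes neither the $4$-valent vertex nor its framing, hence leaves $\Phi(K)$ unchanged; and a virtualisation, although it inserts two virtual crossings and changes the planar picture (and the sign of the relevant chord), does not change the underlying framed $4$-graph --- erasing the two new virtual crossings returns the same $4$-valent vertex with the same pairing of opposite half-edges. This last point is the one place in this step requiring a genuine look at Fig.~\ref{virt}. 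Surjectivity of $\bar\Phi$ is immediate: given a framed $4$-graph $G$ with one unicursal component, draw its Gauss diagram in the plane with chords realised by arcs meeting each other and the core circle transversally, declare all such intersections virtual, and choose over/under data at the chords arbitrarily; the result is a virtual diagram with $\Phi = G$.

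The content of the lemma is the injectivity of $\bar\Phi$. By induction it suffices to show: if two virtual diagrams $K_1$, $K_2$ have underlying graphs $G_1$, $G_2$ related by a single free Reidemeister move, then $K_1$ and $K_2$ are related by generalised Reidemeister moves, crossing switches and virtualisations. I would break this into two steps. Step~(a): any two virtual diagrams with the same underlying framed $4$-graph are related by detour moves, crossing switches and virtualisations --- equal framed $4$-graphs means equal undecorated chord diagram, so their signed and arrowed Gauss diagrams differ only in the signs and arrows of the chords, and these can be brought into coincidence one chord at a time by crossing switches and virtualisations, after which the two diagrams share the same Gauss diagram and hence are connected by detour moves alone. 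Step~(b): a single free Reidemeister move taking $G_1$ to $G_2$ can be lifted --- using detour moves we bring the participating strands into the standard local configuration of the corresponding classical Reidemeister move, using crossing switches and virtualisations we install the over/under and sign data that this move requires, perform the classical move, and arrive at a virtual diagram whose underlying graph is $G_2$; by Step~(a) this diagram agrees with $K_2$ up to the permitted moves. Combining the two steps and running through $R1$, $R2$, $R3$ and their several chord-diagram incarnations (flagged in the Remark following the definition of the Reidemeister moves) closes the induction.

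I expect the main obstacle to be Step~(b): for each variant of the second and third Reidemeister moves on chord diagrams one must exhibit a choice of over/under data (together, if necessary, with a preliminary virtualisation adjusting signs) that makes the move realisable by honest generalised Reidemeister moves, i.e.\ one must check that the ``forbidden move'' obstruction of virtual knot theory is always absorbed by a virtualisation. The subsidiary point that virtualisation acts trivially on the shadow --- the verification hidden in the reference to Fig.~\ref{virt} --- is the other place where care is needed, as it is precisely what makes $\bar\Phi$ well defined in the first place.
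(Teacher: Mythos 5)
The paper offers no proof of this lemma at all --- it is stated as a known fact imported from the earlier literature on free knots (\cite{FrKn,Sbornik}), with only the one-line gloss that a virtualisation merely changes the immersion of the framed $4$-graph in the plane. So there is nothing of the authors' to compare your argument against; what you have written is, in effect, the proof the paper omits, and it is the standard one. Your decomposition is sound: the forgetful map $\Phi$ to the underlying framed $4$-graph (equivalently, stripping signs and arrows from the Gauss diagram) is invariant under detour moves, crossing switches and virtualisations and intertwines classical Reidemeister moves with free ones, which gives well-definedness; surjectivity is the evident planar realisation of any chord diagram with all arc intersections declared virtual; and injectivity reduces to your steps (a) and (b). Step (a) works because a crossing switch flips both the sign and the arrow of a chord while a virtualisation flips exactly one of them, so together they act transitively on the four possible decorations of each chord, after which equality of Gauss diagrams plus the detour move finishes the job. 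The only place where genuine residual work is hidden is the one you flag yourself: in step (b) one must run through the finitely many chord-diagram incarnations of the second and third Reidemeister moves and exhibit, for each, a decoration realisable by an honest generalised Reidemeister move; this case check always succeeds (every unoriented flat $R2$/$R3$ configuration admits at least one classical lift, and the permitted moves let you install that decoration first), but a complete write-up would list the cases. With that caveat, your proposal is correct.
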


One may think of a virtualisation as way of changing the immersion
of a framed 4-graph in plane.

 \begin{figure}
\centering\includegraphics[width=200pt]{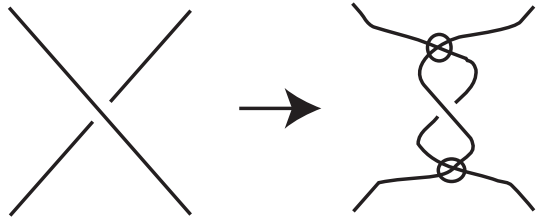} \caption{The
virtualisation move} \label{virt}
 \end{figure}

\section {The Definition of the parity}\label {sec:def_par}

\subsection {Category of knot diagrams}

Let $\mathcal{K}$ be a knot. We shall use the notion of `knot' in
one of the following situations:
 \begin{enumerate}
  \item a free knot;
  \item a homotopy class of curves immersed in a given surface;
  \item a flat knot;
  \item a virtual knot.
 \end{enumerate}

Let us define the category $\mathfrak{K}$ of diagrams of the knot
$\mathcal{K}$. The objects of $\mathfrak{K}$ are diagrams of
$\mathcal{K}$ and morphisms of the category $\mathfrak{K}$ are
(formal) compositions of {\em elementary morphisms}. By an
elementary morphism we mean
 \begin{itemize}
  \item
an isotopy of diagram;
  \item
a Reidemeister move.
 \end{itemize}

 \begin{definition}
A {\em partial bijection} of sets $X$ and $Y$ is a triple
$(\widetilde X,\widetilde Y,\phi)$, where $\widetilde X\subset X$,
$\widetilde Y\subset Y$ and $\phi\colon\widetilde X\to \widetilde Y$
is a bijection.
 \end{definition}

 \begin {rk}
Since the number of vertices of a diagram may change under
Reidemeister moves, there is no bijection between the sets of
vertices of two diagrams connected by a sequence of Reidemeister
moves. To construct any connection between two sets of vertices we
have introduced the notion of a partial bijection which means just
the bijection between the subsets of vertices corresponding to each
other in the two diagrams.
 \end {rk}

Let us denote by $\V$ the vertex functor on $\mathfrak{K}$, i.e.\ a
functor from $\mathfrak{K}$ to the category, objects of which are
finite sets and morphisms are partial bijections. For each diagram
$K$ we define $\V(K)$ to be the set of classical crossings of $K$,
i.e.\ the vertices of the underlying framed 4-graph. Any elementary
morphism $f\colon K\to K'$ naturally induces a partial bijection
$f_*\colon\V(K)\to\V(K')$.

\subsection{A parity}

Now we are going to define a parity with coefficients in an
arbitrary abelian group. In~\cite
{FrKn,FrKnLi,Sbornik,Paritytrieste} the parity with coefficients in
$\mathbb{Z}_2$ was defined. We extend that notion to the case with
an abelian group. Note that one can define a parity with a
non-abelian group, see, for example,~\cite {FunMap}.

Let $A$ be an abelian group.

 \begin{definition}\label {def:parity}
A {\em parity $p$ on diagrams of a knot $\mathcal{K}$ with
coefficients in $A$} is a family of maps $p_K\colon\V(K)\to A$,
$K\in\ob(\mathfrak{K})$, such that for any elementary morphism
$f\colon K\to K'$ the following holds:
  \begin{enumerate}
   \item
$p_{K'}(f_*(v))=p_K(v)$ provided that $v\in\V(K)$ and there exists
$f_*(v)\in\V(K')$;
   \item
$p_K(v_1)+p_K(v_2)=0$ if $f$ is a decreasing second Reidemeister
move and $v_1,\,v_2$ are the disappearing crossings;
   \item
$p_K(v_1)+p_K(v_2)+p_K(v_3)=0$ if $f$ is a third Reidemeister move
and $v_1,\,v_2,\,v_3$ are the crossings participating in this move.
  \end{enumerate}
 \end{definition}

 \begin {rk}
Note that each knot can have its own group $A$, and, therefore,
different knots generally have different parities.
 \end {rk}

 \begin {lemma}\label {lem:par_fir}
Let $p$ be any parity and $K$ be a diagram. Then $p_K(v)=0$ if $f$
is a decreasing first Reidemeister move applied to $K$ and $v$ is
the disappearing crossing of $K$.
 \end {lemma}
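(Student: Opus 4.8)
The strategy is to reduce the first Reidemeister move to the decreasing second Reidemeister move (whose parity behaviour is axiom 2) by combining it with a move whose parity behaviour is known. The classical trick here: given a diagram $K$ with a small loop (the curl created/removed by $R_1$) at a crossing $v$, one can first apply an increasing second Reidemeister move near that loop to produce two new crossings $w_1,w_2$ with $p_K(w_1)+p_K(w_2)=0$, then apply a third Reidemeister move sliding a strand across, then finally a decreasing second Reidemeister move that cancels $w_2$ together with $v$. Tracking the parities through this sequence via axioms 1, 2, 3 forces $p(v)=0$.

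More concretely, I would proceed as follows. First, set up the local picture: the decreasing $R_1$ removes a curl at $v$; isotope so that there is room beside the curl. Apply an increasing $R_2$ to push a nearby arc of the diagram through the region, creating crossings $w_1$ and $w_2$; by axiom 2 (read backwards, i.e.\ these will later disappear) we record $p(w_1) = -p(w_2)$. Next apply an $R_3$ move involving the three crossings $v, w_1, w_2$ — here one needs to choose the original isotopy so that such an $R_3$ is available — and invoke axiom 3: $p(v)+p(w_1)+p(w_2)=0$. Combining with $p(w_1)+p(w_2)=0$ gives $p(v)=0$ immediately. Throughout, axiom 1 guarantees that the parity of $v$ is unchanged by the auxiliary $R_2$ and is the same as the parity of the crossing that $R_1$ removes.

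The main obstacle is verifying that the local configuration genuinely admits the required sequence of moves — that is, that after creating the curl one can arrange an honest $R_2$ followed by an honest $R_3$ whose three participating crossings are exactly $\{v, w_1, w_2\}$, and then a decreasing $R_2$ cancelling the remaining pair. This is a picture-chasing verification (best done with the figures \ref{1mR}, \ref{2mR}, \ref{3mR}), and care is needed because the framed-$4$-graph versions of $R_2$ and $R_3$ come in several variants (as noted in the Remark after Fig.~\ref{3mR}); one must check the argument works for the variant that actually arises, or note that the curl can be isotoped to reach a convenient variant. Once the diagrammatic reduction is fixed, the parity bookkeeping is a one-line linear computation. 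An alternative, if the direct reduction is awkward, is to use that two copies of a curl of opposite signs cancel by $R_2$: create a cancelling pair of curls at $v'$ and $v''$ near $v$ by a sequence of $R_2, R_3$ moves, deduce $p(v')+p(v'')=0$ from axiom 2, and deduce $p(v')=p(v'')=p(v)$ from axioms 1 and 3, again yielding $p(v)=0$.
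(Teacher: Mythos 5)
Your proposal is correct and is essentially the paper's own argument: apply an increasing second Reidemeister move near the curl so that the two new crossings form a triangle with $v$, then combine axiom 2 ($p(w_1)+p(w_2)=0$) with axiom 3 ($p(v)+p(w_1)+p(w_2)=0$) and axiom 1 to conclude $p_K(v)=0$. The diagrammatic verification you flag as the ``main obstacle'' is exactly what the paper's Fig.~\ref{par_fir} exhibits --- the arc pushed through is the strand of the curl itself, so the configuration is entirely local and always available.
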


 \begin {proof}
Let us apply the second Reidemeister move $g$ to the diagram $K$ as
is shown in Fig.~\ref {par_fir}. We have
 $$
p_{K'}(v_1)+p_{K'}(v_2)=0,\qquad
p_{K'}(g_*(v))+p_{K'}(v_1)+p_{K'}(v_2)=p_K(v)=0.
 $$
 \end {proof}

 \begin{figure}
\centering\includegraphics[width=200pt]{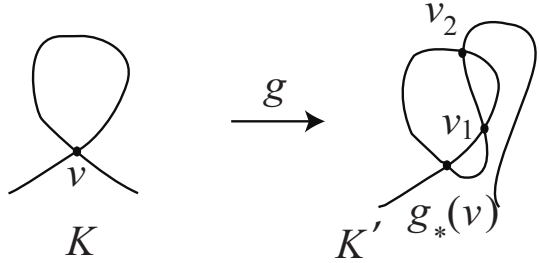}
\caption{Reduction of the first Reidemeister move to the second and
third Reidemeister moves} \label{par_fir}
 \end{figure}

Let us consider some examples of parities for some knot theories.

\subsubsection {Gaussian parity for free, flat and virtual knots}

Let $A=\mathbb{Z}_2$ and $K$ be a virtual (flat) knot diagram
(resp., a framed 4-graph with one unicursal component).

Define the map $gp_K\colon\V(K)\to\mathbb{Z}_2$ by putting
$gp_K(v)=0$ if the number of vertices linked with $v$ is even ({\em
an even crossing}), and $gp_K(v)=1$ otherwise ({\em an odd
crossing}).

 \begin {lemma}\cite{Sbornik}\label {lem:gpar}
The map $gp$ is a parity for free, flat and virtual knots.
 \end {lemma}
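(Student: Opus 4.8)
The plan is to verify the three axioms of Definition~\ref{def:parity} directly from the definition of $gp_K$, using the behaviour of the linking relation of chords under Reidemeister moves. The key observation throughout is that for a crossing $v$ not participating in a move, the set of crossings linked with $v$ can change only among those crossings that \emph{do} take part in the move, so the parity of $\#\{\text{crossings linked with }v\}$ is governed entirely by the local picture. Since the claim for the first Reidemeister move follows from the other axioms by Lemma~\ref{lem:par_fir}, it suffices to treat invariance under isotopy and the three Reidemeister moves.

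Isotopy changes nothing combinatorially, so axiom~1 is immediate there. For the first Reidemeister move the new crossing $v$ is linked with no other crossing (its chord is a small isolated chord whose two ends are adjacent on the core circle), and the linkedness of any two chords disjoint from the move is unaffected — one reads this off the Gauss-diagram picture in Fig.~\ref{1mR}; hence $gp$ is preserved. For the second Reidemeister move, the two crossings $v_1,v_2$ created (or destroyed) have parallel chords with interleaved-in-pairs ends; a moment's inspection of Fig.~\ref{2mR} shows that a chord $c$ away from the move is linked with $v_1$ if and only if it is linked with $v_2$, so $v_1$ and $v_2$ contribute $0$ together to the count for $c$ (this gives axiom~1 for outside crossings) and also $v_1,v_2$ are not linked with each other, while $v_1$ is linked with exactly the same outside chords as $v_2$, so $gp(v_1)=gp(v_2)$ in $\mathbb{Z}_2$, i.e.\ $gp(v_1)+gp(v_2)=0$, which is axiom~2. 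The third Reidemeister move is the substantive case: here one checks from Fig.~\ref{3mR} that (i)~for any chord $c$ outside the move, the number of the three chords $v_1,v_2,v_3$ linked with $c$ is the same before and after, so $gp(c)$ is preserved (axiom~1), and (ii)~among $v_1,v_2,v_3$ themselves the total number of linked pairs has the same parity on the two sides, which forces $gp(v_1)+gp(v_2)+gp(v_3)$ to be unchanged; combined with the fact that this sum must then be the same constant on both diagrams, a direct count of the (at most three) configurations shows it equals $0$.

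The main obstacle is the bookkeeping for the third Reidemeister move: one must enumerate the ways the three participating chords can be mutually linked (equivalently, the possible cyclic orders of their six ends on the core circle that are compatible with the move) and, for each, confirm both that outside chords keep their parity and that $gp(v_1)+gp(v_2)+gp(v_3)=0$. This is a finite but slightly delicate case analysis; the cleanest way to organise it is to fix the local picture of Fig.~\ref{3mR}, track the three chords through the move, and observe that exactly the pairwise linkedness among $\{v_1,v_2,v_3\}$ flips in a controlled way while each $v_i$'s linkedness with any fixed outside chord is invariant. Once this is done for free knots (Gauss diagrams), the flat and virtual cases follow verbatim, since $gp_K$ depends only on the underlying framed $4$-graph and the detour move does not affect classical crossings at all. $\hfill\square$
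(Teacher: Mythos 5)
The paper itself gives no proof of this lemma (it is quoted from~\cite{Sbornik}), so your argument can only be judged on its own terms. Your overall strategy --- verify the axioms of Definition~\ref{def:parity} directly on Gauss diagrams by tracking how chord linkedness changes under the moves, and note that the flat/virtual cases reduce to the free case because $gp$ ignores the extra decorations --- is the right one, and your treatment of the first and second Reidemeister moves is essentially correct. (One small misstatement there: the two chords created by a second Reidemeister move \emph{can} be linked with each other, depending on the variant of the move; this is harmless, since mutual linking adds $1$ to both counts and the conclusion $gp(v_1)=gp(v_2)$ survives.)

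The genuine gap is in your item (ii) for the third Reidemeister move, i.e.\ precisely in axiom~3. First, the claim that ``the total number of linked pairs among $v_1,v_2,v_3$ has the same parity on the two sides'' is false: in the variant where the six endpoints read $v_1v_2\,|\,v_2v_3\,|\,v_3v_1$ all three pairs are unlinked before the move and all three are linked after it. (This does not hurt invariance, because each mutually linked pair contributes $2$ to $gp(v_1)+gp(v_2)+gp(v_3)$ and hence nothing mod $2$ --- but that is a different argument from the one you give.) Second, and more seriously, invariance of the sum under the move does not show the sum is $0$, and ``a direct count of the configurations'' of the three chords cannot show it either, because $gp(v_i)$ also counts linkings of $v_i$ with chords \emph{outside} the move. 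What is actually needed is the observation that the six endpoints of $v_1,v_2,v_3$ occur on the core circle as three pairs of adjacent points, each pair containing one endpoint of each of two of the chords; these three pairs form a $3$-cycle under the relation ``share a chord,'' and any outside chord $c$ separates the pairs into two classes, so the number of the $v_i$ linked with $c$ equals the size of a cut of a cycle, which is even ($0$ or $2$). Summing over all outside chords and adding the even mutual contribution gives $gp(v_1)+gp(v_2)+gp(v_3)=0$. (The same evenness argument, applied to an outside chord and to the three participating chords, also cleans up your item (i).) Without this step the key axiom is asserted rather than proved; compare Lemma~\ref{lem:sum_par}, which records exactly this ``sum over a polygon is zero'' phenomenon for arbitrary parities.
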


 \begin {definition}
The parity $gp$ is called the {\em Gaussian parity}.
 \end {definition}

\subsection {Parity and homology}

A natural source of parities comes from one-dimensional
$\mathbb{Z}_{2}$-(co)homology classes of the underlying surface of a
(virtual) knot. We shall see that if we consider curves in a given
closed 2-surface then (modulo some restrictions) these homology
classes will lead to well-defined parities for knots on such
surfaces (the same works for virtual knots in the thickening of this
surface). The inverse statement is also true: if we take a given
parity on a given surface, then it will lead to a certain
$\mathbb{Z}_{2}$-homology class of the surface.

So, when we have a knot and a fixed surface associated with it, this
gives us a universal receipt of constructing parities and leads us
to the universal parity, see ahead.

However, when passing to virtual knots by means of the
stabilisation, this causes the following trouble: the surface is not
fixed any more and there is no canonical coordinate system on this
surface. Thus, for example, if we work on a concrete torus, we may
fix a coordinate system on it and take the parity corresponding to
the `meridian'. However, when we stabilise and destabilise, we may
destroy the coordinate system on the surface, so it will be
impossible to recover the initial (co)homology class.

To this end, we introduce the notion of a characteristic class for
underlying surfaces corresponding to virtual knots (see rigorous
definition ahead). This is a class which does not depend on anything
except a given virtual knot and behaves nicely on surfaces coming
from diagrams, in particular, under stabilisations/destabilisations.

We give some concrete examples of constructing characteristic
classes.

As we shall see later, this approach does not always help: for the
flat knot diagram (in Fig.~\ref {dec}) on the surface of genus $2$
(the surface is represented as a decagon with opposite sides
identified) is so symmetric, that every characteristic class of it
is trivial (see Example~\ref {sec:def_par}.\ref {exa:dec}), though
when we restrict ourselves to this concrete surface of genus $2$,
there will be non-trivial parities which have non-zero values on the
crossings of the flat knot diagram.

 \begin{figure}
\centering\includegraphics[width=150pt]{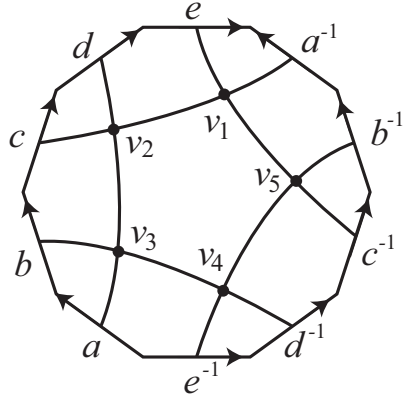} \caption{A knot
in a surface of genus two} \label{dec}
 \end{figure}

To overcome this difficulty, we enlarge the notion of parity.
Instead of a parity valued in $\mathbb{Z}_{2}$, we introduce the
universal parity valued in some linear space over $\mathbb{Z}_{2}$
which is closely related to knot diagrams (the
$\mathbb{Z}_{2}$-homology group of the underlying space with a fixed
basis) and see that all previously known $\mathbb{Z}_{2}$-valued
parities factor through this universal parity.

This parity allows one to work with examples where characteristic
classes and their corresponding parities fail.

First of all we describe a connection between a parity and the
homologies of a surface.

\subsubsection{Homological parity for homotopy
classes of curves generically immersed in a surface}

Let $S$ be a connected closed surface. We consider a free homotopy
class $\mathcal{K}$ of curves generically immersed in $S$.

Let $A=H_1(S,\Z_2)/[\mathcal{K}]$, where $[\cdot]$ denotes a
homological class.

Let $K$ be a framed 4-graph embedded in $S$ representing a curve
from $\mathcal{K}$. For each vertex $v$ we have two halves of the
graph, $K_{v,1}$ and $K_{v,2}$, obtained by smoothing at this
vertex, see Fig.~\ref{smo}.

 \begin{figure}
\centering\includegraphics[width=250pt]{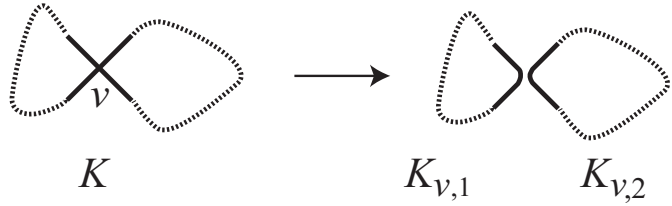} \caption{The graphs
$K_{v,1}$ and $K_{v,2}$} \label{smo}
 \end{figure}

Define the map $hp_K\colon\V(K)\to A$ by putting
$hp_K(v)=[K_{v,1}]$.

 \begin {lemma}\cite{Sbornik}\label {lem:hpar}
The map $hp$ is a parity for homotopy classes of curves generically
immersed in $S$.
 \end {lemma}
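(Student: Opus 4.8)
The plan is to verify the three axioms of Definition~\ref{def:parity} directly for the map $hp_K(v)=[K_{v,1}]\in A=H_1(S,\Z_2)/[\mathcal{K}]$. The first observation is that $[K_{v,1}]$ is well-defined modulo the ambiguity of which half is called $K_{v,1}$ versus $K_{v,2}$: since $K_{v,1}\cup K_{v,2}$ together trace out (a representative of) the whole knot $\mathcal{K}$, and since over $\Z_2$ we have $[K_{v,1}]+[K_{v,2}]=[\mathcal{K}]$ in $H_1(S,\Z_2)$, the two possible values agree after passing to the quotient by $[\mathcal{K}]$. This is precisely why the target group must be $H_1(S,\Z_2)/[\mathcal{K}]$ rather than $H_1(S,\Z_2)$ itself, and it is worth stating this as the first step because it makes $hp_K$ a genuinely well-defined function on $\V(K)$.

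Next I would check axiom~1, invariance under an elementary morphism $f\colon K\to K'$ at a crossing $v$ that survives. For an isotopy there is nothing to prove since the embedded curve and its smoothings change by an ambient isotopy of $S$, which acts trivially on homology. For a Reidemeister move, the move is supported in a disc $D\subset S$; outside $D$ the diagram is unchanged, and the smoothing $K'_{f_*(v),1}$ differs from $K_{v,1}$ only by a modification inside $D$. Since any two arcs/curves in a disc with the same endpoints on $\partial D$ are homologous rel boundary (the disc is simply connected), the homology class of the smoothed half is unchanged: $[K'_{f_*(v),1}]=[K_{v,1}]$ in $H_1(S,\Z_2)$, hence a fortiori in $A$. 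This local-modification-in-a-disc argument is the workhorse and I would phrase it carefully once so it can be reused.

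For axioms~2 and~3 the same disc argument applies, but now I must track the homology classes of the halves at the crossings that disappear. For a decreasing second Reidemeister move removing crossings $v_1,v_2$: smoothing at $v_1$ one way and at $v_2$ correspondingly, the two halves obtained are each, up to a modification inside the supporting disc, a closed curve that bounds (it is the "small" half cut off by the bigon, which together with a sub-arc in the disc forms a null-homologous loop) — more precisely one checks that $[K_{v_1,1}]$ and $[K_{v_2,1}]$ can be chosen so that their sum is null-homologous in $S$, giving $hp_K(v_1)+hp_K(v_2)=0$ in $A$. For the third Reidemeister move with crossings $v_1,v_2,v_3$, the three relevant halves (one per crossing, chosen compatibly) are again modifications inside the supporting disc of curves whose classes sum to zero in $H_1(S,\Z_2)$; this is a small combinatorial check on the picture in Fig.~\ref{3mR}/Fig.~\ref{smo}, using that the portion of the diagram inside the disc contributes nothing to homology and the three "outer" pieces fit together into a boundary. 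The main obstacle is precisely this bookkeeping: one must make consistent choices of which smoothed half to use at each of the two or three crossings so that the homology classes genuinely cancel, and verify that the choice is forced (or irrelevant) after quotienting by $[\mathcal{K}]$. Once the disc-supportedness is isolated, each of the three verifications reduces to inspecting a single local picture, so I would organize the proof as: (i) well-definedness via $[K_{v,1}]+[K_{v,2}]=[\mathcal{K}]$; (ii) the disc lemma; (iii) three short case checks against Figs.~\ref{1mR}--\ref{3mR}.
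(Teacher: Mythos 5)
Your proposal follows essentially the same route as the paper's proof: well-definedness of $hp_K$ from $[K_{v,1}]+[K_{v,2}]=[K]$ in the quotient $A=H_1(S,\Z_2)/[\mathcal{K}]$, invariance under moves supported in a disc, and the observation that suitably chosen halves at the two (resp.\ three) crossings of a second (resp.\ third) Reidemeister move, together with a null-homotopic loop inside the disc, assemble to $[K]=0$ in $A$. The only quibble is the passing phrase suggesting each half individually bounds (it need not), but you immediately correct this to the sum cancelling, which is exactly the paper's computation.
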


 \begin {proof}
From the definition of $A$ it follows that $hp$ does not depend on
the choice of a half for a vertex.

Let $f\colon K\to K'$ be an elementary morphism.

1) Since Reidemeister moves are performed in a small area of $S$
homeomorphic to a disc, we have $hp_{K'}(f_*(v))=hp_K(v)$ provided
that $v\in\V(K)$ and there exists $f_*(v)\in\V(K')$.

2) Let $f$ be a decreasing second Reidemeister move, and let
$v_1,\,v_2$ be the disappearing crossings. Denote by $K_{v_1,1}$ and
$K_{v_2,1}$ the two halves corresponding to the vertices $v_1$ and
$v_2$, see Fig.~\ref {reid2par}.

 \begin{figure}
\centering\includegraphics[width=250pt]{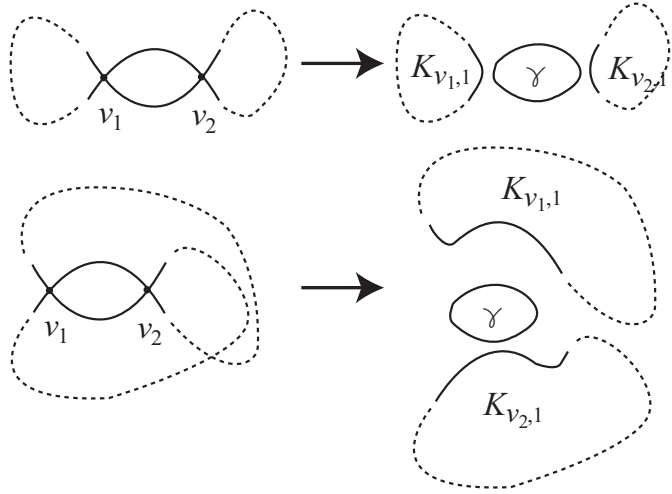} \caption{The
second Reidemeister move} \label{reid2par}
 \end{figure}

We have
 $$
hp_K(v_1)+hp_K(v_2)=[K_{v_1,1}]+[K_{v_2,1}]=[K_{v_1,1}]+[K_{v_2,1}]+[\gamma]=
[K]=0.
 $$

3) Let $f$ be a third Reidemeister move, and let $v_1,\,v_2,\,v_3$
be the crossings participating in this move. Denote by $K_{v_1,1}$,
$K_{v_2,1}$ and $K_{v_3,1}$ the three halves corresponding to $v_1$,
$v_2$ and $v_3$ respectively, see Fig.~\ref {reid3par} (we consider
only one case depicted in Fig.~\ref {reid3par}, all other versions
of the third Reidemeister move can be treated in the same way).

 \begin{figure}
\centering\includegraphics[width=300pt]{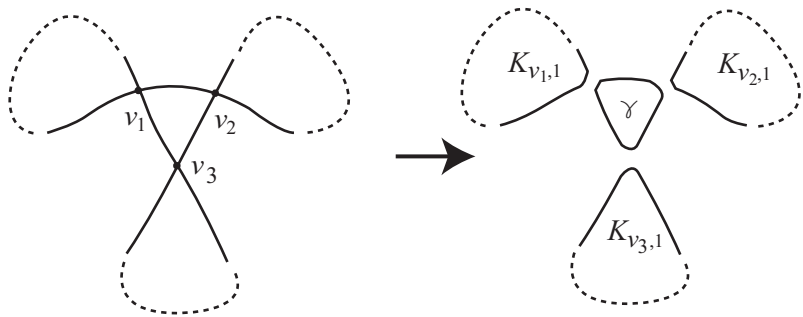}
\caption{The third Reidemeister move} \label{reid3par}
 \end{figure}

We have
 $$
hp_K(v_1)+hp_K(v_2)+hp_K(v_3)=[K_{v_1,1}]+[K_{v_2,1}]+[K_{v_3,1}]
 $$
 $$
=[K_{v_1,1}]+[K_{v_2,1}]+[K_{v_3,1}]+[\gamma]= [K]=0.
 $$
 \end {proof}

\subsubsection{Characteristic classes for framed 4-graphs}

Our next task is to understand the topological nature of parity. As
we shall see, when we deal with curves on a fixed surface, all
possible parities for such curves are closely connected with
(co)homology classes with coefficients in $\mathbb{Z}_{2}$.

However, when we deal with virtual knots or knots in an abstract
thickened surface, then there is no canonical choice of the
coordinate system on the surface, so we can not say what is a
`cohomology class dual to the longitude' or a `cohomology class dual
to the meridian'. Moreover, cohomology classes have to be chosen in
a way compatible with stabilisations.

There is a partial remedy which deals with so-called characteristic
classes. Roughly speaking, a characteristic class is a class on the
surface corresponding to a knot diagram which can be recovered from
the diagram itself. This will be discussed in~\ref {subsub:char_vk}.

Consider a framed $4$-graph $K$ with one unicursal component. The
homology group $H_{1}(K,\Z_{2})$ is generated by halves
corresponding to vertices. If the set of framed $4$-graphs
(possibly, with some further decorations at vertices) is endowed
with a {\em parity}, then we can construct the following cohomology
class $h$: for each of the halves $K_{v,1},\,K_{v,2}$ we set
$h(K_{v,1})=h(K_{v,2})=p_K(v)$, where $p_K(v)$ is the parity of the
vertex $v$. Taking into account that every two halves for each
vertex sum up to give the cycle generated by the whole graph, we
have defined a ``characteristic'' cohomology class $h$ from
$H_{1}(K,\Z_{2})$.

Collecting the properties of this cohomology class we see that

 \begin{enumerate}
  \item
For every framed $4$-graph $K$ we have $h(K)=0$.
  \item
Let $K'$ be obtained from $K$ by a second Reidemeister move
increasing the number of crossings by two. Then for every basis
$\{\alpha_{i}\}$ of $H_{1}(K,\Z_{2})$ there exists a basis in
$H_{1}(K',\Z_{2})$ consisting of one ``bigon'' $\gamma$, the
elements $\alpha'_{i}$ naturally corresponding to $\alpha_{i}$ and
one additional element $\delta$, see Fig.~\ref{r2r3}, left.

\begin{figure}
\centering\includegraphics[width=300pt]{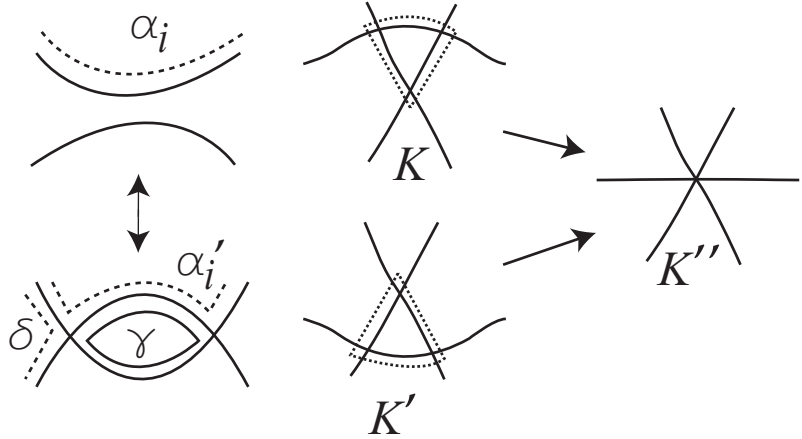} \caption{The
cohomology condition for Reidemeister moves} \label{r2r3}
\end{figure}

Then the following holds: $h(\alpha_{i})=h(\alpha'_{i})$,
$h(\gamma)=0$.

  \item
Let $K'$ be obtained from $K$ by a third Reidemeister move. Then
there exists a graph $K''$ with one vertex of valency  $6$ and the
other vertices of valency $4$ which is obtained from either of $K$
or $K'$ by contracting the ``small'' triangle to the point. This
generates the mappings $i\colon H_{1}(K,\Z_{2})\to
H_{1}(K'',\Z_{2})$ and $i'\colon H_{1}(K',\Z_{2})\to
H_{1}(K'',\Z_{2})$, see Fig.~\ref{r2r3}, right.

We require the following to hold: the cocycle $h$ is equal to zero
for small triangles, besides that if for $a\in
H_{1}(K,\Z_{2}),\,a'\in H_{1}(K',\Z_{2})$ we have $i(a)=i'(a')$,
then $h(a)=h(a')$.
 \end{enumerate}

Note that in 2 no restriction on $h(\delta)$ is imposed.

Thus, every parity for free knots generates some $\Z_{2}$-cohomology
class for all framed $4$-graphs with one unicursal component, and
this class behaves nicely under Reidemeister moves.

The converse is true as well. Assume we are given a certain
``universal'' $\Z_{2}$-cohomology class for all framed 4-graphs
satisfying the conditions 1--3 described above (later we shall
describe the exact definition of the universality). Then it
originates from some {\em parity}. Indeed, it is sufficient to
define the parity of every vertex to be the parity of the
corresponding half. The choice of a particular half does not matter,
since the value of the cohomology class on the whole graph is zero.
One can easily check that parity axioms follow.

This point of view allows one to find parities for those knots lying
in $\Z_{2}$-homologically nontrivial manifolds. For more details,
see~\cite{MM1}.

\subsubsection{Characteristic parities for virtual knots}\label {subsub:char_vk}

Let $K$ be a virtual knot diagram, and let $P=(S,K)$ be the CLSD
associated with the diagram $K$. A {\em checkerboard colouring} of
$S$ with respect to $K$ is a colouring of all the components of
$S\setminus K'$, where $K'$ is the image of the embedding of $K$, by
two colours, say black and white, such that two components of
$S\setminus K'$ being adjacent by an edge of $K'$ have always
distinct colours.

We say that a virtual diagram {\em admits a checkerboard colouring}
or it is {\em checkerboard colourable} if the associated CLSD admits
a checkerboard colouring.

 \begin {theorem}[\cite{IM1}]
If two two virtual diagrams admitting a checkerboard colouring are
equivalent in the category of virtual knots, then they are
equivalent in the category of virtual knots admitting a checkerboard
colouring.
 \end {theorem}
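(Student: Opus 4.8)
The plan is to deduce the statement from two facts: (i) a virtual diagram $K$ admits a checkerboard colouring if and only if the $\Z_2$-fundamental class of the embedded four-graph vanishes in $H_{1}$ of the associated CLSD $\psi(K)$, and (ii) every elementary morphism preserves checkerboard colourability. Granting these, the theorem is immediate: if $K_{0}\to K_{1}\to\cdots\to K_{n}$ is a morphism in the category of virtual knots with $K_{0}$ and $K_{n}$ checkerboard colourable, then by (ii) each $K_{i}$ is checkerboard colourable, so this same sequence of elementary morphisms is already a morphism in the category of virtual knots admitting a checkerboard colouring.

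For (i) I would argue directly. Write $\psi(K)=(S,K')$ and give $S$ the CW structure whose $0$- and $1$-cells are the vertices and edges of $K'$ and whose $2$-cells are the complementary regions. A black/white colouring is exactly the datum of a $2$-chain $c\in C_{2}(S;\Z_{2})$, namely the sum of the black cells, with $\partial c=K'$: for an edge $e$ of $K'$ the coefficient of $e$ in $\partial c$ is the parity of the number of black cells incident to $e$, and since exactly two cells are incident to $e$, requiring this to equal $1$ for every edge is precisely requiring the two sides of $e$ to carry opposite colours. Hence a checkerboard colouring exists if and only if $K'$ bounds, i.e.\ $[K']=0$ in $H_{1}(S;\Z_{2})$; in particular the colouring is unique up to a global swap on each component of $S$.

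For (ii) I would run through the elementary morphisms. A planar isotopy and the detour move do not change $\psi(K)$ up to orientation-preserving homeomorphism, so there is nothing to check. Each Reidemeister move is supported in a disc $B$ whose image in $S$ is an embedded disc; the move changes the $1$-cycle $K'$ only inside that disc, and two $1$-cycles agreeing outside an embedded disc are homologous, so $[K']$ is unchanged as long as the ambient surface is fixed. The one real point is that after an $R_{1}$ or $R_{2}$ move the surface $S$ is no longer the CLSD of the new diagram — a former $2$-cell has become an annulus, or has been split — so one must pass from $S$ to the genuine CLSD of the new diagram; those two surfaces are related by a (de)stabilisation along a curve that bounds a disc in the surface, and one checks that under the induced map on $H_{1}(\,\cdot\,;\Z_{2})$ the diagram's fundamental class still corresponds to $0$. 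Equivalently, and more concretely, I would verify the local claim for each of the finitely many planar pictures of $R_{1}$, $R_{2}$, $R_{3}$ (and their virtualised/mixed variants): the checkerboard colouring that the complement imposes on $\partial B$ extends over the local picture. This is straightforward for increasing moves, where the new monogon/bigon acquires a forced colour, and for decreasing moves it amounts to statement (i) applied to the local picture — one must allow $B$ to be re-coloured.

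The main obstacle is exactly this last verification, for the decreasing $R_{1}$ and $R_{2}$ moves (and their mixed variants): a colouring chosen on one side of the move need not restrict to a colouring on the other, so one cannot transport a single fixed colouring along the whole sequence — one must instead track the homology class, and the bookkeeping comparing the CLSD of $K_{i}$ with that of $K_{i+1}$, which in general have different genus, is where the care lies. Once this is organised, no rerouting or cancellation of moves is needed: the original equivalence already lives inside the colourable category.
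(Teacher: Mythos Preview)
The paper does not prove this theorem: it is quoted from~\cite{IM1} and used as a black box, with no argument (or even sketch) given in the present text. There is therefore nothing here to compare your proposal against.

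For what it is worth, your strategy is the standard one and is sound. The homological characterisation in your step (i) is correct, and once one knows that checkerboard colourability is preserved by every elementary morphism, the theorem is immediate exactly as you say. The only substantive content is indeed the bookkeeping you isolate at the end: under a decreasing $R_1$ or $R_2$ the CLSD genus can drop (the paper itself depicts this in Fig.~\ref{handle}), so one cannot literally carry the same $2$-chain through the move. Your two proposed fixes --- either comparing $H_1$ before and after the (de)stabilisation, or simply checking the finitely many local pictures on the disc boundary --- both work; the second is the path of least resistance and is how the result is usually verified.
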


We consider the category of virtual knots admitting a checkerboard
colouring.

 \begin {definition}
A {\em characteristic class} of a knot $\mathcal{K}=\{K\}$ is a
homology class of the surface $S$ associated with a diagram $K$ such
that this class does depend only on $\mathcal{K}$ and behaves nicely
under Reidemeister moves.
 \end {definition}

Consider the group $H_1(S,\Z_2)$ and any element $[\gamma]\in
H_1(S,\Z_2)$. We know that $[K']=0$.

Define the map $\chi_{K,\gamma}\colon\V(K)\to\mathbb{Z}_2$ by
putting $\chi_{K,\gamma}(v)$ to be equal to the intersection number
of $\gamma$ and $K'_{v,1}$, where $K'_{v,1}$ is a half of $K'$
corresponding to~$v$.

Our aim is to construct a homology class of $\gamma$, which does
only depend on a virtual knot generated by $K$, and defines a parity
on the virtual knot.

Consider the following cases.

1) Let $\gamma_a$ be the sum of halves over all classical crossings
(for each classical crossing we take only one half).

2) Let $\mathcal{L}$ be an arbitrary non-trivial free link with two
linked components. At each vertex of $K$ we can consider a smoothing
giving the link diagram with two components. We say that a classical
crossing $v$ of $K$ {\em leads to} $\mathcal{L}$ if after a
smoothing of it and considering the result just as a framing 4-graph
we get a diagram of $\mathcal{L}$. Let us define
 $$
\gamma_{\mathcal{L}}(K)=\sum\limits_{v}K'_{v,1},
 $$
where the sum is taken over all classical crossings giving a diagram
of $\mathcal{L}$.

 \begin {theorem}
The maps $\chi_{K,\gamma_a}$ and $\chi_{K,\gamma_{\mathcal{L}}}$ are
parities for virtual knots with coefficients in $\Z_2$.
 \end {theorem}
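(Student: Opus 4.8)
The plan is to verify the three parity axioms of Definition~\ref{def:parity} for each of the two maps, exploiting the fact (already established in the discussion preceding the theorem, and in Lemma~\ref{lem:hpar}) that intersection number with a homology class behaves well with respect to Reidemeister moves. The key point is to show that in both cases the \emph{homology class} $[\gamma_a]$ (respectively $[\gamma_{\mathcal{L}}(K)]$) in $H_1(S,\Z_2)$ depends only on the virtual knot $\mathcal{K}$, not on the chosen diagram $K$ — indeed, not even on the choice of which half $K'_{v,1}$ is taken at each vertex, since the two halves at a vertex sum to $[K']=0$. Once that is in hand, axiom~1 (invariance under an elementary morphism that preserves a crossing) follows because a Reidemeister move takes place in a disc, so $K'_{v,1}$ is changed only away from the move and its intersection number with a fixed $\gamma$ is unaffected; this is verbatim the argument of part~1 of the proof of Lemma~\ref{lem:hpar}.

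For axioms~2 and~3 I would argue exactly as in Lemma~\ref{lem:hpar}: under a decreasing second Reidemeister move removing crossings $v_1,v_2$, the halves $K'_{v_1,1}$ and $K'_{v_2,1}$ differ by a small bigon $\gamma_b$ bounding a disc, hence $[K'_{v_1,1}]+[K'_{v_2,1}]=0$ in $H_1(S,\Z_2)$, and intersecting with $\gamma$ gives $\chi_{K,\gamma}(v_1)+\chi_{K,\gamma}(v_2)=0$; similarly, for a third Reidemeister move the three halves sum (up to a small triangle bounding a disc) to zero in homology, giving $\chi_{K,\gamma}(v_1)+\chi_{K,\gamma}(v_2)+\chi_{K,\gamma}(v_3)=0$. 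The subtlety is that here $\gamma=\gamma_a$ or $\gamma=\gamma_{\mathcal{L}}(K)$ is itself built from $K$, so when the diagram changes by the move one must check that the class $[\gamma]$ is transported correctly; this is where the diagram-independence established in the first step does the work. One also uses that the ``extra'' crossings created by a second Reidemeister move, and the reconfigured crossings of a third move, do not change which crossings ``lead to'' $\mathcal{L}$: a crossing $v$ far from the move has its smoothing-type unchanged, and the newly created pair in a second move gives, after smoothing, a diagram that contains a trivial extra bigon-component or reduces to the old one, so it does not contribute a genuinely non-trivial two-component link — here we need $\mathcal{L}$ to be non-trivial, which is precisely the hypothesis imposed on $\mathcal{L}$.

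Concretely I would organize the proof as: (i) show $[\gamma_a]$ is independent of the choice of halves (immediate) and invariant under Reidemeister moves (check the three move types, using that added/removed crossings contribute halves that are null-homologous or cancel in pairs); (ii) conclude that $\chi_{K,\gamma_a}$ satisfies axioms~1--3 by combining step~(i) with the local disc argument of Lemma~\ref{lem:hpar}; (iii) repeat (i)--(ii) for $\gamma_{\mathcal{L}}(K)$, the only new ingredient being the analysis of how the set $\{v : v \text{ leads to } \mathcal{L}\}$ changes under each Reidemeister move — a crossing far from the move is untouched, and the crossings involved in the move contribute a subsum whose halves sum to a class bounding a disc in $S$, hence contribute $0$ after intersection with any fixed class, provided the ``link obtained by smoothing'' at those crossings is not $\mathcal{L}$ (guaranteed by non-triviality of $\mathcal{L}$ together with the fact that a small bigon or triangle smoothing yields a split or trivial component). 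The main obstacle will be step~(iii): making precise, for each of the several configurations of the second and third Reidemeister moves, exactly which two-component smoothings arise at the participating crossings and confirming none of them equals the fixed non-trivial $\mathcal{L}$; this is a finite case check but it is the only place where a genuinely new argument beyond Lemma~\ref{lem:hpar} is needed, and it is where the hypothesis ``$\mathcal{L}$ non-trivial with two linked components'' is essential.
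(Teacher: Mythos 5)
Your overall strategy (reduce to Lemma~\ref{lem:hpar} plus a separate analysis of how the class $\gamma$ itself is transported under moves) is the right one, and it matches the paper's intent. But there is a genuine gap: you treat the underlying surface $S$ as fixed throughout, and in particular you assert that under a decreasing second Reidemeister move the halves $K'_{v_1,1}$ and $K'_{v_2,1}$ ``differ by a small bigon bounding a disc.'' For virtual knots the surface is the CLSD associated with the diagram, and it changes with the diagram: as the paper's Fig.~\ref{handle} shows, a second Reidemeister move can change the genus of the associated surface by one. In that situation the bigon formed by $v_1,v_2$ runs around the new handle and does \emph{not} bound a disc, so your argument for axiom~2 (and the ``intersection with a fixed $\gamma$ is unaffected'' argument for axiom~1) breaks down. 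The paper's proof is organized around exactly this dichotomy: when the two CLSDs have the same genus it invokes Lemma~\ref{lem:hpar}, and when the genus drops by one it gives a separate argument, choosing the halves $K'_{v_1,i}$ and $K'_{v_2,j}$ to be \emph{homotopic} curves on $S_1$ (so their intersection numbers with $\gamma$ agree and sum to $0$ mod $2$), together with a check that halves of the surviving crossings meet the disappearing pair in $0$ or $2$ points. Your proposal contains no counterpart of this case.

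A second, more local error: you claim the two crossings created by a second Reidemeister move never lead to $\mathcal{L}$ because their smoothings yield a split or trivial two-component link. That is false: smoothing at \emph{one} crossing of the bigon leaves the other bigon crossing as a genuine crossing between the two resulting components, and the rest of the diagram is arbitrary, so the resulting free link can perfectly well be a non-trivial linked $\mathcal{L}$. The correct observation, which the paper uses, is that the smoothings at $v_1$ and at $v_2$ give the \emph{same} free link, so either both new crossings contribute to $\gamma_{\mathcal{L}}$ or neither does; since their chosen halves are homologous, their net contribution to $[\gamma_{\mathcal{L}}]$ vanishes mod $2$. Your version of step~(iii) would therefore need to be rewritten along these lines rather than patched.
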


 \begin {proof}
We consider only the map $\chi_{K,\gamma_{\mathcal{L}}}$.

Let $f\colon K_1\to K_2$ be an elementary morphism of two knot
diagrams. Consider two CLSD's $P_1=(S_1,K_1)$ and $P_2=(S_2,K_2)$
associated with $K_1$ and $K_2$, respectively. It is sufficient to
consider two cases:

1) If $S_1$ and $S_2$ have the same genus, then the virtue of the
claim follows from Lemma~\ref {lem:hpar}.

2) If the genus of $S_2$ is smaller than the genus of $S_1$ by $1$,
then $f$ is a decreasing second Reidemeister move, see Fig.~\ref
{handle}.

 \begin{figure}
\centering\includegraphics[width=250pt]{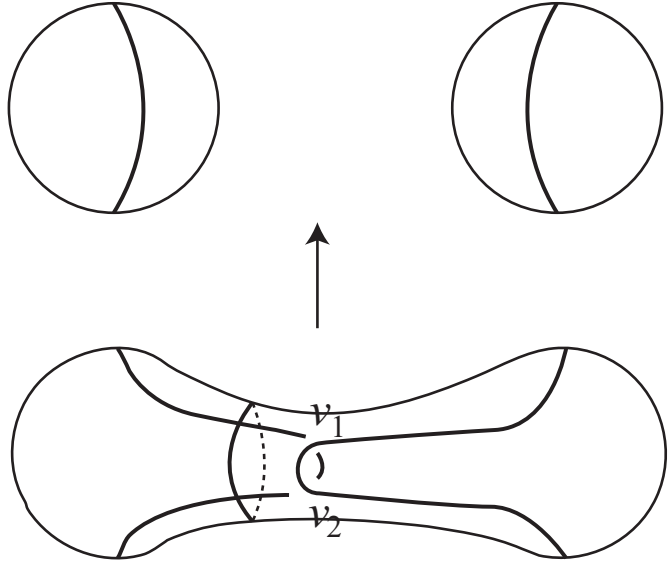} \caption{A
second Reidemeister move adds a handle} \label{handle}
 \end{figure}

As $\mathcal{L}$ is a free link then the classical crossings $v_1$
and $v_2$ participating in the move either simultaneously give the
free link $\mathcal{L}$ or do not give it.

Denote by $K'_i$ the image of $K_i$ in $S_i$. As any half of any
classical crossing of $K'_1$ intersects any half of a classical
crossing distinct from $v_1$ and $v_2$ either at $0$ or precisely
two of $v_1$, $v_2$ and we can pick halves $K'_{v_1,i}$ and
$K'_{v_2,j}$ in such a way that they are homotopic as curves on
$S_1$, we get
 $$
\chi_{K_1,\gamma_{\mathcal{L}}}(v_1)+\chi_{K_1,\gamma_{\mathcal{L}}}(v_2)=0,
 $$
and
$\chi_{K_2,\gamma_{\mathcal{L}}}(f_*(v))=\chi_{K_1,\gamma_{\mathcal{L}}}(v)$
provided that $v\in\V(K_1)$ and there exists $f_*(v)\in\V(K_2)$.
 \end {proof}

 \begin {exa}\label {exa:dec}
Consider the knot diagram $K$ depicted in Fig.~\ref {dec}. It is not
difficult to show that we have the non-trivial map
$hp_K\colon\V(K)\to H_1(S,\Z_2)/[\mathcal{K}]$. The image of this
map is the subgroup of $H_1(S,\Z_2)/[\mathcal{K}]$ generated by $5$
elements $a_i=hp_K(v_i)$ with the relations $a_1+a_2+a_3+a_4+a_5=0$,
cf.~\cite {FunMap}.

But if we want to construct a characteristic parity with the methods
described above we shall fail. $K$ is so symmetric that all five
crossings have the same parity, say $p$. Since we have pentagon, we
get $5p=0$ and, then, $p=0$.
 \end {exa}

Let $K$ be an oriented knot diagram. At each classical vertex we
have one smoothing respecting the orientation on $K$. We can
construct parity $\chi_{K,\mathcal{L}}$ with an oriented free link
$\mathcal{L}$ having two unicursal components by taking the sum only
over classical crossings whose smoothings give $\mathcal{L}$.

Let $\mathcal{L}$ be a non-invertible free link with two unicursal
components~\cite {FrNonInv}, see, for example, Fig.~\ref {noninv}.
If a vertex of an oriented knot leads to $\mathcal{L}$, then this
vertex does most probably not lead to $\mathcal{\overline{L}}$,
where $\mathcal{\overline{L}}$ is the free link obtained from
$\mathcal{L}$ by reversion of the orientation. It means that a
parity does feel an orientation on diagrams.

\begin{figure}
\centering\includegraphics[width=200pt]{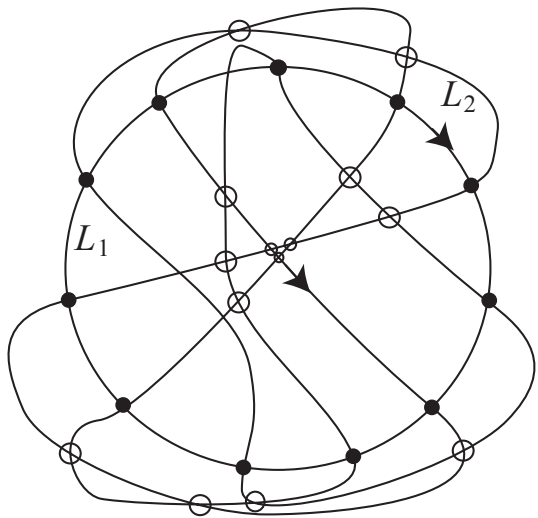} \caption{A
non-invertible free link} \label{noninv}
\end{figure}

\section{The universal parity}\label {sec:uni_par}

In Section~\ref {sec:def_par}, we have given a receipt how to
construct parities from homology classes and indicated how to
construct characteristic homology classes from the knot itself;
these classes lead to concrete parities. However, when we apply such
characteristic classes to the knot in Fig.~\ref {dec}, we see that
all corresponding parities vanish. Nevertheless, the corresponding
flat knot lies in a surface $S_{2}$ of genus $2$ and is not
contractible. So, there are some homology classes (which are
presumably not characteristic) which yield some parity for some
coordinate system of $S_{2}$ which is non-trivial on some vertices
of the knot. The idea of the present section is to construct the
universal parity, cf.~\cite {FunMap}, valued in a certain group
related to the knot rather than the group $\mathbb{Z}_{2}$. This
parity will be universal in the sense that any concrete parity on a
given surface factors through the universal one.

 \begin{definition}
A parity $p_u$ with coefficients in $A_u$ is called a {\em universal
parity} if for any parity $p$ with coefficients in $A$ there exists
a unique homomorphism of group $\rho\colon A_u\to A$ such that
$p_K=\rho\circ (p_u)_K$ for any diagram $K$.
 \end{definition}

Let us describe a construction of the universal parity in general
case.

Let $K$ be a knot diagram. Denote by $1_{K,v}$ the generator of the
direct summand in the group $\bigoplus_{K}\bigoplus_{v\in\V(K)}\Z$
corresponding to the vertex $v$ of $K$.

Let $A_u$ be the group
 $$
A_u=\left(\bigoplus_{K}\bigoplus_{v\in\V(K)}\Z\right) / \mathcal R,
 $$
where $\mathcal R$ is the set of relations of four types:
 \begin{enumerate}
   \item
$1_{K',f_*(v)}=1_{K,v}$ if $v\in\V(K)$ and there exists
$f_*(v)\in\V(K')$;
   \item
$1_{K,v_1}+1_{K,v_2}=0$ if $f$ is a decreasing second Reidemeister
move and $v_1,\,v_2$ are the disappearing crossings;
   \item
$1_{K,v_1}+1_{K,v_2}+1_{K,v_3}=0$ if $f$ is a third Reidemeister
move and $v_1,\,v_2,\,v_3$ are the crossings participating in this
move.
  \end{enumerate}

The map $(p_u)_K$ for each diagram $K$ is defined by the formula
$(p_u)_K(v)=1_{K,v},\ v\in\V(K)$.

If $p$ is a parity with coefficients in a group $A$, one defines the
map $\rho\colon A_u\to A$ in the following way:
 $$
\rho\left(\sum_{K,\ v\in\V(K)} \lambda_{K,v}1_{K,v}\right) =
\sum_{K,\ v\in\V(K)} \lambda_{K,v}p_K(v),\quad \lambda_{K,v}\in\Z.
 $$

The examples below present explicit description of the universal
parity.

\subsection{Free knots}

In the present subsection we show that in the case of the free knot
theory there exists only one non-trivial parity, the Gaussian
parity.

 \begin{theorem}\label {thm:uniq_gau}
Let $\mathcal{K}$ be a free knot. Then the Gaussian parity
{\em(}with coefficients in $\mathbb{Z}_2${\em )} on diagrams of
$\mathcal{K}$ is the universal parity.
 \end{theorem}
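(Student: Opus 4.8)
The plan is to show that the universal parity $A_u$ for a free knot $\mathcal K$ is exactly $\mathbb Z_2$, with $(p_u)_K$ being the Gaussian parity $gp_K$; by the universal property stated earlier this is equivalent to proving (a) $gp$ factors a map $A_u \to \mathbb Z_2$ (which is automatic from Lemma~\ref{lem:gpar} and the construction of $A_u$), and (b) that this map is an isomorphism. The content is therefore entirely in (b): I need to understand the group $A_u = \bigl(\bigoplus_K \bigoplus_{v\in\V(K)}\Z\bigr)/\mathcal R$ well enough to see that it has order $2$. First I would reduce everything to a single diagram: for a fixed diagram $K$, let $A_u(K)$ be the subgroup of $A_u$ generated by the $1_{K,v}$, $v\in\V(K)$. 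Using the relations of type~1 (invariance under elementary morphisms) together with connectedness of the category $\mathfrak K$, every generator $1_{K',v'}$ is identified with some $1_{K,v}$, so $A_u$ is generated by the classes coming from any one diagram; moreover the value $1_{K,v}$ depends only on the ``trajectory'' of $v$ under Reidemeister moves. Hence it suffices to analyze the relations among the $1_{K,v}$ for $K$ ranging over all diagrams of $\mathcal K$, pushed onto a single fixed diagram.

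The key step is the following \emph{parity-reduction lemma}, which I expect to be the technical heart: for \emph{any} free knot diagram $K$ and any crossing $v$, the class $1_{K,v}\in A_u$ equals $0$ if $v$ is Gaussian-even and equals a fixed element $\tau$ of order dividing $2$ if $v$ is Gaussian-odd, and moreover $\tau\ne 0$. The ``order dividing $2$'' part: take an odd crossing and realize $2\cdot 1_{K,v}=0$ by a sequence of moves — e.g.\ perform a second Reidemeister move to duplicate a strand and then slide it so that a relation of type~2 forces $1_{K,v}+1_{K,v}=0$ via type~1 identifications (this is the same bookkeeping as in Lemma~\ref{lem:par_fir}). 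The vanishing of even crossings and the well-definedness of $\tau$ I would prove by induction on the number of crossings, using the basic move calculus: an even crossing can always be involved in a decreasing first or second Reidemeister move after a controlled sequence of third moves (here one uses the combinatorial structure of the Gauss diagram — an even chord is, up to moves, ``unlinked'' and can be cancelled), and relations of type~2,~3 are exactly compatible with the additive behaviour of $gp$. This step is essentially a re-proof, inside the universal group, of the fact that $gp$ is the \emph{only} obstruction; it will require a careful case analysis of how odd/even crossings interact under the three Reidemeister moves, analogous to but more delicate than the verification in Lemma~\ref{lem:gpar}.

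Granting the reduction lemma, the endgame is short. The assignment $v\mapsto gp_K(v)$ respects all four families of relations in $\mathcal R$ (type~1 because $gp$ is a parity and hence morphism-invariant; type~2 because two crossings killed by a second move are linked with the same set of other crossings, so have equal Gaussian parity and their sum is $0$ in $\mathbb Z_2$; type~3 by the identity checked in Lemma~\ref{lem:gpar}), so $gp$ induces a surjection $A_u \to \mathbb Z_2$, forcing $\tau \ne 0$ and hence $A_u \cong \mathbb Z_2$ generated by $\tau$; conversely the reduction lemma shows $A_u$ has at most two elements $0$ and $\tau$. Therefore $A_u=\mathbb Z_2$ and $(p_u)_K = gp_K$, i.e.\ the Gaussian parity is universal, and in particular the only non-trivial parity for free knots (any parity $p$ factors through $A_u=\mathbb Z_2$, so it is either trivial or equal to $gp$ composed with an injection $\mathbb Z_2\hookrightarrow A$). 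The main obstacle, to emphasize, is the inductive argument that an even crossing contributes $0$ in $A_u$: one must show it can be eliminated by Reidemeister moves without first being forced to create uncontrolled odd crossings, and this is where the combinatorics of Gauss diagrams (linking of chords) must be used carefully rather than formally.
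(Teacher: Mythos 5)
Your overall skeleton agrees with the paper's: both reduce the theorem to showing that for an arbitrary parity (equivalently, for the generators $1_{K,v}$ of the universal group $A_u$) every Gaussian-even crossing has value $0$ and all Gaussian-odd crossings share a common value of order dividing $2$; the endgame (nontriviality of $\tau$ because $gp$ itself is a nontrivial parity, hence $A_u\cong\mathbb Z_2$) is fine. The gap is in the mechanism you propose for the reduction lemma. You claim that an even crossing ``can always be involved in a decreasing first or second Reidemeister move after a controlled sequence of third moves.'' This is false: third moves do not change the set of chords, and a Gaussian-even chord of a diagram of a nontrivial free knot need not participate in any decreasing move however the diagram is prepared --- the theorem must hold for all diagrams of all free knots, including irreducible ones. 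The paper's argument goes the opposite way: it only ever \emph{adds} crossings, by increasing second Reidemeister moves, so as to manufacture polygons through the given chord, and it first proves (Lemma~\ref{lem:sum_par}, itself an induction on the number of sides using added chord pairs) that the sum of parities over any polygon vanishes. The vanishing on even chords (Lemma~\ref{lem:gaev_to_ev}) is then an induction not on the number of crossings but on the number of chord-ends in a half-circle of the chord $a$: one inserts auxiliary chords to build a triangle and a pentagon through $a$, deduces $p(a)+p(b)=0$ where the new chord $b$ has a strictly smaller half-circle, and closes the induction. Nothing is ever cancelled from the original diagram.

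The second point you leave unaddressed is why two distinct odd chords $a,\,b$ receive the \emph{same} value. Your sketch only argues $2\cdot 1_{K,v}=0$ for a single odd $v$; the equality $p(a)=p(b)$ (Lemma~\ref{lem:ev_od_ae}) needs a separate argument: a chain of added chords $d_1,\dots,d_k$ interpolating between $a$ and $b$ gives $p(b)=(-1)^{k+1}p(a)$, where $k$ is the number of chord-ends between the nearest ends of $a$ and $b$, and a third Reidemeister move changes the parity of $k$ while fixing $p(a)$, which yields $p(a)=p(b)$ and $2p(a)=0$ simultaneously. Without these two inductions your ``parity-reduction lemma'' is an assertion rather than a proof, and the one concrete route you offer toward its even-crossing half does not work.
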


 \begin {rk}
Theorem~\ref {thm:uniq_gau} means that for each free knot and for
each parity on it either all vertices are even or they have the
Gaussian parity.
 \end {rk}

This theorem will follow from Lemmas~\ref {lem:sum_par},~\ref
{lem:gaev_to_ev},~\ref {lem:ev_od_ae}.

We consider free knots as Gauss diagrams with an ordered collection
of distinct chords $\{a_1,\dots,a_n\}$. Let us choose a point
distinct from ends of chords on the core circle of a chord diagram.
When going around the circle from the chosen point counter-clockwise
order we will meet each chord end. Denoting each end of a chord by
the same letter as the chord we will get a word, where each letter
corresponds to a chord and occurs precisely twice.

 \begin {definition}
Let $D$ be a chord diagram. We will say that an ordered collection
of chords with numbers $i_1,\dots,i_k$ of $D$ forms a {\em polygon},
if a word, corresponding to $D$, contains the following sequences of
distinct letters $b_{2p-1}b_{2p}$, where
$b_{2p-1},b_{2p}\in\{a_{i_{\sigma(p)}},a_{i_{\sigma(p-1)}}\}$,
$p=1,\dots,k$, for some permutation $\sigma\in S_k$.

The pairs $(b_{2p-1},b_{2p})$ of letters $b_{2p-1},\,b_{2p}$ from
the definition of a polygon are said to be {\em sides} of polygon.
 \end {definition}

 \begin {exa}
Consider the chord diagrams depicted in Fig.~\ref {exmnogo}. The
chords denoted by $a_2,\,a_4,\,a_5,\,a_6,\,a_8$ form a convex
pentagon (left) and a non-convex pentagon (right).

In Fig.~\ref {pol_kn} we depict a hexagon for a knot diagram. The
knot diagram does not intersect the interior of the hexagon.
 \end {exa}

 \begin{figure}
\centering\includegraphics[width=250pt]{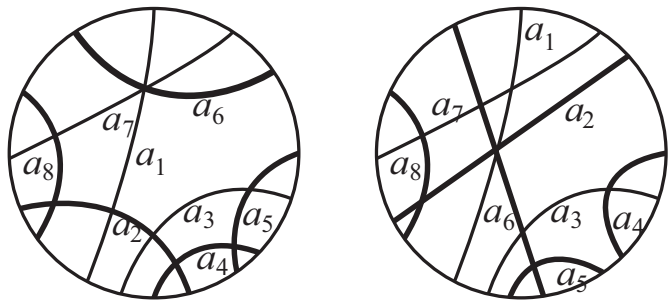}
\caption{Pentagons} \label{exmnogo}
 \end{figure}

 \begin{figure}
\centering\includegraphics[width=150pt]{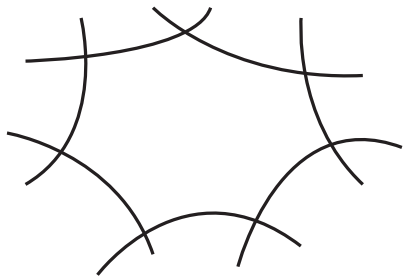} \caption{A
hexagon} \label{pol_kn}
 \end{figure}

 \begin {lemma}\label {lem:sum_par}
For every parity and any chord diagram the sum of the parities of
chords forming a polygon is equal to $0$. \label{thy}
 \end {lemma}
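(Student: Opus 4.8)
The plan is to reduce the statement to the case of a ``triangle'' (a polygon on three chords) and then invoke the third Reidemeister move axiom, using the bigon (second Reidemeister move) axiom to peel off sides one at a time. First I would observe that a polygon on $k$ chords is, by definition, a configuration in which $k$ pairs of chord ends appear consecutively (up to the permutation $\sigma$) along the core circle; geometrically this means the knot diagram bounds a $k$-gon region whose sides are arcs, each arc joining two of the chosen crossings and meeting no other chord ends. The key point is that such a configuration is, up to isotopy of the Gauss diagram, exactly the local picture to which a Reidemeister move can be applied: a monogon (one chord, $k=1$) is the first Reidemeister move picture, and Lemma~\ref{lem:par_fir} already gives that its parity is $0$; a bigon ($k=2$) is the second Reidemeister move picture, so axiom~2 of Definition~\ref{def:parity} gives $p(a_{i_1})+p(a_{i_2})=0$; and a triangle ($k=3$) is the third Reidemeister move picture, so axiom~3 gives $p(a_{i_1})+p(a_{i_2})+p(a_{i_3})=0$. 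So for $k\le 3$ there is nothing to prove beyond matching the combinatorial definition of ``polygon'' with the picture of the relevant Reidemeister move.

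For $k\ge 4$ the plan is an induction on $k$. Given a polygon on chords $a_{i_1},\dots,a_{i_k}$, pick two adjacent sides of the polygon, say the ones meeting at the vertex corresponding to chord $a_{i_k}$; these two sides are arcs $\alpha$ (from $a_{i_{k-1}}$ to $a_{i_k}$) and $\beta$ (from $a_{i_k}$ to $a_{i_1}$) of the core circle. I would apply a second Reidemeister move that pushes the arc $\alpha\cup\beta$ across, creating two new crossings $w_1,w_2$ and, more importantly, converting the original $k$-gon into a $(k-1)$-gon on the chords $a_{i_1},\dots,a_{i_{k-1}}$ together with a new chord $c$ corresponding to one of $w_1,w_2$, while the other new crossing together with $a_{i_k}$ forms a bigon. (This is precisely the ``cutting a corner'' move: replacing the two sides incident to one vertex of a $k$-gon by a single side of a $(k-1)$-gon.) Then: by axiom~2 applied to the bigon, $p(a_{i_k})+p(w_2)=0$; by the induction hypothesis applied to the new $(k-1)$-gon, $p(a_{i_1})+\cdots+p(a_{i_{k-1}})+p(c)=0$; and a further small computation (again via an $R2$ or by tracking halves across the move) relates $p(c)$ and $p(w_2)$ to $p(a_{i_k})$, yielding $p(c)=p(w_2)=-p(a_{i_k})$ and hence $\sum_{j=1}^k p(a_{i_j})=0$.

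The main obstacle — and the place where care is genuinely needed — is the bookkeeping in the inductive step: one must check that the second Reidemeister move described above is actually performable on the Gauss diagram (i.e.\ that the two arcs being isotoped across each other are ``free'' in the required sense, which is exactly guaranteed by the polygon condition that no other chord ends lie on the relevant sides), and that after the move the chords $a_{i_1},\dots,a_{i_{k-1}}$ plus the new chord really do satisfy the combinatorial definition of a $(k-1)$-gon, with the correct identification of which new crossing pairs with $a_{i_k}$ into a bigon. I would handle this by drawing the local picture on the core circle explicitly for a generic corner and verifying the word-combinatorics claim ($b_{2p-1}b_{2p}$ consecutive substrings) directly; the non-convex case causes no extra trouble because the polygon condition is purely combinatorial (about the cyclic word), not about convexity of any planar realisation. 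Once the local move is pinned down, the three identities above combine immediately, so the only real work is this one diagrammatic verification.
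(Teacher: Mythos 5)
Your overall strategy coincides with the paper's: induct on the number of sides, get the monogon/bigon/triangle base cases from Lemma~\ref{lem:par_fir} and the parity axioms, and do the inductive step by an increasing second Reidemeister move that splits the polygon into smaller pieces. However, your inductive step has a genuine flaw in the piece-counting, and as stated the induction does not close. If you push the two sides meeting at the single vertex $a_{i_k}$ across each other, the original $k$-gon decomposes into: a bigon with vertices $a_{i_k}$ and $w_2$; the bigon $w_1w_2$ coming from the move itself; and a region whose vertices are $a_{i_1},\dots,a_{i_{k-1}}$ \emph{together with} $w_1$ --- that is $k$ vertices, so it is again a $k$-gon, not a $(k-1)$-gon (your own formula $p(a_{i_1})+\cdots+p(a_{i_{k-1}})+p(c)=0$ has $k$ summands). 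Truncating a single corner trades one old vertex for one new one without reducing the number of sides, so the two bigon relations only give $p(w_1)=p(a_{i_k})$ and hence that the sum over the new $k$-gon equals the sum over the old one; applying the ``induction hypothesis'' to the new region is applying the very statement you are trying to prove for $k$-gons. The argument is circular at exactly the point you flagged as ``bookkeeping''.

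The repair is to cut off a corner containing an entire \emph{side}, i.e.\ two consecutive vertices rather than one: perform the second Reidemeister move so that the two new chords $b,c$ separate $a_{i_1},a_{i_2}$ from $a_{i_3},\dots,a_{i_k}$. Then the polygon splits into a triangle $b\,a_{i_1}a_{i_2}$ (two old vertices plus one new), the bigon $bc$, and a genuine $(k-1)$-gon $c\,a_{i_3}\cdots a_{i_k}$ (the remaining $k-2$ old vertices plus one new). The three relations $p(b)+p(a_{i_1})+p(a_{i_2})=0$ (triangle, i.e.\ axiom 3 of Definition~\ref{def:parity} via the $3$-gon case), $p(b)+p(c)=0$ (axiom 2), and $p(c)+\sum_{j=3}^{k}p(a_{i_j})=0$ (induction hypothesis) then sum to the desired identity. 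This is precisely the paper's inductive step; note that it uses the third Reidemeister axiom at every stage of the induction, whereas your version tried to get by with only bigon relations in the step --- a sign that something had to go wrong, since the $k$-gon relation for $k\ge 3$ genuinely depends on the triangle axiom.
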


 \begin {rk}
The claim of Lemma~\ref {lem:sum_par} can be taken as a definition
of a parity, see~\cite {FunMap}.
 \end {rk}

 \begin {proof}
Let $p$ be an arbitrary parity on chord diagrams of the free knot
$\mathcal{K}$, and let $D$ be a chord diagram representing
$\mathcal{K}$. Let us prove the claim of the lemma by induction over
the number of sides of a polygon.

{\it The induction base}. The virtue of the claim for a loop, bigon,
triangle follows from Lemma~\ref {lem:par_fir} and Definition~\ref
{sec:def_par}.\ref {def:parity}, respectively.

{\it The induction step}. Assume that the claim is true for
$(k-1)$-gons. Let us consider an arbitrary $k$-gon
$a_{i_1}a_{i_2}\ldots a_{i_k}$.

Let us apply the second Reidemeister move to the chord diagram $D$
by adding two chords $b$ and $c$, see Fig.~\ref {mnogo1} (in
Fig.~\ref {mnogo1} we have depicted the three possibilities of
applying the second Reidemeister move depending on the ends of
chords $a_{i_1},\,a_{i_2},\,a_{i_3},\,a_{i_k}$).

 \begin{figure}
\centering\includegraphics[width=250pt]{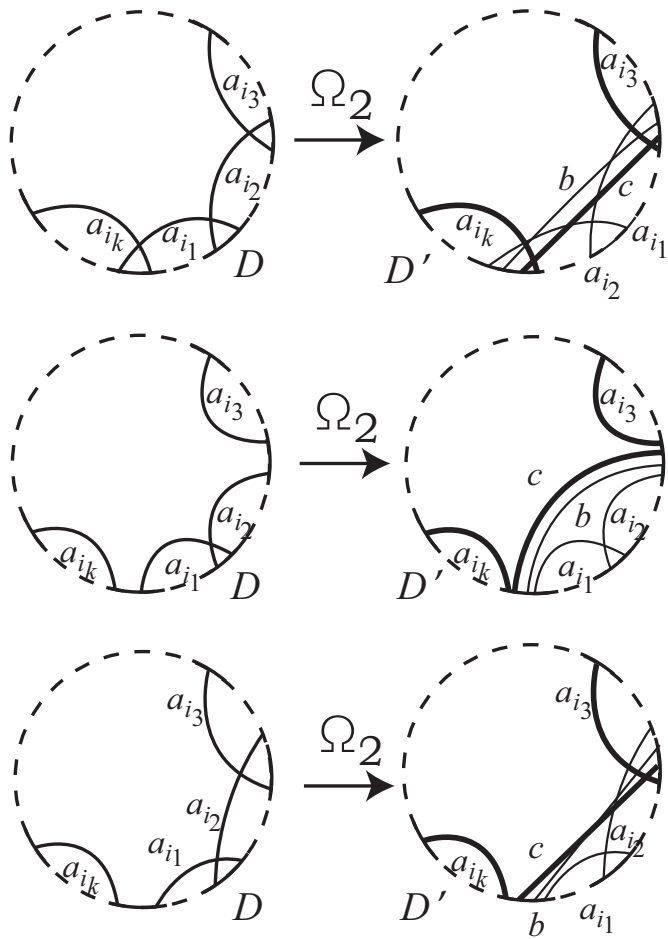} \caption{The
second Reidemeister move} \label{mnogo1}
 \end{figure}

As a result we shall obtain the new chord diagram $D'$ and the
$(k-1)$-gon $c\,a_{i_3}a_{i_4}\ldots a_{i_k}$ and the triangle
$b\,a_{i_1}a_{i_2}$. By the induction hypothesis, we have
  $$
p_{D'}(c)+\sum\limits_{j=3}^kp_{D'}(a_{i_j})=0,\quad
p_{D'}(b)+p_{D'}(a_{i_1})+p(a_{i_2})=0,\quad p_{D'}(b)+p_{D'}(c)=0.
 $$
Therefore,
 $$
\sum\limits_{j=1}^kp_{D'}(a_{i_j})=\sum\limits_{j=1}^kp_{D}(a_{i_j})=0.
 $$
 \end {proof}

 \begin {rk}
If we work with knot diagrams, then the corresponding picture for
Lemma~\ref {lem:sum_par} looks like as is shown in Fig.~\ref
{reid2_mno}.
 \end {rk}

 \begin{figure}
\centering\includegraphics[width=250pt]{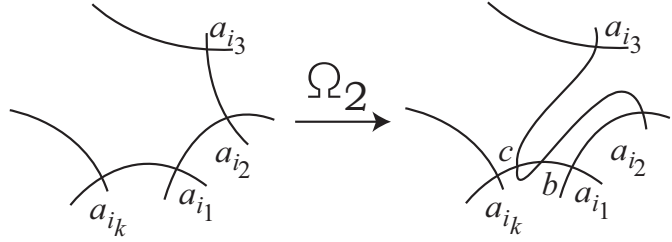} \caption{The
second Reidemeister move} \label{reid2_mno}
 \end{figure}

Let us pass from the free knot theory to the flat knot theory and
the virtual knot theory. Since bigons and triangles participating in
Reidemeister moves can be spanned by discs we get the following

 \begin {crl}\label {st:hom_par}
For every parity and any flat {\em(}virtual\/{\em)} knot diagram the
sum of the parities of crossings forming a polygon, which is spanned
by a disc in the underlying surface, is equal to $0$.
 \end {crl}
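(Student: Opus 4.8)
The plan is to reduce the corollary to Lemma~\ref{lem:sum_par} by relating polygons on the underlying surface to polygons on chord diagrams. The statement asserts: for a flat or virtual knot diagram $K$, if crossings $v_{i_1},\dots,v_{i_k}$ form a polygon (in the combinatorial sense of the preceding definition) whose ``interior'' is spanned by a disc $\Delta$ embedded in the underlying surface $S$ and not meeting $K$, then $\sum_j p_K(v_{i_j})=0$. The key observation is that such a configuration allows one to reproduce the inductive construction used in the proof of Lemma~\ref{lem:sum_par}, but now using \emph{flat} (resp.\ virtual) Reidemeister moves performed inside the disc $\Delta$, which we know are legitimate elementary morphisms in $\mathfrak{K}$.

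First I would set up the induction on the number $k$ of sides of the polygon, exactly mirroring the free-knot argument. For the base cases $k=1,2,3$ (loop, bigon, triangle spanned by a disc) the claim follows immediately: a monogon spanned by a disc is a configuration to which a decreasing first Reidemeister move applies, so Lemma~\ref{lem:par_fir} gives $p_K(v)=0$; a bigon spanned by a disc admits a decreasing second Reidemeister move, so axiom~2 of Definition~\ref{def:parity} gives $p_K(v_1)+p_K(v_2)=0$; and a triangle spanned by a disc admits a third Reidemeister move, so axiom~3 applies. Here the point is that the ``spanned by a disc'' hypothesis is precisely what guarantees the relevant Reidemeister move can actually be performed on the surface $S$ (the disc gives room to slide strands), which is the feature absent in a general polygon on a surface.

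For the inductive step, I would take a $k$-gon with vertices $v_{i_1},\dots,v_{i_k}$ bounding a disc $\Delta$, and perform a second Reidemeister move inside $\Delta$ introducing two new crossings $b$ and $c$, chosen so that $b$ together with $v_{i_1},v_{i_2}$ bounds a small disc (a triangle) and $c$ together with $v_{i_3},\dots,v_{i_k}$ bounds a $(k-1)$-gon, again spanned by a subdisc of $\Delta$; this is the surface analogue of Fig.~\ref{mnogo1}/Fig.~\ref{reid2_mno}. Applying the inductive hypothesis to the $(k-1)$-gon $c\,v_{i_3}\ldots v_{i_k}$ and to the triangle $b\,v_{i_1}v_{i_2}$, plus axiom~2 to the pair $(b,c)$ created by the move, and using axiom~1 to identify parities of the surviving crossings before and after the move, one obtains $\sum_{j=1}^k p_K(v_{i_j})=0$ exactly as in the proof of Lemma~\ref{lem:sum_par}. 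The same argument works verbatim for virtual diagrams, using the detour move when necessary to route the new strands of the second Reidemeister move to the desired location without creating classical crossings.

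I expect the main obstacle to be the bookkeeping required to verify that the second Reidemeister move and the auxiliary strand can indeed be embedded inside the disc $\Delta$ so as to split the $k$-gon into a triangle and a $(k-1)$-gon, each still spanned by a disc; that is, one must check that the combinatorial ``polygon'' structure on the surface behaves the way the pictures suggest. Since bigons and triangles appearing in Reidemeister moves are, by hypothesis on the spanning disc, genuinely innermost (bound embedded discs disjoint from the rest of $K$), this is a routine but slightly delicate planarity/innermost-disc argument inside $\Delta$, and it is exactly the content alluded to in the sentence ``Since bigons and triangles participating in Reidemeister moves can be spanned by discs.'' Once this geometric realisation is in hand, the algebra is identical to that of Lemma~\ref{lem:sum_par}.
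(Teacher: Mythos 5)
Your argument is essentially the paper's: the authors give no separate proof, only the remark that the induction of Lemma~\ref{lem:sum_par} carries over because the bigons and triangles used in the auxiliary Reidemeister moves can be spanned by discs in the underlying surface, which is exactly what you spell out in detail. The one caveat, which the paper itself only addresses later (Lemma~\ref{lem:alt_tr}), is that for virtual (as opposed to flat) diagrams a triangle spanned by a disc may be alternating and hence not directly admit a third Reidemeister move, so your base case $k=3$ needs that additional trick rather than a bare appeal to axiom~3.
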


By using virtualisation moves we can transform any polygon to a
polygon which is spanned by a disc in the underlying surface. As a
result we get the following

 \begin {crl}
If we consider the theory of pseudo-knots, i.e.\ the theory of
virtual knots modulo the virtualisation move, then Lemma~\ref
{lem:sum_par} remains true in this theory too, that is the existence
of the writhe number gives us no additional information.
 \end {crl}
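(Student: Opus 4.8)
The plan is to deduce this from Corollary~\ref{st:hom_par} together with two observations: a parity on pseudo-knot diagrams is automatically a parity on virtual knot diagrams, and such a parity is preserved crossing by crossing under virtualisation. The geometric input is the assertion made just above the statement: a polygon can always be brought, by virtualisations, into a position where it is spanned by a disc in the underlying surface.

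First I would note that the category of diagrams of a pseudo-knot has the same objects as, and strictly more elementary morphisms than, the category of diagrams of the corresponding virtual knot: it is obtained from the latter by adjoining the virtualisation moves. Hence a parity $p$ on pseudo-knot diagrams satisfies the axioms of Definition~\ref{def:parity} in particular for all isotopies and Reidemeister moves, so it \emph{is} a parity for virtual knots, and Corollary~\ref{st:hom_par} applies to it. Second, a virtualisation is an elementary morphism $f$ in the pseudo-knot category which neither creates nor destroys classical crossings, so the induced partial bijection $f_*$ on $\V$ is an honest bijection; axiom (1) then gives $p_{K'}(f_*(v))=p_K(v)$ for \emph{every} crossing $v$. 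Thus the parities of the crossings of a polygon are unchanged when we apply virtualisations.

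The main work is the geometric claim: given a polygon formed by crossings $v_{i_1},\dots,v_{i_k}$ in a virtual diagram $D$, a finite sequence of virtualisation (and detour) moves produces a diagram $D'$ in which the corresponding polygon is spanned by an embedded disc of the underlying surface. Here I would use that a virtualisation leaves the underlying framed $4$-graph (equivalently the Gauss diagram) unchanged and only alters the planar immersion, hence only the closed surface $S$ of the associated CLSD. Since ``being a polygon'' is a property of the Gauss diagram alone, it is preserved; what must be arranged is that on the new surface the $k$-gon region bounded by the $k$ chords of the polygon and the short arcs of the core circle lying between consecutive sides is an embedded disc meeting $D'$ only along its boundary. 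One removes virtual crossings from that region by detour moves, and for every strand of $D$ that pierces the region one uses virtualisations to push it across the chords of the polygon and out of the $k$-gon; every obstruction to the region being an embedded disc is an obstruction carried by the ambient surface, and virtualisations are precisely the moves that change that surface freely while fixing the combinatorics of the polygon. I expect this step --- making the ``clearing out of the $k$-gon by virtualisations'' rigorous, and checking that it does not disturb the polygon itself --- to be the only real obstacle.

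Finally I would combine the pieces. Given the polygon $v_{i_1},\dots,v_{i_k}$ in $D$, choose $D'$ as above with corresponding crossings $v'_{i_1},\dots,v'_{i_k}$. Virtualisation invariance gives $\sum_{j=1}^{k}p_D(v_{i_j})=\sum_{j=1}^{k}p_{D'}(v'_{i_j})$, and Corollary~\ref{st:hom_par} applied to the disc-spanned polygon in $D'$ shows the right-hand side is $0$. Hence $\sum_{j=1}^{k}p_D(v_{i_j})=0$ for every parity on pseudo-knots and every polygon, which is exactly the statement that Lemma~\ref{lem:sum_par} survives in the pseudo-knot theory; in particular remembering the writhe number yields no parity violating the polygon relation.
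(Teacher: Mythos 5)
Your argument is correct and takes essentially the same route as the paper, which likewise reduces the claim to Corollary~\ref{st:hom_par} by observing that virtualisations preserve each crossing's parity (axiom (1)) and asserting that a sequence of virtualisation moves brings any polygon into a position where it is spanned by a disc in the underlying surface. The geometric ``clearing out the $k$-gon'' step that you flag as the only real obstacle is precisely the step the paper states without further justification.
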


 \begin {lemma}\label {lem:gaev_to_ev}
For a free knot {\em(}pseudo-knot\/{\em)} with a diagram $K$ and an
arbitrary parity $p$ we have $p_K(a)=0$ if $gp_K(a)=0$.
 \end {lemma}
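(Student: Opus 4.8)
The plan is to show that an even chord $a$ in a Gauss diagram $K$ — that is, a chord linked with an even number of other chords — can always be maneuvered, by Reidemeister moves and using the polygon relation of Lemma~\ref{lem:sum_par}, into a position where it bounds an empty bigon or is the unique chord of some polygon all of whose remaining vertices have already been shown to carry zero parity. The key observation is that if $gp_K(a)=0$, then on the core circle the two endpoints of $a$ separate it into two arcs, and each arc contains an even number of the chord-ends of chords linked with $a$; more useful is that the set of chords linked with $a$ can be partitioned by which of the two arcs of $a$ contains, say, their first endpoint. I would first look for the case where some arc bounded by $a$ contains no chord-end at all: then $a$ bounds an empty monogon and Lemma~\ref{lem:par_fir} gives $p_K(a)=0$ directly. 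So the real work is to reduce the general even chord to this case.

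The main step is an induction on the number of chord-ends lying strictly between the two endpoints of $a$ on one chosen side. Take a chord $b$ whose end is closest to an endpoint of $a$ on that side. If $b$ is unlinked with $a$, then both ends of $b$ lie on the same arc of $a$, and I can try to slide the ``innermost'' such configuration to decrease the count — or, if $b$ itself forms a bigon with a neighbour, remove it first by a second Reidemeister move (its parity is then controlled by the polygon relation). If $b$ is linked with $a$, then since $a$ is even there must be a second chord $b'$ linked with $a$; I would aim to pair these up and use the triangle/polygon relations: $a$, together with an appropriate chain of the chords it is linked with, forms a polygon in the sense of the Definition, and Lemma~\ref{lem:sum_par} says the sum of their parities vanishes. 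The subtle point is organizing the chords linked with $a$ into a polygon (or into several polygons) so that $p_K(a)$ is expressed as a sum of parities of chords each of which is ``simpler'' — fewer chord-ends enclosed, or Gaussian-even — to which the inductive hypothesis or Lemma~\ref{lem:par_fir} applies.

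The hard part will be the bookkeeping in the inductive step: making precise that an even chord always sits in a polygon whose other sides can be cleared, and ensuring that the Reidemeister moves used to create the needed bigons and triangles do not disturb the evenness or the partial progress of the induction. Concretely, I expect to need the observation (from the structure of Gauss words) that the chords linked with a given chord $a$, read in the cyclic order of their endpoints along one arc of $a$, can be grouped into nested or sequential pairs forming the sides of a polygon with $a$; this is where a careful case analysis, paralleling the three cases of Fig.~\ref{mnogo1}, is unavoidable. Once the polygon containing $a$ is produced and its other vertices are handled by induction, Lemma~\ref{lem:sum_par} yields $p_K(a)=0$, completing the proof.
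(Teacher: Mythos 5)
Your induction scheme (on the number of chord-ends cut off by $a$, with the empty-monogon base case handled by Lemma~\ref{lem:par_fir}) matches the paper's, but the inductive step has a genuine gap: you are hoping to find a polygon, in the sense of the definition, among the chords already present in the diagram (``$a$, together with an appropriate chain of the chords it is linked with, forms a polygon''), and to clear obstructions by sliding chords or removing bigons. Neither of these is available in general. An arbitrary Gaussian-even chord need not be a side of any polygon of the given diagram, the chords adjacent to $a$ need not form removable bigons, and ``sliding'' a chord past others is not a Reidemeister move unless the requisite empty triangles happen to exist. So the step from ``$gp_K(a)=0$'' to ``$p_K(a)$ is a sum of parities of simpler chords'' is exactly the part you have not supplied.

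The missing idea in the paper's argument is to \emph{create} the needed polygons by increasing second Reidemeister moves rather than to find them. One adds a fresh pair of chords $b,b'$ so that $b$ cuts across the two chord-ends nearest to an endpoint of $a$, and a further pair $c,c'$ forming a triangle $a_1a_2c$ with the two chords $a_1,a_2$ owning those ends; then $a,a_1,c,a_2,b$ form a pentagon, and Lemma~\ref{lem:sum_par} together with the triangle relation and $p(b)+p(b')=0$ yields $p_{D'}(a)=-p_{D'}(b)=p_{D'}(b')$ up to sign (a degenerate case, when the two nearest ends belong to a single chord, is handled by a monogon plus a triangle instead). The point of this construction is that $b'$ is again Gaussian-even and its half-circle contains strictly fewer chord-ends, so the induction hypothesis applies to $b'$, not to any pre-existing chord. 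Without this device of manufacturing auxiliary chords, your induction has no way to decrease the complexity measure, and the proof does not close.
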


 \begin {proof}
Let $p$ be a parity, and let $a$ be a chord of a chord diagram $D$
with $gp_D(a)=0$. Let us consider the two halves of the core circle
of $D$, which are obtained by removing the chord $a$. Since
$gp_D(a)=0$ each half-circle corresponding to $a$ contains an even
number of ends of chords. Let us apply the induction over the number
of ends of chords.

{\it The induction base}\/: If the number of ends on any half-circle
is equal to $0$, then $p_D(a)=0$ by using the property of the first
Reidemeister move.

{\it The induction step}\/: Assume that for any chord $d$ of $D$
with $gp_D(d)=0$ such that a half-circle contains less than $n=2k$
ends of chords, we have $p_D(d)=0$. Let us consider a chord $a$ such
that one of its half-circles, $K_{a,1}$, contains exactly $n$ ends
of chords and the other one, $K_{a,2}$, contains more than or equal
to $n$ ends.

Let us orient $D$ in counterclockwise manner and consider the
following two cases.

1) The first two ends in $K_{a,1}$ belong to two distinct chords
$a_1,\,a_2$, see Fig.~\ref {g_p1}. Apply the second increasing
Reidemeister move by adding a pair of chords $b,\,b'$ in such a way
that the half-circle corresponding to $b'$ would contain the set of
ends lying in $K_{a,1}$ minus the first ends of $a_1,\,a_2$, see
Fig.~\ref {g_p2} (above). Let us show that $p_{D'}(a)+p_{D'}(b)=0$
in the new chord diagram $D'$. Let us add the pair of chords
$c,\,c'$ to form the triangle $a_1a_2c$, see Fig.~\ref {g_p2}
(below). Then $p_{D''}(a_1)+p_{D''}(a_2)+p_{D''}(c)=0$ in $D''$.
Moreover, we have the pentagon $aa_1ca_2b$ and, therefore, the
following equality holds (Lemma~\ref {lem:sum_par})
 $$
p_{D''}(a)+p_{D''}(a_1)+p_{D''}(c)+p_{D''}(a_2)+p_{D''}(b)=0.
 $$
We get $p_{D''}(a)+p_{D''}(b)=0$ and $p_{D'}(a)+p_{D'}(b)=0$. In the
half-circle corresponding to $b'$ the number of ends is less than
the number of ends in the half-circle corresponding to $a$. By the
induction hypothesis, we get $p_{D'}(b)=p_{D'}(b')=0$, and
$p_D(a)=0$.

  \begin{figure}
\centering\includegraphics[width=230pt]{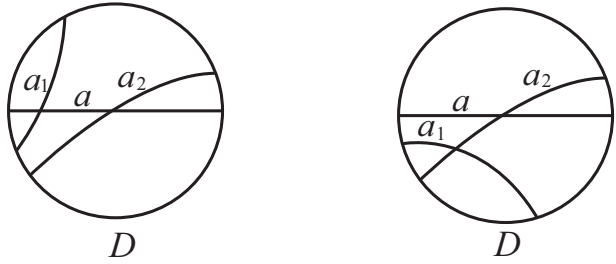} \caption{The
Gaussian parity zero} \label{g_p1}
 \end{figure}

  \begin{figure}
\centering\includegraphics[width=230pt]{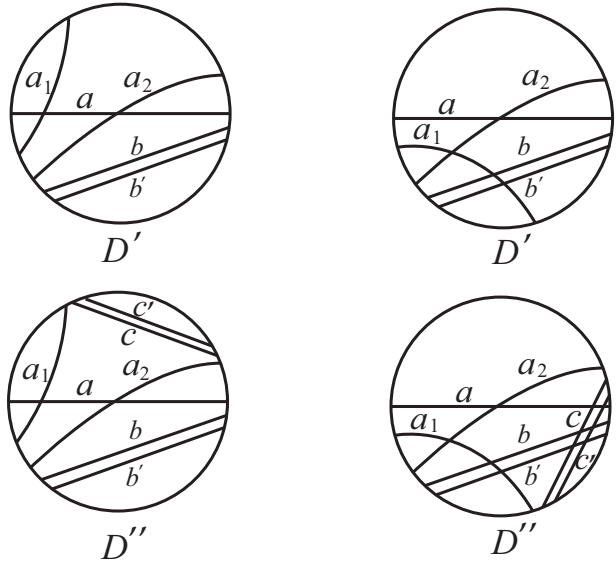} \caption{The
Gaussian parity zero} \label{g_p2}
 \end{figure}

2) If the first two ends belong to the same chord $c$, then
$p_D(c)=0$ (the first Reidemeister move) and $c$ forms the triangle
in $D'$ with the chords $a$ and $b$. Therefore,
$p_{D'}(a)+p_{D'}(b)+p_{D'}(c)=0$. By the induction hypothesis, we
get $p_{D'}(b)=p_{D'}(b')=0$ and $p_D(a)=p_{D'}(b)=0$.
 \end {proof}

 \begin {lemma}\label {lem:ev_od_ae}
Let $p$ be an arbitrary parity  {\em(}with coefficients from a group
$A${\em)} on diagrams of the free knot represented by a chord
diagram $D$. Then for any two chords $a,\,b$ such that
$gp_D(a)=gp_D(b)=1$ we have $p_D(a)=p_D(b)=x\in A$ and $2x=0$.
  \end {lemma}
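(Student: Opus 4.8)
The plan is to reduce the statement to three ingredients that are already available: the polygon relation (Lemma~\ref{lem:sum_par}), the vanishing of $p$ on Gaussian-even chords (Lemma~\ref{lem:gaev_to_ev}), and the invariance of parity under Reidemeister moves, all used not only on $D$ but on \emph{every} diagram of $\mathcal K$: since the parity of a surviving crossing is unchanged by a Reidemeister move, a relation obtained in a diagram $D'$ reachable from $D$ by such moves is valid for the corresponding chords of $D$. The mechanism to be exploited is that, because Gaussian-even chords contribute $0$ by Lemma~\ref{lem:gaev_to_ev}, a polygon all of whose chords but two are Gaussian-even yields a relation involving only those two (odd) chords.

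\textbf{Step 1 (the additive relation).} First I would prove that for any two Gaussian-odd chords $a,b$ of any chord diagram of $\mathcal K$ one has $p(a)+p(b)=0$. The key sub-claim is that, by Reidemeister moves not involving $a$ or $b$, one can pass to a diagram in which $a$ and $b$ form a $2$-gon (in the combinatorial sense of the polygon definition) while every other chord occurring in that polygon is Gaussian-even; then Lemmas~\ref{lem:sum_par} and~\ref{lem:gaev_to_ev} give the relation. I would prove this sub-claim by induction on the number of chord-ends lying between the ends of $a$ and $b$ along the core circle, running the same second-Reidemeister-move maneuver as in the proof of Lemma~\ref{lem:gaev_to_ev}: an increasing second Reidemeister move isolates the first pair of intervening ends into a small triangle, a pentagon relation together with that triangle relation absorbs them, and the problem drops to strictly fewer intervening ends; the auxiliary chords created along the way are positioned so as to be Gaussian-even, and hence fall out. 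The base case is the situation where $a$ and $b$ already form a combinatorial $2$-gon (for instance a subword $abba$), which covers the unlinked case as well.

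\textbf{Step 2 (from additivity to the statement).} Given odd chords $a,b$ of $D$, apply one increasing second Reidemeister move sited so that the two new chords $b',c'$ are both Gaussian-odd. Since an increasing second Reidemeister move always creates two chords of equal Gaussian parity, it suffices to check that the move can be placed so that, say, each new chord is linked only to the other; this is immediate (a ``small kink'' inserted with the appropriate framing produces the local subword $b'c'b'c'$, so $gp(b')=gp(c')=1$). In the resulting diagram $D'$ the chords $a,b$ survive with the same values of $p$, so Step~1 applies to the three pairs $(a,b')$, $(a,c')$, $(b',c')$, giving $p(a)=-p(b')=-p(c')$ and $p(b')=-p(c')$; hence $p(b')=p(c')$, so $2p(b')=0$, and therefore $2p(a)=0$. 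Combining with $p(a)+p(b)=0$ from Step~1 yields $p(a)=p(b)$, and $x:=p(a)=p(b)$ satisfies $2x=0$.

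The main obstacle is Step~1, and within it the combinatorial bookkeeping rather than any algebra: one must verify that, in the inductive maneuver, the second Reidemeister moves can always be positioned so that the newly created chords are Gaussian-even (hence annihilated by Lemma~\ref{lem:gaev_to_ev}), that the various sub-cases (first two intervening ends belonging to one chord or to two distinct chords, $a$ and $b$ linked or unlinked) all reduce correctly, and that the base configuration genuinely is a $2$-gon formed by $a$ and $b$. All of the arithmetic is furnished by the polygon relation and by Reidemeister invariance; the real work lies in organising the chord-diagram surgeries.
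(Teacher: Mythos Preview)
There is a genuine gap in the argument, and it lies exactly where you flag the ``main obstacle'': the sign bookkeeping in Step~1 does not come out the way you claim, and this breaks the logical flow to Step~2.

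The maneuver borrowed from Lemma~\ref{lem:gaev_to_ev} does not produce an outright bigon between $a$ and $b$; it produces a chain of auxiliary chords through which $p(a)$ is related to $p(b)$ \emph{up to sign}. Concretely, what that inductive construction actually yields (this is what the paper proves) is $p(b)=(-1)^{k+1}p(a)$, where $k$ is the number of chord ends between the chosen ends of $a$ and $b$. In Lemma~\ref{lem:gaev_to_ev} this is harmless because $gp(a)=0$ forces the relevant counts to be even; here $gp(a)=1$, so $k$ can have either parity and you cannot conclude $p(a)+p(b)=0$ unconditionally. Your assertion that the auxiliary chords can all be ``positioned so as to be Gaussian-even'' is exactly the point that fails: when you mimic the construction starting from a Gaussian-odd chord, the parallel copies you create are Gaussian-odd as well.

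This sign ambiguity then kills Step~2. If you insert the subword $b'c'b'c'$ on an arc and run the (weakened) Step~1 from $a$ to $b'$ and from $a$ to $c'$, the two end-counts differ by one, so the relations you obtain are $p(b')=\varepsilon\,p(a)$ and $p(c')=-\varepsilon\,p(a)$ for the same sign $\varepsilon$. These combine to $p(b')+p(c')=0$, which is nothing but the second-Reidemeister axiom you already had; no relation $2x=0$ falls out. The paper closes this gap with a different idea: after obtaining $p(b)=(-1)^{k+1}x$, it applies a \emph{third} Reidemeister move that leaves $p(a),p(b)$ unchanged but shifts the number of intervening ends by one, so the same construction now gives $p(b)=(-1)^{k}x$; comparing the two yields $2x=0$. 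Your scheme has no analogue of this parity-flipping step, and without it the argument is circular: Step~1 as stated already presupposes $2x=0$.
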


 \begin {proof}
Let $c_1,\dots,c_k$ be ends of chords lying between the nearest ends
of $a$ and $b$.

Apply $k$ times the second Reidemeister moves as it is shown in
Fig.~\ref {ng_p1} (in the center). Let us show that
$p_{D'}(d_l)=(-1)^lx$, where $x=p_{D'}(a)$. Apply the second
Reidemeister move by adding two chords $f,\,f'$ to form the triangle
$ad_1f$. We have
 $$
gp_{D''}(a)=gp_{D''}(d_1)=1\Longrightarrow
gp_{D''}(f)=0\Longrightarrow p_{D''}(f)=0
 $$
 $$
\Longrightarrow p_{D'}(d_1)=p_{D''}(d_1)=-x.
 $$
By the induction we can prove that $p_{D'}(d_l)=(-1)^lx$ and
$p_D(b)=(-1)^{k+1}x$.

Let us apply the third Reidemeister move to the triangle $ad_1f$.
The parity $p$ and the Gaussian parity of the chord $a$ do not
change but the parity of the number of ends of chords between $a$
and $b$ changes. Applying the previous trick we get
$p_{D}(b)=(-1)^kx$, i.e.\ $2x=0$.

  \begin{figure}
\centering\includegraphics[width=350pt]{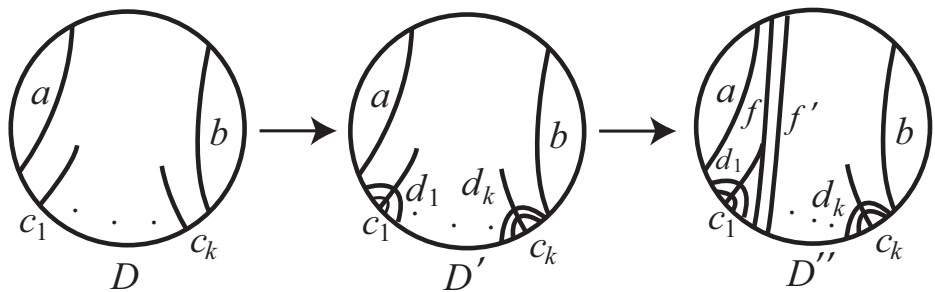} \caption{The
Gaussian parity one} \label{ng_p1}
 \end{figure}
 \end {proof}

By using Lemmas~\ref {lem:gaev_to_ev},~\ref {lem:ev_od_ae} for any
parity $p$ (with coefficients from a group $A$) on diagrams of the
free knot having a diagram $K$ we can construct the homomorphism
$\rho\colon A\to\Z_2$ by taking $\rho(x)=1$, where $p_K(a)=x$ and
$gp_K(a)=1$. This concludes the proof of Theorem~\ref
{thm:uniq_gau}.

 \begin {rk}
Let $p$ be a parity on a free knot $\mathcal{K}$. It is not possible
that there exist two diagrams $K_1$ and $K_2$ of $\mathcal{K}$, both
having chords being odd in the Gaussian parity such that $p$ is
trivial on $K_1$, and $p$ is the Gaussian parity on $K_2$. It
follows from the fact that there is a sequence of Reidemeister moves
transforming $K_1$ to $K_2$ such that any diagram in this sequence
has chords being odd in the Gaussian parity.
 \end {rk}

Before passing to classical knots, we should point out the
following. It is known that classical knot and link theories embed
in virtual knot and link theories~\cite {GPV,KaV}. This means that
if two classical knot (link) diagrams are virtually equivalent then
they are isotopic (classically equivalent).

Nevertheless, the parity axiomatic applied to the classical knot
theory as a part of the  virtual knot theory and to the classical
knot theory as it is, should be treated differently.

Namely, from the above we get the following

 \begin {theorem}
Any parity on virtual knots {\em(}one-component knots, not
links\/{\em)} is trivial on any classical knots.
 \end {theorem}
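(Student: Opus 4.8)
The plan is to deduce the theorem from the analysis of parities for free knots carried out above. It suffices to show that $p_K(v)=0$ for every crossing $v$ of every \emph{classical} diagram $K$ of a classical knot $\mathcal K$: since classical knot theory embeds faithfully into virtual knot theory, any two classical diagrams of $\mathcal K$ are joined by a sequence of classical Reidemeister moves, and parity values on surviving crossings are preserved along elementary morphisms (Definition~\ref{def:parity}), so this is exactly what triviality of $p$ on $\mathcal K$ amounts to.

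The key observation is that the Gaussian parity is identically zero on classical diagrams, i.e.\ $gp_K(v)=0$ for every crossing $v$ of a classical diagram $K$. To see this, smooth $K$ at $v$, obtaining two closed curves $K_{v,1},K_{v,2}$ generically immersed in the plane; by Definition~\ref{def:link} the crossings of $K$ linked with $v$ are precisely the transverse intersection points of $K_{v,1}$ with $K_{v,2}$. Viewing both curves inside $S^2$, they are null-homologous (indeed $H_1(S^2,\mathbb Z_2)=0$), so they meet transversally in an even number of points; hence $gp_K(v)=0$. Equivalently, the underlying surface of a classical diagram can be taken to be $S^2$, since the complement of a connected planar diagram in $S^2$ is a disjoint union of open $2$-cells.

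With this in hand the theorem follows by rerunning the proof of Lemma~\ref{lem:gaev_to_ev}. Although that lemma is stated for free knots, the argument --- an induction on the number of chord ends lying on one half of the core circle, using the first Reidemeister relation of Lemma~\ref{lem:par_fir} together with the bigon, triangle and pentagon relations --- invokes only polygons that can be kept inside a disc neighbourhood of $v$ and of the short initial stretch of the diagram through its first few neighbours. On a classical diagram every embedded polygon bounds a disc in the underlying surface $S^2$, so Corollary~\ref{st:hom_par} supplies exactly the identities needed and the induction goes through verbatim for the classical knot $\mathcal K$; combined with the previous paragraph this yields $p_K(v)=0$ for every $v$.

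The step I expect to require the most care --- and the main obstacle --- is verifying that the auxiliary second and third Reidemeister moves occurring in that proof (the ones introducing the chords $b,b',c,c'$ and forming the triangle $a_1a_2c$) can be performed without leaving the class of classical diagrams, so that the underlying surface stays $S^2$ and Corollary~\ref{st:hom_par} remains applicable at every stage; concretely, one must realise them as finger moves localised near $v$, possibly after first normalising $K$ by classical moves. I would also remark that the conclusion is special to the Gaussian-even case: the companion analysis of Gaussian-odd crossings in Lemma~\ref{lem:ev_od_ae} manipulates a long arc of the diagram between two crossings and hence produces polygons that need not bound discs on surfaces of positive genus, which is why classical knot theory taken on its own may carry parities that its image inside virtual knot theory does not, and why the analogue of Theorem~\ref{thm:uniq_gau} fails for general virtual knots.
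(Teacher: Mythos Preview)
Your core strategy --- observe that every crossing of a classical diagram is Gaussian-even, then invoke Lemma~\ref{lem:gaev_to_ev} to conclude that any parity vanishes there --- is exactly what the paper means by ``from the above''. So the overall shape of your argument is right.

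Where you diverge from the paper, and where your difficulties come from, is in insisting on staying inside the classical category. The parity $p$ in the statement is defined on the category of \emph{virtual} diagrams of $\mathcal K$; the intermediate diagrams produced while rerunning Lemmas~\ref{lem:sum_par} and~\ref{lem:gaev_to_ev} are allowed to be virtual. In the virtual category every Gauss-diagram second Reidemeister move is realisable (detour plus a classical $R2$), and the over/under of each newly created crossing may be chosen so that the triangle it enters admits $R3$. Hence those two lemmas hold verbatim for parities on virtual knots, with no appeal to Corollary~\ref{st:hom_par} needed. Your ``main obstacle'' --- keeping the auxiliary $R2$ and $R3$ moves classical --- is therefore self-imposed and dissolves once virtual intermediate diagrams are permitted. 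The paper's proof is exactly this: the arguments for free knots go through unchanged in the larger virtual category.

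Two smaller points. First, the polygons in the proof of Lemma~\ref{lem:gaev_to_ev} are \emph{not} local: the pentagon $aa_1ca_2b$ contains the entire chord $a$, so it cannot be confined to a disc neighbourhood of a single crossing as you claim. Second, ``spanned by a disc'' in Corollary~\ref{st:hom_par} should be read as bounding a disc disjoint from the rest of the diagram (this is what the inductive $R2$ in Lemma~\ref{lem:sum_par} actually needs); a chord-diagram polygon on a planar diagram need not satisfy this even on $S^2$, so routing your argument through Corollary~\ref{st:hom_par} would itself require work. Again, working virtually avoids the issue entirely.
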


By itself, it does not guarantee that there is no non-trivial parity
on classical knots: possibly, there might be some which does not
extend to virtual knots? Indeed, for the classical knot theory as it
is we are restricted only to those diagrams having classical
crossings, and some ``additional'' crossing used to prove the above
lemmas can make the diagram classical.

However, the following theorem holds as well.

 \begin {theorem}
For classical knot theory there exists a unique parity --- the
trivial parity.
 \end {theorem}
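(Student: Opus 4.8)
The plan is to reduce the statement about classical knots to the already–established structure theory for free knots. The key observation is that every one–component framed $4$-graph arising from a classical knot diagram has a Gauss diagram whose chords are \emph{all even} in the Gaussian parity: indeed, the realizability of a Gauss diagram in the plane (equivalently, the fact that the associated interlacement graph is a circle graph coming from a planar curve) forces every chord to be linked with an even number of other chords. This is the classical Gauss parity/realizability fact, and it says precisely that $gp_K \equiv 0$ for every classical diagram $K$.

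From here I would argue as follows. Let $p$ be any parity on diagrams of a classical knot $\mathcal{K}$. Take any diagram $K$ of $\mathcal{K}$ and any crossing $v$ of $K$. Since $K$ is classical, $gp_K(v)=0$, so Lemma~\ref{lem:gaev_to_ev} would immediately give $p_K(v)=0$ — \emph{except} that the proof of Lemma~\ref{lem:gaev_to_ev} uses auxiliary second and third Reidemeister moves that introduce extra chords, and those intermediate diagrams need not be classical. So the subtlety flagged in the remark before the theorem is real: we cannot invoke Lemma~\ref{lem:gaev_to_ev} verbatim. The fix is to note that the auxiliary moves used in the proof of Lemma~\ref{lem:gaev_to_ev} (and in Lemma~\ref{lem:sum_par}, on which it rests) are \emph{ordinary Reidemeister moves performed inside a disc of the diagram}; when $K$ is a planar diagram, these moves can be realized by genuine planar Reidemeister moves, producing intermediate diagrams that are again \emph{classical} (planar) diagrams of $\mathcal{K}$. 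Concretely, an increasing second Reidemeister move on a planar diagram, pushing one arc across another, is always realizable in the plane; likewise the third Reidemeister move. Hence the entire inductive argument of Lemma~\ref{lem:gaev_to_ev} can be carried out \emph{within the category of classical diagrams of $\mathcal{K}$}, and the parity axioms (which hold for the classical category by hypothesis) apply at every step. Therefore $p_K(v)=0$ for all $v$, i.e.\ $p$ is the trivial parity.

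The main obstacle, as indicated above, is precisely the verification that the auxiliary configurations used in the proofs of Lemma~\ref{lem:sum_par} and Lemma~\ref{lem:gaev_to_ev} can be chosen to keep all intermediate diagrams planar. For Lemma~\ref{lem:gaev_to_ev} this amounts to checking: (a) a chord $a$ with $gp_K(a)=0$ and empty half-circle can be removed by a \emph{planar} first Reidemeister move — clear, since an isolated chord with nothing inside corresponds to a small kink; (b) the second Reidemeister move adding the pair $b,b'$ "parallel" to part of one half-circle of $a$ can be done in the plane — this is a finger move of one strand alongside a band of parallel strands, which is planar; and (c) the pentagon/triangle relations invoked (via Lemma~\ref{lem:sum_par}, hence Corollary~\ref{st:hom_par}) hold because the relevant polygons, living in a disc of the \emph{planar} diagram, are automatically spanned by a disc in the underlying surface $S^2$. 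Point (c) is where Corollary~\ref{st:hom_par} does the real work: for classical diagrams the underlying surface is $S^2$, every embedded polygon bounds, so the polygon relations of Lemma~\ref{lem:sum_par} are unconditionally available. Assembling these observations, the induction goes through and yields $p\equiv 0$; together with the fact that $gp_K\equiv 0$ already shows the Gaussian parity is trivial on classical knots, we conclude that the trivial parity is the unique parity for classical knot theory.
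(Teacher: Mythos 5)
Your proof is correct and follows essentially the same route as the paper, which establishes the theorem as a ``slight modification'' of Theorem~\ref{thm:uniq_gau} carried out with classical diagrams in the plane while invoking Corollary~\ref{st:hom_par}. Your explicit appeal to the realizability fact that $gp_K\equiv 0$ on every classical diagram, and your verification that the auxiliary moves in Lemmas~\ref{lem:sum_par} and~\ref{lem:gaev_to_ev} can be kept planar so that all intermediate diagrams remain classical, are precisely the details the paper's one-line proof leaves implicit.
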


The proof is indeed a slight modification of Theorem~\ref
{thm:uniq_gau}, which is based on Lemmas~\ref {lem:gaev_to_ev},~\ref
{lem:ev_od_ae}. We just use classical knot diagrams on the plane and
bear in mind Corollary~\ref {st:hom_par}.

\subsection{Homotopy classes of curves generically immersed in a surface}

In the previous subsection we have the situation when all polygons
``are spanned'' by discs on the plane. Now we are interested in
those polygons which are spanned by discs in a surface. As a result
we deal with the homology of the surface.

 \begin{theorem}\label{thm:flat_knots_surf}
Let $\mathcal{K}$ be a homotopy class of curves generically immersed
in a surface $S$. Then the homological parity {\em(}with
coefficients in $H_1(S,\mathbb{Z}_2)/[\mathcal{K}]${\em)} is the
universal parity on curves of $\mathcal{K}$.
 \end{theorem}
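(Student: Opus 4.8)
The plan is to show that the homological parity $hp$ satisfies the universal property by constructing, for any parity $p$ on curves of $\mathcal{K}$ with coefficients in an abelian group $A$, a unique homomorphism $\rho\colon H_1(S,\Z_2)/[\mathcal{K}]\to A$ with $p_K=\rho\circ hp_K$ for every diagram $K$. First I would fix a diagram $K_0$ of $\mathcal{K}$ embedded in $S$ and examine the map $hp_{K_0}\colon\V(K_0)\to H_1(S,\Z_2)/[\mathcal{K}]$. The key structural fact I need is that the classes $hp_{K_0}(v)=[K_{v,1}]$, as $v$ ranges over $\V(K_0)$, \emph{generate} $H_1(S,\Z_2)/[\mathcal{K}]$: indeed, for $K$ a filling curve the halves $K_{v,1},K_{v,2}$ together with the curve class $[\mathcal{K}]$ span $H_1(S,\Z_2)$ (the complement of the immersed curve is a union of discs, so $S$ deformation retracts onto a neighbourhood of $K$, whose first homology is generated by these halves), and quotienting by $[\mathcal{K}]$ kills the relation $[K_{v,1}]+[K_{v,2}]=[\mathcal{K}]$. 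Consequently any $\rho$ with $p_{K_0}=\rho\circ hp_{K_0}$ is forced on generators, giving uniqueness for free once existence is established. If no such diagram with disc complement is handy for the given homotopy class, one can always pass to a CLSD/filling representative by a sequence of Reidemeister moves together with stabilisations, but since we work inside a fixed $S$ the cleaner route is to choose $K_0$ generic and filling from the start (replace $\mathcal{K}$ by a filling representative; any curve becomes filling after finitely many finger moves, which are compositions of Reidemeister moves).

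Next I would define $\rho$ on generators by $\rho([K_{v,1}]):=p_{K_0}(v)$ and verify this is well defined, i.e.\ respects all $\Z_2$-linear relations among the $[K_{v,1}]$ in $H_1(S,\Z_2)/[\mathcal{K}]$. This is the heart of the argument. A relation $\sum_{v\in I}[K_{v,1}]=0$ in $H_1(S,\Z_2)/[\mathcal{K}]$ means $\sum_{v\in I}K_{v,1}$ is, up to adding a multiple of $[\mathcal{K}]$, a boundary of a $2$-chain on $S$ — concretely a union of (closures of) complementary regions. By the same disc-decomposition, such a boundary can be realised as the boundary of a subsurface whose frontier consists of arcs of $K_0$ and, geometrically, exhibits the cycle $\sum_{v\in I}K_{v,1}$ as a polygon (or a disjoint union of polygons) spanned by discs in $S$. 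Then Corollary~\ref{st:hom_par} gives $\sum_{v\in I}p_{K_0}(v)=0$, which is exactly $\rho$ of the relation being $0$. The subtlety is to convert ``homologous to a sum of complementary regions'' into ``forms a polygon spanned by discs'': one must track, along the core circle, which arcs of $K_0$ appear on the boundary of the spanning region and check that the combinatorial pattern matches the polygon definition (pairs of adjacent arcs at each involved vertex). I expect this bookkeeping — showing every $\Z_2$-relation among the half-classes is generated by polygon relations realised by embedded discs — to be the main obstacle, and it is morally the surface analogue of Lemma~\ref{lem:sum_par}.

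Having defined $\rho$ on $K_0$, I would then check $p_K=\rho\circ hp_K$ for an \emph{arbitrary} diagram $K$ of $\mathcal{K}$, not just $K_0$. Pick a sequence of elementary morphisms $K_0=K^{(0)}\to K^{(1)}\to\cdots\to K^{(m)}=K$; since every Reidemeister move is supported in a disc in $S$, both $p$ and $hp$ transform compatibly under $\V$-partial bijections (parity axiom 1 and Lemma~\ref{lem:hpar} clause 1), so it suffices to check that the identity $p_{K^{(j)}}=\rho\circ hp_{K^{(j)}}$ is preserved across a single move. For crossings surviving the move this is the invariance just cited; for a newly created pair under a second Reidemeister move, $hp$ assigns the bigon class (which is $0$ in $H_1(S,\Z_2)/[\mathcal{K}]$, being a disc boundary) and the extra class $\delta$ — but $\delta$ equals $hp$ of a surviving crossing up to a bigon, and $p$ satisfies the matching second-move relation, so both sides still agree; for a third move the cocycle/boundary relations handle it the same way. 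Finally, $\rho$ is a group homomorphism by construction (extended $\Z_2$-linearly, so necessarily landing in the $2$-torsion of $A$, consistently with the image of $hp$ living in a $\Z_2$-vector space). This establishes the universal property and completes the proof.
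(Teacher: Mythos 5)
Your architecture coincides with the paper's: choose a filling diagram, observe that the half-classes $[K_{v,1}]$ generate $H_1(S,\Z_2)/[\mathcal K]$, present that group by these generators with relations coming from the boundaries of the complementary cells, check that $p$ satisfies the corresponding relations via the polygon lemma (Corollary~\ref{st:hom_par}, which the paper records in this setting as Lemma~\ref{lem:surf_par}), define $\rho$ on generators, and propagate to non-filling diagrams by second Reidemeister moves. The ``bookkeeping'' you flag as the main obstacle is exactly what the paper dispatches by a cellular computation: $S$ with a disc glued along $K$ is obtained from the space $K'$ of Lemma~\ref{lem:hom_par1} by attaching the cells of $S\setminus K$, and Lemma~\ref{lem:hom_par2} (a closed path on $K$ with rotation points $v_1,\dots,v_k$ represents $\sum_i[K_{v_i,1}]$) identifies each cell boundary with the sum of the half-classes at its corners. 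So that part of your plan is sound and is essentially the paper's route.

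There is, however, one genuine gap: you never establish that $2p_K(v)=0$ for every crossing $v$, and your justification (``extended $\Z_2$-linearly, so necessarily landing in the $2$-torsion of $A$'') is circular --- $\rho$ can only be defined on the $\Z_2$-vector space $H_1(S,\Z_2)/[\mathcal K]$ \emph{if} the prospective images $p_{K}(v)$ are already known to be $2$-torsion in $A$. The relation $2\cdot[K_{v,1}]=0$ appears as a separate defining relation in the presentation and is not in general an integral consequence of the cell-boundary relations, so well-definedness of $\rho$ genuinely requires the identity $2p_K(v)=0$, which is a nontrivial statement about parities valued in an arbitrary abelian group. The paper isolates this as Lemma~\ref{lem:par_mod2} and proves it by a diagrammatic trick: a second Reidemeister move produces a crossing $b$ with $p(a)+p(b)=0$, and a subsequent third Reidemeister move produces crossings $c,d$ with $p(a)+p(c)+p(d)=0$ and $p(b)+p(c)+p(d)=0$, whence $p(a)=p(b)$ and $2p(a)=0$. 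Without this lemma (or an equivalent argument) your map $\rho$ is not a well-defined homomorphism and the universal property does not follow.
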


 \begin{proof}
We start the  proof of the theorem with the following general
lemmas.

 \begin{lemma}\label{lem:par_mod2}
Let $p$ be a parity, $K$ be a curve on $S$ and $a\in\V(K)$.
Then $2p_K(a)=0$.
 \end{lemma}

 \begin{proof}
By applying the second and third Reidemeister moves we get curves
$K_1$ and $K_2$ (see Fig.~\ref{parity_mod2}). We have the equality
$p_{K_1}(a)+p_{K_1}(b)=0$. Then $p_{K_2}(a)+p_{K_2}(b)=0$. We also
have $p_{K_2}(a)+p_{K_2}(c)+p_{K_2}(d)=0$ and
$p_{K_2}(b)+p_{K_2}(c)+p_{K_2}(d)=0$. Hence, $p_{K_2}(a)=p_{K_2}(b)$
and $2p_{K_2}(a)=0$. Then $2p_{K_1}(a)=0$ and $2p_{K}(a)=0$.

 \begin{figure}
\centering\includegraphics[width=350pt]{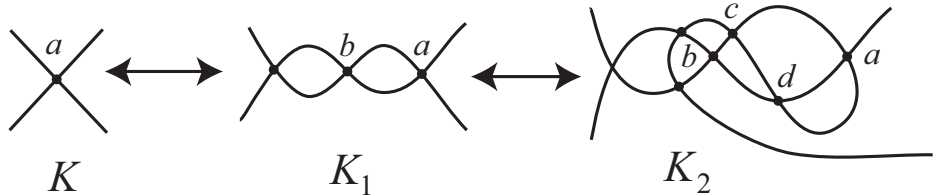} \caption{The
second and third Reidemeister moves} \label{parity_mod2}
 \end{figure}
 \end{proof}

 \begin{lemma}[cf.~\cite {FunMap}]\label{lem:hom_par1}
Let $K$ be a framed 4-graph with one unicursal component. Consider
$K$ as a $1$-dimensional cell complex. Then
$H_1(K,\mathbb{Z}_2)/[K]\cong \bigoplus_{v\in\V(K)}{\mathbb Z}_2$.
 \end{lemma}

 \begin{proof}
Let $C$ be the chord diagram corresponding to $K$. Then $C$ and $K$
are homotopy equivalent as topological spaces. Let $C'$ (resp.,
$K'$) is the topological space obtained by gluing to $C$ (resp.,
$K$) a $2$-disc along the core circle of $C$. Then $C'$ and $K'$ are
homotopy equivalent too and $H_1(C',\Z_2)\cong
H_1(K',\Z_2)=H_1(K,\Z_2)/[K]$. On the other hand, $C'$ is homotopy
equivalent to the bouquet of circles corresponding to the cords of
the diagram $C$, i.e.\ the crossings of $K$. Hence,
$H_1(C',\Z_2)\cong\bigoplus_{v\in\V(K)}\mathbb Z_2$.
 \end{proof}

The isomorphism of the lemma identifies the generator of the group
$\mathbb Z_2$ corresponding to a vertex $v\in\V(K)$ with the
homology class $[K_{v,1}]=[K_{v,2}]\in H_1(K,\mathbb{Z}_2)/[K]$.

 \begin{lemma}\label{lem:hom_par2}
Let $\omega$ be a closed path on the curve $K$ with rotation points
$v_1,\,v_2,\dots, v_k$. Then
$[\omega]=\sum\limits_{i=1}^k[K_{v_i}]\in H_1(K,\mathbb{Z}_2)/[K]$.
 \end{lemma}

 \begin{proof}
By attaching a half $K_{v_i,j}$ for each vertex $v_i$ to the path
$\omega$ we get a closed path without rotation points, i.e.\ a
multiple of $K$. Thus,
 $$
[\omega]+\sum_{i=1}^k[K_{v_i}]=m[K]=0.
 $$
 \end{proof}

Let us return now to the proof of Theorem~\ref
{thm:flat_knots_surf}.

Let $p$ be a parity with coefficients in a group $A$ on curves of a
homotopy class $\mathcal K$ on a closed $2$-surface $S$.

Let $K$ be a curve from $\mathcal{K}$ on the surface $S$. Assume
that $K$ splits the surface into a union of $2$-cells. Arguing as
above in Lemma~\ref{lem:sum_par}, we obtain the following

 \begin{lemma}\label{lem:surf_par}
Let $e$ be a cell in $S\setminus K$ with vertices $v_1,\dots,v_k$
{\em(}not necessarily distinct{\em)}. Then $\sum\limits_{i=1}^k
p_K(v_i)=0$.\hfill $\Box$
 \end{lemma}

Let us show that the map $\rho_K\colon H_1(S,\mathbb{Z}_2)/[\mathcal K]\to
A$ given by the formula $\rho([K_{v,1}])=p_K(v)$, $v\in\V(K)$, is
well defined.

The group $H_1(S,\mathbb{Z}_2)/[K]$ is the first homology group of
the topological space $S'$ obtained from $S$ by gluing a disc along
$K$. $S'$ can also be considered as the result of gluing cells $e\in
S\setminus K$ to the space $K'$ of Lemma~\ref{lem:hom_par1}. Hence,
\begin{multline*}
H_1(S,\mathbb{Z}_2)/[K]=\left(H_1(K',\mathbb{Z}_2)/[K]\right)/([\partial
e],\ e\in S\setminus
K)\\
=\bigoplus_{v\in\V(K)}\Z_2[K_{v,1}]\Big/\left(\sum_{v\in
e\cap\V(K)}[K_{v,1}]=0,\ e\in S\setminus K\right) \\
=\bigoplus_{v\in\V(K)}\Z1_{K,v}\Big/\left(2\cdot1_{K,v}=0,\
v\in\V(K);\ \sum_{v\in e\cap\V(K)}1_{K,v}=0,\ e\in S\setminus
K\right).
\end{multline*}
The second equality follows from
Lemmas~\ref{lem:hom_par1},~\ref{lem:hom_par2}.

On the other hand, due to Lemmas~\ref{lem:par_mod2}
and~\ref{lem:surf_par} we have identities $2p_K(v)=0$, $v\in\V(K)$,
and $\sum\limits_{v\in e\cap\V(K)}p_K(v)=0$, $e\in S\setminus K$,
which imply that the map $\rho$ is well defined epimorphism of
groups.

Let $f\colon K\to K'$ be an elementary morphism (an isotopy or a
Reidemeister move) and the diagram $K'$ splits the surface into
cells. Then for any vertex $v'\in\V(K')$ such that $v'=f_*(v)$ for
some $v\in\V(K)$ we have $[K_{v,1}]=[K'_{v',1}]$ and
$p_K(v)=p_{K'}(v')$. Since the elements $[K'_{v',1}]$ for such
vertices $v'$ generate the group $H_1(S,\mathbb{Z}_2)/[\mathcal K]$
the maps $\rho_K$ and $\rho_{K'}$ coincide. Hence, the map
$\rho=\rho_K$ does not depend on a choice of the diagram $K$ and
$p_K=\rho\circ hp_K$ for any diagram which splits the surface into
cells.

If $S\setminus K$ is not a union of cells, then we can apply second
Reidemeister moves several times and obtain a diagram $K'$ splitting
the surface into cells. By properties of the parities $hp$ and $p$
we have $[K_{v,1}]=[K'_{f_*(v),1}]$ and $p_K(v)=p_{K'}(f_*(v))$ for
any $v\in\V(K)$. Therefore $p_K(v)=p_{K'}(f_*(v))=\rho\circ
hp_{K'}(f_*(v))=\rho\circ hp_K(v)$.

Thus, $p_K=\rho\circ hp_K$ for any diagram $K$, so the homological
parity $hp$ is universal.
\end{proof}

The homological parity remains universal if we pass from the
category of homotopy classes of curves on a given surface $S$ to the
category of knots on $S$ (to be precise, knots in the thickened
surface). The following lemma shows that in some sense parity does
not feel the over- and undercrossing structure.

 \begin{lemma}\label {lem:alt_tr}
Let $p$ be a parity on the category of knots on a surface
$S$, and let $K$ be a diagram of a knot on $S$. If vertices
$a,\,b\in\V(K)$ form a bigon in $S$ then $p_K(a)+p_K(b)=0$. If
vertices $v_1,\,v_2,\,v_3\in\V(K)$ form a triangle in $S$ then
$p_K(a)+p_K(b)+p_K(c)=0$.
 \end{lemma}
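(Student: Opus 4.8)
The plan is to argue by cases on the over/undercrossing data in the region of the bigon (respectively triangle): the geometrically unobstructed configurations follow at once from the parity axioms, and the obstructed ones are reduced to them by inserting auxiliary crossings via second Reidemeister moves and composing the resulting linear relations among parities. First, running the argument of Lemma~\ref{lem:par_mod2} in the category of knots on $S$ --- its proof uses only second and third Reidemeister moves, all available here once one fixes the over/undercrossing data of the newly created crossings --- gives $2p_K(v)=0$ for every crossing $v$ of every diagram, so signs may be ignored throughout. Now let $a,b$ bound a bigon, that is, a disc $B\subset S$ with $a,b$ at its corners and no other part of $K$ inside it. If the over/undercrossing data is such that one of the two boundary arcs of $B$ passes entirely over the other, the bigon is removed by a decreasing second Reidemeister move and $p_K(a)+p_K(b)=0$ is exactly condition~(2) of Definition~\ref{sec:def_par}.\ref{def:parity}. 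Similarly, if $v_1,v_2,v_3$ bound a triangular disc across which one strand passes over the other two, some version of the third Reidemeister move is applicable and $p_K(v_1)+p_K(v_2)+p_K(v_3)=0$ is condition~(3). What remains are the two obstructed configurations: the \emph{clasp}, in which the two arcs of the bigon have opposite over/under type at $a$ and at $b$; and the \emph{Borromean triangle}, in which the three crossings are cyclically ordered in over/under type, so that no strand lies over the other two.

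The Borromean triangle reduces to the bigon case. Write $X,Y,Z$ for the three strands, say with $X$ over $Y$ at $v_1$, $Y$ over $Z$ at $v_2$, and $Z$ over $X$ at $v_3$. I would apply one increasing second Reidemeister move pushing $X$ over $Z$ in a small neighbourhood of the triangle, creating crossings $c$ and $d$ with $p(c)+p(d)=0$, positioned so that $\{v_1,v_2,c\}$ now bounds a triangular disc across which $X$ is the overstrand (so a version of the third Reidemeister move applies to it), while $d$ and $v_3$ bound a bigon, which is again a clasp ($X$ over $Z$ at $d$, $Z$ over $X$ at $v_3$). Condition~(3) applied to $\{v_1,v_2,c\}$ gives $p(v_1)+p(v_2)+p(c)=0$; the bigon case applied to the clasp $\{d,v_3\}$ gives $p(d)+p(v_3)=0$; adding these to $p(c)+p(d)=0$ and using $2p=0$ yields $p_K(v_1)+p_K(v_2)+p_K(v_3)=0$.

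The clasp case is the heart of the argument and the step I expect to be the main obstacle. Unlike a clasp between two distinct components, a clasp within a single knot cannot be undone by Reidemeister moves and isotopy, and with only the two strands of the clasp present no sequence of second Reidemeister moves changes anything; so one has to bring a third arc into the picture. Since $K$ is connected this can be done by detouring an arc of $K$ lying elsewhere (a composition of second and third Reidemeister moves, which by conditions~(1)--(3) leaves the parities of all crossings already present untouched), or, in the absence of such an arc, by first creating auxiliary crossings to the same effect; one then adds crossings with second Reidemeister moves so that the clasp region is cut into pieces, each of which is either a bigon removable by a second Reidemeister move or a triangle admitting a third Reidemeister move. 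This is the mechanism of Lemmas~\ref{lem:gaev_to_ev} and~\ref{lem:ev_od_ae}: introduce auxiliary crossings, read off the polygon relations they produce, and combine. Composing condition~(2) over the removable bigons, condition~(3) over the admissible triangles, and the relations $p(c_{2i-1})+p(c_{2i})=0$ from the auxiliary second Reidemeister moves produces a telescoping identity that collapses to $p_K(a)+p_K(b)=0$. The combinatorial bookkeeping --- where to place the new crossings, and which small complementary regions are discs --- is routine once the defining figure is drawn; constructing that figure is the real content.
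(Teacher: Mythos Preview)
Your case split and the Borromean-to-clasp reduction are fine, but the clasp case --- which you yourself identify as the heart of the argument --- is not proved. ``Detour a third arc of $K$ into the bigon and telescope'' is a strategy, not a proof: you neither exhibit the figure nor say which relations to combine, and dragging in a distant arc through a chain of R2/R3 moves is both awkward to set up and unnecessary.

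The missing observation is that the third strand can be manufactured \emph{locally} by a single first Reidemeister move. With $X$ over $Y$ at $a$ and $Y$ over $X$ at $b$, perform an R1 on the $X$-arc inside the bigon, choosing the sign so that at the new crossing $c$ the piece of $X$ running from $a$ into $c$ lies over the piece running from $c$ to $b$. The former bigon now decomposes into the R1 monogon and a triangle with corners $a,b,c$ whose three local strands are $X_1$ (from $a$ to $c$), $X_2$ (from $c$ to $b$) and $Y$; one has $X_1$ over $Y$ at $a$, $X_1$ over $X_2$ at $c$, and $Y$ over $X_2$ at $b$, so $X_1$ is a genuine overstrand and a third Reidemeister move applies. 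Condition~(3) gives $p_{K'}(a)+p_{K'}(b)+p_{K'}(c)=0$, Lemma~\ref{lem:par_fir} gives $p_{K'}(c)=0$, and condition~(1) transports this back to $p_K(a)+p_K(b)=0$. (Choosing the other sign at $c$ produces an alternating triangle, so the choice matters.) With the clasp done, your Borromean reduction finishes the triangle case --- and in fact that reduction does not need $2p=0$: from $p(v_1)+p(v_2)+p(c)=0$, $p(c)+p(d)=0$ and $p(d)+p(v_3)=0$ one gets $p(v_1)+p(v_2)+p(v_3)=0$ by substitution alone.

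For comparison, the paper reverses the order: it treats the alternating triangle directly by an explicit local construction (three second Reidemeister moves and one third, using only the three strands already present) displayed in a figure, and declares the bigon analogous. Either direction is fine, but in each the base case needs an actual construction rather than a promise of one.
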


 \begin{proof}
We prove the lemma for a triangle, the proof for a bigon is
analogous. Let the vertices $a,\,b,\,c\in\V(K)$ form a triangle. If
one can apply the third Reidemeister move to the triangle, the
identity $p_K(a)+p_K(b)+p_K(c)=0$ follows from definition of parity.
Otherwise the vertices constitute an alternating triangle. By
applying three second and one third Reidemeister moves we get the
diagram $K'$ (see Fig.~\ref{alt_triang}), where the following
equalities hold:
 \begin{gather*}
p_{K'}(b)+p_{K'}(c)+p_{K'}(d)=0,\\
p_{K'}(e)+p_{K'}(f)+p_{K'}(g)=0,\\
p_{K'}(a)+p_{K'}(f)+p_{K'}(g)=0,\\
p_{K'}(e)+p_{K'}(d)=0.
\end{gather*}
Then we have $p_{K'}(a)=p_{K'}(e)=p_{K'}(d)=p_{K'}(b)+p_{K'}(c)$ (we
do not need signs because Lemma~\ref{lem:par_mod2} remains true in
the category of knots). Therefore, $p_K(a)+p_K(b)+p_K(c)=0$.
\end{proof}

 \begin{figure}
\centering\includegraphics[width=350pt]{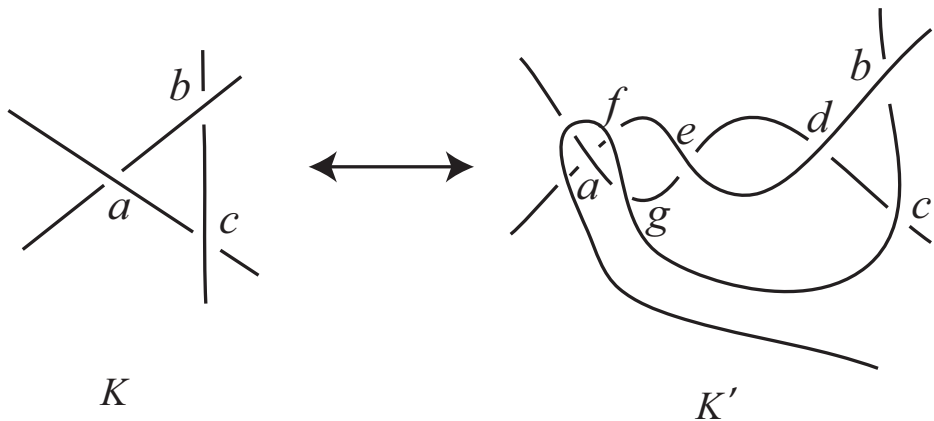} \caption{An
alternating triangle} \label{alt_triang}
 \end{figure}

The claim above ensures that Lemma~\ref{lem:surf_par} holds in the
current situation too. Hence, one can repeat the proof of
Theorem~\ref{thm:flat_knots_surf} and get the following result.

 \begin{theorem}\label{thm:knots_surf}
Let $\mathcal{K}$ be a knot on a surface $S$. Then the homological
parity {\em(}with coefficients in
$H_1(S,\mathbb{Z}_2)/[\mathcal{K}]${\em)} is the universal parity on
diagrams of $\mathcal{K}$.
 \end{theorem}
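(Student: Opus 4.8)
The plan is to mirror, essentially line by line, the proof of Theorem~\ref{thm:flat_knots_surf}, replacing the category of homotopy classes of curves on $S$ by the category of knots on $S$ (i.e.\ knots in the thickened surface). First I would observe that the homological parity $hp$ extends from curves to knot diagrams on $S$: for a knot diagram $K$ on $S$ and a vertex $v$ one still has the two halves $K_{v,1},K_{v,2}$ obtained by (flat) smoothing, and one sets $hp_K(v)=[K_{v,1}]\in H_1(S,\Z_2)/[\mathcal{K}]$. The axioms of Definition~\ref{sec:def_par}.\ref{def:parity} hold here because flattening sends every elementary morphism of knot diagrams to an elementary morphism (or an identity) of curve diagrams, where Lemma~\ref{lem:hpar} applies; in particular the value of $hp$ does not feel the over/undercrossing data.

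Next I would reconstruct the homomorphism $\rho=\rho_K\colon H_1(S,\Z_2)/[\mathcal{K}]\to A$ defined by $\rho([K_{v,1}])=p_K(v)$ for $v\in\V(K)$, exactly as in the proof of Theorem~\ref{thm:flat_knots_surf}. The presentation
$$
H_1(S,\Z_2)/[K]=\bigoplus_{v\in\V(K)}\Z\,1_{K,v}\Big/\left(2\cdot 1_{K,v}=0,\ v\in\V(K);\ \sum_{v\in e\cap\V(K)}1_{K,v}=0,\ e\in S\setminus K\right)
$$
is purely topological (Lemmas~\ref{lem:hom_par1},~\ref{lem:hom_par2}) and therefore needs no change. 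Well-definedness of $\rho$ then reduces to checking that $p$ satisfies the two families of relations $2p_K(v)=0$ and $\sum_{v\in e\cap\V(K)}p_K(v)=0$ for every cell $e$ of $S\setminus K$. The first is Lemma~\ref{lem:par_mod2}, whose proof uses only second and third Reidemeister moves and hence stays valid for knots. The second is Lemma~\ref{lem:surf_par}, and here lies the sole genuine point of difference: a cell $e$ bounds a polygon whose boundary crossings may be alternating, so one cannot directly run the inductive reduction of Lemma~\ref{lem:sum_par} through triangles and bigons that happen to admit the relevant Reidemeister move. This is exactly what Lemma~\ref{lem:alt_tr} overcomes: in the knot category every bigon and every triangle in $S$, alternating or not, still has zero parity sum, via the explicit detour through three second and one third Reidemeister moves. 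With that in hand the induction of Lemma~\ref{lem:sum_par} carries over and delivers Lemma~\ref{lem:surf_par} for knots.

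Finally I would conclude as before. For a diagram $K$ splitting $S$ into $2$-cells, $\rho_K$ is a well-defined epimorphism; for an elementary morphism $f\colon K\to K'$ one has $[K_{v,1}]=[K'_{f_*(v),1}]$ and $p_K(v)=p_{K'}(f_*(v))$ whenever $f_*(v)$ exists, and since the classes $[K'_{v',1}]$ of such $v'$ generate $H_1(S,\Z_2)/[\mathcal{K}]$, the maps $\rho_K$ and $\rho_{K'}$ agree, so $\rho$ is independent of the diagram; a diagram not splitting $S$ into cells is reduced to one that does by second Reidemeister moves, under which both $hp$ and $p$ are controlled. Hence $p_K=\rho\circ hp_K$ for every diagram $K$, and $\rho$ is unique since the $[K_{v,1}]$ generate, so $hp$ is the universal parity. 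I expect the only real obstacle to be the transfer of Lemma~\ref{lem:surf_par} to the knot setting — i.e.\ the handling of alternating bigons and triangles — which is precisely the content of Lemma~\ref{lem:alt_tr}; the remainder of the argument is a transcription of the proof of Theorem~\ref{thm:flat_knots_surf}.
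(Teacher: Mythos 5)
Your proposal is correct and follows essentially the same route as the paper: the authors likewise reduce the theorem to repeating the proof of Theorem~\ref{thm:flat_knots_surf}, with the only new ingredient being Lemma~\ref{lem:alt_tr}, which guarantees the bigon and triangle relations (and hence Lemma~\ref{lem:surf_par}) for alternating configurations where no Reidemeister move applies directly. You have correctly isolated that as the sole point of difference, so nothing further is needed.
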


 \begin{crl}
Any parity on classical knots is trivial.
 \end{crl}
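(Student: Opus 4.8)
The plan is to realise the classical knot theory as a special case of the theory of knots on a closed surface and then to apply Theorem~\ref{thm:knots_surf}. A diagram of a classical knot $\mathcal{K}$ lives in the plane $\mathbb{R}^2$, hence in the sphere $S^2=\mathbb{R}^2\cup\{\infty\}$; the underlying framed $4$-graph of such a diagram is connected (it has one unicursal component), so its complement in $S^2$ is a disjoint union of open $2$-cells, i.e.\ the diagram already splits $S^2$ into cells and no auxiliary second Reidemeister moves are needed to arrange this. Under this identification a classical Reidemeister move performed inside a disc in the plane is exactly the corresponding move performed on $S^2$, and no extra moves (stabilisations) are available once the ambient surface is fixed to be $S^2$; moreover a classical knot in $S^3$ is the same as a knot in the thickened sphere $S^2\times I$. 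Thus the category $\mathfrak{K}$ of diagrams of $\mathcal{K}$ as a classical knot coincides with the category of diagrams of $\mathcal{K}$ regarded as a knot on the surface $S=S^2$.

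I would then invoke Theorem~\ref{thm:knots_surf}: the universal parity on diagrams of $\mathcal{K}$ has coefficients in $A_u=H_1(S^2,\mathbb{Z}_2)/[\mathcal{K}]$. Since $H_1(S^2,\mathbb{Z}_2)=0$, this group is trivial, $A_u=0$. By the defining property of a universal parity, an arbitrary parity $p$ on $\mathcal{K}$ with coefficients in an abelian group $A$ factors as $p_K=\rho\circ(p_u)_K$ through a homomorphism $\rho\colon A_u\to A$; as $A_u=0$, the map $\rho$ is the zero homomorphism and hence $p_K(v)=0$ for every diagram $K$ and every $v\in\V(K)$. Therefore $p$ is the trivial parity, which is precisely the assertion of the corollary.

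The only point that really requires attention is the honest verification of the category identification in the first paragraph, in particular the claim that a connected $4$-valent graph embedded in $S^2$ cuts it into discs and that the morphism sets (isotopies and Reidemeister moves) of the two descriptions genuinely coincide; this is standard planar-graph topology and presents no substantive obstacle, though it should be stated carefully since earlier the paper warned that the classical theory ``as it is'' and the classical theory ``inside the virtual theory'' must be treated differently. As an alternative route that avoids appealing to Theorem~\ref{thm:knots_surf} as a black box, one may argue directly: by Corollary~\ref{st:hom_par} every polygon in a planar diagram is spanned by a disc, so the sum of the parities over the vertices of every face of the diagram vanishes; repeating the homological computation from the proof of Theorem~\ref{thm:flat_knots_surf} with $S=S^2$ identifies the target of the universal parity with $H_1(S^2,\mathbb{Z}_2)/[\mathcal{K}]=0$ and forces every $p_K(v)$ to vanish.
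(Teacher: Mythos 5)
Your proof is correct and takes essentially the same route as the paper: representing the classical knot by diagrams on $S^2$, invoking Theorem~\ref{thm:knots_surf} so that the universal parity group is $H_1(S^2,\mathbb{Z}_2)/[\mathcal{K}]=0$, and concluding that every parity factors through the trivial group. Your extra care about identifying the category of classical diagrams with the category of diagrams on $S^2$ is a sensible elaboration of a step the paper leaves implicit, not a different argument.
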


 \begin{proof}
Any classical knot $\mathcal K$ is represented by diagrams on $S^2$.
But $H_1(S^2,\Z_2)=0$, so the universal parity group as well as any
parity is trivial.
 \end{proof}

 \section{Applications of parity}

Let us briefly summarize some theorems from~\cite{Sbornik}
reformulating them for parities with coefficients from an abelian
group.

 \subsection{The functorial mapping $f$}

Let $\mathcal K$ be a virtual, flat or free knot and $\mathfrak K$
be the corresponding category of its diagrams.

Let us consider any family of maps $\widetilde
p_K\colon\V(K)\to\Z_2$, $K\in\ob(\mathfrak K)$, that possesses all
the properties of Definition~\ref {sec:def_par}.\ref{def:parity}
except for the property 3. Instead of it we impose the condition: if
$v_1,\,v_2,\,v_3$ are crossings participating in a third
Reidemeister move then the number of vertices $v$ among
$v_1,\,v_2,\,v_3$ such that $p_K(v)=1$ is not equal to $1$. We call
such a family a {\em pseudoparity $\widetilde p$ of $\mathcal K$
with coefficients in $\Z_2$}.

The following statement follows directly from the definition.

 \begin{lemma}\label{lem:pseudopar}
If $p$ is a parity {\em(}with coefficients in a group $A${\em)},
then the formula
 $$
\widetilde p_K(v)=\left\{\begin{array}{cl}1,& p_K(v)\ne 0,\\
0,& p_K(v)=0\end{array}\right.
 $$
defines a pseudoparity on $\mathcal K$.
 \end{lemma}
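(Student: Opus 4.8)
The plan is to verify directly that $\widetilde p$ satisfies the two axioms of a pseudoparity, namely the invariance under elementary morphisms together with the two-crossing relation, and then the weakened third-move condition. The first two are essentially immediate, while the crux is the third: translating the additive relation $p_K(v_1)+p_K(v_2)+p_K(v_3)=0$ in $A$ into the statement that, among the three crossings of a third Reidemeister move, the number of those with nonzero value of $p_K$ is never exactly $1$.

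First I would check axiom 1. If $f\colon K\to K'$ is an elementary morphism and $v\in\V(K)$ with $f_*(v)$ defined, then $p_{K'}(f_*(v))=p_K(v)$ by property 1 of Definition~\ref{sec:def_par}.\ref{def:parity}. Since $\widetilde p$ is obtained from $p$ by the fixed map $A\to\Z_2$ sending $0\mapsto 0$ and every nonzero element to $1$, equality of the $A$-values forces equality of the $\Z_2$-values, so $\widetilde p_{K'}(f_*(v))=\widetilde p_K(v)$. Next, axiom 2: if $f$ is a decreasing second Reidemeister move with disappearing crossings $v_1,v_2$, then $p_K(v_1)+p_K(v_2)=0$ in $A$, i.e.\ $p_K(v_2)=-p_K(v_1)$. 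Hence $p_K(v_1)=0$ if and only if $p_K(v_2)=0$, so $\widetilde p_K(v_1)=\widetilde p_K(v_2)$, which in $\Z_2$ is the same as $\widetilde p_K(v_1)+\widetilde p_K(v_2)=0$.

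The remaining point is the third-move condition. Suppose $v_1,v_2,v_3$ participate in a third Reidemeister move, so $p_K(v_1)+p_K(v_2)+p_K(v_3)=0$ in $A$. I want to rule out the case where exactly one of $\widetilde p_K(v_1),\widetilde p_K(v_2),\widetilde p_K(v_3)$ equals $1$. If this happened, then after relabelling exactly one of the three values $p_K(v_i)$ in $A$ is nonzero and the other two vanish; say $p_K(v_1)\neq 0$ while $p_K(v_2)=p_K(v_3)=0$. Then $p_K(v_1)+p_K(v_2)+p_K(v_3)=p_K(v_1)\neq 0$, contradicting the relation in $A$. So the number of indices $i$ with $\widetilde p_K(v_i)=1$ is $0$, $2$ or $3$, never $1$, which is exactly the pseudoparity condition. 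This is the only step with any content, and it is a one-line group-theoretic observation rather than a real obstacle; there is essentially nothing hard in the lemma, which is why the paper says it "follows directly from the definition."

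Finally I would remark that the same argument applies verbatim to a decreasing first Reidemeister move via Lemma~\ref{lem:par_fir}: the disappearing crossing has $p_K(v)=0$, hence $\widetilde p_K(v)=0$, so $\widetilde p$ behaves correctly on first moves as well, and therefore $\widetilde p$ is indeed a pseudoparity of $\mathcal K$ with coefficients in $\Z_2$.
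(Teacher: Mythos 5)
Your proof is correct and is exactly the direct verification the paper has in mind when it states that the lemma ``follows directly from the definition'' (the paper supplies no written proof). The key observation---that if exactly one of the three values $p_K(v_i)$ were nonzero, the relation $p_K(v_1)+p_K(v_2)+p_K(v_3)=0$ in $A$ would be violated---is precisely the content of the lemma, and your treatment of axioms 1 and 2 is likewise the intended one.
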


Let $\widetilde p$ be a pseudoparity on a knot $\mathcal K$ and $K$
be a diagram of $\mathcal K$. We call a classical crossing $v$ of
$K$ an {\em odd crossing} if $\widetilde p_K(v)=1$ and an {\em even
crossing} if $\tilde p_K(v)=0$. Let $f_{\widetilde p}(K)$ be the
diagram obtained from $K$ by making all odd crossings virtual. In
other words, we remove all odd chords of the corresponding chord
diagram.

 \begin{theorem}
The map $f_{\widetilde p}$ defines a functor from the category of
diagrams of a virtual {\em(}resp., flat, free\/{\em)} knot $\mathcal
K$ with the pseudoparity $\widetilde p$ to the category of diagrams
of the virtual {\em(}resp., flat, free\/{\em)} knot $\mathcal
K'=f_{\widetilde p}(\mathcal K)$.
 \end{theorem}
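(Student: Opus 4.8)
The plan is to verify that $f_{\widetilde p}$ is well defined on objects and on morphisms, and that it respects composition. On objects there is nothing to check: given a diagram $K$ of $\mathcal K$, the result $f_{\widetilde p}(K)$ is simply the diagram obtained by virtualising (equivalently, deleting the chords of) the odd crossings, so it is a legitimate diagram of \emph{some} knot; call that knot $\mathcal K'=f_{\widetilde p}(\mathcal K)$. The real content is that if $K_1$ and $K_2$ are two diagrams of $\mathcal K$, then $f_{\widetilde p}(K_1)$ and $f_{\widetilde p}(K_2)$ are diagrams of the \emph{same} knot, so that $\mathcal K'$ is unambiguously defined; and this will follow once we show that $f_{\widetilde p}$ carries each elementary morphism $K_1\to K_2$ to a morphism $f_{\widetilde p}(K_1)\to f_{\widetilde p}(K_2)$ in the target category.

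So the core of the argument is a case analysis over the elementary morphisms. First I would dispose of isotopies, which obviously commute with virtualising a fixed set of crossings. For a first Reidemeister move adding or removing a crossing $v$: whatever the value $\widetilde p_{K}(v)$, making $v$ virtual turns the classical loop into a loop made of a single virtual crossing, which can be removed by a detour move; so on the image side the first Reidemeister move (if $v$ is even) or a detour move (if $v$ is odd) realises the corresponding morphism. For a second Reidemeister move with the two crossings $v_1,v_2$: by the pseudoparity analogue of Lemma~\ref{lem:par_fir} and property~2 of the pseudoparity (inherited from the axioms, since $\widetilde p_K(v_1)=\widetilde p_K(v_2)$ for a decreasing second move), the two crossings have equal pseudoparity; if both are even, the image move is again a second Reidemeister move; if both are odd, both become virtual and the resulting pair of virtual crossings forming a bigon is removed by a detour move. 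For a third Reidemeister move with crossings $v_1,v_2,v_3$: here we use the defining property of a pseudoparity, namely that the number of odd crossings among $v_1,v_2,v_3$ is $0$, $2$, or $3$, but never $1$. If all three are even, the image is an honest third Reidemeister move. If all three are odd, all three become virtual and the move becomes a detour move. If exactly two, say $v_1,v_2$, are odd and $v_3$ is even, then on the image side $v_3$ survives as a classical crossing while the strand carrying $v_1,v_2$ becomes purely virtual, and sliding that virtual strand across $v_3$ is again a detour move; the crucial point, which is exactly why the ``not equal to $1$'' condition is imposed, is that we never need to push a \emph{single} virtual crossing past two classical ones in a way that is not a detour move.

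Having checked that every elementary morphism descends, composition is automatic: $f_{\widetilde p}$ of a composite morphism is the composite of the images, because virtualising commutes with concatenating the diagrammatic moves and the partial bijections on surviving (even) crossings compose correctly. Finally, applying this to two diagrams $K_1,K_2$ of $\mathcal K$ joined by a chain of elementary morphisms shows $f_{\widetilde p}(K_1)$ and $f_{\widetilde p}(K_2)$ are connected by a chain of moves, hence represent the same knot $\mathcal K'$, and the assignment $K\mapsto f_{\widetilde p}(K)$, (morphism)$\mapsto$(image morphism) is the desired functor. The main obstacle is the third Reidemeister case with exactly two odd crossings: one must be careful to check that in every combinatorial variant of the third Reidemeister move (there are several, depending on over/under data in the virtual case and on the edge-identification pattern in the flat and free cases) the two odd crossings lie on a common strand of the triangle, so that virtualising them really does produce a configuration related to the image of the other side of the move by a detour move rather than by something illegal; I expect this to be a short but slightly fiddly figure-chasing argument, and for the free-knot case it is cleanest to phrase it entirely on Gauss diagrams, where ``making a crossing virtual'' is literally ``erasing a chord''.
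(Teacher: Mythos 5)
Your proposal is correct and follows essentially the same route as the paper: a case analysis over the elementary morphisms, sending each to a move of the same type when the participating crossings are even and to a detour move when odd crossings are involved, with the ``never exactly one odd crossing'' axiom ruling out the only problematic third-move configuration. The one point you elaborate beyond the paper --- whether the two odd crossings of a third move share a strand --- resolves itself automatically, since any two of the three crossings of the triangle share the strand absent from the remaining (even) crossing, so that strand carries only virtual crossings locally and is moved by a detour.
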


 \begin{proof}
The map $f_{\widetilde p}$ determines how a functor should act on
objects of the category $\mathfrak K$. We need to show that for any
elementary morphism $h\colon K_1\to K_2$ between two diagrams of
$\mathcal K$ there exists an elementary morphism $f_{\widetilde
p}(h)$ connecting the diagrams $f_{\widetilde p}(K_1)$ and
$f_{\widetilde p}(K_1)$.

If $h$ is an isotopy, then the diagrams $f_{\widetilde p}(K_1)$ and
$f_{\widetilde p}(K_1)$ are isotopic and we can take this isotopy
for $f_{\widetilde p}(h)$. If $h$ is a detour move, the diagrams
$f_{\widetilde p}(K_1)$ and $f_{\widetilde p}(K_1)$ are also related
by a detour move.

If $h$ is a first Reidemeister move and the vertex $v$ of the move
is even, then the diagrams $f_{\widetilde p}(K_1)$ and
$f_{\widetilde p}(K_1)$ differ by a first Reidemeister move. If $v$
is odd, the diagrams are connected by a detour move.

If $h$ is a second Reidemeister move and the vertices $v_1,\,v_2$ of
the move are even, then $f_{\widetilde p}(h)$ is a second
Reidemeister move. If the vertices are odd, then we can connect the
diagrams with a detour move.

If $h$ is a third Reidemeister move then depending on the
(pseudo)parity of the vertices of the move, we can take for the map
$f_{\widetilde p}(h)$ either a third Reidemeister move (if all the
vertices of the move are even) or a detour move (if there are odd
vertices).
 \end{proof}

 \begin {rk}
The mapping ``deleting'' all odd classical crossings is a mapping
into itself, i.e.\ we do not go out from the category. If we had had
a non-trivial parity in the category of classical knots, then we
could have gone out from the category to the category of virtual
knots.
 \end {rk}

 \begin{crl}
For any pseudoparity $\widetilde p$ on $\mathcal K$ the isotopy
class of the diagram $f_{\widetilde p}(K)$ does not depend on the
choice of a diagram $K$ of the knot $\mathcal K$. In other words,
the knot $f_{\widetilde p}(\mathcal K)$ is correctly defined.
 \end{crl}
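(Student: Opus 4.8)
The statement is a formal consequence of the functoriality established in the preceding theorem, so the plan is simply to push the chain of moves connecting two diagrams through the functor $f_{\widetilde p}$. First I would recall that, by the very definition of the category $\mathfrak K$ of diagrams of $\mathcal K$, any two diagrams $K_1$ and $K_2$ representing the same knot $\mathcal K$ are connected by a morphism of $\mathfrak K$, i.e.\ by a finite composition of elementary morphisms
 $$
K_1 = D_0 \xrightarrow{\,h_1\,} D_1 \xrightarrow{\,h_2\,} \cdots \xrightarrow{\,h_m\,} D_m = K_2 ,
 $$
where each $h_i$ is an isotopy, a detour move, or a Reidemeister move. (In the free knot case there is no detour move; the relevant elementary morphisms are just isotopies and Reidemeister moves, and the argument is the same.)

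Next I would apply the functor $f_{\widetilde p}$ of the theorem. It sends each object $D_i$ to the diagram $f_{\widetilde p}(D_i)$ obtained by virtualising (equivalently, deleting from the chord diagram) the odd crossings, and it sends each elementary morphism $h_i\colon D_{i-1}\to D_i$ to an elementary morphism $f_{\widetilde p}(h_i)\colon f_{\widetilde p}(D_{i-1})\to f_{\widetilde p}(D_i)$ in the category of diagrams of $\mathcal K' = f_{\widetilde p}(\mathcal K)$ — namely an isotopy, a detour move, or a Reidemeister move, exactly as enumerated in the proof of the theorem. Composing these, $f_{\widetilde p}(h_m)\circ\cdots\circ f_{\widetilde p}(h_1)$ is a morphism from $f_{\widetilde p}(K_1)$ to $f_{\widetilde p}(K_2)$, so these two diagrams are related by a finite sequence of isotopies and Reidemeister (and detour) moves. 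Hence they represent one and the same virtual (resp.\ flat, free) knot, and the isotopy class of $f_{\widetilde p}(K)$ does not depend on the choice of the diagram $K$ of $\mathcal K$; that is, $f_{\widetilde p}(\mathcal K)$ is well defined.

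\textbf{Main difficulty.} There is essentially no obstacle here: everything of substance — in particular the case analysis showing that each type of Reidemeister move on $K$ induces an allowable move (or a detour move) on $f_{\widetilde p}(K)$, which crucially uses the defining property of a pseudoparity that the number of odd vertices among the three crossings of a third Reidemeister move is never equal to $1$ — has already been carried out in the proof of the theorem. The corollary is a purely formal consequence of the functor property (and of the fact that "diagrams of the same knot" means precisely "objects of $\mathfrak K$ joined by a morphism"), so the only thing to be careful about is to phrase it as such rather than re-doing the case analysis.
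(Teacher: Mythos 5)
Your proof is correct and matches the paper's intent: the corollary is stated there without proof precisely because it is the formal consequence of the functoriality theorem that you spell out. Transporting the chain of elementary morphisms between two diagrams of $\mathcal K$ through $f_{\widetilde p}$ is exactly the intended argument.
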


In the case of the trivial pseudoparity $\widetilde p$ (i.e.\
$\widetilde p_K(v)=0$ for any $v\in\V(K)$) we have $f_{\widetilde
p}(\mathcal K)=\mathcal K$.

As an example showing the power of the notion of parity we present
the following theorem.

 \begin{theorem}[\cite {FrKn}]
Let $K$ be a framed 4-graph with one unicursal component such that
all vertices of $K$ are odd and no decreasing second Reidemeister
move can be applied to $K$. Then $K$ is a minimal diagram of the
corresponding free knot in the following strong sense\/{\em:} for
any diagram $K'$ equivalent to $K$ there is a smoothing of $K'$
isomorphic to the graph $K$.\label{mnm}
 \end{theorem}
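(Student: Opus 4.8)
The plan is to deduce minimality from an auxiliary invariant of free knots, the \emph{parity bracket} associated with the Gaussian parity $gp$ (which is a parity by Lemma~\ref{lem:gpar}); this is essentially the invariant of~\cite{FrKn}. Let $\mathcal{T}$ be the $\mathbb{Z}_2$-vector space spanned by isomorphism classes of framed $4$-graphs, modulo the relations that a graph with a free (cyclic-edge) component is set to $0$ and that two graphs related by a second Reidemeister move are identified; a basis of $\mathcal{T}$ is formed by the framed $4$-graphs that are irreducible with respect to the second Reidemeister move and have no free loop. For a framed $4$-graph $D$ with one unicursal component, equip its vertices with $gp$ and put $[D]:=\sum_s D_s\in\mathcal{T}$, where $s$ ranges over all ways of smoothing the even vertices of $D$ (the odd vertices are kept) and $D_s$ is the resulting framed $4$-graph.

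The technical core is to show that $[D]$ is invariant under the Reidemeister moves applied to $D$; I would argue move by move, using first that a move carried out away from a vertex leaves its Gaussian parity unchanged. For a first Reidemeister move the crossing of the loop is linked with nothing, hence even, hence smoothed: one of its smoothings recovers $D_s$ and the other produces $D_s$ with a split trivial circle, which is $0$ in $\mathcal{T}$. For a second Reidemeister move the two participating crossings have equal parity by the second parity axiom; if both are even, the four states at them reduce to a single surviving term $D_s$ (one term carries a split circle and is $0$, and the two ``mixed'' states are carried to one another by reversing the traversal of the core circle, so they cancel mod $2$), while if both are odd each summand simply acquires a bigon, which is trivial in $\mathcal{T}$. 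The delicate case is the third Reidemeister move, and this is exactly where the parity hypothesis is used essentially: the axiom $p(v_1)+p(v_2)+p(v_3)=0$ forces the number of odd crossings among $v_1,v_2,v_3$ to be $0$ or $2$, never $1$ or $3$. When all three are even, one smooths all three (with identical choices at the remaining even vertices on the two sides of the move) and matches the eight states on either side, the matched graphs coinciding up to a second Reidemeister move; when exactly two are odd, one smooths the single even crossing and matches the two states on each side in the same manner.

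Granting the invariance, the theorem follows in a few lines. Since every vertex of $K$ is odd there is nothing to smooth, so $[K]=K$, a single generator; moreover $K$ is a genuine basis vector of $\mathcal{T}$, because it has no free loop (it has one unicursal component with at least one vertex) and, by hypothesis, admits no decreasing second Reidemeister move, i.e.\ is irreducible. Hence for any diagram $K'$ equivalent to $K$ we have $K=[K]=[K']=\sum_s K'_s$ in $\mathcal{T}$, with each $K'_s$ obtained by smoothing the even vertices of $K'$. As the generator $K$ occurs with coefficient $1$, some $K'_s$ must reduce to $K$ in $\mathcal{T}$; that is, $K'_s$ is $K$ with finitely many extra bigons and no split circle. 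Smoothing the crossings of those bigons in the trivial way converts $K'_s$ into $K$, so, composing this with the smoothing $s$, we obtain a smoothing of $K'$ isomorphic to $K$, as claimed.

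The main obstacle is precisely the third Reidemeister move in the second step: it is the only place where the parity axiom genuinely enters, and verifying it requires both a careful enumeration of the local smoothing states and the right choice of defining relations for $\mathcal{T}$ so that the first and second Reidemeister moves also come out right. Once the bracket is established as a well-defined free-knot invariant, the remainder is routine.
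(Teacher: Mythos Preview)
Your proposal is correct and follows essentially the route of~\cite{FrKn}, which is the reference the paper cites for this theorem; the paper itself does not give a proof here but develops the parity bracket in the very next subsection as precisely the tool you invoke, and the deduction of minimality from the bracket is the same.

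There is one genuine, if minor, difference worth noting. The paper's bracket takes values in $\Z_2\mathfrak G$, where $\mathfrak G$ consists of \emph{one}-unicursal-component framed $4$-graphs modulo the second Reidemeister move; multi-component states are simply discarded. Your target $\mathcal T$ instead keeps multi-component graphs but annihilates any graph with a free circle. Both choices yield an invariant: in the second Reidemeister case your two ``mixed'' smoothings really are isomorphic framed $4$-graphs (the short bigon arcs carry no chord endpoints, so both mixed states are the disjoint union of the two halves $\alpha$ and $\beta$), hence cancel over $\Z_2$, while the fourth state carries the free bigon-circle and dies; the third Reidemeister verification goes through by the same mechanism (one state acquires a free triangle-circle, and the remaining non-matching pair are isomorphic and cancel). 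The paper's choice makes the invariance check a touch shorter since one never needs to argue that two multi-component states coincide. Conversely, your choice requires the confluence of $R_2$-reduction on framed $4$-graphs (so that irreducible graphs form a basis of $\mathcal T$); this is standard, but you are using it when you pass from ``the coefficient of $K$ is $1$'' to ``some $K'_s$ reduces to $K$ by removing bigons.'' With either target, the final step---$[K]=K$ because all crossings are odd, hence some smoothing of $K'$ is $R_2$-equivalent to $K$, hence a further smoothing yields $K$---is exactly as you wrote.
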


 \subsection{The Parity Bracket}

A particular case of the parity bracket firstly appeared in~\cite
{FrKn}. That bracket was constructed for the Gaussian parity and
played a significant role in proving minimality theorems. Also the
bracket was generalised for the case of graph-links, see~\cite
{IM2}, and allowed the authors to prove the existence of
non-realisable graph-links, for more details see~\cite {IM2}.

In this subsection we consider the parity bracket for any parity
valued in $\mathbb{Z}_2$. This bracket is a generalisation of the
bracket from~\cite {FrKn}.

Let ${\mathfrak{G}}$ be the set of all equivalence classes of framed
graphs with one unicursal component modulo second Reidemeister
moves. Consider the linear space $\Z_2\mathfrak{G}$.

Let $\mathcal K$ be a virtual (resp., flat, free) knot, $p$ be a
parity on diagrams of $\mathcal K$ with coefficients from the group
$\Z_2$, and $K$ be a diagram of $\mathcal K$ with
$\V(K)=\{v_1,\dots,v_n\}$. For each element $s\in\{0,1\}^n$ we
define $K_s$ to be equal to the sum of all graphs obtained from $K$
by a smoothing at each vertex $v_i$ if $s_i=1$. If $|s|=l$, $K_s$
contains $2^l$ summands. Define $q_{K,s}(v_i)=p_K(v_i)$ if $s_i=0$,
and $q_{K,s}(v_i)=1-p_K(v_i)$ if $s_i=1$.

Consider the following sum (the {\em parity bracket})
 $$
[K]=\sum\limits_{s\in\{0,1\}^n}\prod\limits_{i=1}^nq_{K,s}(v_i)
K_s\in\Z_2\mathfrak{G},
 $$
where only those summands with one unicursal component are taken
into account.

 \begin {theorem}
If $K$ and $K'$ represent the same knot then the following equality
holds in $\Z_2\mathfrak{G}${\em:} $[K]=[K']$.
 \end {theorem}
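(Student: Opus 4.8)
The plan is to check invariance of $[K]$ under each elementary morphism, exactly as one checks invariance of the Kauffman bracket, but using the parity to control the behaviour at the Reidemeister moves. The key structural observation is the one already used for the Gaussian bracket in~\cite{FrKn}: the quotient $\mathfrak{G}$ is taken modulo second Reidemeister moves, so whenever a move \emph{creates} summands differing by a second Reidemeister move, those summands are identified in $\Z_2\mathfrak{G}$, and whenever it creates a pair of equal summands, they cancel over $\Z_2$. First I would dispose of isotopies and the detour move: these do not change $\V(K)$, the parity is preserved by axiom~1 of Definition~\ref{sec:def_par}.\ref{def:parity}, and each $K_s$ changes only by an isotopy/detour move, so $[K]=[K']$ termwise.

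Next I would treat the first Reidemeister move. Let $K'$ be obtained from $K$ by adding a curl at a crossing $w$. By Lemma~\ref{lem:par_fir}, $gp$ and in fact $p_{K'}(w)=0$, so $q_{K',s}(w)=0$ when the curl is left unsmoothed and $q_{K',s}(w)=1$ when it is smoothed. Thus in the expansion of $[K']$ only the $s$ with $s_w=1$ survive; for such $s$ one of the two smoothings of $w$ produces a disconnected diagram (a free loop split off), which is discarded, and the other reproduces, up to a first Reidemeister move on the underlying framed graph, the corresponding summand $K_{s'}$ of $[K]$. Since a first Reidemeister move on a framed $4$-graph with one unicursal component is a composition of second and third Reidemeister moves on graphs — or, more simply, since such a curl can be removed by a detour-type isotopy after smoothing — these summands agree in $\Z_2\mathfrak{G}$. (Here one must be slightly careful that $\mathfrak{G}$ really absorbs this; if not, one restricts attention to the curl living in a disc and removes it, which is legitimate.)

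The second Reidemeister move is where the quotient by $R2$ does the work. Adding two crossings $v_1,v_2$ forming a bigon, axiom~2 gives $p_{K'}(v_1)+p_{K'}(v_2)=0$, i.e.\ $p_{K'}(v_1)=p_{K'}(v_2)=:x$. Split the sum $[K']$ according to $(s_{v_1},s_{v_2})\in\{0,1\}^2$. The term $(0,0)$ has coefficient $x^2=x$ and reproduces the corresponding summand of $[K]$ with a bigon inserted, hence equals it in $\Z_2\mathfrak{G}$. The terms $(1,0)$ and $(0,1)$: one of the two smoothings at each relevant vertex gives, after the obvious isotopy, the same graph, and their coefficients $x(1-x)$ and $(1-x)x$ are equal, so they cancel in pairs over $\Z_2$; the remaining smoothings produce disconnected graphs which are discarded. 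The term $(1,1)$ has coefficient $(1-x)^2=1-x$ (mod $2$) and, after smoothing both crossings, yields four graphs, which organise into two pairs of isotopic graphs, again cancelling over $\Z_2$, except for the configuration that gives the summand matching $[K]$ with coefficient accounting — the bookkeeping here is the one genuinely fiddly computation, but it is the same as in~\cite{FrKn} and~\cite{IM2}.

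Finally, the third Reidemeister move with participating crossings $v_1,v_2,v_3$: axiom~3 gives $p(v_1)+p(v_2)+p(v_3)=0$, so the number of odd vertices among the three is $0$ or $2$. One expands $[K]$ and $[K']$ over $(s_{v_1},s_{v_2},s_{v_3})\in\{0,1\}^3$ and checks, case by case on the values of the $p(v_i)$ and of the $s_{v_i}$, that corresponding partial sums agree in $\Z_2\mathfrak{G}$. The only subtlety is that for some choices of $s$ the "small triangle" is fully smoothed and one needs the classical fact that the two sides of a third Reidemeister move have the same set of such smoothings up to isotopy (the smoothings of a triangle do not see the over/under data), together with the $R2$-identifications for the mixed terms; the coefficient identity needed is precisely $p(v_1)+p(v_2)+p(v_3)=0$. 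The main obstacle, then, is not conceptual but the organisation of this finite case analysis for $R2$ and $R3$; I would present it by grouping the $2^k$ summands into orbits under "toggling a single participating crossing", showing each nontrivial orbit either cancels over $\Z_2$ or is killed by the $R2$-relations in $\mathfrak{G}$, leaving exactly the summand that matches the other side.
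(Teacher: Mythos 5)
Your proposal follows the paper's proof essentially verbatim in structure: expand the bracket over the crossings participating in the move, discard smoothings with more than one unicursal component, use the parity axioms to kill or equate the coefficients, and absorb the surviving graphs via the quotient by second Reidemeister moves together with mod-$2$ cancellation; the paper simply carries out the $R2$ and (lengthy) $R3$ case analyses explicitly where you defer them. Two small corrections to your bookkeeping: the mixed $R2$ terms vanish outright because their coefficient $x(1-x)$ is identically $0$ in $\Z_2$ (no pairing of isotopic graphs is needed there), and the fully smoothed triangle states on the two sides of an $R3$ move match only up to second-Reidemeister equivalence in $\mathfrak{G}$ (after discarding a disconnected state and cancelling one isotopic pair on each side), not up to isotopy.
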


 \begin {proof}
Let us check the invariance $[K]\in\Z_2\mathfrak{G}$ under the three
Reidemeister moves.

1) Let $K'$ differ from $K$ by a first Reidemeister move, and
$\V(K')=\{v_1,v_2,\dots,v_{n+1}\}$, $\V(K)=\{v_1,v_2,\dots,v_n\}$.
We have $p_{K'}(v_{n+1})=0$ and
 $$
\left[K'\right]=\left[\firstn\right]=\sum\limits_{s\in\{0,1\}^{n+1}}\prod\limits_{i=1}^{n+1}q_{K',s}(v_i)
K'_s
 $$
 $$
=\sum\limits_{s\in\{0,1\}^n}\prod\limits_{i=1}^nq_{K',s}(v_i)
\left(p_{K'}(v_{n+1})\firstn+(1-p_{K'}(v_{n+1}))\left(\firstfio+\firstfit\right)\right)
 $$
 $$
=\sum\limits_{s\in\{0,1\}^n}\prod\limits_{i=1}^nq_{K',s}(v_i)\firstfio=[K].
 $$

2) Let $K'$ be obtained from $K$ by a second Reidemeister move
adding two vertices, where
$\V(K')=\{v_1,v_2,\dots,v_{n+1},v_{n+2}\}$ and
$\V(K)=\{v_1,v_2,\dots,v_n\}$. We have
$p_{K'}(v_{n+1})+p_{K'}(v_{n+2})=0$, i.e.\
$p_{K'}(v_{n+1})=p_{K'}(v_{n+2})=0$ or
$p_{K'}(v_{n+1})=p_{K'}(v_{n+2})=1$,  and
 $$
[K']=\left[\sectwon\right]=\sum\limits_{s\in\{0,1\}^{n+2}}
\prod\limits_{i=1}^{n+2}q_{K',s}(v_i)
K'_s
 $$
 $$
=\sum\limits_{s\in\{0,1\}^n}\prod\limits_{i=1}^nq_{K',s}(v_i)
\left(p_{K'}(v_{n+1})p_{K'}(v_{n+2})\sectwon\right.
 $$
 $$
+p_{K'}(v_{n+1})(1-p_{K'}(v_{n+2}))\left(\secfino+\secfint\right)
+(1-p_{K'}(v_{n+1}))p_{K'}(v_{n+2})\left(\secsecno+\secsecnt\right)
 $$
 $$
+\left.(1-p_{K'}(v_{n+1}))(1-p_{K'}(v_{n+2}))
\left(\secfioseco+\secfiosect+\secfitseco+\secfitsect\right)\right)
 $$
 $$
=\sum\limits_{s\in\{0,1\}^n}\prod\limits_{i=1}^nq_{K',s}(v_i)\secfioseco=[K].
 $$

 \begin{figure}
\centering\includegraphics[width=350pt]{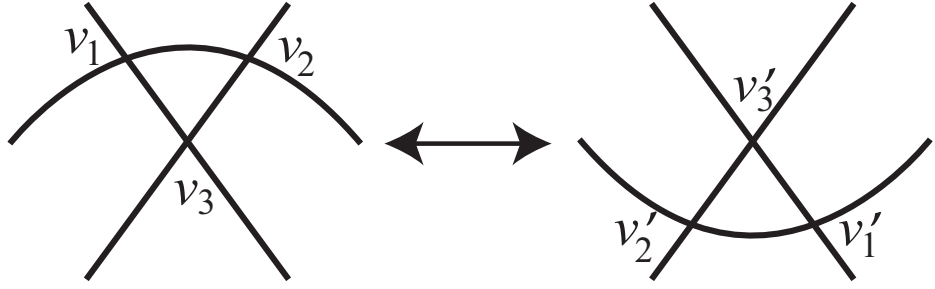} \caption{A third
Reidemeister move} \label{thr}
 \end{figure}

3) Let $K'$ be obtained from $K$ by a third Reidemeister move
applied to vertices $v_1,\,v_2,\,v_3$ in $K$. Denote by
$v'_1,\,v'_2,\,v'_3\in\V(K')$ the vertices corresponding to
$v_1,\,v_2,\,v_3$, see Fig.~\ref {thr} (here
$\V(K')=\{v_1,v_2,\dots,v_n\}$ and
$\V(K)=\{v'_1,v'_2,\dots,v'_n\}$). We have
$p_{K}(v_1)+p_{K}(v_2)+p_{K}(v_3)=0$,
$p_{K'}(v'_1)+p_{K'}(v'_2)+p_{K'}(v'_3)=0$, and
 $$
[K]=\left[\thirdn\right]=\sum\limits_{s\in\{0,1\}^{n}}
\prod\limits_{i=1}^{n}q_{K,s}(v_i)K_s
 $$
 $$
=\sum\limits_{s\in\{0,1\}^{n-3}}
\prod\limits_{i=4}^{n}q_{K,s}(v_i)\left(\underbrace{p_K(v_1)p_K(v_2)p_K(v_3)}_{=0}\thirdn\right.
 $$
 $$
+p_K(v_1)p_K(v_2)(1-p_K(v_3))\left(\thirdfinsecntho+\thirdfinsecntht\right)
 $$
 $$
+(1-p_K(v_1))p_K(v_2)p_K(v_3)\left(\thirdfiosecnthn+\thirdfitsecnthn\right)
 $$
 $$
+p_K(v_1)(1-p_K(v_2))p_K(v_3)\left(\thirdfinsecothn+\thirdfinsectthn\right)
 $$
 $$
+\underbrace{(1-p_K(v_1))(1-p_K(v_2))p_K(v_3)}_{=0}\left(\thirdfiosecothn+\thirdfiosectthn+
\thirdfitsecothn+\thirdfitsectthn\right)
 $$
 $$
+\underbrace{(1-p_K(v_1))p_K(v_2)(1-p_K(v_3))}_{=0}\left(\thirdfiosecntho+\thirdfiosecntht+
\thirdfitsecntho+\thirdfitsecntht\right)
 $$
 $$
+\underbrace{p_K(v_1)(1-p_K(v_2))(1-p_K(v_3))}_{=0}\left(\thirdfinsecotho+\thirdfinsecotht+
\thirdfinsecttho+\thirdfinsecttht\right)
 $$
 $$
+(1-p_K(v_1))(1-p_K(v_2))(1-p_K(v_3))\left(\thirdfiosecotho+\thirdfiosecttho+\thirdfitsecotho
\right.
 $$
 $$
\left.+\thirdfitsecttho+\thirdfitsecttht
+\underbrace{\thirdfiosecotht}_{=0}+\underbrace{\thirdfiosecttht+\thirdfitsecotht}_{=0}\right)
 $$
 $$
=+p_K(v_1)p_K(v_2)(1-p_K(v_3))\left(\thirdfinsecntho+\thirdfinsecntht\right)
 $$
 $$
+(1-p_K(v_1))p_K(v_2)p_K(v_3)\left(\thirdfiosecnthn+\thirdfitsecnthn\right)
 $$
 $$
+p_K(v_1)(1-p_K(v_2))p_K(v_3)\left(\thirdfinsecothn+\thirdfinsectthn\right)
 $$
 $$
+(1-p_K(v_1))(1-p_K(v_2))(1-p_K(v_3))\left(\thirdfiosecotho+\thirdfiosecttho+\thirdfitsecotho
+\thirdfitsecttho+\thirdfitsecttht\right),
 $$
 $$
[K']=\left[\thirdnpr\right]=\sum\limits_{s\in\{0,1\}^{n}}
\prod\limits_{i=1}^{n}q_{K',s}(v'_i)K'_s
 $$
 $$
=\sum\limits_{s\in\{0,1\}^{n-3}}
\prod\limits_{i=4}^{n}q_{K',s}(v'_i)
\left(\underbrace{p_{K'}(v'_1)p_{K'}(v'_2)p_{K'}(v'_3)}_{=0}\thirdnpr\right.
 $$
 $$
+p_{K'}(v'_1)p_{K'}(v_2)(1-p_{K'}(v'_3))\left(\thirdfinsecnthopr+\thirdfinsecnthtpr\right)
 $$
 $$
+(1-p_{K'}(v'_1))p_{K'}(v'_2)p_{K'}(v'_3)\left(\thirdfinsecothnpr+\thirdfinsectthnpr\right)
 $$
 $$
+p_{K'}(v'_1)(1-p_{K'}(v'_2))p_{K'}(v'_3)\left(\thirdfiosecnthnpr+\thirdfitsecnthnpr\right)
 $$
 $$
+\underbrace{(1-p_{K'}(v'_1))(1-p_{K'}(v'_2))p_{K'}(v'_3)}_{=0}
\left(\thirdfiosecothnpr+\thirdfiosectthnpr+
\thirdfitsecothnpr+\thirdfitsectthnpr\right)
 $$
 $$
+\underbrace{(1-p_{K'}(v'_1))p_{K'}(v'_2)(1-p_{K'}(v'_3))}_{=0}\left(\thirdfinsecothopr+\thirdfinsecothtpr+
\thirdfinsectthopr+\thirdfinsectthtpr\right)
 $$
 $$
+\underbrace{p_{K'}(v'_1)(1-p_{K'}(v'_2))(1-p_{K'}(v'_3))}_{=0}\left(\thirdfiosecnthopr+\thirdfiosecnthtpr+
\thirdfitsecnthopr+\thirdfitsecnthtpr\right)
 $$
 $$
+(1-p_{K'}(v'_1))(1-p_{K'}(v'_2))(1-p_{K'}(v'_3))\left(\thirdfitsectthopr+
\thirdfitsecothopr+\thirdfiosectthopr\right.
 $$
 $$
\left.+\thirdfiosecothopr+\thirdfitsectthtpr+\underbrace{\thirdfiosecothtpr}_{=0}
+\underbrace{\thirdfiosectthtpr+\thirdfitsecothtpr}_{=0}\right)
 $$
 $$
=p_{K'}(v'_1)p_{K'}(v_2)(1-p_{K'}(v'_3))\left(\thirdfinsecnthopr+\thirdfinsecnthtpr\right)
 $$
 $$
=(1-p_{K'}(v'_1))p_{K'}(v'_2)p_{K'}(v'_3)\left(\thirdfinsecothnpr+\thirdfinsectthnpr\right)
 $$
 $$
+p_{K'}(v'_1)(1-p_{K'}(v'_2))p_{K'}(v'_3)\left(\thirdfiosecnthnpr+\thirdfitsecnthnpr\right)
 $$
 $$
+(1-p_{K'}(v'_1))(1-p_{K'}(v'_2))(1-p_{K'}(v'_3))\left(\thirdfitsectthopr+
\thirdfitsecothopr+\thirdfiosectthopr
+\thirdfiosecothopr+\thirdfitsectthtpr\right).
 $$

As we consider $\Z_2\mathfrak{G}$ (i.e.\ up to second Reidemeister
moves), we have
 $$
\thirdfinsecntho=\thirdfinsecnthopr,\quad\thirdfinsecntht=\thirdfinsecnthtpr,\quad
\thirdfiosecnthn=\thirdfinsecothnpr,\quad\thirdfitsecnthn=\thirdfinsectthnpr,
 $$
 $$
\thirdfinsecothn=\thirdfiosecnthnpr,\quad\thirdfinsectthn=\thirdfitsecnthnpr,\quad
\thirdfiosecotho=\thirdfitsectthopr,\quad\thirdfiosecttho=\thirdfitsecothopr,
 $$
 $$
\thirdfitsecotho=\thirdfiosectthopr,\quad
\thirdfitsecttho=\thirdfiosecothopr,\quad\thirdfitsecttht=\thirdfitsectthtpr.
 $$
Therefore, $[K]=[K']$.
 \end {proof}

\bibliographystyle{amsplain}

\end{document}